\newcommand{\twlog}{without loss of generality}
\newcommand{\eqand}{\ensuremath{\quad \textrm{and} \quad}}
\newcommand{\foot}{\footnote}
\newcommand{\ld}{\ensuremath{,\ldots,}}
\newcommand{\ssq}{\ensuremath{\subseteq}}
\newcommand{\smin}{\ensuremath{\setminus}}
\newcommand{\eps}{\ensuremath{\varepsilon}}
\newcommand{\wh}{\ensuremath{\widehat}}
\newcommand{\htop}{\ensuremath{h_{\mathrm{top}}}}
\newcommand{\ind}{\ensuremath{\mathbf{1}}}
\newcommand{\Leb}{\ensuremath{\mathrm{Leb}}}
\newcommand{\inte}{\ensuremath{\mathrm{int}}}
\newcommand{\diam}{\ensuremath{\mathrm{diam}}}
\newcommand{\kreis}{\ensuremath{\mathbb{T}^{1}}}
\newcommand{\torus}{\ensuremath{\mathbb{T}^2}}
\newcommand{\nfolge}[1]{\ensuremath{(#1)_{n\in\mathbb{N}}}}
\newcommand{\alphlist}{\begin{list}{(\alph{enumi})}{\usecounter{enumi}\setlength{\parsep}{2pt}
      \setlength{\itemsep}{1pt} \setlength{\topsep}{5pt}
      \setlength{\partopsep}{3pt}}}
\newcommand{\arablist}{\begin{list}{(\arabic{enumi})}{\usecounter{enumi}\setlength{\parsep}{2pt}
          \setlength{\itemsep}{1pt} \setlength{\topsep}{5pt}
          \setlength{\partopsep}{3pt}}}
\newcommand{\romanlist}{\begin{list}{(\roman{enumi})}{\usecounter{enumi}\setlength{\parsep}{2pt}
              \setlength{\itemsep}{1pt} \setlength{\topsep}{5pt}
              \setlength{\partopsep}{3pt}}}
\newcommand{\Romanlist}{\begin{list}{(\Roman{enumi})}{\usecounter{enumi}\setlength{\parsep}{2pt}
              \setlength{\itemsep}{1pt} \setlength{\topsep}{5pt}
              \setlength{\partopsep}{3pt}}}
\newcommand{\bulletlist}{\begin{list}{$\bullet$}{\setlength{\parsep}{2pt}
                \setlength{\itemsep}{1pt} \setlength{\topsep}{5pt}
                \setlength{\partopsep}{3pt}\setlength{\leftmargin}{15pt}}} 
\newcommand{\Alphlist}{\begin{list}{(\Alph{enumi})}{\usecounter{enumi}\setlength{\parsep}{2pt}
      \setlength{\itemsep}{1pt} \setlength{\topsep}{5pt}
      \setlength{\partopsep}{3pt}}}
 \newcommand{\listend}{\end{list}}
\newcommand{\T}{\ensuremath{\mathbb{T}}}
\newcommand{\N}{\ensuremath{\mathbb{N}}} 
\newcommand{\R}{\ensuremath{\mathbb{R}}}
\newcommand{\Z}{\ensuremath{\mathbb{Z}}}
\newcommand{\Q}{\ensuremath{\mathbb{Q}}}
\newcommand{\cF}{\mathcal{F}}
\newcommand{\cM}{\mathcal{M}}
\newcommand{\nLim}{\ensuremath{\lim_{n\rightarrow\infty}}}
\newcommand{\kLim}{\ensuremath{\lim_{k\rightarrow\infty}}}
\newcommand{\inergsum}{\ensuremath{\sum_{i=0}^{n-1}}}
\newcommand{\ktel}{\ensuremath{\frac{1}{k}}}
\newcommand{\ntel}{\ensuremath{\frac{1}{n}}}
\newtheorem{theorem}{Theorem}[section]
\newtheorem{corollary}[theorem]{Corollary}
\newtheorem{proposition}[theorem]{Proposition}
\newtheorem{lemma}[theorem]{Lemma}
\theoremstyle{definition}
\newtheorem*{example*}{Example}
\newtheorem{remark}[theorem]{Remark}
\newtheorem{remarks}[theorem]{Remarks}
\newcommand{\I}{\ensuremath{\mathbb{I}}}
\DeclareMathOperator*{\Times}{\mathop{\raisebox{-.5ex}{\hbox{\huge{$\times$}}}}}
\newcommand{\Bcal}{\ensuremath{\mathcal{B}}}
\newcommand{\abs}[1]{\left|#1\right|}
\newcommand{\countsep}[1]{S_{#1}}
\newcommand{\fsc}{\mathrm{{ac}}}
\newcommand{\Sep}{\ensuremath{\mathrm{Sep}}}
\newcommand{\Span}{\ensuremath{\mathrm{Span}}}
\newcommand{\Per}{\ensuremath{\mathrm{Per}}}
\newcommand{\oac}{\ensuremath{\overline{\mathrm{ac}}}}
\newcommand{\uac}{\ensuremath{\underline{\mathrm{ac}}}}
\newcommand{\Dim}{\ensuremath{\mathrm{Dim}}}
\renewcommand{\phi}{\varphi}
\newcommand{\lam}{\lambda}
\title{\Large\textsc{Amorphic complexity}}
\author{\normalsize
	G. Fuhrmann
		\thanks{Department of Mathematics, TU Dresden, Germany. Email: 
		{\tt gabrielfuhrmann@googlemail.com}}
    \and \normalsize M. Gr\"oger
		\thanks{Department of Mathematics, Universit\"at Bremen,
		Germany. Email: {\tt groeger@math.uni-bremen.de}} 		
	\and \normalsize T. J\"ager
		\thanks{Department of Mathematics, TU Dresden, Germany. Email: 
		{\tt Tobias.Oertel-Jaeger@tu-dresden.de}}
}
\begin{document}
\renewcommand\dagger{**}
\maketitle

\begin{abstract}\small 
	We introduce amorphic complexity as a new topological invariant that
	measures the complexity of dynamical systems in the regime of zero entropy.
	Its main purpose is to detect the very onset of disorder in the asymptotic
	behaviour.
	For instance, it gives positive value to Denjoy examples on the circle and
	Sturmian subshifts, while being zero for all isometries and Morse-Smale
	systems.

	After discussing basic properties and examples, we show that amorphic
	complexity and the underlying asymptotic separation numbers can be used to
	distinguish almost automorphic minimal systems from equicontinuous ones.
	For symbolic systems, amorphic complexity equals the box dimension of the
	associated Besicovitch space.
	In this context, we concentrate on regular Toeplitz flows and give a
	detailed description of the relation to the scaling behaviour of the
	densities of the $p$-skeletons.
	Finally, we take a look at strange non-chaotic attractors appearing in
	so-called pinched skew product systems.
	Continuous-time systems, more general group actions and the application to
	cut and project quasicrystals will be treated in subsequent work.
\end{abstract}

\section{Introduction}
\label{Intro}

The paradigm example of a topological complexity invariant for dynamical systems
is topological entropy.
It measures the exponential growth, in time, of orbits distinguishable at finite
precision and can be used to compare the complexity of dynamical systems defined
on arbitrary compact metric spaces.
Moreover, it is central to the powerful machinery of thermodynamic formalism.
There are, however, two situations where entropy does not provide very much
information, namely when it is either zero or infinite. 
In the latter case, mean topological dimension has been identified as a suitable
substitute. 
Its theoretical significance is demonstrated, for example, by the fact that zero
mean dimension is one of the few dynamical consequences of unique ergodicity
\cite{LindenstraussWeiss2000MeanDimension}.

Our focus here lies on the zero entropy regime, and in particular on the very
onset of dynamical complexity and the break of equicontinuity.
We are looking for a dynamically defined positive real-valued quantity which
\romanlist
	\item is an invariant of topological conjugacy and has other good properties;
	\item gives value zero to isometries and Morse-Smale systems;
	\item is able to detect, as test cases, the complexity inherent in the
		dynamics of Sturmian shifts or Denjoy homeomorphisms on the circle,
		by taking positive values for such systems.
\listend

There exist several concepts to describe the complexity of systems in the
zero entropy regime (see, for example, \cite{Misiurewicz1981,Smital1986,
MisiurewiczSmital1988,KolyadaSharkovsky1991,Carvalho1997,
Ferenczi1997MeasureTheoreticComplexity,KatokThouvenot1997SlowEntropy,Ferenczi1999,
BlanchardHostMaas2000TopologicalComplexity,
HasselblattKatok2002HandbookPrincipalStructures, FerencziPark2007,HuangParkYe2007,
HuangYe2009,ChengLi2010,DouHuangPark2011,Marco2013,KongChen2014}).
Some of them have properties that may be considered as shortcomings, although
this partly depends on the viewpoint and the particular purpose one has in mind.
To be more precise, let us consider one example of a standard approach to measure
the complexity of zero entropy systems, namely, the (modified) power entropy
\cite{HasselblattKatok2002HandbookPrincipalStructures}.
In the context of tiling spaces and minimal symbolic subshifts, power entropy is
more commonly known as polynomial word complexity and presents a well-established
tool to  describe the complexity of aperiodic sequences.
However, it turns out that power entropy gives positive values to Morse-Smale
systems, whereas modified power entropy is too coarse to distinguish Sturmian
subshifts or Denjoy examples from irrational rotations.

We are thus taking an alternative and complementary direction, which leads us to
define the notions of \emph{asymptotic separation numbers} and
\emph{amorphic  complexity}.
Those are based on an asymptotic notion of separation, which is the main
qualitative difference to the previous two concepts, since the latter rely in
their definition on the classical Bowen-Dinaburg/Hamming metrics which consider
only finite time-scales.
As a consequence, ergodic theorems can be applied in a more or less direct way
to compute or estimate amorphic complexity in many situations.
In order to fix ideas, we concentrate on the dynamics of continuous maps defined
on metric spaces.
Continuous-time systems and more general group actions will be treated in
future work.\medskip

Let $(X,d)$ be a metric space and $f:X\to X$.  Given $x,y\in X$, $\delta>0$,
$\nu\in(0,1]$ and $n\in\N$ we let
\begin{equation}\label{e.separation_count}
	\countsep{n}(f,\delta,x,y)\ := \ \#\left\{0\leq k<n\;|\;
		d(f^k(x),f^k(y))\geq\delta\right\}.
\end{equation}
We say that $x$ and $y$ are \emph{$(f,\delta,\nu)$-separated} if
\[
	\varlimsup\limits_{n\to\infty}\frac{\countsep{n}(f,\delta,x,y)}{n}\ \geq \ \nu \ .
\]
A subset $S\subseteq X$ is said to be \emph{$(f,\delta,\nu)$-separated} if all
distinct points $x,y\in S$ are $(f,\delta,\nu)$-separated.  
The {\em (asymptotic) separation number} $\Sep(f,\delta,\nu)$, for distance
$\delta>0$ and frequency $\nu\in(0,1]$, is then defined as the largest
cardinality of an $(f,\delta,\nu)$-separated set in $X$.
If these quantities are finite for all $\delta,\nu>0$, we say $f$ has
{\em finite separation numbers}, otherwise we say it has
{\em infinite separation numbers}.
Further, if $\Sep(f,\delta,\nu)$ is uniformly bounded in $\nu$ for all $\delta>0$,
we say that $f$ has {\em bounded separation numbers}, otherwise we say
{\em separation numbers are unbounded}.

These notions provide a first qualitative indication concerning the complexity
of a system.
Roughly spoken, finite but unbounded separation numbers correspond to dynamics
of intermediate complexity, which we are mainly interested in here.
Once a system behaves `chaotically', in the sense of positive entropy or weak
mixing, separation numbers become infinite.
\begin{theorem}
	Suppose $X$ is a compact metric space and $f:X\to X$ is continuous.
	If $f$ has positive topological entropy or is weakly mixing with respect to
	some invariant probability measure $\mu$ with non-trivial support, then it
	has infinite separation numbers.
\end{theorem}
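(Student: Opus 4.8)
The plan is to deal with the two hypotheses separately; in each case I will exhibit a single scale $\delta>0$, a single frequency $\nu\in(0,1]$ and an \emph{infinite} $(f,\delta,\nu)$-separated set $S\subseteq X$, which by the definition of $\Sep$ forces $\Sep(f,\delta,\nu)=\infty$. The recurring point is that $\nu$ must be chosen uniformly, i.e.\ not allowed to degrade as $S$ grows.

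\emph{The weakly mixing case.} I would pass to the product system $(X\times X,f\times f)$ with the measure $\mu\times\mu$, which is ergodic since $\mu$ is weakly mixing. As $\mathrm{supp}(\mu)$ is non-trivial, $\mu$ is not a point mass, hence $(\mu\times\mu)(\Delta)<1$ for the diagonal $\Delta$; since $\{(x,y):d(x,y)\ge\delta\}\uparrow(X\times X)\setminus\Delta$ as $\delta\downarrow0$, one may fix $\delta>0$ with $c:=(\mu\times\mu)(\{(x,y):d(x,y)\ge\delta\})>0$. Birkhoff's ergodic theorem applied to the indicator of this set yields a full-measure set $G\subseteq X\times X$ with $\tfrac1n\,\countsep{n}(f,\delta,x,y)\to c$ for all $(x,y)\in G$, and $G$ is symmetric because $\countsep{n}$ is symmetric in its last two arguments. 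Writing $G_x:=\{y:(x,y)\in G\}$, Fubini gives that $X_0:=\{x:\mu(G_x)=1\}$ is conull, and one builds $S=\{x_1,x_2,\dots\}\subseteq X_0$ inductively: after choosing $x_1,\dots,x_{i-1}$, the admissible points form $X_0\cap\bigcap_{\ell<i}G_{x_\ell}$, a finite intersection of conull sets, hence nonempty. Every pair of distinct points of $S$ lies in $G$, so $S$ is $(f,\delta,c)$-separated and infinite.

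\emph{The positive entropy case.} Here I would invoke the fact --- due to Kerr and Li --- that positive topological entropy is equivalent to the existence of a pair $(a,b)$ with $a\ne b$ that is an IE-pair, i.e.\ for any neighbourhoods $U\ni a$, $V\ni b$ the couple $(U,V)$ has an independence set of positive upper density. Fix disjoint closed neighbourhoods $U\ni a$, $V\ni b$ with $\delta:=\inf\{d(u,v):u\in U,\,v\in V\}>0$ and an independence set $J=\{j_1<j_2<\dots\}$ of upper density $d_0>0$, so that $\bigcap_{j\in F}f^{-j}(W_j)\ne\emptyset$ for every finite $F\subseteq J$ and every assignment $j\mapsto W_j\in\{U,V\}$. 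As these sets are closed and the family has the finite intersection property, compactness provides, for each pattern $\omega\in\{U,V\}^{J}$, a point $z_\omega$ with $f^j(z_\omega)\in\omega(j)$ for all $j\in J$. Whenever $\omega,\omega'$ disagree at some $j\in J$ one has $d(f^jz_\omega,f^jz_{\omega'})\ge\delta$, so $\countsep{n}(f,\delta,z_\omega,z_{\omega'})$ is at least the number of disagreement positions below $n$. It remains to choose infinitely many patterns whose pairwise disagreement sets, regarded inside $\N$, have upper density bounded below by one fixed $\nu>0$. I would do this probabilistically: taking the values $\omega(j_k)$ to be independent fair coin flips, the strong law of large numbers makes each pair's disagreement set have relative density $\tfrac12$ inside $J$; since $j_k\le N$ exactly when $k\le\#(J\cap[1,N])$ and $\#(J\cap[1,N])/N$ has limsup $d_0$, this disagreement set has upper density at least $d_0/2$ in $\N$, and a countable union bound makes this hold simultaneously for a countable family of patterns. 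The corresponding points then form an infinite $(f,\delta,d_0/2)$-separated set.

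The weakly mixing case is more or less a direct application of the pointwise ergodic theorem together with a Fubini argument, and should cause no trouble. The real work is in the positive entropy case: one needs a genuinely quantitative combinatorial consequence of positive entropy --- an independence set of positive \emph{upper} density (rather than merely positive Banach density), so that the density survives the transport from $J$ to $\N$ --- and then the `code' construction, which is what guarantees one common separation frequency $\nu$ for the whole infinite family. A single distributionally chaotic pair, or even an uncountable DC2-scrambled set, would not obviously be enough, as there the separation constants could vary with the pair.
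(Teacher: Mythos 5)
Both halves of your argument are correct; the weakly mixing half is essentially the paper's proof while the positive entropy half takes a genuinely different route, so a brief comparison is in order. For weak mixing, the paper uses that $\mu^m$ is ergodic for every $m\geq 2$ to conclude that $\mu^m$-almost every $m$-tuple is pairwise $(f,\delta,\nu_0)$-separated with $\nu_0:=\int h_\delta\,d\mu^2>0$, hence $\Sep(f,\delta,\nu_0)\geq m$ for all $m$; you use only ergodicity of $\mu\times\mu$ together with a Fubini argument and an inductive selection of an infinite set inside the conull symmetric set $G$. Both rest on the ergodic theorem for product measures and give the same uniform $\nu$; yours is marginally more economical since it never leaves $X^2$. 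For positive entropy your route is substantially different: you pass through Kerr--Li's IE-pairs, a positive-upper-density independence set, a closed-neighbourhood coding by compactness, and a strong-law argument to pick countably many patterns with pairwise disagreement of frequency $\tfrac12$ inside $J$. The paper is shorter here: it cites Downarowicz's theorem that positive topological entropy yields an uncountable \emph{uniformly} DC2-scrambled set, from which one directly reads off a single pair $(\delta_0,\nu)$ and an infinite $(f,\delta_0,\nu)$-separated set --- the uniformity hypothesis is exactly what answers the objection you raise at the end of your proposal about the separation constants varying with the pair, and you should be aware of that refinement of distributional chaos. Your IE-pair argument is a legitimate alternative, but it leans on heavier local entropy theory, and one step in it deserves an explicit citation: Kerr--Li's ``positive independence density'' is defined through a Banach-type density (independence sets filling a positive fraction of some long interval), whereas your transport of density from $J$ to $\N$ requires a single infinite independence set of positive \emph{upper} density. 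Kerr and Li do prove that, for independence sets, positive Banach density, positive upper density and positive density are all equivalent, but without invoking that equivalence your final step does not close.
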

The proof is given in Section~\ref{QualitativeBehaviour}.
Obviously, if $f$ is an isometry or, more generally, equicontinuous, then its
separation numbers are bounded.
Moving away from equicontinuity, one encounters the class of almost automorphic
systems, which are central objects of study in topological dynamics and include
many examples of both theoretical and practical importance
\cite{auslander1988minimal}.
At least in the minimal case, separation numbers are suited to describe this
transition, as the following result shows.
In order to state it, suppose that $(X,d)$ and $(\Xi,\rho)$ are metric spaces
and $f:X\to X$ and $g:\Xi\to\Xi$ are continuous.
We say that $f$ is an extension of $g$ if there exists a continuous onto map
$h:X\to \Xi$ such that $h\circ f=g\circ h$.
The map $f$ is called an {\em almost 1-1 extension} of $g$ if the set 
$\{\xi\in\Xi\mid \#h^{-1}(\xi)=1\}$ is dense in $\Xi$.
In the case that $g$ is minimal, this condition can be replaced by the weaker
assumption that there exists one $\xi\in\Xi$ with $\#h^{-1}(\xi)=1$.
We further say that $f$ is an {\em almost sure 1-1 extension} if the set
$\{\xi\in \Xi\mid \#h^{-1}(\xi)>1\}$ has measure zero with respect to every
$g$-invariant probability measure $\mu$ on $\Xi$.\foot{Note that if $g$ is
equicontinuous and minimal, then it is uniquely ergodic.
Hence, there is only one measure to consider in this case.}
Due to Veech's Structure Theorem \cite{veech1965almost}, almost automorphic
minimal systems can be defined as almost 1-1 extensions of equicontinuous
minimal systems.
\begin{theorem} \label{t.automorphic_systems}
	Suppose $X$ is a compact metric space and $f:X\to X$ is a homeomorphism.
	\alphlist
		\item If $f$ is minimal and almost automorphic, but not equicontinuous,
			then $f$ has unbounded separation numbers.
		\item If $f$ is an almost sure 1-1 extension of an equicontinuous system,
			then $f$ has finite separation numbers.
	\listend
\end{theorem}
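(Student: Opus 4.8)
\emph{Part (a).} I would work with the maximal equicontinuous factor $h\colon X\to\Xi$ and use that an equicontinuous minimal system is a rotation on a compact abelian group, hence uniquely ergodic with fully supported invariant measure $m$. Since $f$ is not equicontinuous, $h$ cannot be injective (a continuous bijection of compacta is a conjugacy), so fix $\xi_0\in\Xi$ and $x_0\neq y_0\in h^{-1}(\xi_0)$ and put $4\delta:=d(x_0,y_0)>0$. The facts to exploit are that $\Xi_0:=\{\xi:\#h^{-1}(\xi)=1\}$ is a dense, $g$-invariant set on which the section $z\colon\Xi_0\to X$, $\{z(\xi)\}=h^{-1}(\xi)$, is continuous with $f\circ z=z\circ g$, and that (using minimality of $f$, which makes $h^{-1}(\Xi_0)$ dense in $X$) both $x_0$ and $y_0$ are cluster points of $z$ at $\xi_0$. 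Consequently the ``defect sets'' $A:=\{\xi\in\Xi_0:d(z(\xi),x_0)<\delta\}$ and $B:=\{\xi\in\Xi_0:d(z(\xi),y_0)<\delta\}$ are nonempty, relatively open in $\Xi_0$, accumulate at $\xi_0$, and satisfy $d(z(\xi),z(\xi'))\geq2\delta$ for $\xi\in A$, $\xi'\in B$. Since $\delta$ does not depend on $N$, it now suffices to produce, for every $N$, an $(f,\delta,\nu_N)$-separated set of cardinality $N$ with $\nu_N>0$ — only the frequency is allowed to decay. For $\kappa,\kappa'\in\Xi_0$ the pair $z(\kappa),z(\kappa')$ is $(f,\delta,\nu)$-separated once the $g$-orbit of $\kappa$ visits $A\cap\bigl(B+(\kappa-\kappa')\bigr)$ with upper frequency $\geq\nu$ (because then $f^k z(\kappa)\in B(x_0,\delta)$ and $f^k z(\kappa')\in B(y_0,\delta)$), and by unique ergodicity of $(\Xi,g)$ such frequencies are positive once these sets have nonempty interior. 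So part~(a) reduces to choosing $N$ base points in $\Xi_0$ whose pairwise differences land in the ``difference region'' of $A$ and $B$ near $\xi_0$; for one--dimensional $\Xi$ this is immediate and reproduces the Sturmian picture.

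\emph{The main obstacle.} The real work in (a) is this $N$-point construction in general, where $\Xi$ may be higher--dimensional and the fibres of $h$ need not be small, and I would split cases. If a set of fibres of positive $m$-measure has diameter bounded away from zero (as for irregular Toeplitz flows), the cleaner route bypasses the section: fix an $f$-invariant $\mu$ with $h_*\mu=m$, form the $N$-fold relative self-joining $\int_\Xi\mu_\xi^{\otimes N}\,dm(\xi)$ on the $N$-fold fibred product over $h$, observe that it charges the open set where all $N$ coordinates are pairwise more than $\delta'$ apart for a suitable $\delta'>0$, pass to an ergodic component still charging it, and take a generic point of it. When instead the non-trivial fibres form a null set, or are otherwise too small to support many well-separated points, this joining is useless and one must return to the section/defect--set construction, positioning base points explicitly near $\xi_0$. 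Merging the two regimes into one construction valid for every almost automorphic minimal non-equicontinuous system — while holding $\delta$ fixed as $\nu_N\to0$ — is the crux; once such $N$-element $(f,\delta,\nu_N)$-separated sets exist for all $N$, the fixed scale $\delta$ immediately yields $\sup_\nu\Sep(f,\delta,\nu)=\infty$.

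\emph{Part (b).} Here I would use the Besicovitch--type pseudometric $\overline{d}(x,y):=\varlimsup_{n}\tfrac1n\sum_{k=0}^{n-1}d\bigl(f^k(x),f^k(y)\bigr)$. Two easy observations: if $x,y$ are $(f,\delta,\nu)$-separated then $\overline{d}(x,y)\geq\delta\nu$, so $\Sep(f,\delta,\nu)$ is at most the size of any cover of $X$ by sets of $\overline{d}$-diameter $<\delta\nu$; hence it suffices to show that for every $s>0$ such a finite cover at scale $s$ exists. Let $h\colon X\to\Xi$ be the factor map onto the equicontinuous system $g$ and set $D_\epsilon(\xi):=\mathrm{diam}\,h^{-1}\bigl(\overline{B_\rho(\xi,\epsilon)}\bigr)$. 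By equicontinuity there is $\beta>0$ with $\rho(\xi,\xi')\leq\beta\Rightarrow\rho(g^k\xi,g^k\xi')\leq\epsilon$ for all $k$, and then $\rho(h(x),h(y))\leq\beta$ forces $d(f^k(x),f^k(y))\leq D_\epsilon(g^k h(x))$ for all $k$, whence $\overline{d}(x,y)\leq M(\epsilon):=\sup_\xi\varlimsup_n\tfrac1n\sum_{k<n}D_\epsilon(g^k\xi)$. The crucial point is $M(\epsilon)\to0$ as $\epsilon\to0$: since $D_\epsilon\downarrow\mathrm{diam}\,h^{-1}$ pointwise, write $D_\epsilon\leq\eta+\mathrm{diam}(X)\cdot\mathbf{1}_{G_{\epsilon,\eta}}$ with $G_{\epsilon,\eta}:=\overline{\{D_\epsilon>\eta\}}$ closed and decreasing in $\epsilon$; one checks $\bigcap_{\epsilon>0}G_{\epsilon,\eta}\subseteq\{\xi:\mathrm{diam}\,h^{-1}(\xi)\geq\eta\}$, a closed set which — because $f$ is an almost sure 1-1 extension — is null for every $g$-invariant measure, and then a compactness argument on the (compact) set of invariant measures, using upper semicontinuity of $\mu\mapsto\mu(G_{\epsilon,\eta})$, yields $\sup_\mu\mu(G_{\epsilon,\eta})\to0$ for each fixed $\eta$; combined with the standard weak-$*$ limit bound $\sup_\xi\varlimsup_n\tfrac1n\#\{k<n:g^k\xi\in G_{\epsilon,\eta}\}\leq\sup_\mu\mu(G_{\epsilon,\eta})$ this gives $M(\epsilon)\leq\eta+\mathrm{diam}(X)\cdot\sup_\mu\mu(G_{\epsilon,\eta})$. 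Taking $\eta<s/2$ and then $\epsilon$ small makes $M(\epsilon)<s$, so every preimage $h^{-1}(V)$ of a $\rho$-ball $V$ of radius $\beta/2$ has $\overline{d}$-diameter $<s$; a finite subcover of $\Xi$ by such balls covers $X$ as required, and hence $\Sep(f,\delta,\nu)<\infty$ for all $\delta,\nu>0$. The only genuine step is this null-set/equidistribution estimate, i.e.\ the uniform control of the $G_{\epsilon,\eta}$-frequencies; the rest is routine.
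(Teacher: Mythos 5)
Your Besicovitch-metric argument is correct and packages the same mechanism as the paper, just from a different angle. Where the paper fixes $\delta$, sets $E_\delta=\{\xi:\diam h^{-1}(\xi)\geq\delta\}$, finds $\eps$ with $\sup_{\mu\in\mathcal M(g)}\mu(\overline{B_\eps(E_\delta)})<\nu$, and then uses the paper's Lemma~\ref{l.eta_function} to conclude that $\eta_\delta(\eps)$-close $h$-fibres can only be $\delta$-apart on visits to $\overline{B_\eps(E_\delta)}$, you instead work with $D_\eps(\xi)=\diam h^{-1}(\overline{B_\rho(\xi,\eps)})$, the Besicovitch distance $\overline d$, and the sets $G_{\eps,\eta}$; the inequality $\overline d(x,y)\geq\delta\nu$ for separated pairs is the right bridge and the rest is a valid chain: $D_\eps\downarrow\diam h^{-1}$, $\bigcap_\eps G_{\eps,\eta}$ contained in a $g$-invariantly null closed set, Dini/upper semicontinuity to get $\sup_\mu\mu(G_{\eps,\eta})\to0$, and the empirical-measure / Portmanteau bound on orbit frequencies. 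In both routes the three essential ingredients are closedness of $\mathcal F_\delta$ (= upper semicontinuity of fibre diameters), the almost-sure 1-1 hypothesis, and compactness of $\mathcal M(g)$. So (b) is fine.

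\textbf{Part (a).} This is where the proposal has a genuine gap, essentially where you say it does. Two concrete problems. First, you define $A=\{\xi\in\Xi_0:d(z(\xi),x_0)<\delta\}$ and $B$ analogously; these are only relatively open in the singleton-fibre set $\Xi_0$, and for a general almost automorphic non-equicontinuous system $\Xi_0$ can be a null $G_\delta$ with empty interior in $\Xi$. Unique ergodicity then gives nothing: the visiting frequency of a $g$-orbit to a set without interior can be $0$. You must upgrade $A$, $B$ to sets with nonempty interior in $\Xi$; the way the paper does this (Lemma~\ref{l.good_neighbourhoods}) is to pick $x_1,x_2$ with \emph{singleton} fibres, $d(x_1,x_2)>\delta$, and then use closedness of $\mathcal F_\delta$ to see that for sufficiently small genuine $\Xi$-neighbourhoods $V_i$ of $h(x_i)$ one has $d(h^{-1}(V_1),h^{-1}(V_2))>\delta$. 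Second, and more importantly, you correctly identify that the whole theorem reduces to producing, at fixed $\delta>0$, an $N$-element $(f,\delta,\nu_N)$-separated set for every $N$, and then you split into two regimes (``fibres of positive measure bounded away from zero'' vs.\ not) and say that merging them ``is the crux'' without resolving it. The paper resolves it once and for all by induction: having built pairwise-separated open sets $V^k_1,\ldots,V^k_k$ with the property that for each pair $i<j$ some iterate $g^{n^k_{i,j}}$ maps $V^k_i,V^k_j$ to sets with $\delta$-separated $h$-preimages, one uses minimality of $g$ to push $V^k_k$ forward to a neighbourhood of the big fibre and applies Lemma~\ref{l.good_neighbourhoods} \emph{inside} that image to split off $V^{k+1}_{k}$, $V^{k+1}_{k+1}$ subsetting $V^k_k$. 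The separation frequencies are then positive by the paper's Lemma~\ref{l.point_separation} (isometry $g\times g$, almost periodicity, syndetic return times). This nested construction is what you are missing, and without it your argument does not close; in particular your joining idea in the ``positive-measure fibre'' case overshoots (almost automorphy does not provide a measure-theoretic foothold for a uniform $\delta$) and is unnecessary.
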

Again, the proof is given in Section~\ref{QualitativeBehaviour}.
Two examples for case (b) discussed below are regular Toeplitz flows and Delone
dynamical systems arising from cut and project quasicrystals.
We refer to \cite{LiTuYe2014,DownarowiczGlasner2015} for some recent progress
on extensions of minimal equicontinuous systems.\medskip

In order to obtain quantitative information, we proceed to study the scaling
behaviour of separation numbers as the separation frequency $\nu$ goes to zero.
In principle, one may consider arbitrary growth rates (see
Section~\ref{sec_definitions}).
However, as the examples we discuss all indicate, it is polynomial growth which
is the most relevant.
Given $\delta>0$, we let
\begin{equation}\label{d.delta_amorphic_complexity}
	\uac(f,\delta)\ := \ \varliminf_{\nu\to 0}
		\frac{\log \Sep(f,\delta,\nu)}{-\log \nu} \  ,\qquad
	\oac(f,\delta) \ := \ \varlimsup_{\nu\to 0}
			\frac{\log\Sep(f,\delta,\nu)}{-\log \nu}
\end{equation}
and define the {\em lower}, respectively {\em upper amorphic complexity of $f$} as 
\begin{equation}\label{e.amorphic_complexity}
	\uac(f) \ := \ \sup_{\delta>0} \uac(f,\delta) \eqand
	\oac(f) \ := \ \sup_{\delta>0} \oac(f,\delta) \ . 
\end{equation}
If both values coincide, $\fsc(f):=\uac(f)=\oac(f)$ is called the {\em amorphic
complexity of $f$}.
We note once  more that the main difference to the notion of (modified) power
entropy is the fact that we use an asymptotic concept of separation, and the
scaling  behaviour that is measured is not the one with respect to time, but
that with respect to the separation frequency.
Somewhat surprisingly, this makes amorphic complexity quite well-accessible to
rigorous computations and estimates.
The reason is that separation frequencies often correspond to certain ergodic
averages or visiting frequencies, which can be determined by the application of
ergodic theorems.
We have the following basic properties.
\begin{proposition}
	Suppose $X,Y$ are compact metric spaces and $f:X\to X,\ g:Y\to Y$
	continuous maps.
	Then the following statements hold.
	\alphlist
		\item {\em Factor relation:} If $g$ is a factor of $f$, then
			$\oac(f)\geq\oac(g)$ and  $\uac(f)\geq \uac(g)$.
			In particular, amorphic complexity is an invariant of topological
			conjugacy.
		\item {\em Power invariance:} For all $m\in\N$ we have
			$\oac(f^m)=\oac(f)$ and $\uac(f^m)=\uac(f)$. 
		\item {\em Product formula:} If upper and lower amorphic complexity
			coincide for both $f$ and $g$, then the same holds for $f\times g$
			and we have $\fsc(f\times g)=\fsc(f)+\fsc(g)$.
			Otherwise, we have $\oac(f\times g)\leq \oac(f)+\oac(g)$ and
			$\uac(f\times g)\geq \uac(f)+\uac(g)$.
		\item {\em Commutation invariance:}
			$\oac(f\circ g)=\oac(g\circ f)$ and $\uac(f\circ g)=\uac(g\circ f)$.
	\listend
\end{proposition}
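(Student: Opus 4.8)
The plan is to establish each of the four properties directly from the definitions, working first at the level of the separation numbers $\Sep(f,\delta,\nu)$ and then passing to the logarithmic scaling limits. The common technical device will be comparing separated sets in one system with separated sets in another by transporting them along the relevant maps and controlling how $\delta$ and $\nu$ have to be adjusted.

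\emph{Factor relation.} Suppose $h\colon X\to Y$ is a factor map, i.e.\ continuous, onto, with $h\circ f=g\circ h$. Given a finite $(g,\delta,\nu)$-separated set $T\subseteq Y$, I would lift it to a set $S\subseteq X$ by choosing one preimage for each point of $T$. Since $h$ is uniformly continuous on the compact space $X$, for every $\delta>0$ there is $\delta'>0$ with $d(x,x')<\delta'\Rightarrow\rho(h(x),h(x'))<\delta$; contrapositively, $\rho(h(f^k x),h(f^k x'))\ge\delta$ forces $d(f^k x,f^k x')\ge\delta'$, and $\rho(h(f^k x),h(f^k x'))=\rho(g^k(h x),g^k(h x'))$. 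Hence $\countsep{n}(f,\delta',x,x')\ge\countsep{n}(g,\delta,hx,hx')$ for all $n$, so $S$ is $(f,\delta',\nu)$-separated of the same cardinality as $T$. This yields $\Sep(f,\delta',\nu)\ge\Sep(g,\delta,\nu)$, hence $\oac(f,\delta')\ge\oac(g,\delta)$ and similarly for $\uac$; taking suprema over $\delta$ gives the claim. Topological conjugacy invariance is then immediate since a conjugacy is a factor map in both directions.

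\emph{Power invariance.} Fix $m\in\N$. For the easy inequality, note that any $(f,\delta,\nu)$-separated set is trivially $(f^m,\delta,\nu/m)$-separated, because along the subsequence $k\in m\Z$ the orbit of $f^m$ reads off every $m$-th coordinate of the $f$-orbit, so the visiting frequency drops by at most a factor $m$; this gives $\Sep(f^m,\delta,\nu/m)\ge\Sep(f,\delta,\nu)$, and since $\frac{\log\Sep}{-\log\nu}$ is essentially insensitive to replacing $\nu$ by $\nu/m$ as $\nu\to0$ (the extra $\log m$ is swallowed by $-\log\nu\to\infty$), we obtain $\oac(f^m)\ge\oac(f)$ and likewise for $\uac$. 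For the reverse, take an $(f^m,\delta,\nu)$-separated set $S$; I would use uniform continuity of $f$ to pick $\delta'>0$ so small that $d(f^j x,f^j y)\ge\delta$ for some $0\le j<m$ whenever $d(x,y)\ge\delta'$ — wait, that is the wrong direction, so instead I pick $\delta'$ with $d(x,y)<\delta'\Rightarrow d(f^j x,f^j y)<\delta$ for all $0\le j<m$. Then if $d(f^{mk}x,f^{mk}y)\ge\delta$ we cannot have all of $d(f^{mk+j}x,f^{mk+j}y)<\delta'$; rather I argue contrapositively that each time $f^m$ separates at step $k$ by $\delta$, the $f$-orbit is separated by $\delta'$ at step $mk$, so $\countsep{n m}(f,\delta',x,y)\ge\countsep{n}(f^m,\delta,x,y)$, giving frequency at least $\nu/m$ along the full time scale. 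Hence $\Sep(f,\delta',\nu/m)\ge\Sep(f^m,\delta,\nu)$ and the scaling argument finishes it.

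\emph{Product formula and commutation invariance.} For the product, equip $X\times Y$ with, say, the max metric. A separated set in a product contains separated sets in each factor after projecting, and conversely the product of an $(f,\delta,\nu)$-separated set $S$ and a $(g,\delta,\mu)$-separated set $T$ is $(f\times g,\delta,\nu+\mu-1)$-separated only if the frequencies overlap enough, which is awkward at fixed frequency; the clean route is to observe that distinct points $(x,y),(x',y')$ are separated in $f\times g$ at distance $\delta$ with frequency $\nu$ as soon as either coordinate is, so $\Sep(f\times g,\delta,\nu)\le\Sep(f,\delta,\nu)\cdot\Sep(g,\delta,\nu)$ (cover a separated set by the product of the projections' separated-set structure) — giving the upper bound $\oac(f\times g)\le\oac(f)+\oac(g)$ after taking logs — while for the lower bound one shows $\Sep(f\times g,\delta,\nu)\ge\Sep(f,\delta,2\nu)\cdot\Sep(g,\delta,2\nu)$ by noting that if $x,x'$ are $(f,\delta,2\nu)$-separated and $y,y'$ are $(g,\delta,2\nu)$-separated then the two ``bad'' time-sets each have upper density $\ge 2\nu$, hence their union has upper density $\ge2\nu>\nu$ in the worst case $\ldots$ here a small inclusion–exclusion or a subsequence argument is needed, and this is the main obstacle: controlling upper densities of unions is not automatic. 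I would handle it by passing to a subsequence along which the density of the first bad set is realized and noting the second bad set still has upper density $\ge 2\nu$ along it, or more simply by using that $\varlimsup(a_n+b_n)\ge\varlimsup a_n$ together with a symmetric bound and averaging; in any case the factor $2\nu$ is harmless after dividing by $-\log\nu$. When $\fsc(f)$ and $\fsc(g)$ both exist, the matching upper and lower bounds force $\fsc(f\times g)=\fsc(f)+\fsc(g)$. Finally, commutation invariance follows from the factor/conjugacy relation: $f\circ g$ and $g\circ f$ are conjugate via $g$ on the subsystem $\overline{g(X)}$ (indeed $g\circ(f\circ g)=(g\circ f)\circ g$), and a short argument using that separated sets live where the dynamics is eventually supported — or simply that $(g\circ f)^n=g\circ(f\circ g)^{n-1}\circ f$ — shows $\countsep{n}$ for the two maps differ by at most a bounded additive constant and a uniform-continuity change of $\delta$, which does not affect the limits.

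The step I expect to be genuinely delicate is the lower bound in the product formula, precisely because separation is defined through an upper density (a $\varlimsup$), and the upper density of a union of two sets is not bounded below by the sum, nor even by the max in a way that composes well with taking a single separated set of pairs; I anticipate needing the device of doubling the frequency in each factor so that the union of the two positive-density time-sets is forced to have density exceeding $\nu$, together with a diagonal-subsequence argument to witness it simultaneously for all pairs in the product set.
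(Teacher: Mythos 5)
Your factor-relation argument is correct and matches the paper's. The power invariance and product formula, however, both contain substantive gaps.

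\textbf{Power invariance.} The claimed ``easy'' inequality --- that an $(f,\delta,\nu)$-separated set is automatically $(f^m,\delta,\nu/m)$-separated --- is false. Separation is defined through an upper density of the set of times $\{k : d(f^k x,f^k y)\ge\delta\}$, and there is no reason this set should meet $m\Z$ at all: if $m=2$ and the separation events are all at odd times, the $f^2$-orbit witnesses none of them, and the restriction to even times has density zero. The correct move is precisely the uniform-continuity device you deploy in the other direction: pick $\tilde\delta>0$ so small that $d(f^i z,f^i w)\ge\delta$ for some $0\le i<m$ forces $d(z,w)\ge\tilde\delta$. Then each separation event at time $k$ pushes back to a separation event at time $m\lfloor k/m\rfloor$ at the coarser scale $\tilde\delta$, giving $\Sep(f^m,\tilde\delta,\nu)\ge\Sep(f,\delta,\nu)$ with the \emph{same} $\nu$, which is what the paper proves. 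Your reverse direction is essentially right.

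\textbf{Product formula.} You have the two directions reversed in difficulty. The inequality you want to call ``clean,'' namely $\Sep(f\times g,\delta,\nu)\le\Sep(f,\delta,\nu)\cdot\Sep(g,\delta,\nu)$, is not established by your hint about ``covering a separated set by the product of the projections' structure'': projecting an $(f\times g,\delta,\nu)$-separated set onto $X$ does not yield an $(f,\delta,\nu)$-separated set, since two pairs may be separated only in the $Y$-coordinate. The paper gets the upper bound via \emph{spanning} sets: $\Span(f\times g,\delta,\nu)\le\Span(f,\delta,\nu/2)\cdot\Span(g,\delta,\nu/2)$, which follows from $\countsep{n}(f\times g,\delta,(x,y),(\tilde x,\tilde y))\le\countsep{n}(f,\delta,x,\tilde x)+\countsep{n}(g,\delta,y,\tilde y)$, and then invokes the equivalence of the spanning and separating definitions (Corollary~\ref{c.span_sep}). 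Conversely, the lower bound --- which you flag as the ``genuinely delicate'' step requiring a $2\nu$ trick and diagonal subsequences --- is in fact immediate: with the max metric, if $(x,y)\ne(x',y')$ then one coordinate differs, say $x\ne x'$, and $d\big((f\times g)^k(x,y),(f\times g)^k(x',y')\big)\ge d_X(f^k x,f^k x')$, so the $(f,\delta,\nu)$-separation of $x,x'$ transfers directly. Hence $S_X\times S_Y$ is $(f\times g,\delta,\nu)$-separated and $\Sep(f\times g,\delta,\nu)\ge\Sep(f,\delta,\nu)\cdot\Sep(g,\delta,\nu)$, no union-of-bad-sets estimate needed. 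The obstacle you anticipated does not arise because you never need a time at which \emph{both} coordinates fail to separate.

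\textbf{Commutation invariance.} Your sketch is overcomplicated; the paper's observation is simply that $g$ itself is a factor map from $f\circ g$ to $g\circ f$ (and $f$ is one in the other direction), so (d) follows from (a) immediately with no auxiliary argument about supports.
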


As the power invariance indicates, amorphic complexity behaves quite different
from topological entropy in some aspects.
In this context, it should also be noted that no variational principle can be
expected for amorphic complexity. 
This is a direct consequence of requirement (iii) above, which is met by
amorphic complexity (see Proposition \ref{p.ac_morse_iso_Sturmian_Denjoy}). 
The reason is that since Sturmian subshifts, Denjoy examples and irrational
rotations are uniquely ergodic and measure-theoretically isomorphic, they cannot
be distinguished on a measure-theoretic level. 
Hence, no reasonable analogue to the variational principle of topological
entropy can be satisfied.
\begin{proposition}\label{p.ac_morse_iso_Sturmian_Denjoy}
	Amorphic complexity is zero for all isometries and Morse-Smale systems, but
	equals one for Sturmian subshifts and Denjoy examples on the circle.
\end{proposition}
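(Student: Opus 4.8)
The plan is to treat the four assertions of Proposition \ref{p.ac_morse_iso_Sturmian_Denjoy} separately, since they rely on quite different mechanisms. For \emph{isometries}, the key observation is that if $d(f^k(x),f^k(y))=d(x,y)$ for all $k$, then for any $\delta>0$ either $d(x,y)\geq\delta$ and then $x,y$ are $(f,\delta,1)$-separated, or $d(x,y)<\delta$ and then $x,y$ are never $\delta$-separated at any time. Hence an $(f,\delta,\nu)$-separated set is simply a $\delta$-separated set in the ordinary metric sense, whose cardinality is bounded by the covering number $N(X,\delta)$ — in particular independent of $\nu$. Thus $\Sep(f,\delta,\nu)$ is bounded in $\nu$, so $\uac(f,\delta)=\oac(f,\delta)=0$ for every $\delta$, giving $\fsc(f)=0$. (The same argument with a standard $\varepsilon$-$\delta$ adjustment handles general equicontinuous maps, but only isometries are claimed here.)

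For \emph{Morse--Smale systems}, I would use the fact that the non-wandering set is a finite union of periodic orbits and that every point is attracted, in forward time, to one of these orbits. Fix $\delta>0$. For two points $x,y$, their forward orbits converge to periodic orbits $O_x,O_y$; along the way the pair $(f^k(x),f^k(y))$ spends only finitely many steps at distance $\geq\delta$ unless the limiting periodic orbits already force $\delta$-separation infinitely often with positive frequency. The crucial point is that there are only finitely many periodic orbits, hence finitely many ``asymptotic configurations'' $(O_x,O_y)$, and the asymptotic separation frequency of a pair depends only on this configuration (the transient part contributes nothing to the $\limsup$ of $\countsep{n}/n$). Consequently, for any $\nu>0$, an $(f,\delta,\nu)$-separated set can contain at most one point converging to each periodic orbit whose ``self-frequency'' falls below $\nu$, so $\Sep(f,\delta,\nu)$ is bounded by a constant depending only on the (finite) periodic data and not on $\nu$. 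This yields bounded separation numbers and hence $\fsc(f)=0$.

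For \emph{Sturmian subshifts} and \emph{Denjoy examples}, the strategy is to exploit that both are almost automorphic minimal homeomorphisms (almost $1$-$1$ extensions of an irrational rotation $R_\alpha$ on $\mathbb{T}$) which are moreover almost sure $1$-$1$ extensions. By Theorem \ref{t.automorphic_systems}(b) they have finite separation numbers, so $\fsc$ is at least well-behaved; the content is to show it equals exactly $1$. For the lower bound $\fsc\geq 1$, I would produce, for suitable fixed $\delta>0$ and each small $\nu$, an $(f,\delta,\nu)$-separated set of cardinality $\gtrsim 1/\nu$: using the factor map $h$ to the rotation, the preimages of a collection of $\sim 1/\nu$ well-spaced points in $\mathbb{T}$ can be chosen so that any two are pushed $\delta$-apart on a subset of times of density comparable to their rotational distance, which is $\gtrsim\nu$; taking $-\log\Sep/(-\log\nu)\to$ recovers exponent $1$. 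For the upper bound $\fsc\leq 1$, one argues that two points whose images under $h$ are within distance $\sim\nu$ in $\mathbb{T}$ cannot be $(f,\delta,\nu)$-separated for a fixed threshold $\delta$ (they are $\delta$-close except on a set of times of density $O(\nu)$, by unique ergodicity and the structure of the almost $1$-$1$ extension), so an $(f,\delta,\nu)$-separated set maps to a $c\nu$-separated subset of $\mathbb{T}$ and hence has at most $O(1/\nu)$ elements. Combining the bounds gives $\fsc=1$.

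The main obstacle is the Sturmian/Denjoy computation, specifically the quantitative control in both directions relating the separation frequency of a pair $x,y$ to the rotational distance $\rho(h(x),h(y))$ on the circle. The subtlety is that the almost $1$-$1$ factor map collapses a small (measure-zero but not empty) set of rotational orbits, and near those ``exceptional'' fibers the correspondence between $\delta$-separation of iterates in $X$ and the position of the rotated pair relative to the splitting interval must be handled carefully — this is exactly where unique ergodicity and a Birkhoff-type visiting-frequency argument do the real work, converting a combinatorial/geometric statement about time-averages into the clean scaling exponent $1$. The isometry and Morse--Smale parts, by contrast, are essentially finitary and should be routine.
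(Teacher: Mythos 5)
Your treatment of isometries is exactly the paper's one-line observation: $\Sep(f,\delta,\nu)$ does not depend on $\nu$, so the exponent is zero. For Morse--Smale systems you reach the right conclusion, but your accounting (one point per periodic orbit, modulated by a ``self-frequency'') is more convoluted than necessary and even slightly off: two orbits converging to \emph{distinct} points on the \emph{same} periodic orbit can perfectly well be $(f,\delta,\nu)$-separated for all $\nu$. The clean statement, which is what the paper uses, is that any $x$ satisfies $f^{np}(x)\to y$ for a single $y\in\Omega(f)$ (with $p$ the period of $y$), two orbits with the same limit $y$ are never $(f,\delta,\nu)$-separated, and therefore $\Sep(f,\delta,\nu)\leq\#\Omega(f)$ uniformly in both $\delta$ and $\nu$. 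Your bound is still uniform, so no harm is done, but the intermediate reasoning would need to be repaired.

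The Sturmian/Denjoy part is where there is a genuine gap, and you essentially say so yourself. Your plan is the right shape (project to the rotation, show $\Sep\asymp1/\nu$ from both sides), but both directions hinge on quantitative lemmas that you name as ``the main obstacle'' without supplying the mechanism. For the lower bound, ``preimages of $\sim1/\nu$ well-spaced points are $\delta$-separated with frequency $\gtrsim\nu$'' is not automatic: the factor map collapses the gaps of the Cantor set, and distinct angles do not give $\delta$-separated $f$-orbits at \emph{every} time. The paper resolves this by picking three wandering intervals $[a_{\ell_m},b_{\ell_m}]$ whose Cantor-function images $\zeta_1,\zeta_2,\zeta_3$ are pairwise $>1/4$ apart and whose lengths are $\geq\delta$; then whenever $R_\alpha^k$ moves one $\zeta_m$ into the arc between the projected points, a fat gap must lie in each of the two complementary arcs, forcing $d(f^k(x_i),f^k(x_j))\geq\delta$, and Weyl equidistribution gives the frequency $\geq\nu$. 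For the upper bound, your claim that $h$-images at distance $\lesssim\nu$ are $\delta$-close except on a set of times of density $O(\nu)$ is precisely what must be proved; the paper does it by introducing $\varphi_{\tilde\nu}(x)=\mu([x,x+\tilde\nu])$ (with $\mu=\Leb\circ p^{-1}$), applying Birkhoff to bound the bad frequency by $\Leb(\{\varphi_{\tilde\nu}\geq\delta\})$, and then a pigeonhole on $\mu(\T^1)=1$ to get $\Leb(\{\varphi_{\tilde\nu}\geq\delta\})<2\tilde\nu(\lceil1/\delta\rceil+1)$. Neither of these two devices appears in your sketch. Finally, the paper does not compute Sturmian and Denjoy independently: it proves a folklore lemma that every Sturmian is conjugate to $\left.f\right|_C$ for a Denjoy $f$, uses that Denjoy examples have the unique target property to pass from $\left.f\right|_C$ to $f$, and then computes only the Denjoy case; your plan would have you doing the almost-1-1-extension estimate twice.
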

The proof is given in Sections \ref{Isometries} and \ref {BasicExamples}.
By means of some elementary examples in Section~\ref{PowerEntropy}, we will also
demonstrate that no direct relations -- in terms of inequalities -- exist
between amorphic complexity and the notions of power entropy and modified power
entropy.\medskip

The arguments in the proof of Theorem~\ref{t.automorphic_systems} can be
quantified, at least to some extent, to obtain an upper bound on amorphic
complexity for minimal almost sure 1-1 extensions of isometries.
In rough terms, the result reads as follows.
Details will be given in Section~\ref{QuantitativeAutomorphic}.
By $\overline{\Dim}_B(A)$ we denote the upper box dimension of a totally bounded
subset $A$ of a metric space.
\begin{theorem}\label{t.automorphic_quantitative_intro}
	Suppose $X$ and $\Xi$ are compact metric spaces and $f:X\to X$ is an almost
	sure 1-1 extension of a minimal isometry $g:\Xi\to\Xi$ with factor map $h$.
	Further, assume that the upper box dimension of $\Xi$ is finite and strictly
	positive.
	Then
	\begin{equation} \label{e.automorphic_estimate}
		\oac(f) \ \leq \ \frac{\gamma(h)\cdot\overline\Dim_B(\Xi)}
		{\overline\Dim_B(\Xi)-\sup_{\delta>0}\overline\Dim_B(E_\delta)} \ ,
	\end{equation}
	where $E_\delta=\{\xi\in\Xi\mid \diam(h^{-1}(\xi))\geq \delta\}$ and
	$\gamma(h)$ is a scaling factor depending on the local properties of the
	factor map $h$.
\end{theorem}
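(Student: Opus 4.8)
The plan is to fix $\delta>0$, take an arbitrary $(f,\delta,\nu)$-separated set $S\subseteq X$, and bound $\#S$ from above by a covering number of $\Xi$ at a scale $r=r(\nu)$ that will be specified at the very end; the exponent in \eqref{e.automorphic_estimate} then falls out of optimising this choice. Write $\rho$ for the metric on $\Xi$ and $\mu$ for the unique $g$-invariant probability measure (a minimal isometry is equicontinuous and minimal, hence uniquely ergodic). Since $X$ is compact, $\xi\mapsto h^{-1}(\xi)$ is upper semicontinuous, so $\xi\mapsto\operatorname{diam} h^{-1}(\xi)$ is upper semicontinuous and every $E_\delta$ is closed; the almost sure $1$-$1$ hypothesis gives $\mu(E_\delta)=0$. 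The first step is a fibrewise collapse: if $x,y\in S$ satisfy $h(x)=h(y)=\xi$, then $f^k(x)$ and $f^k(y)$ lie in the same fibre $h^{-1}(g^k(\xi))$, so $d(f^k(x),f^k(y))\ge\delta$ forces $g^k(\xi)\in E_\delta$; as $E_\delta$ is closed with $\mu(E_\delta)=0$, unique ergodicity gives $\varlimsup_n\tfrac1n\#\{0\le k<n:g^k(\xi)\in E_\delta\}\le\mu(E_\delta)=0$, so $x$ and $y$ are not $(f,\delta,\nu)$-separated for any $\nu>0$. Hence $h|_S$ is injective and it suffices to bound $\#T$ with $T:=h(S)\subseteq\Xi$.

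I would then introduce, for $r>0$, the sets $F^r_\delta:=\{\xi\in\Xi:\operatorname{diam} h^{-1}(\overline B(\xi,r))\ge\delta\}$, which are closed by the same compactness argument. If $\xi,\eta\in T$ satisfy $\rho(\xi,\eta)<r$ and $x,y\in S$ are the associated points, then $\rho(g^k(\xi),g^k(\eta))=\rho(\xi,\eta)<r$ for every $k$ because $g$ is an isometry, so $f^k(x),f^k(y)\in h^{-1}(\overline B(g^k(\xi),r))$, and therefore $d(f^k(x),f^k(y))\ge\delta$ implies $g^k(\xi)\in F^r_\delta$. Consequently, any $\xi\in T$ with another point of $T$ within distance $r$ satisfies $\varlimsup_n\tfrac1n\#\{0\le k<n:g^k(\xi)\in F^r_\delta\}\ge\nu$.

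Next I would make the compactness fact $\bigcap_{r>0}F^r_\delta=E_\delta$ quantitative (were it to fail, a diagonal sequence would produce a point of $E_\delta$ at positive distance from $E_\delta$): for each $\delta$ there is a modulus $\kappa_\delta(r)\downarrow0$ as $r\downarrow0$ with $F^r_\delta\subseteq(E_\delta)_{\kappa_\delta(r)}:=\{\xi:\rho(\xi,E_\delta)\le\kappa_\delta(r)\}$, a closed set. Applying the uniform ergodic theorem for the uniquely ergodic $g$, via upper approximation of the indicator of $(E_\delta)_{\kappa_\delta(r)}$ by continuous functions, bounds the upper frequency $\varlimsup_n\tfrac1n\#\{0\le k<n:g^k(\xi)\in(E_\delta)_{\kappa_\delta(r)}\}$ by $\mu((E_\delta)_{\kappa_\delta(r)})$, uniformly in $\xi$. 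Combined with the previous step, if $\mu((E_\delta)_{\kappa_\delta(r)})<\nu$ then no point of $T$ has a $T$-neighbour within $r$, so $T$ is $r$-separated and $\#S=\#T\le N(\Xi,r/3)$, where $N(A,s)$ is the least number of $s$-balls covering $A$. Here the equicontinuous structure theory enters: the closure $\mathcal G$ of the group generated by $g$ in the isometry group of $(\Xi,\rho)$ acts freely, transitively and by $\rho$-isometries, so $\mu$ is the Haar measure transported from $\mathcal G$, the ball measure $s\mapsto\mu(B(\xi,s))$ is independent of $\xi$, and it is comparable up to subexponential factors in $1/s$ to $1/N(\Xi,s)$. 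Covering $(E_\delta)_\kappa$ by $\mu$-balls centred on a $\kappa$-net of $E_\delta$ then gives $\mu((E_\delta)_\kappa)\lesssim N(E_\delta,\kappa)\cdot\mu(B(\cdot,2\kappa))\lesssim\kappa^{\,\overline{\Dim}_B(\Xi)-\overline{\Dim}_B(E_\delta)}$ up to subexponential corrections.

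Finally I would choose $r=r(\nu)$ maximal with $\mu((E_\delta)_{\kappa_\delta(r)})<\nu$; by the last estimate this forces $\kappa_\delta(r)\approx\nu^{1/(\overline{\Dim}_B(\Xi)-\overline{\Dim}_B(E_\delta))}$, and inverting the modulus $\kappa_\delta$ gives $-\log r(\nu)\sim\gamma_\delta\cdot\bigl(\overline{\Dim}_B(\Xi)-\overline{\Dim}_B(E_\delta)\bigr)^{-1}(-\log\nu)$, where $\gamma_\delta$ records the local distortion of $h$ near the $\delta$-large fibres and $\gamma(h):=\sup_{\delta>0}\gamma_\delta$. Since $\#S\le N(\Xi,r(\nu))\le r(\nu)^{-\overline{\Dim}_B(\Xi)-o(1)}$, this yields $\varlimsup_{\nu\to0}\tfrac{\log\Sep(f,\delta,\nu)}{-\log\nu}\le\gamma_\delta\,\overline{\Dim}_B(\Xi)/(\overline{\Dim}_B(\Xi)-\overline{\Dim}_B(E_\delta))$, and taking $\sup_{\delta>0}$ while using monotonicity in $\gamma_\delta$ and in $\overline{\Dim}_B(E_\delta)$ gives \eqref{e.automorphic_estimate}. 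The main obstacle is everything hidden in $\gamma(h)$: upgrading the soft inclusion $F^r_\delta\subseteq(E_\delta)_{\kappa_\delta(r)}$ to an intrinsically defined scaling exponent of the factor map, and doing this compatibly both with the merely asymptotic uniformity of the ergodic averages of $g$ and with the subexponential error terms attached to $\overline{\Dim}_B(\Xi)$ and the $\overline{\Dim}_B(E_\delta)$; the fibrewise collapse, the unique-ergodicity estimate and the final optimisation are comparatively routine.
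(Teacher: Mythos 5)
Your proposal follows essentially the same route as the paper's proof: the closedness of $E_\delta$ and $\mu(E_\delta)=0$, the unique-ergodicity bound on visit frequencies of $\kappa$-neighbourhoods of $E_\delta$, the homogeneity of $\mu$ on $\Xi$ and the Minkowski characterisation of box dimension, and a scaling modulus of the factor map relating the separation scale in $\Xi$ to the neighbourhood scale around $E_\delta$. Your $F^r_\delta\subseteq(E_\delta)_{\kappa_\delta(r)}$ inclusion is exactly the paper's Lemma \ref{l.eta_function} with the modulus read in the opposite direction, and your $\gamma(h)$ obtained by inverting $\kappa_\delta$ is the paper's $\gamma(h)$ from \eqref{e.gamma_def}; the remaining differences (the explicit fibrewise collapse, packaging the optimisation as a choice of $r(\nu)$ rather than the paper's three-factor chain along a dyadic sequence) are presentational.
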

The proof is given in Section~\ref{QuantitativeAutomorphic}.
It should be mentioned, at least according to our current understanding, that
this result is of rather abstract nature.
The reason is the fact that the scaling factor $\gamma(h)$, defined by
\eqref{e.gamma_def}, seems to be difficult to determine in specific examples.
However, it turns out that in many cases direct methods can be used instead to
obtain improved explicit estimates.\medskip

In this direction, we will first investigate {\em regular Toeplitz flows} in
Section~\ref{RegularToeplitzFlows}.
Given a finite alphabet $A$, a sequence $\omega=(\omega_k)_{k\in\I}\in A^\I$ with
$\I=\N_0$ or $\Z$ is called {\em Toeplitz} if for all $k\in\I$ there exists
$p\in\N$ such that $\omega_{k+p\ell}=\omega_k$ for all $\ell\in\N$.
In other words, every symbol in a Toeplitz sequence occurs periodically.
Thus, if we let $\Per(p,\omega)=\{k\in\I\mid \omega_{k+p\ell}=\omega_k \ 
	\textrm{for all}\ \ell\in\N\}$, then $\bigcup_{p\in\N}\Per(p,\omega)=\I$.
By $D(p)=\#(\Per(p,\omega)\cap[0,p-1])/p$ we denote the density of the
$p$-periodic positions. If $\lim_{p\to\infty} D(p)=1$, then the Toeplitz
sequence is called {\em regular}.
A well-known example of a regular Toeplitz sequence is the paperfolding sequence, also
known as the dragon curve sequence \cite{AlloucheBacher1992}.

We call a sequence $(p_\ell)_{\ell\in\N}$ of integers such that $p_{\ell+1}$ is a
multiple of $p_\ell$ for all $\ell\in\N$ and
$\bigcup_{\ell\in\N}\Per(p_\ell,\omega)=\I$ a {\em weak periodic structure}
for $\omega$. More details are given in Section~\ref{RegularToeplitzFlows}.
We denote the shift orbit closure of $\omega$ by $\Sigma_\omega$ such that
$(\Sigma_\omega,\sigma)$ is the subshift generated by $\omega$.
\begin{theorem} \label{t.toeplitz}
	Suppose $\omega$ is a non-periodic regular Toeplitz sequence with weak
	periodic structure $(p_\ell)_{\ell\in\N}$.
	Then
	\[
		\oac\left(\left.\sigma\right|_{\Sigma_\omega}\right) \ \leq \ 
		\varlimsup_{\ell\to\infty}\frac{\log p_{\ell+1}}{\log(1-D(p_\ell))} \ .
	\]
\end{theorem}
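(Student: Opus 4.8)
The plan is to exploit the well-known fact that a regular Toeplitz subshift $(\Sigma_\omega,\sigma)$ is an almost $1$-$1$ extension of the odometer associated with its weak periodic structure, and to argue that two points whose odometer images are too close cannot be $(\sigma,\delta,\nu)$-separated. Throughout I would write $\pi\colon\Sigma_\omega\to\varprojlim\Z/p_\ell\Z$ for the canonical factor map onto the $(p_\ell)$-odometer (recalled in Section~\ref{RegularToeplitzFlows}), which conjugates $\sigma$ to the addition of $1$, and denote the $\ell$-th coordinate of $\pi(x)$ by $a_\ell(x)\in\Z/p_\ell\Z$. The overall strategy is: (i) separation forces a lower bound on the upper density of the set of positions where the two points differ; (ii) agreement of the $\ell$-th odometer coordinates forces an upper bound on that density, namely $1-D(p_\ell)$; (iii) these two bounds, fed into the definition of $\oac$, give the claimed estimate.

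The first real step is the structural input. I would show, directly from the definition of $\Per(p_\ell,\omega)$ and the description of $\Sigma_\omega$ as an orbit closure, that for every $x\in\Sigma_\omega$ and every $k$ with $(k+a_\ell(x))\bmod p_\ell\in\Per(p_\ell,\omega)$ one has $x_k=\omega_{(k+a_\ell(x))\bmod p_\ell}$: the value of $x$ at a $p_\ell$-periodic position is already determined by $a_\ell(x)$. Since $\Per(p_\ell,\omega)$ is $p_\ell$-periodic of density $D(p_\ell)$, it follows that whenever $a_\ell(x)=a_\ell(y)$ the points $x$ and $y$ agree on a set of density $D(p_\ell)$, so the mismatch set $\Delta(x,y):=\{k\in\I\mid x_k\neq y_k\}$ has upper density at most $1-D(p_\ell)$. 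I expect this to be the main obstacle: pinning down the factor map $\pi$ cleanly, and making the ``values at periodic positions are determined'' claim precise (in particular in the two-sided case), even though the underlying idea is standard Toeplitz theory.

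The second step is routine. Fixing $\delta>0$, I choose $N=N_\delta$ so that $d(\sigma^k x,\sigma^k y)\geq\delta$ implies $x_{k+j}\neq y_{k+j}$ for some $|j|\leq N$. Then $\{k:\exists\,|j|\leq N,\ x_{k+j}\neq y_{k+j}\}$ is contained in the union of the $2N+1$ translates of $\Delta(x,y)$, and by translation invariance of upper density, an $(\sigma,\delta,\nu)$-separated pair has $\Delta(x,y)$ of upper density at least $\nu':=\nu/(2N+1)$. Combining with the first step: if $1-D(p_\ell)<\nu'$, then $a_\ell(x)\neq a_\ell(y)$ for any two distinct $(\sigma,\delta,\nu)$-separated points. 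Letting $\ell^\ast=\ell^\ast(\nu')$ be the least $\ell$ with $1-D(p_\ell)<\nu'$ (finite, and $\to\infty$ as $\nu\to0$, by regularity and non-periodicity), the map $x\mapsto a_{\ell^\ast}(x)$ is injective on every $(\sigma,\delta,\nu)$-separated set, whence $\Sep(\sigma|_{\Sigma_\omega},\delta,\nu)\leq p_{\ell^\ast(\nu')}$.

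Finally I would feed this into $\oac$. Minimality of $\ell^\ast$ gives $1-D(p_{\ell^\ast-1})\geq\nu'$ for $\nu$ small, hence $-\log\nu\geq-\log(1-D(p_{\ell^\ast-1}))-\log(2N_\delta+1)$, so with $\ell:=\ell^\ast-1$,
\[
  \frac{\log\Sep(\sigma|_{\Sigma_\omega},\delta,\nu)}{-\log\nu}\ \leq\ \frac{\log p_{\ell+1}}{-\log(1-D(p_\ell))-\log(2N_\delta+1)}.
\]
Letting $\nu\to0$ (so $\ell\to\infty$) bounds $\oac(\sigma|_{\Sigma_\omega},\delta)$ by the $\varlimsup$ over $\ell$ of the right-hand side; since $D(p_\ell)\to1$ the additive constant $\log(2N_\delta+1)$ is negligible against $-\log(1-D(p_\ell))\to\infty$ inside the $\varlimsup$, so this equals $\varlimsup_{\ell\to\infty}\log p_{\ell+1}/(-\log(1-D(p_\ell)))$, independently of $\delta$. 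Taking $\sup_{\delta>0}$ then yields the theorem; the remaining work here is just the bookkeeping linking $\nu'$ to $\nu$ and checking that the window constant $N_\delta$ indeed washes out.
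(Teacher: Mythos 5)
Your proposal is correct and follows essentially the same route as the paper's proof of Theorem~\ref{t.toeplitz_estimate} (which directly implies the statement): the paper pigeonholes a separated set of size greater than $p_{\ell+1}$ to find two points with identical $p_{\ell+1}$-skeleton, bounds their symbol-mismatch density by $1-D(p_{\ell+1})$, and converts this into a bound on the separation frequency via the observation that $\rho(\sigma^kx,\sigma^ky)\geq 2^{-m}$ iff $x$ and $y$ disagree within a window of radius $m$ around $k$ --- exactly your steps phrased with the odometer coordinate $a_\ell$ in place of the skeleton position. The only cosmetic difference is that you make the odometer factor map explicit where the paper simply cites the fact that $p$-skeletons of two points in $\Sigma_\omega$ coincide up to a shift by at most $p-1$; these are the same structural input.
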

In Section~\ref{RegularToeplitzFlows}, we demonstrate by means of examples that
this estimate is sharp and that a dense set of values in $[1,\infty)$ is
attained (Theorem~\ref{t.toeplitz_sharpbound_examples} and
Corollary~\ref{c.toeplitz_densevalues}).
A more comprehensive treatment of amorphic complexity for symbolic systems of
intermediate complexity (zero topological entropy) will be given in
\cite{FG2014} (see also Section \ref{BesicovitchSpace}).

In Section~\ref{PinchedSystems}, we take a closer look at strange
non-chaotic attractors (SNA) appearing in so-called pinched skew products.
The latter are known as paradigm examples for the occurrence of SNA
\cite{grebogi/ott/pelikan/yorke:1984,keller:1996,glendinning/jaeger/keller:2006}.
In this case technical issues prevent a straightforward computation of
amorphic complexity, and we are only able to apply a modified version of the
concept.
However, since the attempt to distinguish SNA and smooth (non-strange) attractors
in skew product systems by means of topological invariants has been the origin
of our investigations, it seemed important to include these findings.\medskip

Finally, we also want to include a research announcement of a result from the
forthcoming paper \cite{FGJ2014AmorphicComplexityQuasicrystals}, which
fits well into the above discussion.
Suppose $\tilde L$ is a cocompact discrete subgroup of $\R^m\times\R^D$ such that
$\pi_1:\tilde L\to \R^m$ is injective and $\pi_2:\tilde L\to \R^D$ has dense image.
Further, assume that $W\subset \R^D$ is compact and satisfies $W=\overline{\inte(W)}$.
The pair $(\tilde L,W)$ is called a {\em cut and project scheme} and defines a
Delone subset $\Lambda(W):=\pi_1\big((\R^m\times W)\cap \tilde L\big)$ of $\R^m$.
A natural $\R^m$-action on the space of Delone sets in $\R^m$ is given by
$(t,\Lambda)\mapsto \Lambda-t$.
Taking the orbit closure $\Omega(\Lambda(W)) := \overline{\{\Lambda(W)-t\mid t\in\R^m\}}$
of $\Lambda(W)$, in a suitable topology, we obtain a {\em Delone dynamical system}
$(\Omega(\Lambda(W)),\R^m)$ whose dynamical properties are closely related to the
geometry of the Delone set $\Lambda(W)$.
We refer to
\cite{Schlottmann1999GeneralizedModelSets,Moody2000ModelSetsSurvey,
	LagariasPleasants2003RepetitiveDeloneSets,BaakeLenzMoody2007Characterization}
and references therein for further details. For the amorphic complexity, adapted
to general actions of amenable groups, we obtain
\begin{theorem}[\cite{FGJ2014AmorphicComplexityQuasicrystals}]
	Suppose $(\tilde L,W)$ is a cut and project scheme in $\R^m\times \R^D$ and
	$(\Omega(\Lambda(W)),\R^m)$ is the associated Delone dynamical system.
	Then
	\begin{equation}\label{e.quasicrystal_estimate}
		\oac(\Omega(\Lambda(W)),\R^m) \ \leq \ \frac{D}{D-\overline\Dim_B(\partial W)} \ . 
	\end{equation}
\end{theorem}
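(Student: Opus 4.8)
The plan is to reduce the estimate \eqref{e.quasicrystal_estimate} to the abstract bound of Theorem~\ref{t.automorphic_quantitative_intro}, applied to the torus factor of the Delone dynamical system. First I would recall the standard structure theory of cut and project schemes: the Delone dynamical system $(\Omega(\Lambda(W)),\R^m)$ is an almost $1$-$1$ extension of the minimal equicontinuous system $(\T,\R^m)$, where $\T=(\R^m\times\R^D)/\tilde L$ is the torus carrying the straight-line $\R^m$-action induced by the first factor, and the factor map $h:\Omega(\Lambda(W))\to\T$ is the \emph{torus parametrisation}. It is well known that the fibres of $h$ are singletons precisely over those points of $\T$ whose $\R^D$-component lies in a translate of $\partial W$; hence the non-injectivity locus projects to $\partial W$ inside each fibre coordinate. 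In particular, since $\partial W$ has empty interior (because $W=\overline{\inte(W)}$ and $W$ is compact, so $\partial W$ is nowhere dense), $h$ is an almost $1$-$1$ extension, and since the unique invariant measure on $\T$ is Haar measure while $\partial W$ has measure zero (being nowhere dense is not quite enough — one uses that the box dimension of $\partial W$ is strictly less than $D$, which forces Lebesgue measure zero), $f$ is an almost sure $1$-$1$ extension of $g$.

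Next I would identify the diameter sets $E_\delta$. For $\xi\in\T$, the fibre $h^{-1}(\xi)$ consists of all Delone sets in $\Omega$ lying above $\xi$; two such differ only in whether finitely many points near the ``window boundary'' are present, so that $\diam(h^{-1}(\xi))\geq\delta$ translates, via the metric on the space of Delone sets, into the condition that the $\R^D$-component of $\xi$ lies within a bounded distance (a distance controlled by $\delta$, going to $0$ as $\delta\to\infty$) of $\partial W$ modulo the lattice. Thus $E_\delta$ is, up to the quotient by $\tilde L$, contained in a neighbourhood of $\partial W$ in the $\R^D$-direction times the whole $\R^m$-direction, and one obtains $\overline\Dim_B(E_\delta)\le m+\overline\Dim_B(\partial W)$ for every $\delta>0$, while $\overline\Dim_B(\T)=m+D$. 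Plugging these into \eqref{e.automorphic_estimate} gives
\[
	\oac(\Omega(\Lambda(W)),\R^m)\ \leq\ \frac{\gamma(h)\cdot(m+D)}{(m+D)-(m+\overline\Dim_B(\partial W))}
	\ =\ \frac{\gamma(h)\cdot(m+D)}{D-\overline\Dim_B(\partial W)}\ .
\]
The remaining point is to show that in this $\R^m$-action setting the scaling factor $\gamma(h)$ of \eqref{e.gamma_def} equals $D/(m+D)$ rather than $1$: the factor map $h$ is locally (bi-)Lipschitz in the $\R^m$-directions but the relevant separation is detected only through the $D$-dimensional window coordinate, and the frequency with which two nearby Delone sets are separated scales like the $D$-dimensional measure of a shrinking slab rather than the full $(m+D)$-dimensional one. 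Carrying out this computation of $\gamma(h)$ — essentially a careful bookkeeping of how translation in $\R^m$ sweeps across the window boundary and contributes to the asymptotic separation frequency — is the main obstacle, and is where the adaptation of the definitions to amenable group actions (deferred to \cite{FGJ2014AmorphicComplexityQuasicrystals}) does the real work; once $\gamma(h)=D/(m+D)$ is established the stated bound \eqref{e.quasicrystal_estimate} follows immediately by cancellation.

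Two technical caveats I would address along the way. First, Theorem~\ref{t.automorphic_quantitative_intro} is stated for a single continuous map, so strictly one either passes to a time-$t$ map of the flow (using that the $\R^m$-action restricted to a one-parameter subgroup with dense orbit is still an almost sure $1$-$1$ extension of a minimal isometry, and invoking a power/time-change invariance for amorphic complexity) or, more naturally, cites the amenable-group version of the abstract estimate from \cite{FGJ2014AmorphicComplexityQuasicrystals}; I would take the latter route since the paper explicitly announces that framework. Second, I would need the hypothesis $\overline\Dim_B(\partial W)<D$ implicitly — it is what makes the denominator positive and what guarantees the almost-sure $1$-$1$ property — and I would remark that this is automatic here since $W$ being the closure of its interior forces $\partial W$ to have empty interior; if $\overline\Dim_B(\partial W)=D$ the bound is vacuous and there is nothing to prove.
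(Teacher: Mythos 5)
Your plan — reduce to the abstract bound of Theorem~\ref{t.automorphic_quantitative_intro} via the torus parametrisation — is not what the paper does, and the key numerical claim on which your version hinges is both unsupported and, as far as I can tell, incorrect. The paper explicitly says that the proof in \cite{FGJ2014AmorphicComplexityQuasicrystals} is ``independent and based on more direct arguments,'' i.e.\ it does \emph{not} pass through \eqref{e.automorphic_estimate}; the remark following the theorem merely observes \emph{a posteriori} that the result is consistent with \eqref{e.automorphic_estimate} when the equicontinuous factor is taken to be a \emph{$D$-dimensional} torus, $\overline\Dim_B(E_\delta)=\overline\Dim_B(\partial W)$ for all $\delta$, and $\gamma(h)=1$. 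You instead work on the full $(m+D)$-torus $(\R^m\times\R^D)/\tilde L$, get $\overline\Dim_B(E_\delta)=m+\overline\Dim_B(\partial W)$, and then need to manufacture $\gamma(h)=D/(m+D)$ to cancel the extra factor of $m+D$.

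The trouble is that $\gamma(h)$, as defined in \eqref{e.gamma_def} through Lemma~\ref{l.eta_function}, records a purely \emph{local, static} scaling: it measures how a neighbourhood $\eta_\delta(\eps)$ in the base must shrink relative to $\eps$ so that any fibre-separated pair with $\rho(h(x),h(y))<\eta$ lands inside $B_\eps(E_\delta)$. For the torus parametrisation of a cut and project set, the dependence of the configuration near the origin on the torus point is locally Lipschitz (points slide linearly with the translate of the window), so $\eta_\delta(\eps)$ scales linearly with $\eps$ and $\gamma(h)=1$, not $D/(m+D)$. The discrepancy you are trying to absorb into $\gamma(h)$ is not a local scaling effect at all; it is the fact that the $m$ orbit directions of the $\R^m$-flow should simply not be counted in the box dimensions $\overline\Dim_B(\Xi)$ and $\overline\Dim_B(E_\delta)$, because two base points on (or Besicovitch-close to) the same $\R^m$-orbit are never asymptotically separated. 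That is exactly what the paper's remark encodes by replacing $m+D$ with $D$ and $m+\overline\Dim_B(\partial W)$ with $\overline\Dim_B(\partial W)$: in the amenable-group version of the abstract estimate the relevant dimension is the transverse one. With $\gamma(h)=1$ and the full $(m+D)$-dimensional counts, your formula would only give $(m+D)/(D-\overline\Dim_B(\partial W))$, which is strictly weaker than \eqref{e.quasicrystal_estimate}. So the gap is concrete: either you must prove $\gamma(h)=D/(m+D)$ (which I believe is false), or you must reformulate the abstract bound with transverse box dimensions — and in either case you have deferred precisely the step that carries the content of the theorem to the forthcoming paper, which is what the theorem's citation already does.
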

As in the case of regular Toeplitz flows, it can be demonstrated by means of
examples that this estimate is sharp.
At the same time, equality does not always hold. 

It is well-known that under the above assumptions the dynamical system
$(\Omega(\Lambda(W)),\R^m)$ is an almost 1-1 extension of a minimal and isometric
$\R^m$-action on a $D$-dimensional torus.
Moreover, it turns out that with the notions of
Theorem~\ref{t.automorphic_quantitative_intro} we have
$\overline\Dim_B(\partial W)=\overline\Dim_B(E_\delta)$ for all $\delta>0$.
Thus, \eqref{e.quasicrystal_estimate} can be interpreted as a special case of
\eqref{e.automorphic_estimate}, with $\gamma(h)=1$.
However, as we have mentioned, the proof is independent and based on more direct
arguments.\medskip

\noindent{\bf Acknowledgments.} The above results were first presented during
the conference `Complexity and Dimension Theory of Skew Product Systems' in
Vienna in September 2013, and we would like to thank the organisers
Henk Bruin and Roland Zweim\"uller for creating this opportunity as well as the
Erwin-Schr\"odinger-Institute for its hospitality and the superb conditions
provided during the event.
T.\ J.\ also thanks the organisers of the `Dynamics and Numbers activity' (MPIM
Bonn, June--July 2014), during which this work was finalized.
We are indebted to Tomasz Downarowicz for his thoughtful remarks, and in
particular for suggesting the study of Toeplitz systems.
All authors acknowledge support of the German Research Council (Emmy Noether
Grant Ja 1721/2-1) and M.\ G.\ has been supported by a doctoral scholarship of
the `Studienstiftung des deutschen Volkes'.

\section{Qualitative behaviour of asymptotic separation numbers}
\label{QualitativeBehaviour}
Let $(X,\Bcal,\mu)$ be a probability space and let $\mu$ be invariant
with respect to the measurable map $f:X\to X$.
For the definition of ergodic and weak-mixing measures, respectively, see for
example \cite{Walters1982}.
\begin{theorem}[{\cite[Theorem 4.10.6]{BrinStuck2002}}]
	\label{theorem_equivalence_weak_mixing_ergodicity}
	The following statements are equivalent
	\begin{enumerate}
		\item[(a)] $\mu$ is weak-mixing with respect to $f$,
		\item[(b)] $\mu^m=\Times_{k=1}^{m}\mu$ is ergodic with respect to
			$\Times_{k=1}^m f$ for all $m\geq 2$.
	\end{enumerate}
\end{theorem}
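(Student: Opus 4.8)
This equivalence is classical; I outline the route I would take, which reduces everything to von Neumann's mean ergodic theorem. The plan is to work throughout with the operator formulation of weak mixing: $\mu$ is weak-mixing with respect to $f$ precisely when, for all $\phi,\psi\in L^2(\mu)$,
\[
	\lim_{n\to\infty}\frac1n\sum_{k=0}^{n-1}\left|\int\phi\cdot(\psi\circ f^k)\,d\mu-\int\phi\,d\mu\int\psi\,d\mu\right|\ =\ 0 \ ;
\]
by the Koopman--von Neumann lemma this is equivalent to saying that, for each fixed pair $\phi,\psi$, there is a set $J=J(\phi,\psi)\subseteq\N$ of density one along which $\int\phi\cdot(\psi\circ f^k)\,d\mu$ converges to $\int\phi\,d\mu\int\psi\,d\mu$. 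Since weak mixing trivially implies ergodicity, it suffices to show that (a) implies the formally stronger assertion that $\mu^m$ is weak-mixing with respect to $\Times_{k=1}^m f$ for all $m\geq2$, and that the case $m=2$ of (b) implies (a).

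For the first implication, note that finite linear combinations of product functions $\Phi=\phi_1\otimes\dots\otimes\phi_m$ are dense in $L^2(\mu^m)$, and that the Koopman operators in play are isometries, so that the left-hand side above depends on the test functions continuously, uniformly in $k$ (by Cauchy--Schwarz). It therefore suffices to verify the weak-mixing condition for $\Phi=\phi_1\otimes\dots\otimes\phi_m$ and $\Psi=\psi_1\otimes\dots\otimes\psi_m$. Writing $T=\Times_{k=1}^m f$, one has for such functions
\[
	\int\Phi\cdot(\Psi\circ T^k)\,d\mu^m\ =\ \prod_{i=1}^m\int\phi_i\cdot(\psi_i\circ f^k)\,d\mu \ ,
\]
and each factor is bounded by $\|\phi_i\|_2\,\|\psi_i\|_2$ and, by weak mixing of $\mu$, converges to $\int\phi_i\,d\mu\int\psi_i\,d\mu$ along the density-one set $J(\phi_i,\psi_i)$. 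Intersecting these finitely many density-one sets, one obtains a density-one set along which the whole product converges to $\prod_{i=1}^m\int\phi_i\,d\mu\int\psi_i\,d\mu=\int\Phi\,d\mu^m\int\Psi\,d\mu^m$; since the integrands stay uniformly bounded in $k$, convergence along a density-one set forces the Cesàro averages of the absolute deviations to vanish.

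For the converse I use only that $\mu\times\mu$ is ergodic with respect to $f\times f$. Conceptually, the point is that the Koopman operator $U_f\colon\phi\mapsto\phi\circ f$ then has no non-constant eigenfunctions: if $U_f\phi=\lambda\phi$ with $\phi\in L^2(\mu)$ nonzero, then $(x,y)\mapsto\phi(x)\overline{\phi(y)}$ is $(f\times f)$-invariant, hence $\mu\times\mu$-a.e.\ constant by ergodicity, which forces $\phi$ itself to be a.e.\ constant. To turn this into the weak-mixing condition in a quantitative way, I would use, for $\phi\in L^2(\mu)$ with $\int\phi\,d\mu=0$, the identity
\[
	\frac1n\sum_{k=0}^{n-1}\bigl|\langle U_f^k\phi,\phi\rangle\bigr|^2\ =\ \left\langle\,\frac1n\sum_{k=0}^{n-1}(U_f\times U_f)^k(\phi\otimes\overline{\phi})\,,\ \phi\otimes\overline{\phi}\,\right\rangle ,
\]
with $\langle\cdot,\cdot\rangle$ the $L^2$-inner product; it follows from $(U_f\times U_f)^k(\phi\otimes\overline{\phi})=(U_f^k\phi)\otimes\overline{U_f^k\phi}$ and the product structure of the inner product on $L^2(\mu\times\mu)$. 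By von Neumann's mean ergodic theorem the operator averages on the right converge strongly to the orthogonal projection onto the space of $(f\times f)$-invariant $L^2$-functions, which under our hypothesis consists of the constants only; and $\int\phi\otimes\overline{\phi}\,d(\mu\times\mu)=\bigl|\int\phi\,d\mu\bigr|^2=0$. Hence $\frac1n\sum_{k<n}\bigl|\langle U_f^k\phi,\phi\rangle\bigr|^2\to0$ for every $\phi$ orthogonal to the constants, and standard manipulations---polarisation to pass to mixed inner products, splitting off the mean, and Cauchy--Schwarz to deduce vanishing of the Cesàro averages of the absolute values from that of the squares---recover the displayed weak-mixing condition. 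Putting the two implications together, one sees moreover that the conditions ``$\mu$ weak-mixing'', ``$\mu\times\mu$ ergodic with respect to $f\times f$'' and ``$\mu^m$ ergodic for all $m\geq2$'' are all equivalent.

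The steps I expect to require the most care are the uniform-in-$k$ approximation that reduces the first implication to product test functions, and, in the converse, the passage from the vanishing of $\frac1n\sum_{k<n}|\langle U_f^k\phi,\phi\rangle|^2$ to the full correlation form of weak mixing; both are routine once set up carefully. The essential analytic inputs---von Neumann's mean ergodic theorem and the Koopman--von Neumann lemma---are classical, and a fully detailed treatment can be found in \cite{Walters1982} and \cite{BrinStuck2002}.
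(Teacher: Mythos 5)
The paper offers no proof of this statement at all: it is quoted verbatim from Brin--Stuck (Theorem 4.10.6) and used as a black box in the proof of the subsequent theorem on infinite separation numbers. So there is no in-text argument to compare against.

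Your sketch is essentially a correct rendering of the standard Koopman-operator proof, and it is in the same spirit as the cited textbook treatment. The forward direction (weak mixing of $\mu$ implies weak mixing, hence ergodicity, of $\mu^m$) correctly reduces to simple tensors via the uniform-in-$k$ Cauchy--Schwarz bound and then intersects finitely many density-one sets supplied by Koopman--von Neumann. The converse from ergodicity of $\mu\times\mu$ is also sound: your identity $\frac1n\sum_{k<n}|\langle U_f^k\phi,\phi\rangle|^2 = \langle A_n(\phi\otimes\overline\phi),\phi\otimes\overline\phi\rangle$ with $A_n$ the Cesàro average of $(U_f\times U_f)^k$, combined with von Neumann's mean ergodic theorem and the ergodicity hypothesis, forces the right-hand side to zero for mean-zero $\phi$. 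A small simplification worth noting is that you can replace the second tensor factor by $\psi\otimes\overline\psi$ for arbitrary mean-zero $\psi$ from the start, so no polarisation is actually needed; splitting off means and applying Cauchy--Schwarz then gives the full correlation form directly. Either way, the argument you outline establishes the equivalence.
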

\begin{theorem}
	Let $(X,d)$ be a metric space.
	Suppose $f:X\to X$ is Borel measurable and $\mu$ is a Borel probability
	measure invariant under $f$.
	Furthermore, assume that $\mu$ is weak-mixing with respect to $f$ and its
	support is not a single point.
    Then $f$ has infinite separation numbers.
\end{theorem}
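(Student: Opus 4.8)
The plan is to construct, for every $m\in\N$, an $(f,\delta,\nu)$-separated set of cardinality $m$ for one suitable fixed pair $(\delta,\nu)$; since ``infinite separation numbers'' only means that $\Sep(f,\delta,\nu)=\infty$ for \emph{some} $(\delta,\nu)$, this is all that is needed. The construction rests on Theorem~\ref{theorem_equivalence_weak_mixing_ergodicity}, which turns weak mixing of $\mu$ into ergodicity of every Cartesian power $(X^m,\Times_{k=1}^m f,\mu^m)$, combined with Birkhoff's ergodic theorem. In particular we do not need separation frequencies close to $1$: any fixed positive $\nu$, here one coming from the $\mu$-measures of two small balls, will do.

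Since $\operatorname{supp}(\mu)$ is not a single point, pick $x_0\neq y_0$ in $\operatorname{supp}(\mu)$, set $r:=d(x_0,y_0)/3>0$, and let $U$ and $V$ be the open balls of radius $r$ centred at $x_0$ and $y_0$, respectively. Then $U\cap V=\emptyset$, while $\mu(U)>0$ and $\mu(V)>0$ by the definition of the support, and the triangle inequality gives $d(u,v)\ge r$ for all $u\in U$, $v\in V$. Put $\delta:=r$ and $\nu:=2\mu(U)\mu(V)$; as $U$ and $V$ are disjoint we have $\mu(U)+\mu(V)\le 1$, hence $\nu\in(0,1]$. The goal reduces to showing $\Sep(f,\delta,\nu)=\infty$.

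Fix $m\ge 2$. By Theorem~\ref{theorem_equivalence_weak_mixing_ergodicity}, $\mu^m=\Times_{k=1}^m\mu$ is ergodic with respect to $F:=\Times_{k=1}^m f$; this is the only place where weak mixing is used. For each pair $1\le i<j\le m$ set
\[
	B_{ij} \ := \ \bigl\{(z_1,\dots,z_m)\in X^m \ \bigm| \ (z_i,z_j)\in (U\times V)\cup(V\times U)\bigr\},
\]
so that, the two defining events being disjoint, $\mu^m(B_{ij})=2\mu(U)\mu(V)=\nu$. Applying Birkhoff's ergodic theorem to the characteristic function of $B_{ij}$ and intersecting, over the finitely many pairs $i<j$, the resulting sets of full $\mu^m$-measure, I obtain one point $(x_1,\dots,x_m)\in X^m$ whose $F$-orbit visits every $B_{ij}$ with asymptotic frequency $\nu$. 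Now whenever $F^k(x_1,\dots,x_m)\in B_{ij}$, one of $f^k x_i, f^k x_j$ lies in $U$ and the other in $V$, whence $d(f^k x_i, f^k x_j)\ge\delta$; therefore $\countsep{n}(f,\delta,x_i,x_j)$ is at least the number of indices $0\le k<n$ with $F^k(x_1,\dots,x_m)\in B_{ij}$, and dividing by $n$ yields $\varlimsup_{n\to\infty}\frac{\countsep{n}(f,\delta,x_i,x_j)}{n}\ge\nu$. Hence each pair $x_i,x_j$ is $(f,\delta,\nu)$-separated; in particular the $x_i$ are pairwise distinct, since $x_i=x_j$ would force $\countsep{n}(f,\delta,x_i,x_j)=0$ for all $n$. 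Thus $\{x_1,\dots,x_m\}$ is an $(f,\delta,\nu)$-separated set of cardinality $m$, and as $m\ge 2$ is arbitrary we obtain $\Sep(f,\delta,\nu)=\infty$, i.e.\ $f$ has infinite separation numbers.

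I do not foresee a serious obstacle, since the whole argument amounts to combining ergodicity of the Cartesian powers with Birkhoff's theorem. The points that deserve a little care are: extracting two disjoint balls of positive measure from the hypothesis on $\operatorname{supp}(\mu)$; noticing that the fixed value $\nu=2\mu(U)\mu(V)$ already suffices, so that no sharpening of the ergodic input toward frequency $1$ is needed; and observing that a point which is generic for the characteristic functions of all the $B_{ij}$ simultaneously automatically has pairwise distinct coordinates, so that the constructed separated set genuinely has $m$ elements.
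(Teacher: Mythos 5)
Your argument is correct and follows essentially the same route as the paper: weak mixing is converted into ergodicity of every Cartesian power $\mu^m$ via Theorem~\ref{theorem_equivalence_weak_mixing_ergodicity}, Birkhoff's ergodic theorem produces a $\mu^m$-generic point in $X^m$, and each pair of its coordinates is then $(f,\delta,\nu)$-separated with a frequency controlled by the ergodic average. The only cosmetic difference is that the paper works with the Heaviside observable $h_\delta(z,w)=\Theta(d(z,w)-\delta)$ directly, obtaining $\int h_\delta\,d\mu^2\ge\nu_0>0$ for small $\delta$ from the non-degeneracy of the support, whereas you instantiate that positivity concretely by choosing two disjoint balls $U,V$ of positive measure and tracking visits of $(z_i,z_j)$ to $(U\times V)\cup(V\times U)$; this amounts to replacing the event $\{d(z_i,z_j)\ge\delta\}$ by a subevent of the same order, which is perfectly adequate for a lower bound on the separation frequency. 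Your extra remark that simultaneous genericity forces the coordinates $x_1,\dots,x_m$ to be pairwise distinct is a welcome explicit justification of a point the paper leaves implicit.
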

\begin{proof}
	For each $\delta>0$ we define the function $h_\delta:X^2\to\{0,1\}$ as
	$h_\delta(z,w):=\Theta(d(z,w)-\delta)$ where $\Theta:\R\to\{0,1\}$ is the
	Heaviside step function.
	Note that
	\[
		\frac{1}{n}\countsep{n}(f,\delta,x,y)
		\ = \ \frac{1}{n}\sum\limits_{k=0}^{n-1}h_\delta\big(f^k(x),f^k(y)\big) \ .
	\]
	Since $\mu$ is not supported on a single point, we can find $\delta_0>0$ and
	$\nu_0>0$ such that for all $\delta\leq\delta_0$ we have
	\begin{align}\label{thm_weak_mixing_heaviside_prop}
          \int h_\delta d\mu^2\ \geq \ \nu_0 \ .
	\end{align}
	(Note that $\int h_{\delta'}d\mu^2\geq \int h_\delta d\mu^2$ for $\delta'\leq\delta$.) 
	Fix $\delta\in(0,\delta_0]$, $\nu\in (0,\nu_0]$ and let
	\begin{align}
		\label{pairwise_frequency _separation}
		\phi_m:X^m\to\R^{m(m-1)/2}:
		\begin{pmatrix}	x_1\\ x_2\\ \vdots\\ x_m \end{pmatrix}
		\mapsto\lim\limits_{n\to\infty}\frac{1}{n}\sum\limits_{k=0}^{n-1}
		\begin{pmatrix}	
			h_\delta(f^k(x_1),f^k(x_2))\\
			h_\delta(f^k(x_1),f^k(x_3))\\ \vdots\\
			h_\delta(f^k(x_{m-1}),f^k(x_m))
		\end{pmatrix}
	\end{align}
	for each $m\geq 2$.
	Since $h_\delta$ is bounded, observe that the functions
    $(x_1,\dots,x_m)\mapsto h_\delta(x_i,x_j)$ with $1\leq i<j\leq m$ are in
	$L^1(\mu^m)$.  By ergodicity of $\mu^m$, the limits in
	\eqref{pairwise_frequency _separation} exist $\mu^m$-almost everywhere.
	Further, $\phi_m$ is $\mu^m$-almost surely constant and all its entries
	are different from zero, since we have
	\begin{eqnarray*}
          \lefteqn{\lim\limits_{n\to\infty}\frac{1}{n}\sum\limits_{k=0}^{n-1}
            h_\delta\left(f^k(x_i),f^k(x_j)\right)
            \ = \ \int\limits_{X^m}h_\delta(x_i,x_j)d\mu^m(x_1,\dots,x_m) } \\
          &= & \int\limits_{X^2}h_\delta(x_i,x_j)d\mu(x_i)d\mu(x_j)
          \ \geq \ \nu_0 \ > \ 0 \hspace{8eM} 
	\end{eqnarray*}
	for $1\leq i<j\leq m$ by \eqref{thm_weak_mixing_heaviside_prop}.
	Thus, the above implies that for each $m\in\N$ there exist at least $m$ points
	that are pairwise $(f,\delta,\nu)$-separated, so that
	\[
		\Sep(f,\delta,\nu) \ \geq \ m \ .
	\]
	Since $m$ was arbitrary and the pair $(\delta,\nu)$ is fixed, we get that
	$\Sep(f,\delta,\nu)$ is infinite.
\end{proof}\smallskip

The analogous statement for maps with positive topological entropy is a direct
consequence of a result of Downarowicz in \cite{Downarowicz2014}.
In order to state it, we say that two points $x$ and $y$ in a metric space
$(X,d)$ are \emph{DC2-scrambled with respect to $f$} if the following two
conditions are fulfilled
\begin{eqnarray}
	\forall\delta>0&:&\varlimsup\limits_{n\to\infty}
		\frac{\#\left\{0\leq k<n\;|\;d(f^k(x),f^k(y))<\delta_{\phantom 0}\right\}}
			{n} \ = \ 1 \ ,\nonumber\\
	\exists\delta_0>0&:&\varliminf\limits_{n\to\infty}
		\frac{\#\left\{0\leq k<n\;|\;d(f^k(x),f^k(y))<\delta_0\right\}}
			{n}\ < \ 1 \ .\label{e.scrambling2}
\end{eqnarray}
Furthermore, we say that a subset $S\subseteq X$ is \emph{DC2-scrambled} if any
pair $x,y\in S$ with $x\neq y$ is DC2-scrambled. 
The set $S$ is called \emph{uniformly DC2-scrambled} if the $\delta_0$'s and the
lower frequencies in \eqref{e.scrambling2} are uniform for all pairs
$x,y\in S$ with $x\neq y$.  
Now by \cite[Theorem 1.2]{Downarowicz2014}, if $f$ has positive topological
entropy, then there exists an uncountable DC2-scrambled set $S$, and as stated
in \cite[Remark 2]{Downarowicz2014} this set can be chosen uniformly
DC2-scrambled. 
It is then obvious from \eqref{e.scrambling2} that the points in $S$ are pairwise
$(f,\delta,\nu)$-separated for the respective parameters $\delta,\nu>0$, i.e.\ 
$\Sep(f,\delta,\nu)=\infty$. 
Thus, we obtain
\begin{theorem}
	Let $(X,d)$ be a compact metric space.  
	Suppose $f:X\to X$ is a continuous map with positive topological entropy.
	Then $f$ has infinite separation numbers.
\end{theorem}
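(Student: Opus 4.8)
The plan is to read the conclusion directly off Downarowicz's theorem, once the notion of DC2-scrambling has been matched with that of asymptotic separation. First I would invoke \cite[Theorem 1.2]{Downarowicz2014} together with \cite[Remark 2]{Downarowicz2014} to obtain an uncountable set $S\subseteq X$ that is \emph{uniformly} DC2-scrambled. The only part of this I actually need is the second clause in \eqref{e.scrambling2}, in its uniform form: there is a single $\delta_0>0$ and a single constant $c<1$ such that
\[
	\varliminf_{n\to\infty}\frac{1}{n}\,\#\{0\leq k<n\mid d(f^k(x),f^k(y))<\delta_0\}\ \leq\ c
\]
for every pair of distinct points $x,y\in S$.

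Next I would pass to complements. Since $\countsep{n}(f,\delta_0,x,y)=n-\#\{0\leq k<n\mid d(f^k(x),f^k(y))<\delta_0\}$, dividing by $n$ and taking $\varlimsup$ converts the displayed inequality into
\[
	\varlimsup_{n\to\infty}\frac{\countsep{n}(f,\delta_0,x,y)}{n}\ \geq\ 1-c\ =:\ \nu_0\ >\ 0
\]
for all distinct $x,y\in S$. Thus every pair of distinct points of $S$ is $(f,\delta_0,\nu_0)$-separated, i.e.\ $S$ is itself an $(f,\delta_0,\nu_0)$-separated set. Since $S$ is uncountable, in particular infinite, this gives $\Sep(f,\delta_0,\nu_0)=\infty$, so that $f$ fails to have finite separation numbers for the fixed pair $(\delta_0,\nu_0)$, which is by definition what it means for $f$ to have infinite separation numbers.

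There is essentially no obstacle beyond quoting Downarowicz's result; the only points that deserve a word of care are that the two clauses defining DC2-scrambling play asymmetric roles here — only the clause providing a uniform $\delta_0$ with lower density strictly below $1$ is used, while the ``$\varlimsup=1$'' clause is irrelevant — and that it is exactly the uniformity asserted in \cite[Remark 2]{Downarowicz2014} which allows $\delta_0$ and $\nu_0$ to be chosen independently of the pair, as the definition of an $(f,\delta,\nu)$-separated set demands. I would also note in passing that the argument in fact yields $\Sep(f,\delta_0,\nu)=\infty$ for every $\nu\in(0,\nu_0]$, since an $(f,\delta_0,\nu_0)$-separated set is a fortiori $(f,\delta_0,\nu)$-separated for $\nu\leq\nu_0$; so positive entropy forces separation numbers that are infinite in a uniform way.
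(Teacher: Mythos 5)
Your proposal is correct and takes essentially the same route as the paper: the paper also invokes Downarowicz's Theorem 1.2 together with Remark 2 to obtain an uncountable uniformly DC2-scrambled set and then reads off infinite separation numbers from the second clause of \eqref{e.scrambling2}. The only difference is presentational -- the paper dismisses the final step as ``obvious,'' whereas you spell out the complementation and the $\varlimsup/\varliminf$ bookkeeping explicitly, and correctly note that only the uniform-$\delta_0$ clause is actually used.
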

\smallskip 

We now turn to the opposite direction and aim to show that almost sure 1-1
extensions of equicontinuous systems have finite separation numbers.
In order to do so, we need to introduce some further notions and preliminary statements.
Suppose $(X,d)$ and $(\Xi,\rho)$ are compact metric spaces and $f:X\to X$ is an
extension of $g:\Xi\to \Xi$ with factor map $h:X\to \Xi$.
For $x\in X$, define the {\em fibre of $x$} as $F_x:=h^{-1}(h(x))$.
Denote the collection of fibres by $\cF:=\{F_x \mid x\in X\}$.
Given $\delta>0$, let
\[
	\textstyle\cF_{\delta}\ := \ \{x\in X\;|\;\diam(F_x)\geq\delta\}
	\ = \ \bigcup_{\stackrel{F\ssq\cF}{\diam(F)\geq\delta}} F \ .
\]
Further, let $\cF_{>0}:=\bigcup_{\delta>0}\cF_\delta$, $E_\delta:=h(\cF_\delta)$
and $E:=h(\cF_{>0})$.
Obviously, both $\cF_\delta$ and $E_\delta$ are decreasing in $\delta$.
The next lemma is well-known and we omit the easy proof.
\begin{lemma}\label{lemma_E_delta_closed}
	The set $\cF_\delta$ is closed for all $\delta>0$. 
\end{lemma}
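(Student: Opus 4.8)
The plan is to show that the complement $X\setminus\cF_\delta=\{x\in X\mid \diam(F_x)<\delta\}$ is open. So fix $x_0$ with $\diam(F_{x_0})<\delta$; I want to produce a neighbourhood of $x_0$ consisting of points with fibre diameter $<\delta$. First I would pick $\varepsilon>0$ with $\diam(F_{x_0})<\delta-3\varepsilon$ (say), so that the $\varepsilon$-neighbourhood $U:=\{x\in X\mid d(x,F_{x_0})<\varepsilon\}$ of the fibre still has diameter $<\delta-\varepsilon$. The key step is then to observe that, by compactness of $X$ and continuity of $h$, fibres cannot suddenly "escape" from a neighbourhood of $F_{x_0}$: there is a neighbourhood $V$ of $h(x_0)$ in $\Xi$ such that $h^{-1}(V)\subseteq U$. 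Indeed, if this failed, one could choose points $z_n\in h^{-1}(V_n)\setminus U$ for a shrinking neighbourhood basis $V_n$ of $h(x_0)$; passing to a convergent subsequence $z_n\to z$ (compactness), continuity of $h$ forces $h(z)=h(x_0)$, i.e.\ $z\in F_{x_0}\subseteq U$, contradicting $z_n\notin U$ together with $U$ open.

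Granting this, take $W:=h^{-1}(V)$, which is an open neighbourhood of $x_0$. For any $x\in W$ we have $h(x)\in V$, hence $F_x=h^{-1}(h(x))\subseteq h^{-1}(V)=W\subseteq U$, and therefore $\diam(F_x)\le\diam(U)<\delta$. Thus $W\subseteq X\setminus\cF_\delta$, which shows the complement is open and $\cF_\delta$ is closed.

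The only mildly delicate point is the compactness argument establishing that small neighbourhoods in $\Xi$ pull back into the prescribed neighbourhood $U$ of the fibre; everything else is a routine chase through the definitions of $\diam$ and of the $\varepsilon$-neighbourhood. Alternatively, one could phrase the whole lemma more slickly: the map $x\mapsto \diam(F_x)$ is upper semicontinuous because $h$ is a closed map (continuous map between compact metric spaces) and $F_x$ depends upper-semicontinuously on $x$ in the Hausdorff sense; then $\cF_\delta$ is a sublevel-complement of an u.s.c.\ function and hence closed. I would probably present the direct neighbourhood argument above, since it is self-contained and avoids invoking semicontinuity of set-valued maps.
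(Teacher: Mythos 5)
The paper omits the proof of this lemma, stating only that it is ``well-known'' and ``easy,'' so there is no in-paper argument to compare against. Your argument is correct as written: the reduction to showing the complement $\{x \mid \diam(F_x)<\delta\}$ is open, the $\varepsilon$-thickening $U$ of $F_{x_0}$, and the compactness/continuity argument that produces a neighbourhood $V$ of $h(x_0)$ with $h^{-1}(V)\subseteq U$ (upper semicontinuity of the fibre map) all go through, and from there $\diam(F_x)\leq\diam(U)<\delta$ for $x\in h^{-1}(V)$ finishes the job. A slightly more direct route, and likely the one the authors regard as the ``easy'' one, is the sequential argument: given $x_n\in\cF_\delta$ with $x_n\to x$, each $F_{x_n}$ is a closed subset of the compact space $X$, so one can choose $y_n,z_n\in F_{x_n}$ with $d(y_n,z_n)=\diam(F_{x_n})\geq\delta$; passing to convergent subsequences $y_n\to y$, $z_n\to z$ gives $d(y,z)\geq\delta$, while continuity of $h$ together with $h(y_n)=h(z_n)=h(x_n)\to h(x)$ forces $h(y)=h(z)=h(x)$, so $y,z\in F_x$ and $\diam(F_x)\geq\delta$, i.e.\ $x\in\cF_\delta$. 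This establishes closedness in one step without constructing an open neighbourhood $W$; both approaches use exactly the same compactness and continuity inputs, so the choice is a matter of taste.
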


Note that as a direct consequence the sets $\cF_{>0}$, $E_\delta$ and $E$ are
Borel measurable.
The following basic observation will be crucial in the proof of the next theorem.
From now on, we denote by $B_{\eps}(A)$ for $\eps>0$ the open $\eps$-neighborhood of a
subset $A$ of a metric space.

\begin{lemma} \label{l.eta_function}
	For all $\delta>0$ and $\varepsilon>0$ there exists $\eta=\eta_\delta(\eps)>0$
	such that if $x,y\in X$ satisfy $d(x,y)\geq\delta$ and $\rho(h(x),h(y))<\eta$,
	then $h(x)$ and $h(y)$ are contained in $B_\eps(E_\delta)$.
\end{lemma}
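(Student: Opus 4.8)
The plan is to prove the lemma by contradiction, exploiting the compactness of $X$ and the closedness of $E_\delta$ in $\Xi$.

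First I would record an auxiliary fact: for every $\delta>0$ the set $E_\delta$ is closed. Indeed, by Lemma~\ref{lemma_E_delta_closed} the set $\cF_\delta$ is closed, hence compact as a closed subset of the compact space $X$, so its continuous image $E_\delta=h(\cF_\delta)$ is compact and in particular closed. Consequently, for every $\eps>0$ the complement $\Xi\setminus B_\eps(E_\delta)$ is a closed subset of $\Xi$.

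Now suppose, for a contradiction, that the assertion fails for some fixed $\delta>0$ and $\eps>0$. Then, choosing $\eta=1/n$ for $n\in\N$, we obtain points $x_n,y_n\in X$ with $d(x_n,y_n)\geq\delta$ and $\rho(h(x_n),h(y_n))<1/n$, but such that at least one of $h(x_n)$, $h(y_n)$ does not lie in $B_\eps(E_\delta)$. Passing to a subsequence and, if necessary, interchanging the names of $x_n$ and $y_n$, we may assume that $h(x_n)\notin B_\eps(E_\delta)$ for every $n$. By compactness of $X$ we pass to a further subsequence along which $x_n\to x$ and $y_n\to y$ for some $x,y\in X$.

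To finish, continuity of the metric gives $d(x,y)\geq\delta$, while continuity of $h$ together with $\rho(h(x_n),h(y_n))\to 0$ forces $h(x)=h(y)$. Hence $y\in F_x=h^{-1}(h(x))$, so $\diam(F_x)\geq d(x,y)\geq\delta$, i.e.\ $x\in\cF_\delta$, whence $h(x)\in E_\delta\subseteq B_\eps(E_\delta)$. On the other hand, every $h(x_n)$ lies in the closed set $\Xi\setminus B_\eps(E_\delta)$ and $h(x_n)\to h(x)$, so $h(x)\notin B_\eps(E_\delta)$ --- a contradiction. There is no genuine obstacle here; the single point that must be in place is the closedness of $E_\delta$, which is exactly what legitimises passing to the limit in the final step, and it is worth noting that by symmetry the roles of $x$ and $y$ are interchangeable throughout.
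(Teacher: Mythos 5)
Your proof is correct and follows essentially the same compactness-and-contradiction argument as the paper: both pass to convergent subsequences $x_n\to x$, $y_n\to y$, deduce $d(x,y)\geq\delta$ and $h(x)=h(y)$ in the limit so that $x\in\cF_\delta$ and $h(x)\in E_\delta\subseteq B_\eps(E_\delta)$, and derive a contradiction with $h(x_n)\notin B_\eps(E_\delta)$. One small misattribution in your commentary: the limit step works because $\Xi\setminus B_\eps(E_\delta)$ is closed, and that is simply because $B_\eps(E_\delta)$ is an \emph{open} $\eps$-neighbourhood by the paper's convention, which holds for \emph{any} set $E_\delta$ whether closed or not --- so the preliminary paragraph establishing closedness of $E_\delta$ is not actually needed.
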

\begin{proof}
	Assume for a contradiction that the statement is false.
	Then there are $\delta, \varepsilon>0$ and sequences $(x_k)_{k\in\N}$,
	$(y_k)_{k\in\N}$ in $X$ such that $h(x_k)\notin B_\eps(E_\delta)$ or
	$h(y_k)\notin B_\eps(E_\delta)$ and $d(x_k,y_k)\geq\delta$ for all $k\in\N$,
	but $\rho(h(x_k),h(y_k))\to 0$ as $k\to\infty$.
	By going over to subsequences if necessary, we may assume that 
	$(h(x_k))_{k\in\N}$ lies in $X\backslash B_\eps(E_\delta)$ and that 
	$(x_k)_{k\in\N}$ and $(y_k)_{k\in\N}$ converge.
	Let $x:=\lim_{k\to\infty} x_k$ and $y:=\lim_{k\to\infty} y_k$.
	Then $d(x,y)\geq\delta$ and $h(x) =\kLim h(x_k) \notin B_\eps(E_\delta)$.
	However, $h(x)=h(y)$ and thus $\diam(F_x)=\diam(F_y)\geq\delta$, such that
	$x\in\cF_\delta$, which is the required contradiction.
\end{proof}

\begin{theorem}\label{t.finite_separation_numbers}
	Let $f:X\to X$ be a continuous map. 
	Further, assume that $f$ is an almost sure 1-1 extension of an isometry
	$g:\Xi\to\Xi$.
	Then $f$ has finite separation numbers.
\end{theorem}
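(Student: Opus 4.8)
The plan is to fix $\delta>0$ and $\nu\in(0,1]$ and bound $\Sep(f,\delta,\nu)$ by pushing everything down to $\Xi$ via the factor map $h$. The guiding observation is that, since $g$ is an isometry, the distance $\rho(h(f^kx),h(f^ky))=\rho(g^kh(x),g^kh(y))=\rho(h(x),h(y))$ does not depend on $k$. So if $x$ and $y$ have nearby images under $h$ and are $(f,\delta,\nu)$-separated, then at each of the (upper density $\geq\nu$ many) times $k$ with $d(f^kx,f^ky)\geq\delta$ we are exactly in the situation of Lemma~\ref{l.eta_function}, which forces $h(f^kx)=g^kh(x)$ to lie in a prescribed small neighbourhood of $E_\delta$. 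In other words, the $g$-orbit of $h(x)$ would have to visit that neighbourhood with upper frequency at least $\nu$. The almost sure $1$-$1$ hypothesis says $E_\delta\subseteq E$ is null for every $g$-invariant measure, and an orbit cannot visit an invariantly small set with large frequency; this contradiction will show that the $h$-images of $(f,\delta,\nu)$-separated points are uniformly separated in $\Xi$, so total boundedness of $\Xi$ bounds their number.

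Carrying this out, I would first record the measure-theoretic input. The set $E_\delta=h(\cF_\delta)$ is closed, since $\cF_\delta$ is closed by Lemma~\ref{lemma_E_delta_closed} and $X$ is compact. Because $E_\delta\subseteq E$, the almost sure $1$-$1$ assumption gives $\mu(E_\delta)=0$ for every $g$-invariant Borel probability measure $\mu$. For $\eps>0$ set $C_\eps:=\{\zeta\in\Xi\mid\rho(\zeta,E_\delta)\leq\eps\}$, a closed set containing $B_\eps(E_\delta)$; the family $(C_\eps)_{\eps>0}$ decreases and $\bigcap_{\eps>0}C_\eps=E_\delta$. The set $M(g)$ of $g$-invariant Borel probability measures on $\Xi$ is non-empty and weak-$*$ compact, and $\mu\mapsto\mu(C_\eps)$ is upper semicontinuous for each $\eps$. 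A Dini-type argument then yields $\sup_{\mu\in M(g)}\mu(C_\eps)\to0$ as $\eps\to0$: if some $\mu_n$ satisfied $\mu_n(C_{1/n})\geq\alpha>0$, a weak-$*$ limit $\mu_*$ of the $\mu_n$ would satisfy $\mu_*(C_{1/m})\geq\alpha$ for every $m$ by upper semicontinuity and monotonicity, hence $\mu_*(E_\delta)\geq\alpha$, a contradiction. Fix $\eps>0$ with $\sup_{\mu\in M(g)}\mu(C_\eps)<\nu$ and put $\eta:=\eta_\delta(\eps)>0$ from Lemma~\ref{l.eta_function}.

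Next I would prove the key claim: any two distinct $(f,\delta,\nu)$-separated points $x,y\in X$ satisfy $\rho(h(x),h(y))\geq\eta$. Suppose not, and let $K:=\{k\geq0\mid d(f^kx,f^ky)\geq\delta\}$, which by definition has upper density at least $\nu$. For $k\in K$ we have $d(f^kx,f^ky)\geq\delta$ and $\rho(h(f^kx),h(f^ky))=\rho(h(x),h(y))<\eta$, so Lemma~\ref{l.eta_function} applied to the pair $(f^kx,f^ky)$ gives $h(f^kx)=g^kh(x)\in B_\eps(E_\delta)\subseteq C_\eps$. Hence the $g$-orbit of $\xi:=h(x)$ visits the closed set $C_\eps$ with $\varlimsup_{n\to\infty}\tfrac1n\#\{0\leq k<n\mid g^k\xi\in C_\eps\}\geq\nu$. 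On the other hand, any weak-$*$ accumulation point of the empirical measures $\tfrac1n\sum_{k=0}^{n-1}\delta_{g^k\xi}$ is $g$-invariant, so by the portmanteau theorem for the closed set $C_\eps$ this upper frequency is at most $\sup_{\mu\in M(g)}\mu(C_\eps)<\nu$, a contradiction. This proves the claim. In particular $h$ is injective on any $(f,\delta,\nu)$-separated set $S$ (two distinct points in one fibre would have $\rho$-distance $0<\eta$) and $h(S)$ is an $\eta$-separated subset of $\Xi$; since $\Xi$ is compact, hence totally bounded, $\#S=\#h(S)$ is at most the $\eta$-packing number of $\Xi$, which is finite. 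As $\delta$ and $\nu$ were arbitrary, $f$ has finite separation numbers.

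The only genuinely non-routine point is the uniformity over invariant measures in the second step, i.e.\ upgrading ``$\mu(E_\delta)=0$ for every $\mu\in M(g)$'' to ``$\sup_{\mu\in M(g)}\mu(C_\eps)<\nu$ for some $\eps>0$''; Lemma~\ref{l.eta_function}, the isometry property, and total boundedness of $\Xi$ then do the rest essentially for free. (For a general equicontinuous, rather than isometric, factor one would first replace $\rho$ by a topologically equivalent metric turning $g$ into an isometry, but the present statement concerns isometries only, so that reduction is not needed here.)
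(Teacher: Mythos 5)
Your proof is correct and follows essentially the same route as the paper: reduce to a uniform bound $\sup_{\mu\in M(g)}\mu(\overline{B_\eps(E_\delta)})<\nu$ over invariant measures (you via upper semicontinuity of $\mu\mapsto\mu(C_\eps)$ on closed sets plus weak-$*$ compactness, the paper via explicit Lipschitz bump functions and dominated convergence — equivalent devices), then use the isometry property together with Lemma~\ref{l.eta_function} and the Portmanteau theorem to conclude that $h$-images of an $(f,\delta,\nu)$-separated set are $\eta_\delta(\eps)$-separated, hence bounded in cardinality by total boundedness of $\Xi$. The only cosmetic difference is that you make the isometry step ($\rho(h(f^kx),h(f^ky))=\rho(h(x),h(y))$ for all $k$) fully explicit before invoking Lemma~\ref{l.eta_function} at each separating time, which is the same mechanism the paper uses but states more tersely.
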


Note that this implies Theorem~\ref{t.automorphic_systems}(b), since any
equicontinuous system is an isometry with respect to an equivalent metric.

\begin{proof}
	Denote by $\cM(g)$ the set of all $g$-invariant Borel probability measures on
	$\Xi$.
	Fix $\delta>0$ and $\nu>0$. We claim that since
	$\mu(E_\delta)\leq \mu(E)=0$ for all $\mu\in\cM(g)$, there exists $\eps>0$
	such that
	\begin{equation} \label{e.eps_choice}
		\mu\left(\overline{B_\eps(E_\delta)}\right) \ < \ \nu \quad
		\textrm{for all } \mu\in\cM(g) \ .
	\end{equation}
	Otherwise, it would be possible to find a sequence $\mu_n\in\cM(g)$ with
	$\mu_n\big(\overline{B_{1/n}(E_\delta)}\big)\geq \nu$, which can be
	chosen such that it converges to some $\mu\in\cM(g)$ in the
	weak-$\ast$-topology.
	If $\varphi_m(\xi):=\max\{1-m\cdot d(\xi,B_{1/m}(E_\delta)),0\}$, then we
	have $\int_\Xi\varphi_m\ d\mu_n\geq \nu$ for all $n\geq m$ and hence
	$\int_\Xi\varphi_m\ d\mu\geq \nu$ for all $m\in\N$.
	However, this implies $\mu(E_\delta)\geq \nu$ by dominated convergence,
	contradicting our assumptions. Hence, we may choose $\eps>0$ as in
	\eqref{e.eps_choice}.

	This, in turn, implies that
	\begin{equation}\label{e.visits_to_B_eps}
		\varlimsup_{n\to\infty}\frac{\#\big\{0\leq k<n
			\mid g^k(\xi)\in \overline{B_\eps(E_\delta)}\big\}}{n} \ < \ \nu
	\end{equation}
	for all $\xi\in \Xi$.
	If this was not the case, it would again be possible to construct a
	$g$-invariant measure $\mu$ contradicting \eqref{e.eps_choice}, this time as
	a limit of finite sums $\mu_{\ell}:=\frac{1}{n_{\ell}}\sum_{k=0}^{n_{\ell}-1}
		\delta_{g^k(\xi)}$ of weighted Dirac measures for some $\xi\in\Xi$ that
	does not satisfy \eqref{e.visits_to_B_eps}. (Note that in this situation we
	have $\mu_{\ell}\big(\overline{B_\eps(E_\delta)}\big)\geq\nu$ for all
	$\ell\in\N$, and this inequality carries over to the limit $\mu$ by the
	Portmanteau Theorem.)

	Hence, given any pair $x,y\in X$, the frequency by which both of the iterates
	of $h(x)$ and $h(y)$ visit $\overline{B_\eps(E_\delta)}$ at the same time
	is smaller than $\nu$.
	Together with Lemma \ref{l.eta_function}, this implies that if
	$\rho(h(x),h(y))<\eta_{\delta}(\eps)$, then the points $x$ and $y$ cannot
	be $(f,\delta,\nu)$-separated.
	Thus, if $S\ssq X$ is an $(f,\delta,\nu)$-separated set, then the set $h(S)$
	must be $\eta_\delta(\eps)$-separated (compare Section \ref{BesicovitchSpace})
	with respect to the metric $\rho$.
	By compactness, the maximal cardinality $N$ of an $\eta_\delta(\eps)$-separated
	set in $\Xi$ is bounded. We obtain
	\begin{equation}\label{e.automorphic_Sep_bound}
		\Sep(f,\delta,\nu) \ \leq \ N \ .
	\end{equation}
	Since $\delta>0$ and $\nu>0$ where arbitrary, this completes the proof.
\end{proof}

As immediate consequences, we obtain
\begin{corollary}
	If for all $\delta>0$ the set $E_{\delta}$ is finite and contains no
	periodic point, then $f$ has finite separation numbers.
\end{corollary}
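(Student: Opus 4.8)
The plan is to derive the statement directly from Theorem~\ref{t.finite_separation_numbers}: it suffices to verify that, under the stated hypotheses, $f$ is an almost sure 1-1 extension of $g$, that is, $\mu(E)=0$ for every $\mu\in\cM(g)$, where $E=h(\cF_{>0})=\{\xi\in\Xi\mid \#h^{-1}(\xi)>1\}$.

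First I would note that $E=\bigcup_{n\in\N}E_{1/n}$, so $E$ is a countable union of finite sets, hence countable; and since each $E_{1/n}$ contains no $g$-periodic point, every point of $E$ is $g$-aperiodic. It therefore remains to show that $\mu(\{\xi\})=0$ for every $\mu\in\cM(g)$ and every $\xi\in\Xi$ that is not $g$-periodic. Here I would only use that $g$, being an isometry, is injective: for non-periodic $\xi$ the points $g^0(\xi),g^1(\xi),g^2(\xi),\dots$ are pairwise distinct, and injectivity gives $g^{-1}(\{g^n(\xi)\})=\{g^{n-1}(\xi)\}$ for each $n\ge1$. Hence $g$-invariance of $\mu$ yields $\mu(\{g^n(\xi)\})=\mu(\{g^{n-1}(\xi)\})$, so $\mu(\{g^n(\xi)\})=\mu(\{\xi\})$ for all $n\ge0$; summing over these countably many distinct points gives $\sum_{n\ge0}\mu(\{\xi\})\le\mu(\Xi)=1$, which forces $\mu(\{\xi\})=0$.

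Combining the two observations, $\mu(E)=\sum_{\xi\in E}\mu(\{\xi\})=0$ for every $\mu\in\cM(g)$, so $f$ is an almost sure 1-1 extension of the isometry $g$ and Theorem~\ref{t.finite_separation_numbers} applies to give finite separation numbers. I do not expect a genuine obstacle in this argument; the only mildly delicate point is the passage from ``$E_\delta$ finite with no periodic points for all $\delta$'' to ``$E$ is $\mu$-null'', which is handled by the atom estimate above, and it is worth recording explicitly that one needs $E_\delta$ finite for \emph{all} $\delta>0$ (so that $E$ itself is countable), not merely for a single $\delta$.
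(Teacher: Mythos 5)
Your proof is correct and supplies exactly what the paper leaves to the reader by declaring this an immediate consequence of Theorem~\ref{t.finite_separation_numbers}: you check that the hypotheses force $E=\bigcup_{n}E_{1/n}$ to be $\mu$-null for every $\mu\in\cM(g)$, using injectivity of the isometry $g$ to conclude that non-periodic points cannot be atoms of an invariant measure. The one detail worth keeping explicit, which you do, is that finiteness of $E_\delta$ for \emph{all} $\delta>0$ is what makes $E$ countable so that the atom estimate can be summed.
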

\begin{corollary}
	If $\lim_{n\to\infty}\diam\big(F_{f^n(x)}\big)=0$ for all $x\in X$, 
	then $f$ has finite separation numbers.
\end{corollary}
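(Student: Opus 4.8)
The plan is to deduce this directly from Theorem~\ref{t.finite_separation_numbers}: the only thing to check is that the hypothesis forces $f$ to be an \emph{almost sure} 1-1 extension of the isometry $g$. As already used in the proof of Theorem~\ref{t.finite_separation_numbers}, this amounts to showing $\mu(E)=0$ for every $g$-invariant Borel probability measure $\mu$ on $\Xi$, where $E=h(\cF_{>0})=\{\xi\in\Xi\mid\diam(h^{-1}(\xi))>0\}$. Since $E=\bigcup_{m\in\N}E_{1/m}$ with each $E_{1/m}=h(\cF_{1/m})$ compact, hence Borel, by Lemma~\ref{lemma_E_delta_closed}, it suffices to prove $\mu(E_\delta)=0$ for every $\delta>0$ and every $g$-invariant $\mu$.

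First I would transfer the hypothesis to the base system. For $x\in X$ and $n\in\N$ one has $F_{f^n(x)}=h^{-1}\big(h(f^n(x))\big)=h^{-1}\big(g^n(h(x))\big)$, so, since $h$ is onto, the assumption $\diam(F_{f^n(x)})\to 0$ for all $x\in X$ is the same as $\diam\big(h^{-1}(g^n(\xi))\big)\to 0$ for all $\xi\in\Xi$. Fixing $\delta>0$, this says exactly that every forward $g$-orbit meets $E_\delta$ only finitely often: for each $\xi\in\Xi$ there is $N(\xi)\in\N$ with $g^n(\xi)\notin E_\delta$ whenever $n\geq N(\xi)$. Hence $\sum_{k=0}^{n-1}\mathbf{1}_{E_\delta}(g^k(\xi))\leq N(\xi)$ for all $n$, so the averages $\frac1n\sum_{k=0}^{n-1}\mathbf{1}_{E_\delta}(g^k(\xi))$ are bounded by $1$ and converge pointwise to $0$. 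Integrating over $\Xi$ against a $g$-invariant $\mu$, using $\int\mathbf{1}_{E_\delta}\circ g^k\,d\mu=\mu(E_\delta)$ for each $k$ and dominated convergence, yields $\mu(E_\delta)=0$. (Poincaré recurrence gives the same conclusion.) Taking the union over $\delta=1/m$ gives $\mu(E)=0$, so $f$ is an almost sure 1-1 extension of $g$, and Theorem~\ref{t.finite_separation_numbers} applies.

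There is no substantial obstacle here; the two points that deserve a moment's care are the identity $F_{f^n(x)}=h^{-1}(g^n(h(x)))$ together with surjectivity of $h$, which is what lets the hypothesis be read off on all of $\Xi$, and the passage from ``$E_\delta$ is visited only finitely often along each orbit'' to ``$\mu(E_\delta)=0$'', for which the bounded-convergence (or recurrence) argument above suffices.
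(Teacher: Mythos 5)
Your proof is correct and uses essentially the same approach as the paper: reduce to Theorem~\ref{t.finite_separation_numbers} by showing that the hypothesis forces $\mu(E_\delta)=0$ for every $\delta>0$ and every $g$-invariant measure $\mu$. Where the paper's one-line hint is to obtain a contradiction from Poincar\'e recurrence (a positive-measure subset of $E_\delta$ would have recurrent points, contradicting $\diam(F_{f^n(x)})\to 0$), you give the equivalent direct dominated-convergence computation of $\mu(E_\delta)$ via ergodic sums, and you explicitly flag the Poincar\'e alternative yourself.
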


For the second corollary, use Poincaré's Recurrence Theorem to get a contradiction.
It remains to prove part (a) of Theorem~\ref{t.automorphic_systems}, which we
restate as
\begin{theorem} \label{t.unbounded_separation}
	Let $f:X\to X$ be a continuous map.
	Further, assume that $f$ is a minimal almost 1-1 extension of an isometry
	$g:\Xi\to \Xi$ such that the factor map $h$ is not injective.
	Then $f$ has unbounded separation numbers.
\end{theorem}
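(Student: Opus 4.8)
The plan is to show that the non-injectivity of $h$ forces arbitrarily large $(f,\delta,\nu)$-separated sets for a fixed $\delta>0$, by letting $\nu\to 0$. The starting point is that since $h$ is not injective and $f$ is minimal, there is some $\xi_0\in\Xi$ with $\diam(h^{-1}(\xi_0))=2\delta_0>0$; minimality of $g$ (inherited from minimality of $f$) then guarantees that the orbit of $\xi_0$ is dense, so the ``thick fibre'' behaviour is spread over all of $\Xi$. Fix $\delta\in(0,\delta_0)$. I would first extract two points $a,b\in h^{-1}(\xi_0)$ with $d(a,b)\geq\delta$, and then use minimality and continuity of $f$ together with the isometry property of $g$ to produce, for any prescribed $N$, a time $n$ and $N$ distinct points $\xi_1,\dots,\xi_N\in\Xi$ whose $g$-orbits up to time $n$ are ``spread out'' enough that a positive fraction of the iterates of suitably chosen lifts lands near the pair $(a,b)$ simultaneously.

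More concretely, the key mechanism is the following: because $g$ is a minimal isometry, for any $\varepsilon>0$ the set of return times $R_\varepsilon:=\{k\in\N\mid \rho(g^k(\xi_0),\xi_0)<\varepsilon\}$ is \emph{syndetic} (relatively dense) — this is the standard fact that a point in a minimal isometry is uniformly recurrent, and in fact $R_\varepsilon$ has positive lower density $\beta(\varepsilon)>0$. Pick a lift $x_0\in X$ of $\xi_0$ with, say, $x_0$ in one half of the thick fibre. By uniform continuity of $h$ and of $f$ on the (compact) relevant time window — here I would use that on each finite orbit segment the maps are uniformly continuous — whenever $g^k(\xi)$ is $\varepsilon$-close to $\xi_0$ the point $f^k(x)$ (for an appropriate lift $x$ of $\xi$) is forced to be close to $h^{-1}(\xi_0)$; but the fibre $h^{-1}(\xi_0)$ has diameter $\geq 2\delta_0$, so two different lifts $x,y$ can be chosen landing in opposite ``halves'', giving $d(f^k(x),f^k(y))\geq\delta$ for all $k\in R_\varepsilon$. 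This yields a \emph{pair} of points that is $(f,\delta,\beta(\varepsilon))$-separated. To get an $N$-element separated set rather than just a pair, I would iterate this: using minimality, translate the construction by $N$ different group elements so that the corresponding ``thick fibre visits'' of the $N$ base points $\xi_1,\dots,\xi_N$ occur along disjoint (or at least independently controllable) syndetic sets of times, and on each such set the associated pair of lifts is $\delta$-separated; a pigeonhole/intersection argument then extracts a single frequency $\nu_N>0$ at which all $\binom{N}{2}$ pairs are simultaneously separated. Since $\Sep(f,\delta,\nu_N)\geq N$ for a $\nu_N$ that does not need to be bounded below, $f$ has unbounded separation numbers.

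The main obstacle I expect is the \emph{uniformity of the lifting argument over unbounded time}: the naive ``close in $\Xi$ implies close to a fibre in $X$'' estimate degrades as $k\to\infty$ since $f^k$ need not be equicontinuous on $X$ (that is precisely the hypothesis we are exploiting — non-injectivity of $h$ usually goes hand in hand with $f$ failing to be equicontinuous). The clean way around this is to work with the almost $1$-$1$ structure directly: the set of $\xi$ with $\#h^{-1}(\xi)=1$ is residual and $g$-invariant, and for such a singleton fibre $\xi$ the map $h^{-1}$ is continuous at $\xi$; I would choose the base point $\xi_0$ and the ``spreading'' dynamics so that the relevant near-returns happen through a neighbourhood of $\xi_0$ on which the fibre diameter is controlled, rather than through points where $f$ expands wildly. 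Essentially one wants to say: at the times when $g^k(\xi)$ is near $\xi_0$, the two lifts $f^k(x),f^k(y)$ are near the two prescribed points $a,b$ in the fibre over $\xi_0$, and this is a purely topological/local statement about $h$ near $\xi_0$, independent of $k$. Making that localisation precise — choosing the halves of the fibre, the neighbourhood of $\xi_0$, and the tolerance $\varepsilon$ in the right order — is the technical heart of the argument; everything else (syndeticity of return times, pigeonhole to combine pairs into an $N$-set, letting $\nu\to 0$) is standard.
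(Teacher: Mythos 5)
Your proposal correctly identifies the two structural ingredients the paper uses: (i) the isometry gives syndetic return times for $g\times g$, and (ii) the almost $1$-$1$ structure means you should localise near singleton fibres, where $h^{-1}$ behaves continuously, rather than trying to control lifts through the thick fibre directly. Recognising the ``uniformity over unbounded time'' problem and proposing the singleton-fibre fix is exactly right — that is the content of the paper's Lemma~\ref{l.good_neighbourhoods}, which picks two singleton-fibre points $x_1,x_2$ with $d(x_1,x_2)>\delta$ near the thick fibre and small neighbourhoods $V_1,V_2$ of $h(x_1),h(x_2)$ with $d(h^{-1}(V_1),h^{-1}(V_2))>\delta$; the paper's Lemma~\ref{l.point_separation} then gives positive separation frequency \emph{without any lifting estimate at all}, simply because $f^k(x_i)\in h^{-1}(V_i)$ whenever $g^k(\xi_i)\in V_i$, and $(g\times g)$ is an isometry, hence uniformly recurrent.

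The genuine gap is in your construction of an $N$-point separated set from the basic pair. You propose ``translating the construction by $N$ different group elements so that the thick-fibre visits occur along disjoint syndetic sets of times,'' then a pigeonhole argument. This does not work as stated: for a pair $(\xi_i,\xi_j)$ to be separated at times $k$ in a positive-density set, you need $g^k(\xi_i)$ and $g^k(\xi_j)$ to land \emph{simultaneously} in the two prescribed locations $V_1$ and $V_2$ whose $X$-preimages are $\delta$-apart. Having each $\xi_i$ individually visit a neighbourhood of $\xi_0$ at disjoint times says nothing about where the other $\xi_j$ is at those times, so pairwise separation does not follow. The paper gets around this by a nested induction: having built disjoint open sets $V^k_1,\ldots,V^k_k$ with the property that each pair becomes $\delta$-apart in $X$ after some recorded shift $n^k_{i,j}$ (using minimality to bring $V^k_k$ onto a neighbourhood of $\xi_0$, then applying Lemma~\ref{l.good_neighbourhoods} to split it), one splits $V^k_k$ into two subsets $V^{k+1}_k, V^{k+1}_{k+1}$ inside it, so that all new pairs inherit or acquire the required shift. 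Your ``take the minimum over the finitely many pairwise frequencies'' closing step is correct once the family $\{V^k_i\}$ exists, but the nesting/splitting mechanism that produces it is the missing piece.
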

For the proof, we will again need two preliminary lemmas.
Given $x,y\in X$ and $\delta>0$, we let 
\begin{equation}
	\label{e.separation_frequency}
	\nu(f,\delta,x,y) \ := \ \varlimsup_{n\to\infty} \ntel\countsep{n}(f,\delta,x,y) \ . 
\end{equation}
\begin{lemma} \label{l.point_separation}
	Suppose $V_1,V_2\ssq \Xi$ are two open sets which satisfy
	$d(h^{-1}(V_1),h^{-1}(V_2))\geq \delta$.
	Then $\nu(f,\delta,x_1,x_2)>0$ for all $x_1\in h^{-1}(V_1)$ and
	$x_2\in h^{-1}(V_2)$.
\end{lemma}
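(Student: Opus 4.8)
The plan is to translate the statement into one about visiting frequencies of the product system $(\Xi\times\Xi,g\times g)$ and then exploit the rigid orbit structure of the minimal isometry $g$. First observe that, since $h\circ f=g\circ h$, for every $k\in\N_0$ we have $f^k(x_i)\in h^{-1}(V_i)$ if and only if $g^k(h(x_i))\in V_i$. Hence, whenever $(g\times g)^k\big(h(x_1),h(x_2)\big)\in V_1\times V_2$, the iterates $f^k(x_1)$ and $f^k(x_2)$ lie in $h^{-1}(V_1)$ and $h^{-1}(V_2)$ respectively, so that $d\big(f^k(x_1),f^k(x_2)\big)\geq d\big(h^{-1}(V_1),h^{-1}(V_2)\big)\geq\delta$ by hypothesis. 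Writing $\xi:=\big(h(x_1),h(x_2)\big)$, we therefore get
\[
	\countsep{n}(f,\delta,x_1,x_2)\ \geq\ \#\big\{0\leq k<n\mid (g\times g)^k\xi\in V_1\times V_2\big\},
\]
and it suffices to show that the set on the right has positive upper density in $\N_0$.

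The key step is the following. Equip $\Xi\times\Xi$ with the maximum metric, so that $g\times g$ is again an isometry. Since $x_i\in h^{-1}(V_i)$, the point $\xi$ lies in the open set $V_1\times V_2$. Let $Z$ be the closure of the forward $(g\times g)$-orbit of $\xi$; using that $g\times g$ is an isometry one checks that $Z$ is $(g\times g)$-invariant, so $(Z,g\times g)$ is a transitive equicontinuous system. Any transitive equicontinuous (here: isometric) system is minimal, hence $W:=Z\cap(V_1\times V_2)$ is a non-empty relatively open subset of a minimal system, and therefore the return time set $\{k\in\N_0\mid (g\times g)^k\xi\in W\}$ is syndetic. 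A syndetic subset of $\N_0$ has positive lower density, which in particular gives positive upper density for the set appearing above.

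Combining the two observations yields $\varlimsup_{n\to\infty}\tfrac1n\,\countsep{n}(f,\delta,x_1,x_2)>0$, i.e. $\nu(f,\delta,x_1,x_2)>0$, as claimed. Note that minimality of $f$ itself is not needed here; only the isometry (equicontinuity) of $g$ enters, and it does so through the one genuinely non-formal ingredient of the argument, namely the fact that orbit closures of equicontinuous maps are minimal. As an alternative to invoking this directly, one may observe that $(Z,g\times g)$, being minimal and equicontinuous, is uniquely ergodic with invariant measure of full support; that measure then assigns positive mass to the open set $V_1\times V_2$, and the ergodic theorem for uniquely ergodic systems gives $\varliminf_{n\to\infty}\tfrac1n\#\{0\le k<n\mid (g\times g)^k\xi\in V_1\times V_2\}>0$, which is more than enough.
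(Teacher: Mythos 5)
Your argument is correct and follows essentially the same route as the paper: reduce the separation count to the visiting frequency of $(\xi_1,\xi_2)$ under $g\times g$ to the open product set $V_1\times V_2$, and then use that $g\times g$ is an isometry to conclude the return-time set is syndetic (the paper simply cites almost periodicity of every point in an equicontinuous compact system, while you re-derive this by observing that the orbit closure $Z$ is a minimal isometric system). The alternative argument via unique ergodicity and full support of the invariant measure that you sketch at the end is also valid and slightly stronger (giving a positive lower density directly), but is not what the paper uses.
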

\begin{proof}
	Let $\xi_1:=h(x_1)$ and $\xi_2:=h(x_2)$.
	By assumption, we have that $d(f^k(x_1),f^k(x_2))\geq \delta$ whenever
	$g^k(\xi_1)\in V_1$ and $g^k(\xi_2)\in V_2$.
	Consequently,
	\begin{equation} \label{e.separation_frequency2}
		\nu(f,\delta,x_1,x_2) \ \geq \ \varlimsup_{n\to\infty} \ntel 
		\#\left\{ 0\leq k<n \mid (g\times g)^k(\xi_1,\xi_2)\in V_1\times V_2\right\} \ .
	\end{equation}
	However, as $g$ is an isometry, so is $g\times g$.
	This implies that all points $(\xi_1,\xi_2)\in \Xi\times \Xi$ are almost
	periodic, and the set of return times to any of their neighbourhoods is
	syndetic \cite{auslander1988minimal}.
	Hence, the right-hand side of \eqref{e.separation_frequency2} is strictly
	positive.
\end{proof}

\begin{lemma}\label{l.good_neighbourhoods}
	Suppose $f$ is a minimal almost 1-1 extension of $g$ and
	$\diam(h^{-1}(\xi))>\delta$ for some $\xi\in \Xi$.
	Then for every neighbourhood $U$ of $\xi$ there exist $V_1,V_2\ssq U$ such
	that $d(h^{-1}(V_1),h^{-1}(V_2)) > \delta$.
\end{lemma}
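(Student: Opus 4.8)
The plan is to exploit the almost 1-1 structure to pull the large fibre $h^{-1}(\xi)$ apart by taking preimages of suitably chosen small open sets clustered around $\xi$. To fix the geometry, I would first choose $\delta'$ with $\delta<\delta'<\diam(h^{-1}(\xi))$ and points $a,b\in h^{-1}(\xi)$ with $d(a,b)>\delta'$, and then pick $r>0$ so small that $\delta'-2r>\delta$ and $h(B_r(a))\cup h(B_r(b))\subseteq U$; the latter is possible since $h(a)=h(b)=\xi\in U$ and $h$ is continuous. The balls $B_r(a)$ and $B_r(b)$ are then disjoint and satisfy $d(B_r(a),B_r(b))\geq\delta'-2r>\delta$.

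The crucial point is to locate points of $\Xi$ inside $h(B_r(a))$ and inside $h(B_r(b))$ whose fibres are singletons, and this is where the almost 1-1 hypothesis enters. To this end I would show that the set $X_1:=\{x\in X\mid \#h^{-1}(h(x))=1\}$ is dense in $X$. Since $f$ is minimal, so is its factor $g$; hence, by the definition of an almost 1-1 extension, there is a point $x_0\in X$ with $h^{-1}(h(x_0))=\{x_0\}$. Using that $f$ is onto and $g$ is injective (being an isometry), an easy induction shows $h^{-1}(h(f^n(x_0)))=\{f^n(x_0)\}$ for all $n\geq 0$, so the entire forward orbit of $x_0$ lies in $X_1$; by minimality this orbit is dense. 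I may therefore pick $x_1\in X_1\cap B_r(a)$ and $x_2\in X_1\cap B_r(b)$.

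It then remains to put $V_1:=U\setminus h(X\setminus B_r(a))$ and, symmetrically, $V_2:=U\setminus h(X\setminus B_r(b))$, and to verify the required properties; this is routine once one recalls that $h$, being a continuous surjection between compact metric spaces, is a closed map. Indeed, $h(X\setminus B_r(a))$ is then closed, so $V_1$ is open and contained in $U$; it is non-empty because $h(x_1)\in V_1$, since the fibre $h^{-1}(h(x_1))=\{x_1\}$ does not meet $X\setminus B_r(a)$ while $h(x_1)\in h(B_r(a))\subseteq U$; and $h^{-1}(V_1)\subseteq B_r(a)$, because $h(x)\in V_1$ forces $x\notin X\setminus B_r(a)$. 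With the analogous statements for $V_2$ (and noting that the $V_i$ are open, as needed for the later combination with Lemma~\ref{l.point_separation}) we obtain $d(h^{-1}(V_1),h^{-1}(V_2))\geq d(B_r(a),B_r(b))>\delta$, which is what is claimed. The only genuinely non-trivial step is the density of $X_1$ in $X$: the definition of an almost 1-1 extension only provides density of the singleton-fibre points in the base $\Xi$, and transporting this to the total space is precisely where minimality enters, via the orbit argument above; everything else reduces to the elementary identity $h^{-1}(\Xi\setminus h(C))=X\setminus h^{-1}(h(C))\subseteq X\setminus C$ for closed $C\subseteq X$.
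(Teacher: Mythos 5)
Your proof is correct and follows the same strategy as the paper's: locate points with singleton fibres near the two widely separated endpoints $a,b$ of the large fibre $h^{-1}(\xi)$, and then shrink neighbourhoods of their images so that the $h$-preimages stay inside $B_r(a)$ and $B_r(b)$ respectively. You supply details the paper leaves implicit, namely that minimality forces density of singleton fibres in $X$ (via the orbit argument using surjectivity of $f$ and injectivity of $g$) and that the closedness of $h$ makes the explicit choice $V_i = U\setminus h\bigl(X\setminus B_r(\cdot)\bigr)$ work, which is exactly the ``continuity'' step the paper's one-line proof alludes to.
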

\begin{proof}
	Due to minimality, singleton fibres are dense in $X$.
	Hence, it is possible to find $x_1,x_2\in h^{-1}(U)$ such that $F_{x_i}=\{x_i\}$,
	$i\in\{1,2\}$ and $d(x_1,x_2)>\delta$.
	Then, by continuity, any sufficiently small neighbourhoods $V_i$ of $h(x_i)$
	will satisfy $d(h^{-1}(V_1),h^{-1}(V_2))>\delta$.
\end{proof}

\begin{proof}[\bf Proof of Theorem~\ref{t.unbounded_separation}.]
	Since the factor map $h$ is not injective, there exists $\xi\in \Xi$ with
	$\diam(h^{-1}(\xi))>\delta$ for some $\delta>0$.
	We will construct, by induction on $k\in\N$ with $k\geq 2$, a sequence of finite
	families of	disjoint open sets $V^k_1\ld V^k_k$ with the property that for
	all	$1\leq i<j\leq k$ there exists $n^k_{i,j}\in \N_0$ such that
	\begin{equation}\label{e.open_set_properties}
		d\left(h^{-1} \left( g^{n^k_{i,j}}\left(V^k_i\right)\right),
			h^{-1}\left(g^{n^k_{i,j}}\left(V^k_j\right)\right)\right) \ > \ \delta \ .
   \end{equation}
	For any family of points $x^k_i\in h^{-1}(V^k_i)$, $i\in\{1\ld k\}$, and
	$1\leq i<j\leq k$ we will then have 
	\[
		\nu\left(f,\delta,x^k_i,x^k_j\right) \ = \ 
		\nu\left(f,\delta,f^{n^k_{i,j}}\left(x^k_i\right),
			f^{n^k_{i,j}}\left(x^k_j\right)\right) \ > \ 0
	\]
	by Lemma~\ref{l.point_separation}.
	Thus, if $\nu_k:=\min\left\{\nu\left(f,\delta,x^k_i,x^k_j\right)\mid 1\leq i<j
		\leq k\right\}$, then $\{x^k_1\ld x^k_k\}$ is a
	$(f,\delta,\nu_k)$-separated set of cardinality $k$.
	This implies that $\sup_{\nu>0} \Sep(f,\delta,\nu)$
	is infinite, as required, since $k$ was arbitrary.

	It remains to construct the disjoint open sets $V^k_i$.
	For $k=2$, the sets $V^2_1$ and $V^2_2$ can be chosen according to
	Lemma~\ref{l.good_neighbourhoods} with $n^2_{1,2}=0$.
	Suppose that $V^k_1\ld V^k_k$ have been constructed as above.
	By minimality, there exists $n\in\N$ such that $g^n(V^k_k)$ is a
	neighbourhood of $\xi$.
	Lemma~\ref{l.good_neighbourhoods} yields the existence of open sets
	$V,V'\ssq g^n(V^k_k)$ with $d(h^{-1}(V),h^{-1}(V'))>\delta$.
	We now set 
	\[
		V^{k+1}_i\ := \ V^k_i\ \textnormal{ for }\ i\in\{1\ld k-1\}\ , \ 
		V^{k+1}_k \ := \ g^{-n}(V)
		\quad\textnormal{and}\quad V^{k+1}_{k+1}\ := \ g^{-n}(V') \ ,
	\]
	so that $V^{k+1}_k\cup V^{k+1}_{k+1}\ssq V^k_k$.
	Choosing $n^{k+1}_{i,j}:=n^k_{i,j}$ if $1\leq i<j\leq k-1$,
	$n^{k+1}_{i,j}:=n^k_{i,k}$ if $1\leq i\leq k-1$ and $j\in\{k,k+1\}$ and
	$n^{k+1}_{k,k+1}:=n$, we obtain that \eqref{e.open_set_properties} is
	satisfied for all $1\leq i<j\leq k+1$.
\end{proof}

\section{Properties of amorphic complexity and basic examples}
\label{sec_definitions}

\subsection{More general growth rates}

As mentioned in the introduction, one may consider more general than just
polynomial growth rates in the definition of amorphic complexity.
We call $a:\R_+\times(0,1]\to\R_+$ a {\em scale function} if $a(\,\cdot\,,\nu)$
is non-decreasing, $a(s,\,\cdot\,)$ is decreasing and
$\lim_{\nu\to 0}a(s,\nu)=\infty$ for all $s\in\R_+$.
If the separation numbers of $f$ are finite, then we let 
\begin{equation}\label{def_fsc_delta}
	\begin{aligned}
		\uac(f,a,\delta)&:=\sup\left\{s>0\;\left |\;\varliminf
			\limits_{\nu\to 0}\frac{\Sep(f,\delta,\nu)}{a(s,\nu)}>0\right.\right\} \ ,\\
		\oac(f,a,\delta)&:=\sup\left\{s>0\;\left|\;\varlimsup
			\limits_{\nu\to 0}\frac{\Sep(f,\delta,\nu)}{a(s,\nu)}>0\right.\right\}
	\end{aligned}
\end{equation}
and proceed to define the \emph{lower and upper amorphic complexity of $f$ with
respect to the scale function $a$} as
\begin{equation}
	\begin{aligned} \label{d.general_amorphic_compexity}
		\uac(f,a)&:=\sup\limits_{\delta>0}\,\uac(f,a,\delta) \ ,\\
		\oac(f,a)&:=\sup\limits_{\delta>0}\,\oac(f,a,\delta) \ .
	\end{aligned}
\end{equation}
As before, if $\uac(f,a)=\oac(f,a)$, then their common value is denoted by
$\fsc(f,a)$. If $a(s,\nu)=\nu^{-s}$, then this reduces to the definition given
in the introduction. 

In order to obtain good properties, however, some regularity has to be imposed
on the scale function.
We say a scale function $a$ is \emph{O-(weakly) regularly varying (at the origin)
with respect to $\nu$} if
\[
	\varlimsup\limits_{\nu\to 0}\frac{a(s,c\nu)}{a(s,\nu)}
\]
is finite for each $s,c>0$.
Under this assumption, a part of the theory can be developed in a completely
analogous way, until specific properties of polynomial growth start to play a role.
For the sake of simplicity, we refrain from stating the results in this section
in their full generality.
However, we provide extra comments in each subsection to specify the class of
scale functions the corresponding results extend to.
For more information on O-regularly varying functions, see for example
\cite{AljancicArandelovic1977,BuldyginKlesovSteinebach2006} and references therein.

\subsection{Definition via $(f,\delta,\nu)$-spanning sets}
\label{SpanningSets}

As in the case of topological entropy, amorphic complexity can be defined in an
equivalent way by using spanning sets instead of separating sets.
A subset $S$ of a metric space $(X,d)$ is said to be \emph{$(f,\delta,\nu)$-spanning}
if for all $x\in X$ there exists a $y\in S$ such that
\[
	\varlimsup\limits_{n\to\infty}\frac{\countsep{n}(f,\delta,x,y)}{n} \ < \ \nu \ .
\]
By $\Span(f,\delta,\nu)$ we denote the smallest cardinality of any
$(f,\delta,\nu)$-spanning set in $X$.

\begin{lemma}\label{lem_relations_sep_span}
	Let $f:X\to X$ be a map, $\delta>0$ and $\nu\in(0,1]$. We have that 
	\begin{equation}\label{e.span_sep_relation}
		\Sep(f,\delta,\nu)\ \geq \ \Span(f,\delta,\nu) \eqand
		\Span(f,\delta,\nu/2)\ \geq \ \Sep(f,2\delta,\nu)\ .
	\end{equation}
\end{lemma}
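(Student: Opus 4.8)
The plan is to prove the two inequalities in \eqref{e.span_sep_relation} by arguments that mirror the classical relation between separating and spanning sets for topological entropy, adapted to the asymptotic separation frequency.

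For the first inequality, $\Sep(f,\delta,\nu) \geq \Span(f,\delta,\nu)$, I would take a maximal $(f,\delta,\nu)$-separated set $S$ (if the separation numbers are infinite there is nothing to prove, so assume $\Sep(f,\delta,\nu)$ is finite and realized by some $S$) and argue that $S$ is automatically $(f,\delta,\nu)$-spanning. Indeed, for any $x \in X$, either $x \in S$, in which case we may take $y = x$ and the frequency $\countsep{n}(f,\delta,x,x)/n = 0 < \nu$ trivially; or $x \notin S$, in which case maximality of $S$ forces $S \cup \{x\}$ to fail to be $(f,\delta,\nu)$-separated, so there is some $y \in S$ with $\varlimsup_{n\to\infty} \countsep{n}(f,\delta,x,y)/n < \nu$. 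Either way, $S$ is $(f,\delta,\nu)$-spanning, whence $\Span(f,\delta,\nu) \leq \#S = \Sep(f,\delta,\nu)$.

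For the second inequality, $\Span(f,\delta,\nu/2) \geq \Sep(f,2\delta,\nu)$, the idea is a triangle-inequality/counting argument. Let $T$ be an $(f,\delta,\nu/2)$-spanning set and let $S$ be an $(f,2\delta,\nu)$-separated set; I want to produce an injection $S \hookrightarrow T$. For each $x \in S$, pick $\theta(x) \in T$ with $\varlimsup_{n\to\infty} \countsep{n}(f,\delta,x,\theta(x))/n < \nu/2$. The key point is that if $x \neq x'$ in $S$ had $\theta(x) = \theta(x') = y$, then whenever $d(f^k(x),f^k(x')) \geq 2\delta$ one has, by the triangle inequality, $d(f^k(x),f^k(y)) \geq \delta$ or $d(f^k(x'),f^k(y)) \geq \delta$; hence
\[
	\countsep{n}(f,2\delta,x,x') \ \leq \ \countsep{n}(f,\delta,x,y) + \countsep{n}(f,\delta,x',y) \ .
\]
Dividing by $n$ and passing to the $\varlimsup$ (using superadditivity of $\varlimsup$ over the right-hand sum) gives $\nu \leq \varlimsup_n \countsep{n}(f,2\delta,x,x')/n < \nu/2 + \nu/2 = \nu$, a contradiction. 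So $\theta$ is injective and $\#S \leq \#T$, giving $\Sep(f,2\delta,\nu) \leq \Span(f,\delta,\nu/2)$.

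The only mild subtlety — and the one step worth stating carefully rather than waving through — is the handling of the limit superior in the last displayed estimate: one must use $\varlimsup(a_n + b_n) \leq \varlimsup a_n + \varlimsup b_n$ together with $\varliminf \leq \varlimsup$ applied to the defining inequality $\varlimsup_n \countsep{n}(f,2\delta,x,x')/n \geq \nu$ for the separated set $S$. Beyond that the argument is entirely elementary, and no compactness or continuity of $f$ is needed, consistent with the lemma being stated for an arbitrary map $f:X\to X$.
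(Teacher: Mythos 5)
Your proof is correct and follows essentially the same route as the paper: the first inequality via maximality of a separated set (as in the classical entropy comparison), and the second via the triangle inequality $\countsep{n}(f,2\delta,x_1,x_2)\leq\countsep{n}(f,\delta,x_1,y)+\countsep{n}(f,\delta,x_2,y)$ combined with subadditivity of $\varlimsup$, the only cosmetic difference being that you phrase the pigeonhole step as injectivity of a choice map $\theta\colon S\to T$ while the paper argues by contradiction from $\#\tilde S>\#S$. (Minor slip in your closing remark: the inequality $\varlimsup(a_n+b_n)\le\varlimsup a_n+\varlimsup b_n$ is \emph{sub}additivity, and the appeal to $\varliminf\le\varlimsup$ is unnecessary since the defining separation condition is already stated with $\varlimsup$.)
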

\begin{proof}
	For the first inequality, the proof is similar to the argument in the
	comparison of the separating and spanning sets in the classical definition
	of topological entropy \cite[Chapter 7.2]{Walters1982}.

	For the second inequality, assume \twlog\ that $\Span(f,\delta,\nu/2)<\infty$.
	Let $S\subseteq X$ be an $(f,\delta,\nu/2)$-spanning set of cardinality
	$\Span(f,\delta,\nu/2)$ and assume for a contradiction that $\tilde S\subseteq X$
	is an $(f,2\delta,\nu)$-separated set with $\#\tilde S>\#S$.
	Then for some $y\in S$ there exist $x_1,x_2\in\tilde S$ such that
	\[
		\varlimsup\limits_{n\to\infty}\frac{\countsep{n}(f,\delta,x_i,y)}{n}
			\ < \ \frac{\nu}{2}
	\]
	with $i\in\{1,2\}$. However, due to the triangle inequality we have that
	\[
		\countsep{n}(f,2\delta,x_1,x_2) \ \leq \
		\countsep{n}(f,\delta,x_1,y)+\countsep{n}(f,\delta,y,x_2) \ 
	\]
	and consequently
	\[
        \varlimsup\limits_{n\to\infty}\frac{\countsep{n}(f,2\delta,x_1,x_2)}{n} \
        \leq \
        \varlimsup\limits_{n\to\infty}\frac{\countsep{n}(f,\delta,x_1,y)}{n} +
        \varlimsup\limits_{n\to\infty}\frac{\countsep{n}(f,\delta,x_2,y)}{n}
        \ < \ \nu \ .
	\]
	This contradicts the fact that $x_1$ and $x_2$ are $(f,2\delta,\nu)$-separated. 
\end{proof}
\begin{corollary}\label{c.span_sep}
	Given a metric space $X$ and $f:X\to X$, we have that 
	\begin{equation}
		\uac(f) \ = \ \sup_{\delta>0} \varliminf_{\nu\to 0}
			\frac{\log\Span(f,\delta,\nu)}{-\log\nu} 
		\eqand
		\oac(f) \ = \ \sup_{\delta>0} \varlimsup_{\nu\to 0}
			\frac{\log\Span(f,\delta,\nu)}{-\log\nu} \ . 
	\end{equation}
\end{corollary}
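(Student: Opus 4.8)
The plan is to deduce Corollary~\ref{c.span_sep} directly from the two-sided comparison between spanning and separating numbers established in Lemma~\ref{lem_relations_sep_span}, using the monotonicity of the quantities involved in $\delta$ together with the elementary fact that logarithms turn the constant factor $2$ in $\delta$ and $\nu$ into a harmless additive perturbation that vanishes after dividing by $-\log\nu$ and sending $\nu\to0$. So the whole argument is a "sandwich" estimate; there is no new dynamics, only bookkeeping with $\varliminf$, $\varlimsup$ and $\sup_\delta$.

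First I would fix $\delta>0$ and observe that the first inequality in \eqref{e.span_sep_relation}, namely $\Sep(f,\delta,\nu)\geq\Span(f,\delta,\nu)$, immediately gives
\[
	\varliminf_{\nu\to 0}\frac{\log\Span(f,\delta,\nu)}{-\log\nu}
	\ \leq \ \varliminf_{\nu\to 0}\frac{\log\Sep(f,\delta,\nu)}{-\log\nu}
	\ = \ \uac(f,\delta)
\]
and the analogous inequality with $\varlimsup$ for $\oac(f,\delta)$; taking the supremum over $\delta>0$ shows that the right-hand sides of the asserted identities are at most $\uac(f)$ and $\oac(f)$, respectively. For the reverse direction I would use the second inequality $\Span(f,\delta,\nu/2)\geq\Sep(f,2\delta,\nu)$. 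Here the key computation is that, for fixed $\delta$,
\[
	\frac{\log\Sep(f,2\delta,\nu)}{-\log\nu}
	\ \leq \ \frac{\log\Span(f,\delta,\nu/2)}{-\log\nu}
	\ = \ \frac{\log\Span(f,\delta,\nu/2)}{-\log(\nu/2)}\cdot\frac{-\log(\nu/2)}{-\log\nu}\ ,
\]
and since $-\log(\nu/2)/(-\log\nu)=1+\log 2/(-\log\nu)\to 1$ as $\nu\to0$, passing to $\varliminf$ (resp.\ $\varlimsup$) and substituting $\nu'=\nu/2$ yields
\[
	\uac(f,2\delta)\ \leq\ \varliminf_{\nu'\to 0}\frac{\log\Span(f,\delta,\nu')}{-\log\nu'}
\]
and likewise for $\oac$. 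Taking the supremum over $\delta>0$ — and using that $\{2\delta:\delta>0\}=(0,\infty)$, so $\sup_{\delta>0}\uac(f,2\delta)=\uac(f)$ — gives the matching lower bound, completing the chain of inequalities and hence the two identities.

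The only mildly delicate point, and the one I would be most careful about, is the legitimacy of the limit manipulation $\varliminf_{\nu\to0} \log\Span(f,\delta,\nu/2)/(-\log\nu)=\varliminf_{\nu'\to0}\log\Span(f,\delta,\nu')/(-\log\nu')$: this needs the elementary lemma that if $b(\nu)/c(\nu)\to 1$ with $c(\nu)\to\infty$ then $\varliminf a(\nu)/b(\nu)=\varliminf a(\nu)/c(\nu)$ for any nonnegative $a$, applied with $a(\nu)=\log\Span(f,\delta,\nu/2)$, $b(\nu)=-\log\nu$, $c(\nu)=-\log(\nu/2)$, together with the change of variables $\nu\mapsto\nu/2$, which is a homeomorphism of $(0,1]$ onto $(0,1/2]$ fixing the limit point $0$. (One should also note $\Span(f,\delta,\nu)$ is finite for all $\delta,\nu$ precisely when $\Sep$ is, by \eqref{e.span_sep_relation}, so the expressions are well defined exactly in the relevant case; if separation numbers are infinite both sides are trivially $+\infty$.) Beyond that, everything is routine, so I do not anticipate a real obstacle — the content is entirely contained in Lemma~\ref{lem_relations_sep_span}.
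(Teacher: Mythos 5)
Your proof is correct and spells out precisely the argument the paper leaves implicit: the corollary is stated directly after Lemma~\ref{lem_relations_sep_span} with no written proof, and the intended derivation is exactly the sandwich you give, where the factor $2$ in both $\delta$ and $\nu$ is absorbed because $-\log(\nu/2)/(-\log\nu)\to 1$ and $\sup_{\delta>0}\uac(f,2\delta)=\uac(f)$. Your handling of the $\varliminf/\varlimsup$ change of variables and of the degenerate case of infinite separation numbers is careful and closes the only potential gaps.
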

\begin{remarks}\mbox{}
	\alphlist
		\item The above statement remains true if $a(s,\nu)=\nu^{-s}$ is
			replaced by any O-regularly varying scale function.
		\item In the definition of $(f,\delta,\nu)$-separated sets and
			$(f,\delta,\nu)$-spanning sets one could also use $\liminf$ instead
			of $\limsup$, and thus define the notions of
			\emph{strongly $(f,\delta,\nu)$-separated sets} and
			\emph{weakly $(f,\delta,\nu)$-spanning sets}, respectively.
			However, there is no analogue to the second inequality in
			\eqref{e.span_sep_relation} in this case.
	\listend
\end{remarks}

\subsection{Factor relation and topological invariance}

We assume that $X$ and $\Xi$ are arbitrary metric spaces, possibly non-compact.
The price to pay for this is that we have to assume the uniform continuity of
the factor map.
All the assertions of this section remain true for arbitrary scale functions.
\begin{proposition}
	Assume $g:\Xi\to \Xi$ is a factor of $f:X\to X$ with a uniformly continuous
	factor map $h:X\to \Xi$.  Then $\uac(f)\geq\uac(g)$ and $\oac(f)\geq
	\oac(g)$.
\end{proposition}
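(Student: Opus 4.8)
The plan is to show that any $(g,\delta,\nu)$-separated set in $\Xi$ can be lifted, through the factor map, to an $(f,\delta',\nu)$-separated set in $X$ of the same cardinality, for a suitable $\delta'>0$ depending only on $\delta$ via the uniform continuity of $h$. This will give $\Sep(f,\delta',\nu)\geq\Sep(g,\delta,\nu)$, from which the desired inequalities for $\uac$ and $\oac$ follow by taking logarithms, dividing by $-\log\nu$, passing to the relevant limits as $\nu\to 0$, and finally taking the supremum over $\delta>0$ (equivalently over $\delta'>0$).

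First I would fix $\delta>0$ and invoke uniform continuity of $h$: there exists $\delta'>0$ such that $d(x,y)<\delta'$ implies $\rho(h(x),h(y))<\delta$ for all $x,y\in X$. Equivalently, $\rho(h(x),h(y))\geq\delta$ forces $d(x,y)\geq\delta'$. Consequently, for any $x,y\in X$ and any $k\in\N_0$, the inclusion
\[
	\left\{0\leq k<n \mid \rho\big(g^k(h(x)),g^k(h(y))\big)\geq\delta\right\}
	\ \subseteq \ \left\{0\leq k<n \mid d\big(f^k(x),f^k(y)\big)\geq\delta'\right\}
\]
holds, using the intertwining relation $h\circ f=g\circ h$, hence $g^k(h(x))=h(f^k(x))$. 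Taking cardinalities and dividing by $n$ yields $\countsep{n}(g,\delta,h(x),h(y))\leq\countsep{n}(f,\delta',x,y)$, and passing to $\varlimsup_{n\to\infty}$ gives $\nu(g,\delta,h(x),h(y))\leq\nu(f,\delta',x,y)$ in the notation of \eqref{e.separation_frequency}.

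Next I would take a $(g,\delta,\nu)$-separated set $T\subseteq\Xi$ of cardinality $\Sep(g,\delta,\nu)$ (if this is infinite, the argument below produces arbitrarily large finite separated sets in $X$ and the conclusion is immediate), and for each $\xi\in T$ choose a preimage $x_\xi\in h^{-1}(\xi)$, which exists since $h$ is onto. Set $S:=\{x_\xi\mid\xi\in T\}$. Since the $\xi$ are pairwise distinct and $h(x_\xi)=\xi$, the points of $S$ are pairwise distinct and $\#S=\#T$. For distinct $x_\xi,x_{\xi'}\in S$ we have, by the displayed estimate, $\nu(f,\delta',x_\xi,x_{\xi'})\geq\nu(g,\delta,\xi,\xi')\geq\nu$, so $x_\xi$ and $x_{\xi'}$ are $(f,\delta',\nu)$-separated. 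Thus $S$ is an $(f,\delta',\nu)$-separated set and $\Sep(f,\delta',\nu)\geq\Sep(g,\delta,\nu)$.

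Finally I would conclude: for each fixed $\delta>0$ and the associated $\delta'$,
\[
	\frac{\log\Sep(f,\delta',\nu)}{-\log\nu}\ \geq\ \frac{\log\Sep(g,\delta,\nu)}{-\log\nu}\qquad\text{for all small }\nu>0,
\]
so taking $\varliminf$ (resp. $\varlimsup$) as $\nu\to 0$ gives $\uac(f,\delta')\geq\uac(g,\delta)$ (resp. $\oac(f,\delta')\geq\oac(g,\delta)$). Taking the supremum over $\delta>0$ on the right and bounding the left by $\sup_{\delta'>0}$ yields $\uac(f)\geq\uac(g)$ and $\oac(f)\geq\oac(g)$. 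The only genuinely delicate point is the very first step — extracting a usable modulus of continuity — which is precisely why uniform continuity of $h$ is assumed here rather than mere continuity; on non-compact spaces plain continuity would not let one choose $\delta'$ uniformly over all pairs $x,y$, and the inclusion of level sets above would fail. Everything else is bookkeeping with cardinalities and monotone limits. The remark that the proof goes through verbatim for an arbitrary scale function $a$ is clear, since we only used the monotonicity of $t\mapsto\log t$ and of the limit operations, never any special feature of $\nu^{-s}$.
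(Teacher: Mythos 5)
Your argument is correct and follows essentially the same route as the paper: use uniform continuity of $h$ to obtain $\tilde\delta$ from $\delta$, lift a $(g,\delta,\nu)$-separated set to a $(f,\tilde\delta,\nu)$-separated set via preimages, and conclude $\Sep(f,\tilde\delta,\nu)\geq\Sep(g,\delta,\nu)$. The only difference is that you spell out the level-set inclusion, the passage to $\varlimsup$, and the final suprema in more detail than the paper does.
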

\begin{proof}
	We denote the metric on $X$ and $\Xi$ with $d$ and $\rho$, respectively.
	The uniform continuity of $h$ implies that for every $\delta>0$ there exists
	$\tilde\delta>0$ such that $\rho(h(z),h(w))\geq\delta$ implies
	$d(z,w)\geq\tilde\delta$.
	Suppose $\xi,\xi'\in \Xi$ are $(g,\delta,\nu)$-separated.
	Then there exist $x,x'\in X$ such that $h(x)=\xi$ and $h(x')=\xi'$.
	Since $\rho(g^k(\xi),g^k(\xi'))\geq\delta$ implies
	$d(f^k(x),f^k(x'))\geq\tilde\delta$, the points $x$ and $x'$ need to be 
	$(f,\tilde \delta,\nu)$-separated.
	Given $\nu\in(0,1]$, this means that if $S\subseteq \Xi$ is a 
	$(g,\delta,\nu)$-separated set, then there exist $\tilde
	S\subseteq X$ with $h(\tilde S)=S$ and $\tilde\delta>0$ such that $\tilde S$
	is a $(f,\tilde\delta, \nu)$-separated set.
	Therefore, for all $\nu\in(0,1]$ we get
	\begin{align*}
		\Sep(f,\tilde\delta,\nu)\geq \Sep(g,\delta,\nu) \ .
	\end{align*}
	The assertions follow easily.		
\end{proof}

\begin{corollary}\label{fsc_top_inv}
	Suppose $X$ and $\Xi$ are compact and let $f:X\to X$ and $g:\Xi\to \Xi$
	be conjugate.
	Then $\uac(f)=\uac(g)$ and $\oac(f)=\oac(g)$.
\end{corollary}

For the next corollary, observe that $f\circ g$ is an extension of $g\circ f$
with factor map $h=g$, and conversely $\tilde h=f$ is a factor map from $g\circ
f$ to $f\circ g$.

\begin{corollary}
	Suppose $f:X\to X$ and $g:X\to X$ are uniformly continuous.
	Then $\uac(f\circ g)=\uac(g\circ f)$ and $\oac(f\circ g)=\oac(g\circ f)$.
\end{corollary}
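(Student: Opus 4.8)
The plan is to read off both equalities from the Proposition on the factor relation, applied once in each of the two directions exhibited just before the statement; in particular I would not prove anything new about $f\circ g$ and $g\circ f$ directly, only package the preceding observation into the hypotheses of that Proposition.

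Concretely, the first step is to verify the two intertwining identities. By induction on $k\in\N_0$ one gets $g\circ(f\circ g)^k=(g\circ f)^k\circ g$ (for $k=0$ this reads $g=g$, and the inductive step is $g\circ(f\circ g)^{k+1}=\bigl(g\circ(f\circ g)^k\bigr)\circ(f\circ g)=(g\circ f)^k\circ g\circ f\circ g=(g\circ f)^{k+1}\circ g$), and symmetrically $f\circ(g\circ f)^k=(f\circ g)^k\circ f$. Hence $g$ is a uniformly continuous intertwiner carrying the dynamics of $f\circ g$ to that of $g\circ f$, and likewise $f$ is a uniformly continuous intertwiner in the opposite direction. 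Modulo the surjectivity point addressed below, feeding these into the Proposition gives $\oac(f\circ g)\ge\oac(g\circ f)$ and $\oac(g\circ f)\ge\oac(f\circ g)$, hence equality; repeating the whole argument with $\uac$ in place of $\oac$ finishes the proof.

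The one place where I expect to have to be careful — and really the only obstacle — is that the Proposition is stated for a \emph{surjective} factor map, whereas $g$ and $f$ need not be onto. I would deal with this by passing to the $(g\circ f)$-invariant set $g(X)$ (indeed $(g\circ f)(g(X))=g(f(g(X)))\subseteq g(X)$): on $g(X)$ the map $g$ is onto and the intertwining identity still holds, so the Proposition applies and yields $\oac(f\circ g)\ge\oac\bigl((g\circ f)|_{g(X)}\bigr)$. It then remains to see that restricting to $g(X)$ changes nothing, i.e.\ $\Sep(g\circ f,\delta,\nu)=\Sep\bigl((g\circ f)|_{g(X)},\delta,\nu\bigr)$ for all $\delta,\nu$. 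The inequality $\ge$ is immediate; for $\le$, given a $(g\circ f,\delta,\nu)$-separated set $S\subseteq X$ I would check that $(g\circ f)(S)\subseteq g(X)$ is again $(g\circ f,\delta,\nu)$-separated of the same cardinality — the map $g\circ f$ must be injective on $S$, since if it identified two distinct points of $S$ their orbits would coincide from time one on and their separation frequency would be $0<\nu$, and replacing an orbit by its time-one shift changes the count $\countsep{n}$ by at most $1$ and hence leaves the upper separation frequency unchanged. The symmetric statements for $f$ and $g\circ f$, combined with the two inequalities above, give the asserted equalities for both $\oac$ and $\uac$.
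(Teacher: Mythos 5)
Your proof takes essentially the same route as the paper: observe that $g$ intertwines $f\circ g$ with $g\circ f$ (and $f$ intertwines in the reverse direction), then invoke the factor‑relation Proposition in both directions. The paper in fact offers exactly this one‑sentence observation and nothing more; it does not address the onto‑ness of $g$ and $f$, even though the paper's own definition of an extension requires the factor map to be surjective and its proof of the factor Proposition genuinely uses this. Your patch is correct and fills that gap: restricting to the forward‑invariant set $g(X)$, on which $g:X\to g(X)$ is onto, applying the Proposition there, and then identifying $\Sep\bigl((g\circ f)\big|_{g(X)},\delta,\nu\bigr)$ with $\Sep(g\circ f,\delta,\nu)$ by pushing a separated set $S$ forward to $(g\circ f)(S)\subseteq g(X)$ — the injectivity of $g\circ f$ on $S$ (else the separation frequency would be $0<\nu$) and the fact that a time‑one shift of orbits changes $\countsep{n}$ by at most $1$ are exactly the two points that make this work.
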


\subsection{Power invariance and product rule}

We first consider iterates of $f$. In contrast to topological entropy, taking
powers does not affect the amorphic complexity.
Throughout this section, we assume that $X$ and $Y$ are metric spaces. 
\begin{proposition}
	Assume $f:X\to X$ is uniformly continuous and let $m\in\N$.
	Then $\uac(f^m)=\uac(f)$ and $\oac(f^m)=\oac(f)$.
\end{proposition}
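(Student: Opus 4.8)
The plan is to show that amorphic complexity is unchanged when we pass from $f$ to $f^m$ by comparing the separation numbers $\Sep(f,\delta,\nu)$ and $\Sep(f^m,\delta,\nu)$ in both directions. The key observation is that separation is measured as an \emph{asymptotic frequency} along the orbit, and counting hits along the full $f$-orbit versus along the sub-orbit $\{f^{mk}\}_{k\in\N}$ only changes frequencies by bounded factors. Concretely, for $x,y\in X$ and any $\delta>0$ one has the elementary bound
\begin{equation*}
	\countsep{mn}(f,\delta,x,y) \ = \ \sum_{j=0}^{m-1}\countsep{n}\!\left(f^m,\delta,f^j(x),f^j(y)\right) \ ,
\end{equation*}
so dividing by $mn$ and taking $\varlimsup$ shows that $x,y$ are $(f,\delta,\nu)$-separated if and only if $f^j(x),f^j(y)$ are $(f^m,\delta,m\nu)$-separated for some $j\in\{0,\dots,m-1\}$, and conversely any pair that is $(f^m,\delta,\nu)$-separated is $(f,\delta,\nu/m)$-separated (take $j=0$ in the sum; each summand is nonnegative). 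Thus the two separation numbers agree up to a bounded rescaling of the frequency parameter $\nu$ by the constant $m$.

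First I would establish the inequality $\oac(f^m)\leq\oac(f)$ and $\uac(f^m)\leq\uac(f)$: if $S$ is an $(f^m,\delta,\nu)$-separated set, then by the remark above it is $(f,\delta,\nu/m)$-separated, hence $\Sep(f,\delta,\nu/m)\geq\Sep(f^m,\delta,\nu)$. Taking logarithms, dividing by $-\log\nu$, and using that $-\log(\nu/m)=-\log\nu+\log m\sim-\log\nu$ as $\nu\to 0$ gives the desired inequality after taking $\varlimsup$ (resp.\ $\varliminf$) and then $\sup_{\delta>0}$. For the reverse direction, suppose $S$ is an $(f,\delta,\nu)$-separated set. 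By pigeonhole, for each distinct pair $x,y\in S$ there is some $j(x,y)\in\{0,\dots,m-1\}$ with $f^{j(x,y)}(x),f^{j(x,y)}(y)$ being $(f^m,\delta,\nu/m)$-separated; this does not directly give a single large $(f^m,\delta,\cdot)$-separated set since different pairs may need different shifts $j$. To get around this, I would instead use uniform continuity of $f$ (hence of $f^j$ for $j<m$): choose $\tilde\delta>0$ such that $d(f^j(z),f^j(w))\geq\delta$ implies $d(z,w)\geq\tilde\delta$ for all $j\in\{0,\dots,m-1\}$. Then if $x,y$ are $(f,\delta,\nu)$-separated, splitting the count shows that along the sub-orbit indexed by residue $j_0$ for which the frequency is largest we get $\countsep{n}(f^m,\tilde\delta,x,y)\geq\countsep{n}(f^m,\delta,f^{j_0}(x),f^{j_0}(y))$ after pulling the iterate back via the choice of $\tilde\delta$, so $x,y$ themselves (at $j=0$, no shift) are $(f^m,\tilde\delta,\nu/m)$-separated. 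Hence $\Sep(f^m,\tilde\delta,\nu/m)\geq\Sep(f,\delta,\nu)$, and the same logarithmic asymptotics as before yield $\oac(f^m,\tilde\delta)\geq\oac(f,\delta)$ and likewise for $\uac$; taking suprema over $\delta$ (noting $\tilde\delta\to 0$ as $\delta\to 0$) completes the argument.

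The main obstacle I anticipate is precisely the ``single shift'' issue in the reverse inequality: naively one only separates \emph{pairwise} after applying possibly different iterates $f^j$, which is not enough to conclude a bound on $\Sep(f^m,\cdot,\cdot)$ for the whole set at once. The resolution via uniform continuity — trading the distance $\delta$ for a smaller $\tilde\delta$ so that one can work at the unshifted positions $j=0$ — is the crucial point, and it is exactly why the hypothesis of uniform continuity appears in the statement (for compact $X$ this is automatic). Everything else is the routine verification that replacing $\nu$ by $\nu/m$ or $m\nu$ leaves the ratio $\log\Sep/(-\log\nu)$ asymptotically unaffected, which follows since $\log m$ is negligible compared to $-\log\nu$ as $\nu\to 0$; in the language of the preceding subsection, $\nu\mapsto\nu^{-s}$ is O-regularly varying, so the same proof works verbatim for that whole class of scale functions.
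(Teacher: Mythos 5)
Your proof is correct and follows essentially the same route as the paper: the crucial step, in both, is to use uniform continuity to produce a $\tilde\delta$ with $d(f^i(z),f^i(w))\geq\delta\Rightarrow d(z,w)\geq\tilde\delta$ for all $0\leq i<m$, so that separation events along the $f$-orbit can be ``pulled back'' to the nearest index that is a multiple of $m$, giving a relation between $\Sep(f^m,\tilde\delta,\cdot)$ and $\Sep(f,\delta,\cdot)$. Your identity $\countsep{mn}(f,\delta,x,y)=\sum_{j=0}^{m-1}\countsep{n}(f^m,\delta,f^j(x),f^j(y))$ is a clean way to organize this bookkeeping, and you correctly identify and solve the main difficulty — that a naive pigeonhole only separates \emph{pairs} after applying pair-dependent shifts $f^{j(x,y)}$, which is not a separated \emph{set}; pulling the count back to $j=0$ via $\tilde\delta$ fixes this.

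Two small points. First, your claimed ``if and only if'' is slightly off: from the identity and subadditivity of $\varlimsup$, being $(f,\delta,\nu)$-separated only yields that some $j$ makes $f^j(x),f^j(y)$ $(f^m,\delta,\nu)$-separated, not $(f^m,\delta,m\nu)$-separated as stated. This does not hurt your proof, since you anyway fall back on the weaker (but sufficient) deduction that $x,y$ are $(f^m,\tilde\delta,\nu/m)$-separated. Second, and more as a comparison than a criticism: the paper's accounting is set up so that $\nu$ is never rescaled in either direction — one obtains $\Sep(f^m,\tilde\delta,\nu)\geq\Sep(f,\delta,\nu)$ and $\Sep(f,\tilde\delta,\nu)\geq\Sep(f^m,\delta,\nu)$ directly — which is why the result extends to \emph{arbitrary} scale functions, not just O-regularly varying ones (see Remark (a) after the proposition). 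Your version rescales $\nu$ by a factor of $m$ in both inequalities, which is perfectly fine for polynomial (and O-regularly varying) scale functions but would not literally carry over to completely general scales. On the other hand, you observe, as the paper does in Remark (b), that the inequality $\Sep(f,\delta,\nu/m)\geq\Sep(f^m,\delta,\nu)$ (and hence $\oac(f)\geq\oac(f^m)$) needs no continuity assumption at all.
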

\begin{proof}
	Since all iterates of $f$ are uniformly continuous as well, we have that for
	every $\delta>0$ there exists $\tilde\delta>0$ such	that
	$d(f^i(z),f^i(w))\geq\delta$ implies $d(z,w)\geq\tilde\delta$ for
	all $i\in\{0\ld m-1\}$.

	Suppose $x,y\in X$ are $(f,\delta,\nu)$-separated.
	Assume that $d(f^k(x),f^k(y))\geq\delta$ with $k=m\cdot\tilde k+i$, where
	$\tilde k\in\N_0$ and $i\in\{0,\dots,m-1\}$. 
    Then by the above we have
    $d\big(f^{m\tilde k}(x),f^{m\tilde k}(y)\big)\geq\tilde\delta$.
	This means that for $\tilde n\in\N$ and
	$n\in\{m\cdot\tilde n,\dots,m(\tilde n+1)-1\}$ we get
	\[
		\ntel\countsep{n}(f,\delta,x,y) \ \leq \ 
		\ntel\left(m\cdot\countsep{\tilde n}(f^m,\tilde\delta,x,y)+m\right)
		\ \leq \ \frac{1}{\tilde n}(\countsep{\tilde n}(f^m,\tilde\delta,x,y)+1)\ . 
	\]
	By taking the $\limsup$ we get that $x$ and $y$ are
	$(f^m,\tilde\delta,\nu)$-separated.
	Hence,
	\begin{align}\label{prop_iterates_of_f_1}
		\Sep(f^m,\tilde\delta,\nu)\ \geq \ \Sep(f,\delta,\nu) \ .
	\end{align}

	Conversely, suppose that $x$ and $y$ are $(f^m,\delta,\nu)$-separated. 
	Then for $k\geq 1$ it follows from $d(f^{mk}(x),f^{mk}(y))\geq\delta$ that
	$d\big(f^{\tilde k}(x),f^{\tilde k}(y)\big)\geq\tilde\delta$ for all
	$\tilde k\in\{m(k-1)+1,\dots,mk\}$.
	Each $\tilde n\in\N$ belongs to a block $\{m(n-1)+1,\dots,m\cdot n\}$ with
	$n\in\N$ and we have
	\[
		\frac{1}{\tilde n}\countsep{\tilde n}(f,\tilde\delta,x,y)  \ \geq \ 
		\frac{1}{\tilde n}\left(m\cdot\countsep{n}(f^m,\delta,x,y)-m\right)
		\ \geq\ \ntel\left(\countsep{n}(f^m,\delta,x,y)-1\right) \ .
	\]
	Again, by taking the $\limsup$ we get that $x$ and $y$ are
    $(f,\tilde\delta,\nu)$-separated.
    Hence,
	\begin{align}\label{prop_iterates_of_f_2}
		\Sep(f,\tilde\delta,\nu) \ \geq \ \Sep(f^m,\delta,\nu) \ .
	\end{align}
	Using \eqref{prop_iterates_of_f_1} and \eqref{prop_iterates_of_f_2}, we get
	that $\uac(f^m)=\uac(f)$ and $\oac(f^m)=\oac(f)$.
\end{proof}

\begin{remarks}\mbox{}
	\alphlist
		\item The above result remains true for arbitrary scale functions.
		\item If $f$ is not uniformly continuous, then we still have
			$\Sep(f,\delta,\nu/m)\geq\Sep(f^m,\delta,\nu)$.
			This yields $\uac(f,a)\geq\uac(f^m,a)$ and $\oac(f,a)\geq\oac(f^m,a)$
			for $a$ O-regularly varying.
	\listend
\end{remarks}

In contrast to the above, the product formula is specific to polynomial growth
or, more generally, to scale functions satisfying a product rule of the form
$a(s+t,\nu)=a(s,\nu)\cdot a(t,\nu)$.
\begin{proposition}
	Let $f:X\to X$ and $g: Y\to Y$.
	Then $\uac(f\times g)\geq\uac(f)+\uac(g)$
	and $\oac(f\times g)\leq \oac(f)+\oac(g)$.
	Therefore, if the limits $\fsc(f)$ and $\fsc(g)$ exist,	we get
	\[
		\fsc(f\times g)=\fsc(f)+\fsc(g) \ .
	\]
\end{proposition}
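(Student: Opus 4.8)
The plan is to prove the two inequalities $\uac(f\times g)\geq\uac(f)+\uac(g)$ and $\oac(f\times g)\leq\oac(f)+\oac(g)$ separately, each by comparing separation numbers on the product with the separation numbers of the factors at suitably adjusted distance parameters. Throughout I equip $X\times Y$ with the maximum metric $d((x,y),(x',y'))=\max\{d_X(x,x'),d_Y(y,y')\}$, so that $d((f\times g)^k(x,y),(f\times g)^k(x',y'))\geq\delta$ holds precisely when $d_X(f^k x,f^k x')\geq\delta$ \emph{or} $d_Y(g^k y,g^k y')\geq\delta$. The key elementary observation is the subadditivity/superadditivity of the counting function under this choice: for all $n$,
\[
	\max\{\countsep{n}(f,\delta,x,x'),\countsep{n}(g,\delta,y,y')\}
	\ \leq \ \countsep{n}(f\times g,\delta,(x,y),(x',y'))
	\ \leq \ \countsep{n}(f,\delta,x,x')+\countsep{n}(g,\delta,y,y') \ .
\]
Dividing by $n$ and passing to $\varlimsup$ gives the corresponding bounds on the separation frequencies, which is what drives both halves.

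For the lower bound on $\uac$: given $\delta>0$, take an $(f,\delta,\nu)$-separated set $S_X\subseteq X$ and a $(g,\delta,\nu)$-separated set $S_Y\subseteq Y$ realising $\Sep(f,\delta,\nu)$ and $\Sep(g,\delta,\nu)$. For distinct points $(x,y),(x',y')\in S_X\times S_Y$, at least one coordinate differs, and by the left inequality above the separation frequency of the pair in the product is $\geq\nu$; hence $S_X\times S_Y$ is $(f\times g,\delta,\nu)$-separated and
\[
	\Sep(f\times g,\delta,\nu)\ \geq\ \Sep(f,\delta,\nu)\cdot\Sep(g,\delta,\nu)\ .
\]
Taking logarithms, dividing by $-\log\nu$, using superadditivity of $\varliminf$ for nonnegative sequences, and then taking $\sup$ over $\delta$ yields $\uac(f\times g)\geq\uac(f,\delta)+\uac(g,\delta)$ for each $\delta$ and hence $\uac(f\times g)\geq\uac(f)+\uac(g)$. (The only mild point here is that the two $\delta$-localised complexities are each approached along possibly different sequences of $\nu$'s, but since we only need a $\liminf$ lower bound and both factors are evaluated at the \emph{same} $\nu$, superadditivity of $\varliminf$ suffices.)

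For the upper bound on $\oac$: fix $\delta>0$ and let $S\subseteq X\times Y$ be an $(f\times g,\delta,\nu)$-separated set. For each pair of distinct points in $S$, the right-hand inequality above forces $\nu(f,\delta,x,x')+\nu(g,\delta,y,y')\geq\nu$, hence at least one of the two summands is $\geq\nu/2$. Project $S$ to $S_X\subseteq X$ and $S_Y\subseteq Y$; a counting/pigeonhole argument (the classical product trick: if $\#S>\Sep(f,\delta,\nu/2)\cdot\Sep(g,\delta,\nu/2)$ then two points of $S$ collapse to a pair that is neither $(f,\delta,\nu/2)$-separated nor $(g,\delta,\nu/2)$-separated, a contradiction) gives
\[
	\Sep(f\times g,\delta,\nu)\ \leq\ \Sep(f,\delta,\nu/2)\cdot\Sep(g,\delta,\nu/2)\ .
\]
Take logarithms, divide by $-\log\nu$, split the $\varlimsup$ of the sum using subadditivity of $\varlimsup$, and use that $\nu$ and $\nu/2$ are comparable so that $-\log(\nu/2)/(-\log\nu)\to 1$ as $\nu\to 0$; this turns $\Sep(f,\delta,\nu/2)$ into the same $\oac(f,\delta)$ in the limit. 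Passing to $\sup$ over $\delta$ on both sides — here using that $\oac(f\times g,\delta)\leq\oac(f,\delta)+\oac(g,\delta)\leq\oac(f)+\oac(g)$ — gives $\oac(f\times g)\leq\oac(f)+\oac(g)$. The main obstacle, and the step deserving the most care, is precisely this $\nu$-versus-$\nu/2$ bookkeeping in the $\oac$ bound: one must make sure the factor-$2$ rescaling of the separation frequency does not leak into the logarithmic scaling exponent, which is exactly where O-regular variation of $a(s,\nu)=\nu^{-s}$ (trivially satisfied here) is what makes the argument go through. Finally, if $\fsc(f)$ and $\fsc(g)$ exist, the two established inequalities read $\fsc(f)+\fsc(g)\leq\uac(f\times g)\leq\oac(f\times g)\leq\fsc(f)+\fsc(g)$, forcing equality and the existence of $\fsc(f\times g)$.
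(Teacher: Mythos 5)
Your lower bound and the reduction to the two inequalities on separation/spanning numbers are essentially the paper's argument, but your upper bound has a genuine gap in the ``classical product trick'' step. Let me point it out concretely.

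You claim $\Sep(f\times g,\delta,\nu)\leq\Sep(f,\delta,\nu/2)\cdot\Sep(g,\delta,\nu/2)$ via a pigeonhole argument, and the sketch is: for distinct $(x,y),(x',y')$ in a separated set $S$, at least one of $\nu(f,\delta,x,x')\geq\nu/2$ or $\nu(g,\delta,y,y')\geq\nu/2$ holds, and then "if $\#S$ exceeds the product, two points collapse." The problem is that this is \emph{not} a pigeonhole situation but a Ramsey one: you have a $2$-colouring of the edges of the complete graph on $S$ (colour $X$ if the first coordinates are $(f,\delta,\nu/2)$-separated, colour $Y$ otherwise), with no $X$-clique of size $\Sep(f,\delta,\nu/2)+1$ and no $Y$-clique of size $\Sep(g,\delta,\nu/2)+1$. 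That only bounds $\#S$ by the Ramsey number $R(\Sep(f,\delta,\nu/2)+1,\Sep(g,\delta,\nu/2)+1)$, which is far larger than the product. To actually collapse two points you need a \emph{map} from $S$ to a product of two sets of the right cardinalities, and the natural one (send $(x,y)$ to the nearest pair $(\tilde x,\tilde y)$ in maximal separated sets $\tilde S_X,\tilde S_Y$, which are automatically spanning) runs into the fact that ``separation'' does not obey the triangle inequality at fixed $\delta$: from $\nu(f,\delta,x,\tilde x)<\nu/2$ and $\nu(f,\delta,x',\tilde x)<\nu/2$ you can only conclude $\nu(f,2\delta,x,x')<\nu$, not $\nu(f,\delta,x,x')<\nu/2$. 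So the honest version of your bound is $\Sep(f\times g,\delta,\nu)\leq\Sep(f,\delta/2,\nu/4)\cdot\Sep(g,\delta/2,\nu/4)$, with a simultaneous rescaling in $\delta$ that your write-up doesn't account for (it only flags the $\nu$ rescaling as the delicate point, and asserts the $\delta$ stays fixed).

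The paper sidesteps this entirely by proving the upper bound with \emph{spanning} sets. One shows directly that if $\tilde S_X$ is $(f,\delta,\nu/2)$-spanning and $\tilde S_Y$ is $(g,\delta,\nu/2)$-spanning, then $\tilde S_X\times\tilde S_Y$ is $(f\times g,\delta,\nu)$-spanning, by the same subadditivity of $\countsep{n}$ under the max metric that you record; here only $\nu$ halves, and $\delta$ is untouched. This gives $\Span(f\times g,\delta,\nu)\leq\Span(f,\delta,\nu/2)\Span(g,\delta,\nu/2)$, and one then invokes Corollary~\ref{c.span_sep} to pass from $\Span$ back to the definition of $\oac$. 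Your argument would also close up if you replaced the unjustified $\Sep$-inequality with the corrected $\Sep(f\times g,\delta,\nu)\leq\Sep(f,\delta/2,\nu/4)\Sep(g,\delta/2,\nu/4)$, since the $\delta/2$ vanishes under the $\sup_{\delta>0}$ and the $\nu/4$ washes out in the $\log$ ratio; but as written the pigeonhole step is where the proof breaks.
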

\begin{proof}
	We denote the metric on $X$ and $Y$ by $d_X$ and $d_Y$, respectively.
	Let $d$ be the maximum metric on the product space $X\times Y$.
	Using Corollary~\ref{c.span_sep}, the assertions are direct consequences of
	the following two inequalities, which we show for all $\delta>0$ and
	$\nu\in(0,1]$ 
	\begin{eqnarray}
		\label{sep_product_inequality}\Sep(f\times g,\delta,\nu) & \geq &
		\Sep(f,\delta,\nu)\cdot \Sep(g,\delta,\nu) \ , \\
		\label{span_product_inequality}\Span(f\times g,\delta,\nu) & \leq &
			\Span(f,\delta,\nu/2)\cdot\Span(g,\delta,\nu/2) \ .
	\end{eqnarray}
	For proving \eqref{sep_product_inequality} assume that $S_X\subseteq X$
	and $S_Y\subseteq Y$ are $(f,\delta,\nu)$- and $(g,\delta,\nu)$-separated
	sets, respectively, with cardinalities $\Sep(f,\delta,\nu)$ and
	$\Sep(g,\delta,\nu)$, respectively.
	Then $S:=S_X\times S_Y\subseteq X\times Y$ is an
	$(f\times g,\delta,\nu)$-separated set.
	This implies \eqref{sep_product_inequality}.
	
	Now, in order to prove \eqref{span_product_inequality} assume that
	$\tilde S_X\subseteq X$ and $\tilde S_Y\subseteq Y$ are	$(f,\delta,\nu/2)$-
	and $(g,\delta,\nu/2)$-spanning sets, respectively,	with cardinalities
	$\Span(f,\delta,\nu/2)$ and $\Span(g,\delta,\nu/2)$, respectively.
	The set $\tilde S:=\tilde S_X\times\tilde S_Y\subseteq X\times Y$ is
	$(f\times g,\delta,\nu)$-spanning, since for arbitrary $(x,y)\in X\times Y$
	there are $\tilde x\in\tilde S_X$ and $\tilde y\in\tilde S_Y$ such that
	\begin{multline*}
		\countsep{n}(f\times g,\delta,(x,y),(\tilde x,\tilde y))
		=\#\left\{0\leq k<n\;|\;d\big((f\times g)^k(x,y),
			(f\times g)^k(\tilde x,\tilde y)\big)\geq\delta\right\}\\
		\leq\#\left\{0\leq k<n\;|\; d_X(f^k(x),f^k(\tilde x))\geq\delta\right\}
			+\#\left\{0\leq k<n\;|\; d_Y(g^k(y),g^k(\tilde y))\geq\delta\right\}
		\ .\qedhere
	\end{multline*}
\end{proof}

\subsection{Isometries, Morse-Smale systems and transient dynamics}\label{Isometries}

It is obvious that all isometries have bounded separation numbers and zero
amorphic complexity, since $\Sep(f,\delta,\nu)$ does not depend on $\nu$ in this
case.
Similarly, amorphic complexity is zero for Morse-Smale systems.
Here, we call a continuous map $f$ on a compact metric space $X$
\emph{Morse-Smale} if its non-wandering set $\Omega(f)$ is finite.
This implies that $\Omega(f)$ consists of a finite number of fixed or periodic
orbits, and for any $x\in X$ there exists $y\in\Omega(f)$ with
$\nLim f^{np}(x)=y$, where $p$ is the period of $y$.
Since orbits converging to the same periodic orbit cannot be
$(f,\delta,\nu)$-separated, we obtain $\Sep(f,\delta,\nu)\leq \#\Omega(f)$ for
all $\delta,\nu>0$.
Hence, separation numbers are even bounded uniformly in $\delta$ and $\nu$.

This shows that amorphic complexity is, in some sense, less sensitive to transient
behaviour than power entropy, which gives positive value to Morse-Smale systems
(see Section~\ref{PowerEntropy}).
However, amorphic complexity is not entirely insensitive to transient dynamics,
and the relation $\fsc(f)=\fsc(\left.f\right|_{\Omega(f)})$ does not always hold.
An example can be given as follows.

Let $f:[0,1]\times\kreis\to[0,1]\times \kreis$ be of the form
$f(x,y):=(g(x),y+\alpha(x)\bmod 1)$, where $\T^1:=\R^1/\Z^1$, $\alpha:[0,1]\to\R$
is continuous and $g:[0,1]\to[0,1]$ is a Morse-Smale homeomorphism with unique
attracting fixed point $x_a=0$ and unique repelling fixed point $x_r=1$, so that
$\kLim g^k(x)=0$ for all $x\in(0,1)$.
Let $x_0\in(0,1)$ and $x_k:=g^k(x_0)$ for $k\in\N$ and $x_0':=(x_0+x_1)/2$.
Suppose $\alpha$ is given by 
\begin{equation}\label{e.alpha}
	\alpha(x) \ := \ \left\{
	\begin{array}{cl}
		0 & \textrm{if } x\in\{0\}\cup (x_0,1] ;\\
		1-2\frac{|x_0'-x|}{x_0-x_1} & \textrm{if } x\in (x_1,x_0];\\ 
		\ktel \alpha\left(g^{-(k-1)}(x)\right) & \textrm{if } x
			\in (x_k,x_{k-1}],\ k\geq 2;   
	\end{array}\right. \ .
\end{equation}
Then, if $x,x'\in [x_1,x_0']$, we have that 
\begin{equation}\label{e.different_speeds}
	\left| \sum_{k=0}^{n-1} \alpha\circ g^k(x) - \sum_{k=0}^{n-1} \alpha\circ
		g^k(x')\right| 
	\ = \ 2\frac{\abs{x-x'}}{x_0-x_1}\sum _{k=1}^{n} \frac{1}{k} \ .
\end{equation}
This means that one of the two points $(x,0),(x',0)$ performs infinitely more
turns around the annulus $[0,1]\times\kreis$ as $n\to\infty$, and it is not
difficult to deduce from \eqref{e.different_speeds} that $(x,0),(x',0)$ are
$(f, \delta,\nu)$-separated for some fixed $\delta,\nu>0$ independent of $x,x'$.
Hence, $[x_1,x_0']\times\{0\}$ is an uncountable $(f,\delta,\nu)$-separated set,
and we obtain $\Sep(f,\delta,\nu)=\infty$. 

It should be interesting to describe which types of transient behaviour have
an impact on amorphic complexity and which ones do not, and thus to understand
whether this quantity may be used to distinguish qualitatively different types
of transient dynamics.
However, we are not going to pursue this issue further here, but confine
ourselves to give a simple criterion for the validity of the equality
$\fsc(f)=\fsc(\left.f\right|_{\Omega(f)})$.

We say $f$ has the \emph{unique target property} if for every $x\in X\smin\Omega(f)$
there exists $y\in\Omega(f)$ such that $\nLim d(f^n(x),f^n(y))=0$.
Then the following statement is easy to prove.
\begin{lemma} \label{l.unique_target}
	Assume $f$ has the unique target property, then  
	$\uac(f)=\uac\big(\left.f\right|_{\Omega(f)}\big)$
	and	$\oac(f)=\oac\big(\left.f\right|_{\Omega(f)}\big)$.
\end{lemma}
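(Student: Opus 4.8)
The plan is to show that the separation numbers of $f$ and of $\left.f\right|_{\Omega(f)}$ differ only by a harmless rescaling of the distance parameter, which immediately forces the scaling exponents to coincide. Recall that $\Omega(f)$ is a nonempty compact forward-invariant subset of $X$, so that $\left.f\right|_{\Omega(f)}$ is a genuine continuous dynamical system. One inequality is trivial: every $(\left.f\right|_{\Omega(f)},\delta,\nu)$-separated set $S\subseteq\Omega(f)$ is also an $(f,\delta,\nu)$-separated subset of $X$, whence $\Sep(f,\delta,\nu)\geq\Sep(\left.f\right|_{\Omega(f)},\delta,\nu)$ for all $\delta,\nu>0$, and therefore $\uac(f)\geq\uac(\left.f\right|_{\Omega(f)})$ and $\oac(f)\geq\oac(\left.f\right|_{\Omega(f)})$.

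For the reverse inequalities I would use the unique target property to fix, for each $x\in X$, a point $\tau(x)\in\Omega(f)$ with $\lim_{n\to\infty}d\big(f^n(x),f^n(\tau(x))\big)=0$ (choosing $\tau(x)=x$ whenever $x\in\Omega(f)$). Two observations make $\tau$ useful. First, if $\tau(x)=\tau(x')$, then $d(f^n(x),f^n(x'))\to 0$ by the triangle inequality, so $x$ and $x'$ fail to be $(f,\delta,\nu)$-separated for every $\delta,\nu>0$; hence $\tau$ is injective on any $(f,\delta,\nu)$-separated set $S$, and in particular $\#\tau(S)=\#S$. Second, fix $\delta>0$ and distinct $x,x'$ with $\tau(x)\neq\tau(x')$. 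Since the correction terms vanish, there is an $N=N(x,x')$ such that $d(f^n(x),f^n(\tau(x)))<\delta/4$ and $d(f^n(x'),f^n(\tau(x')))<\delta/4$ for all $n\geq N$; for such $n$, the triangle inequality shows that $d(f^n(x),f^n(x'))\geq\delta$ implies $d(f^n(\tau(x)),f^n(\tau(x')))\geq\delta/2$. Consequently $\countsep{n}(f,\delta/2,\tau(x),\tau(x'))\geq\countsep{n}(f,\delta,x,x')-N$ for all $n$, and dividing by $n$ and passing to the upper limit yields $\nu(f,\delta/2,\tau(x),\tau(x'))\geq\nu(f,\delta,x,x')$. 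Combining the two observations, if $S$ is an $(f,\delta,\nu)$-separated set then $\tau(S)\subseteq\Omega(f)$ is an $(\left.f\right|_{\Omega(f)},\delta/2,\nu)$-separated set of the same cardinality, so $\Sep(\left.f\right|_{\Omega(f)},\delta/2,\nu)\geq\Sep(f,\delta,\nu)$ for all $\delta,\nu>0$.

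Putting the two estimates together gives $\Sep(\left.f\right|_{\Omega(f)},\delta,\nu)\leq\Sep(f,\delta,\nu)\leq\Sep(\left.f\right|_{\Omega(f)},\delta/2,\nu)$ for all $\delta,\nu>0$. Dividing by $-\log\nu$, taking $\varliminf$ (respectively $\varlimsup$) as $\nu\to 0$, and then the supremum over $\delta>0$, the factor $1/2$ in the distance disappears and one obtains $\uac(f)=\uac(\left.f\right|_{\Omega(f)})$ and $\oac(f)=\oac(\left.f\right|_{\Omega(f)})$. The step that requires the most care is the transfer of separation frequencies from $x,x'$ to $\tau(x),\tau(x')$: the threshold $N$ beyond which the correction terms are small depends on the chosen pair, so a priori the bound on $\countsep{n}$ is not uniform over $S$. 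This is exactly where the asymptotic, frequency-based notion of separation pays off — a perturbation that vanishes as $n\to\infty$ alters only finitely many terms of each counting function and therefore leaves the upper-density frequency $\nu(f,\cdot,\cdot,\cdot)$ unchanged, so the pair-dependence of $N$ does no harm.
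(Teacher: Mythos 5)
Your argument is correct and takes essentially the same route as the paper's proof of the more general version of this lemma: transfer an $(f,\delta,\nu)$-separated set into $\Omega(f)$ via the target assignment $\tau$, using that the correction terms $d(f^n(x),f^n(\tau(x)))$ vanish. You are in fact slightly more careful than the paper, which simply asserts that the transferred set is $(f,\delta,\nu)$-separated at the \emph{same} threshold $\delta$; your weakening to $\delta/2$ is the technically correct formulation (a pair whose iterates stay at distance exactly $\delta$ may have targets whose iterates hover strictly below $\delta$), and, as you note, the supremum over $\delta$ in the definition of amorphic complexity absorbs the halving, while your injectivity observation for $\tau$ on a separated set is left implicit in the paper.
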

In fact, the nonwandering set $\Omega(f)$ does not play a special role in the
definition of the unique target property nor in the above lemma and can be
replaced by any other subset of $X$ (even invariance is not necessary).
For later use (see Section~\ref{PinchedSystems}), we provide a precise formulation.
Given $E\ssq X$, we let
\begin{equation}\label{e.subset_separation_numbers}
  \Sep_{E}(f,\delta,\nu) \ 
   := \ \sup\left\{\#A \mid A\ssq E \textrm{ and } A \textrm{ is }(f,\delta,\nu)
  \textrm{-separated}\right\} 
\end{equation}
and define
\begin{align}\label{eq: defn h am for subsets}
	\begin{split}
	 	\uac_{E}(f,\delta) & \ := \ \varliminf_{\nu\to 0}
			\frac{\log \Sep_{E}(f,\delta,\nu)}{-\log \nu}\quad , \quad
		\uac_{E}(f) \ := \ \sup_{\delta>0}\uac_{E}(f,\delta)\ ,\\
		\oac_{E}(f,\delta) & \ := \ \varlimsup_{\nu\to 0}
			\frac{\log\Sep_{E}(f,\delta,\nu)}{-\log \nu}\quad , \quad
		\oac_{E}(f)\, \ := \ \sup_{\delta>0}\oac_{E}(f,\delta) \ .
	\end{split}
\end{align}

We say $f$ has the \emph{unique target property with respect to $E\ssq X$} if
for all $x\in X$ there exists $y\in E$ such that $\nLim d(f^n(x),f^n(y))=0$.
\begin{lemma}\label{lem: unique target general}
	Suppose $f:X\to X$ has the unique target property with respect to $E\ssq X$.
	Then $\uac(f)=\uac_{E}(f)$ and $\oac(f)=\oac_{E}(f)$.
\end{lemma}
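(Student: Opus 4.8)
The plan is to mimic the proof of Lemma~\ref{l.unique_target}, which is the special case $E=\Omega(f)$, but keep track of the asymmetry between the two containments $\Sep_E\le\Sep$ and $\Sep\le(\text{something involving }\Sep_E)$. The key point is that the unique target property with respect to $E$ provides a retraction-like map $x\mapsto y(x)\in E$ with $d(f^n(x),f^n(y(x)))\to 0$, and asymptotic separation only ever ``sees'' the tail of an orbit, so $x$ and $y(x)$ are dynamically indistinguishable at every scale $\delta$ and every frequency $\nu$.

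First I would record the trivial inequality. Since $E\subseteq X$, any $(f,\delta,\nu)$-separated subset of $E$ is an $(f,\delta,\nu)$-separated subset of $X$, hence $\Sep_E(f,\delta,\nu)\le\Sep(f,\delta,\nu)$ for all $\delta,\nu$, which immediately gives $\uac_E(f,\delta)\le\uac(f,\delta)$ and $\oac_E(f,\delta)\le\oac(f,\delta)$, and taking suprema over $\delta$ yields $\uac_E(f)\le\uac(f)$ and $\oac_E(f)\le\oac(f)$.

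For the reverse direction, fix $\delta>0$ and $\nu\in(0,1]$. Let $S\subseteq X$ be an $(f,\delta,\nu)$-separated set; we want to push it into $E$ without losing too much. For each $x\in S$ choose $y(x)\in E$ with $\nLim d(f^n(x),f^n(y(x)))=0$. I claim that for distinct $x,x'\in S$ the images $y(x)$ and $y(x')$ are distinct and are $(f,\delta/2,\nu)$-separated — wait, one must be slightly careful with the constant. Concretely: since $d(f^k(x),f^k(y(x)))<\delta/4$ for all $k$ large, the triangle inequality gives $\countsep{n}(f,\delta,x,x')\le\countsep{n}(f,\delta/2,y(x),y(x'))+C$ for some constant $C=C(x,x')$ absorbing the finitely many initial indices, and dividing by $n$ and taking $\varlimsup$ shows $\nu\le\varlimsup_n\frac1n\countsep{n}(f,\delta/2,y(x),y(x'))$; in particular $y(x)\ne y(x')$ and the image set $y(S)\subseteq E$ is $(f,\delta/2,\nu)$-separated with $\#y(S)=\#S$. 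Hence $\Sep_E(f,\delta/2,\nu)\ge\Sep(f,\delta,\nu)$ for all $\delta,\nu$. Plugging this into \eqref{eq: defn h am for subsets} gives $\uac_E(f,\delta/2)\ge\uac(f,\delta)$ and $\oac_E(f,\delta/2)\ge\oac(f,\delta)$ for every $\delta>0$, and taking the supremum over $\delta>0$ on both sides (the factor $1/2$ is harmless under $\sup_{\delta>0}$) yields $\uac_E(f)\ge\uac(f)$ and $\oac_E(f)\ge\oac(f)$. Combining with the first paragraph finishes the proof.

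The only mildly delicate point — the ``main obstacle'', such as it is — is the bookkeeping with the separation constant: the map $x\mapsto y(x)$ need not be uniformly close to the identity along orbits (the convergence $d(f^n(x),f^n(y(x)))\to0$ is pointwise in $x$, not uniform), so one cannot directly assert $\countsep{n}(f,\delta,x,x')\le\countsep{n}(f,\delta,y(x),y(x'))$. This is why I pass from $\delta$ to $\delta/2$ and allow a pair-dependent additive constant $C(x,x')$, which disappears after normalising by $n$ and taking the limit; since we then take $\sup_{\delta>0}$, the loss of a factor two in $\delta$ costs nothing. Everything else is a direct transcription of the argument for Lemma~\ref{l.unique_target}.
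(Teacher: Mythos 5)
Your argument is correct and follows essentially the same strategy as the paper's one‑line proof: use the unique target property to push an $(f,\delta,\nu)$-separated set $S\subseteq X$ into $E$, and observe that the reverse inequality $\Sep_E\le\Sep$ is trivial. The one place you depart is actually a small improvement: the paper asserts directly that the pushed set $\tilde S\subseteq E$ is still $(f,\delta,\nu)$-separated at the \emph{same} $\delta$, which is not quite immediate (orbit distances hovering at exactly $\delta$ could be pushed just below the threshold by the perturbation $x\mapsto y(x)$, even though $d(f^n(x),f^n(y(x)))\to0$); your pass from $\delta$ to $\delta/2$, together with the observation that the factor two is absorbed by $\sup_{\delta>0}$, closes this gap cleanly while reaching the same conclusion.
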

\begin{proof}
	Suppose $S=\{x_1\ld x_m\} \ssq X$ is an $(f,\delta,\nu)$-separated set.
	Then by assumption there exist $y_1\ld y_m\in E$ such that
	$\nLim d(f^n(x_i),f^n(y_i))=0$ for all $i\in\{1\ld m\}$.
	Hence, the set $\tilde S:=\{y_1\ld y_m\}\ssq E$ is $(f,\delta,\nu)$-separated
	as well.
	This shows that $\Sep_{E}(f,\delta,\nu)\geq\Sep(f,\delta,\nu)$, and since
	the reverse inequality is obvious this proves the statement.
\end{proof}

\subsection{Denjoy examples and Sturmian subshifts}
\label{BasicExamples}

We start with some standard notation concerning circle maps and symbolic dynamics.
Let $\T^1=\R/\Z$ be the circle and denote by $d$ the usual metric on $\T^1$.
Further, we denote the open and the closed counter-clockwise interval from $a$
to $b$ in $\T^1$ by $(a,b)$ and $[a,b]$, respectively.
The Lebesgue measure on $\T^1$ is denoted by $\Leb$.
Moreover, the rigid rotation with angle $\alpha\in\R$ is denoted by
$R_\alpha(x):=x+\alpha\mod 1$.

For a finite set $A$ we denote by $\sigma$ the left shift on $\Sigma_A:=A^{\I}$
where $\I$ equals either $\N_0$ or $\Z$.
The product topology on $\Sigma_A$ is induced by the \emph{Cantor metric}
$\rho(x,y):=2^{-j}$ where $x=(x_k)_{k\in\I}$, $y=(y_k)_{k\in\I}\in\Sigma_A$
and $j:=\min\{|k|: x_k\neq y_k\textnormal{ with } k\in\I\}$.\smallskip

We first recall some basics about Sturmian subshifts and Denjoy homeomorphisms
of the circle.
For Sturmians, we mainly follow \cite[Section 2.2]{ChazottesDurand2005}.
Assume that $\alpha\in (0,1)$ is irrational.
Consider the coding map $\phi_{\alpha}:\T^1\to\{0,1\}$ defined via $\phi_{\alpha}(x)=0$
if $x\in I_0:=[0,1-\alpha)$ and $\phi_{\alpha}(x)=1$ if $x\in I_1:=[1-\alpha,1)$.
Set
\[
	\Sigma_{\alpha}\ := \ \overline{
	\left\{(\phi_{\alpha}(R_{\alpha}^k(x)))_{k\in\Z}\;|\;x\in\T^1\right\}}
	\ \subset \ \Sigma_{\{0,1\}} \ .
\]
The subshift $(\Sigma_{\alpha},\sigma)$ is called the
\emph{Sturmian subshift generated by $\alpha$} and its elements are called
\emph{Sturmian sequences}.
According to \cite{MorseHedlund1940}, there exists a map $h:\Sigma_{\alpha}\to\T^1$
semi-conjugating $\sigma$ and $R_{\alpha}$ with the property that $\#h^{-1}(x)=2$
for $x\in\{k\alpha\mod 1\;|\;k\in\Z\}$ and $\#h^{-1}(x)=1$ otherwise.
If $x=k\alpha$, then one of the two alternative sequences in $h^{-1}(x)$
corresponds to the coding with respect to the original partition $\{I_0, I_1\}$,
whereas the other one corresponds to the coding with respect to the partition
$\{(0,1-\alpha],(1-\alpha,1]\}$.
Further information is given in \cite[Section 1.6]{BlanchardMaassNogueira2000}.\smallskip

Poincaré's classification of circle homeomorphisms in \cite{Poincare1885} states
that to each orientation preserving homeomorphism $f:\T^1\to\T^1$ of the circle
we can associate a unique real number $\alpha\in[0,1)$, called the rotation
number of $f$, such that $f$ is semi-conjugate, via an orientation preserving
map, to the rigid rotation $R_{\alpha}$, provided $\alpha$ is irrational (see
also \cite{demelo/vanstrien:1993,HasselblattKatok1997}).
Another classical result by A. Denjoy \cite{Denjoy1932} states that if $f$ is a
diffeomorphism such that its derivative is of bounded variation, then $f$ is
even conjugate to $R_{\alpha}$.
In this case, the amorphic complexity is zero.
However, Denjoy also constructed examples of $C^1$ circle diffeomorphism with
irrational rotation number that are not conjugate to a rotation and later, Herman
\cite{Herman1979} showed that these examples can be made $C^{1+\varepsilon}$ for
any $\varepsilon<1$.
Such maps are commonly called \emph{Denjoy examples} or {\em Denjoy homeomorphisms}.
From Poincar\'e's classification, it is known that in this case there exist
\emph{wandering intervals}, that is, open intervals $I\subset \kreis$ such that
$f^n(I)\cap I=\emptyset$ for all $n\geq 1$.
Any Denjoy example has a unique minimal set $C$, which is a Cantor set and
coincides with the non-wandering set $\Omega(f)$.
All connected components of $\kreis\smin C$ are wandering intervals, and the
length of their $n$-th iterates goes to zero as $n\to\infty$.
Since the endpoints of these intervals belong to the minimal set, this also
implies that Denjoy examples have the unique target property. \smallskip

Not surprisingly, there is an intimate connection between Denjoy examples and
Sturmian subshifts.
Let $f$ be a Denjoy homeomorphism with rotation number $\alpha$ and suppose it
has a unique wandering interval $I$, in the sense that the minimal set
$C=\kreis\smin\bigcup_{n\in\Z} f^n(I)$.\foot{It is possible
	to have several connected components of $\kreis\smin C$ with pairwise
	disjoint orbits.}
Given any $x_0\in I$, let $J_0:=[f(x_0),x_0)$ and $J_1:=[x_0,f(x_0))$.
Then for every $x\in\kreis$ the coding $\ind_{J_1}\circ f^n(x)$, where $\ind_{J_1}$
denotes the indicator function of $J_1$, is a Sturmian sequence in $\Sigma_\alpha$.
Moreover, in this situation any point in the minimal set $C$ has a unique coding.
This yields the following folklore statement.
\begin{lemma} \label{l.Sturmian_Denjoy}
	For any Sturmian subshift $(\Sigma_\alpha,\sigma)$ there exists a Denjoy
	homeomorpism $f$ with minimal set $C$ such that $\left.f\right|_{C}$
	is conjugate to $\left.\sigma\right|_{\Sigma_\alpha}$.
\end{lemma}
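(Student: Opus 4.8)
The plan is to construct $f$ by the classical Denjoy blow-up of an $R_\alpha$-orbit and to recognise, via the coding recalled in the paragraph immediately preceding the lemma, that the restriction of this $f$ to its minimal set is conjugate — not merely semiconjugate — to $\left.\sigma\right|_{\Sigma_\alpha}$. First I would carry out the blow-up: replace each point of the orbit $O:=\{k\alpha\bmod 1\mid k\in\Z\}$ of $R_\alpha$ by a nondegenerate closed interval $\overline{I_k}$, with $\sum_{k\in\Z}\abs{I_k}<\infty$ so that the ambient space is again a circle. The standard Denjoy construction then yields an orientation-preserving homeomorphism $f$ of $\kreis$ together with a monotone degree-one map $p:\kreis\to\kreis$ with $p\circ f=R_\alpha\circ p$, $p^{-1}(k\alpha)=\overline{I_k}$ for every $k$, and $p$ injective over $\kreis\smin O$; here $f$ maps $\overline{I_k}$ orientation-preservingly and homeomorphically onto $\overline{I_{k+1}}$, which pins it down uniquely since $\bigcup_k\overline{I_k}$ is dense in $\kreis$ and $f$ is monotone on it. In particular $f$ has rotation number $\alpha$, and the pairwise disjoint intervals $I_k=f^k(I_0)$ are wandering, so $f$ is a Denjoy example. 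Its minimal set is the Cantor set $C:=\kreis\smin\bigcup_{k\in\Z}I_k$: this set is closed, $f$-invariant and nonempty, and it is minimal because for $x\in C$ the closure of the $f$-orbit of $x$ is mapped onto $\kreis$ by $p$ (minimality of $R_\alpha$) and hence, pulling back through the single-valued branch of $p^{-1}$ over $\kreis\smin O$, is dense in $C$. Since a Denjoy example has a unique minimal set, $C$ is \emph{the} minimal set, and $C=\kreis\smin\bigcup_{n\in\Z}f^n(I_0)$ exhibits $I_0$ as a unique wandering interval in the sense required by the preceding paragraph.

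Next I would assemble the conjugacy. Pick $x_0\in\inte(I_0)$, put $J_0:=[f(x_0),x_0)$ and $J_1:=[x_0,f(x_0))$, and define $\Phi:\kreis\to\{0,1\}^\Z$ by $\Phi(x):=(\ind_{J_1}(f^n(x)))_{n\in\Z}$. By the discussion preceding Lemma~\ref{l.Sturmian_Denjoy}, $\Phi(x)\in\Sigma_\alpha$ for every $x\in\kreis$ and $\Phi$ is injective on $C$, while $\Phi\circ f=\sigma\circ\Phi$ is immediate from the definition. The map $\Phi|_C$ is continuous: since $x_0\in\inte(I_0)$ and $f(x_0)\in\inte(I_1)$ lie off $C$, we have $J_1\cap C=\overline{J_1}\cap C$, which is clopen in $C$, so $\{x\in C\mid f^n(x)\in J_1\}=(f^n|_C)^{-1}(J_1\cap C)$ is clopen in $C$ for every $n$ and each coordinate of $\Phi|_C$ is locally constant. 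Finally $\Phi(C)$ is a nonempty, closed (by compactness of $C$), $\sigma$-invariant subset of the minimal subshift $\Sigma_\alpha$, hence equals $\Sigma_\alpha$. Therefore $\Phi|_C:C\to\Sigma_\alpha$ is a continuous bijection between compact metric spaces, i.e.\ a homeomorphism, and it conjugates $\left.f\right|_C$ with $\left.\sigma\right|_{\Sigma_\alpha}$.

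The one step requiring genuine care is the blow-up construction of $f$ and the verification that the Cantor set $C$ is exactly its minimal set; both are classical (see e.g.\ \cite{demelo/vanstrien:1993,HasselblattKatok1997} together with the discussion of Denjoy examples above), so I would present this part tersely and refer to the literature, spending the written-out details instead on the continuity and surjectivity of $\Phi|_C$ — the two ingredients that upgrade the Sturmian coding from a semiconjugacy to the desired conjugacy.
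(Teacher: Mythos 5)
Your proof is correct and follows essentially the same route the paper has in mind: construct $f$ by the classical Denjoy blow-up of an $R_\alpha$-orbit so that it has a single orbit of wandering intervals, then use the Sturmian coding via $\ind_{J_1}$ on the minimal set $C$ and check injectivity (the ``unique coding'' observation), equivariance, continuity, and surjectivity to obtain a conjugacy. The paper simply labels this as folklore and relies on the preceding paragraph; you have written out precisely the details (clopenness of $J_1\cap C$ and minimality of $\Sigma_\alpha$) that turn the sketch into a proof.
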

For our purposes, this means that we only have to determine the amorphic
complexity of Denjoy examples.
Note that the converse to the above lemma is false: if $f$ has multiple
wandering intervals with pairwise disjoint orbits, then it is not conjugate to a
Sturmian subshift.
\begin{theorem} \label{t.denjoy}
	Suppose $f:\T^1\to\T^1$ is a Denjoy homeomorphism.
	Then $\fsc(f)=1$.
\end{theorem}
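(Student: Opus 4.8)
The plan is to establish the two inequalities $\uac(f)\ge 1$ and $\oac(f)\le 1$ separately; together they give $\fsc(f)=1$. Throughout I write $\alpha$ for the rotation number, $h:\T^1\to\T^1$ for the monotone semiconjugacy with $h\circ f=R_\alpha\circ h$, and $(J_\iota)_\iota$ for the family of wandering intervals (nonempty, since a Denjoy homeomorphism is not conjugate to a rotation). Recall that $h$ collapses each $\overline{J_\iota}$ to a single point $h(J_\iota)$ and is one-to-one elsewhere. The main tool will be the push-forward $\tilde\mu:=h_\ast\Leb$ of Lebesgue measure, which splits as $\tilde\mu=\tilde\mu_0+\sum_\iota|J_\iota|\,\delta_{h(J_\iota)}$, where $\tilde\mu_0:=h_\ast(\Leb|_{\Omega(f)})$ is finite and non-atomic and $L_0:=\sum_\iota|J_\iota|\le 1$. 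The elementary but crucial observation is that for all $u,v\in\T^1$ and either of the two arcs $B$ joining $h(u)$ and $h(v)$ one has $d(u,v)\le\tilde\mu(B)$, because the arc from $u$ to $v$ on the corresponding side is contained in $h^{-1}(B)$. (Since Denjoy maps do not preserve $\Leb$, the measure $\tilde\mu$ is \emph{not} $R_\alpha$-invariant; this is why the abstract estimate of Theorem~\ref{t.automorphic_quantitative_intro} cannot be applied directly here.)

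For the upper bound I fix $\delta>0$. As $\tilde\mu_0$ is finite and non-atomic, a compactness argument gives $\omega_0(r):=\sup\{\tilde\mu_0(A):A\text{ a closed arc of length }\le r\}\to 0$ as $r\to 0$; pick $r_0=r_0(\delta)>0$ with $\omega_0(r_0)<\delta/2$. For $x,y$ with $r:=\rho(h(x),h(y))\le r_0$ and $B$ the shorter arc between $h(x)$ and $h(y)$, the observation above shows that $d(f^kx,f^ky)\ge\delta$ forces $\tilde\mu(R_\alpha^kB)\ge\delta$ and hence $\sum\{|J_\iota|:h(J_\iota)\in R_\alpha^kB\}\ge\delta/2$. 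By Markov's inequality together with the convergence $\tfrac1n\#\{0\le k<n:R_\alpha^{-k}z\in B\}\to|B|$ (uniform in $z$, by unique ergodicity of $R_\alpha$), the set of such $k$ has upper density at most $\tfrac{2}{\delta}\sum_\iota|J_\iota|\,r=\tfrac{2L_0}{\delta}r$. Thus $\nu(f,\delta,x,y)\le\tfrac{2L_0}{\delta}\rho(h(x),h(y))$ whenever $\rho(h(x),h(y))\le r_0$. Consequently, once $\nu$ is small, every $(f,\delta,\nu)$-separated set $S$ has $h|_S$ injective and $h(S)$ a $\tfrac{\delta\nu}{2L_0}$-separated subset of $\T^1$, so $\Sep(f,\delta,\nu)\le 1+\tfrac{2L_0}{\delta\nu}$; letting $\nu\to 0$ gives $\oac(f,\delta)\le 1$, and hence $\oac(f)\le 1$.

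For the lower bound I would fix two distinct wandering intervals $J^{(1)}\ne J^{(2)}$ with $h(J^{(1)})\ne h(J^{(2)})$, set $\delta:=\tfrac12\min(|J^{(1)}|,|J^{(2)}|)$ and $d_0:=\rho(h(J^{(1)}),h(J^{(2)}))>0$. Given small $\nu>0$, put $\epsilon:=\nu/2$ and $N:=\lceil d_0/(2\epsilon)\rceil$ (so $N\epsilon<d_0$ and $N\ge d_0/\nu$), choose $\zeta$ such that none of $\zeta,\zeta+\epsilon,\dots,\zeta+(N-1)\epsilon$ lies in the $R_\alpha$-orbit of any $h(J_\iota)$ (only countably many $\zeta$ are excluded), and set $x_i:=h^{-1}(\zeta+i\epsilon)$ for $0\le i<N$; these are $N$ distinct points. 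For $i<j$ let $A_k$ be the counter-clockwise arc from $R_\alpha^k(\zeta+i\epsilon)$ to $R_\alpha^k(\zeta+j\epsilon)$, of length $(j-i)\epsilon<d_0$. If $h(J^{(1)})\in\operatorname{int}A_k$ then $h(J^{(2)})\notin A_k$, so $\overline{J^{(1)}}$ lies strictly inside the arc over $A_k$ and $\overline{J^{(2)}}$ strictly inside the arc over the complementary arc; these are precisely the two arcs between $f^kx_i$ and $f^kx_j$, each therefore of length $\ge 2\delta$, so $d(f^kx_i,f^kx_j)\ge 2\delta$ — and symmetrically if $h(J^{(2)})\in\operatorname{int}A_k$. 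The two sets of such $k$ are disjoint and, by unique ergodicity, each has density $(j-i)\epsilon$, so $\nu(f,2\delta,x_i,x_j)\ge 2(j-i)\epsilon\ge\nu$. Hence $\{x_0,\dots,x_{N-1}\}$ is $(f,2\delta,\nu)$-separated, $\Sep(f,2\delta,\nu)\ge N\ge d_0/\nu$, and letting $\nu\to 0$ yields $\uac(f,2\delta)\ge 1$, so $\uac(f)\ge 1$.

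The hardest part is the upper bound. Because $\tilde\mu$ carries atoms (from the wandering intervals) and is not rotation-invariant, $d(f^kx,f^ky)$ cannot be controlled uniformly in terms of $\rho(h(x),h(y))$ at each single time $k$ — equivalently, the function $\eta_\delta$ of Lemma~\ref{l.eta_function} need not admit a useful lower bound here. The way around this is to pass to \emph{frequencies}: isolate the residual non-atomic part $\tilde\mu_0$ (uniformly small on short arcs) and bound the density of the times $k$ at which the large-interval mass inside $R_\alpha^kB$ exceeds $\delta/2$ by a Birkhoff average, invoking unique ergodicity of the rotation. A secondary subtlety, on the lower-bound side, is the book-keeping needed to ensure that \emph{both} arcs between $f^kx_i$ and $f^kx_j$ contain one of the chosen wandering intervals, so that their Denjoy-distance — the minimum of the two arc lengths — is genuinely bounded below.
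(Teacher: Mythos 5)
Your argument is correct and rests on the same core mechanisms as the paper's proof of Theorem~\ref{t.denjoy}: pushing Lebesgue measure forward through the semiconjugacy $h$ (the paper's $\mu=\Leb\circ p^{-1}$ is exactly your $\tilde\mu=h_\ast\Leb$), reducing separation frequencies to visit frequencies of rotation orbits, invoking unique ergodicity / Weyl equidistribution, and finishing with a pigeonhole-type bound showing $\Sep(f,\delta,\nu)=O(1/\nu)$. The differences are at the level of execution rather than strategy. For the upper bound the paper constructs an explicit equidistributed \emph{spanning} lattice in the inaccessible set, controls $d(f^k x_i,f^k y)$ by the auxiliary function $\varphi_{\tilde\nu}(x)=\mu([x,x+\tilde\nu])$, and then bounds $\Leb(\{\varphi_{\tilde\nu}\ge\delta\})$ by disjointly packing intervals against $\mu(\T^1)=1$; you instead decompose $\tilde\mu$ into its non-atomic residual part (uniformly small on short arcs by continuity of measure) and a purely atomic part of total mass $L_0\le 1$, and bound the frequency of bad times by Markov applied to the atomic part. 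This gives the cleaner pointwise inequality $\nu(f,\delta,x,y)\le\tfrac{2L_0}{\delta}\rho(h(x),h(y))$ from which the $\Sep$ bound falls out directly, rather than via a spanning set and \eqref{e.span_sep_relation}. For the lower bound the paper plants three reference collapsed intervals so that whenever one lies in the arc $R_\alpha^kP$ another necessarily lies in the complement, whereas you use two wandering intervals with a spacing hypothesis ($|A_k|<d_0$); both guarantee that each of the two arcs between $f^kx_i$ and $f^kx_j$ contains a fixed gap of definite length. Your concluding remarks correctly identify why the abstract bound of Theorem~\ref{t.automorphic_quantitative_intro} is not useful here and why passing to frequencies via equidistribution is the decisive step; that is also the guiding insight of the paper's proof.
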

Since Denjoy examples have the unique target property,
Lemma~\ref{l.unique_target} yields that $\fsc(\left.f\right|_{C})=\fsc(f)=1$.
Together with Corollary~\ref{fsc_top_inv} and Lemma~\ref{l.Sturmian_Denjoy},
this implies
\begin{corollary}
	For any Sturmian subshift $(\Sigma_\alpha,\sigma)$ we have
	$\fsc\big(\left.\sigma\right|_{\Sigma_\alpha}\big)=1$.
\end{corollary}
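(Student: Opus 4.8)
The plan is to obtain the corollary purely by transporting the computation of Theorem~\ref{t.denjoy} along a conjugacy and a restriction; no new dynamical argument is needed, since all the substance already resides in the Denjoy case.

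First I would use Lemma~\ref{l.Sturmian_Denjoy} to fix a Denjoy homeomorphism $f:\T^1\to\T^1$ with rotation number $\alpha$, minimal set $C$, such that $\left.f\right|_{C}$ is topologically conjugate to $\left.\sigma\right|_{\Sigma_\alpha}$. By Theorem~\ref{t.denjoy} we then have $\fsc(f)=1$.

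Next I would pass from the full circle map to its restriction to $C$. As recalled in the discussion preceding Lemma~\ref{l.Sturmian_Denjoy}, every connected component of $\T^1\smin C$ is a wandering interval whose iterates shrink to zero length and whose endpoints lie in $C=\Omega(f)$; hence for every $x\in\T^1$ there is $y\in C$ with $\nLim d(f^n(x),f^n(y))=0$, i.e.\ $f$ has the unique target property with respect to $C$. Lemma~\ref{l.unique_target} (equivalently Lemma~\ref{lem: unique target general} with $E=C$) then yields $\uac(f)=\uac(\left.f\right|_{C})$ and $\oac(f)=\oac(\left.f\right|_{C})$, so $\fsc(\left.f\right|_{C})=\fsc(f)=1$.

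Finally I would invoke the topological invariance of amorphic complexity, Corollary~\ref{fsc_top_inv}: since $C$ is a closed (Cantor) subset of $\T^1$ and $\Sigma_\alpha$ is a closed subshift of $\{0,1\}^{\Z}$, both phase spaces are compact, so the conjugacy from Lemma~\ref{l.Sturmian_Denjoy} gives $\fsc(\left.\sigma\right|_{\Sigma_\alpha})=\fsc(\left.f\right|_{C})=1$, which is the claim. There is no real obstacle at the level of the corollary — the only thing to verify is the compactness hypothesis of Corollary~\ref{fsc_top_inv}, which is immediate — so the argument is essentially bookkeeping; the genuine work (computing the scaling of the asymptotic separation numbers of a Denjoy example, where the separation frequencies turn into visiting frequencies of arcs under $R_\alpha$ and can be evaluated by unique ergodicity) is entirely contained in Theorem~\ref{t.denjoy}.
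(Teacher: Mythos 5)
Your argument is correct and coincides with the paper's own proof: both derive the result from Theorem~\ref{t.denjoy} via the unique target property of Denjoy examples (Lemma~\ref{l.unique_target}) to pass to $\left.f\right|_{C}$, and then transport the value through the conjugacy of Lemma~\ref{l.Sturmian_Denjoy} using Corollary~\ref{fsc_top_inv}. Nothing to add.
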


Theorem~\ref{t.denjoy} is a direct consequence of the following two lemmas.
However, before we proceed, we want to collect some more facts concerning
Denjoy examples, following mainly \cite[Section 0]{Markley1970} and
\cite[Section 2]{Hernandez-CorbatoOrtega2012}.
The Cantor set $C=\Omega(f)$ can be described as
\[
	C\ = \ \T^1\backslash\bigcup\limits_{\ell=1}^\infty (a_{\ell},b_{\ell}) \ ,
\]
where $((a_{\ell},b_{\ell}))_{\ell\in\N}$ is a family of open and pairwise
disjoint intervals.
The \emph{accessible points $A\subset\T^1$ of $C$} are defined as the union of
the endpoints of these intervals and the \emph{inaccessible points of $C$} are
defined as $I:=C\backslash A$.
A \emph{Cantor function $p:\T^1\to\T^1$ associated to $C$} is a continuous map
satisfying
\[
	p(x)=p(y)\ \Longleftrightarrow \ x=y\textnormal{ or } x,y
	\in [a_{\ell},b_{\ell}] \textnormal{ for some }\ell\geq 1 \ ,
\]
that is, $p$ collapses the intervals $[a_{\ell},b_{\ell}]$ to single points and is
invertible on $I$.
From this definition it is not difficult to deduce that $p$ is onto and that
$p(A)$ is countable and dense in $\T^1$.
Furthermore, we can assume without loss of generality that
$p\circ f=R_{\alpha}\circ p$, where $\alpha\in [0,1)\backslash\Q$ is the rotation
number of $f$, see \cite[Section 2]{Markley1970}.
\begin{lemma}
	Let $f:\T^1\to\T^1$ be a Denjoy homeomorphism.
	Then there exists $\delta>0$ such that
	$\Sep(f,\delta,\nu)\geq\lfloor 1/\nu\rfloor$ for all $\nu\in(0,1]$.
\end{lemma}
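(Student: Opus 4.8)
The plan is to exhibit, for a suitable fixed $\delta>0$, an $(f,\delta,\nu)$-separated set of cardinality at least $\lfloor 1/\nu\rfloor$ for every $\nu\in(0,1]$, by exploiting the semi-conjugacy $p\circ f=R_\alpha\circ p$ to an irrational rotation. The key point is that the separation frequency of two points equals a visiting frequency of the rotation, which can be computed via unique ergodicity of $R_\alpha$ (equivalently, by the equidistribution theorem). First I would pick a nontrivial gap, say the interval $(a_1,b_1)$ collapsed by $p$ to a single point $c\in\T^1$, and choose $\delta>0$ smaller than, say, one third of the length of $(a_1,b_1)$. Then any two points $x,y\in\T^1$ whose $R_\alpha$-orbits of their $p$-images both land in a small subinterval $U$ of $p((a_1,b_1))$-preimages — more precisely, for which $f^k(x)$ lies near $a_1$ and $f^k(y)$ lies near $b_1$ at the same time $k$ — are at distance $\geq\delta$ at that time $k$.

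The construction itself I would carry out as follows. Fix $\nu\in(0,1]$ and set $N:=\lfloor 1/\nu\rfloor$. Using that $p(A)$ is dense in $\T^1$ and that accessible points come in pairs $p(a_\ell)=p(b_\ell)$, choose $N$ distinct points $z_1,\dots,z_N$ in $\T^1$ and, for each $i$, a point $x_i\in A$ (an endpoint of some gap) lifting $z_i$ under $p$. Actually, cleaner: take $x_i:=b_{\ell_i}$ for $N$ distinct gaps whose $p$-images are distinct (possible since $p(A)$ is dense hence infinite). Now I want each pair $(x_i,x_j)$ to be $(f,\delta,\nu)$-separated, i.e.\ $\varlimsup_n \tfrac1n S_n(f,\delta,x_i,x_j)\geq\nu$. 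Since $p\circ f^k = R_\alpha^k\circ p$ and $x_i$ is the right endpoint of the gap $(a_{\ell_i},b_{\ell_i})$, the orbit $f^k(x_i)$ approaches the gap $(a_1,b_1)$ precisely when $R_\alpha^k(p(x_i))$ approaches $c=p((a_1,b_1))$; and when it does, $f^k(x_i)$ is forced to one particular side of that gap (the side determined by orientation and by which endpoint $x_i$ is), because $f$ is orientation preserving and maps gaps to gaps. The decisive combinatorial fact is that for two distinct gaps, the symbolic/geometric side to which the orbit is pushed differs for infinitely many (indeed, a positive-frequency set of) times $k$ at which $R_\alpha^k$ visits a neighbourhood of $c$ — this is exactly the Sturmian coding phenomenon, and the frequency of such $k$ is the Lebesgue measure of an arc, which can be taken $\geq\nu$ by shrinking the arc around $c$ appropriately (here we only need frequency $\geq\nu$, and $\nu N\leq 1$, so a single arc of length $\geq\nu$ near $c$ works simultaneously for all pairs).

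Concretely, by unique ergodicity of $R_\alpha$ the visiting frequency $\varlimsup_n\tfrac1n\#\{0\le k<n\mid R_\alpha^k(w)\in J\}$ equals $\Leb(J)$ for every arc $J$ and every $w$; choosing an arc $J$ around $c$ with $\Leb(J)\geq\nu$ (possible since $\nu\le 1$) and small enough that $p^{-1}(J\setminus\{c\})$ is contained in the $\delta$-neighbourhood of the gap $(a_1,b_1)$ split into its two sides, one gets that for each pair $i\neq j$, the set of times $k$ with $R_\alpha^k(z_i)\in J$ has frequency $\Leb(J)\ge\nu$, and at each such $k$ the points $f^k(x_i)$ and $f^k(x_j)$ sit on opposite sides of $(a_1,b_1)$ as soon as $z_i\neq z_j$ lies at the correct endpoint-pattern, hence are at distance $\geq\delta$. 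The main obstacle is this last orientation/side-assignment bookkeeping: one must check that, for the chosen family of endpoints, distinct points genuinely get separated — i.e.\ that they are not all pushed to the same side — for a full-frequency (not merely infinite) set of return times $k$. I would handle this by noting that $f|_C$ is (an extension by wandering intervals of) a minimal rotation, so the set of gaps is a single $f$-orbit together with possibly finitely many others; picking all $N$ endpoints from one gap-orbit, the relative position of $f^k(x_i)$ and $f^k(x_j)$ inside a neighbourhood of $(a_1,b_1)$ is controlled by $R_\alpha^{k}(z_i)$ versus $R_\alpha^k(z_j)$ relative to $c$, and since $z_i\ne z_j$ these land on opposite sides of $c$ with frequency $\min(t,1-t)$ where $t$ is the normalized distance, which we can keep $\geq\nu$ by choosing the $z_i$ close together within an arc of length $2\nu$ around a point near $c$. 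Once the side-assignment is settled, the frequency estimate is immediate from unique ergodicity, giving $\Sep(f,\delta,\nu)\geq N=\lfloor 1/\nu\rfloor$.
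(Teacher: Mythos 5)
Your overall strategy — use the semi-conjugacy $p\circ f=R_\alpha\circ p$ and compute separation frequencies via equidistribution of the rotation — is exactly the paper's, but your frequency count is wrong, and this is not a cosmetic slip: it is the point of the lemma.

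The flaw is in the sentence: ``for each pair $i\neq j$, the set of times $k$ with $R_\alpha^k(z_i)\in J$ has frequency $\Leb(J)\ge\nu$, and at each such $k$ the points $f^k(x_i)$ and $f^k(x_j)$ sit on opposite sides of $(a_1,b_1)$.'' That last clause is false. Whether $f^k(x_i)$ and $f^k(x_j)$ straddle the gap $(a_1,b_1)$ is equivalent to $c=p((a_1,b_1))$ lying in the arc between $R_\alpha^k(z_i)$ and $R_\alpha^k(z_j)$; it has nothing to do with whether $R_\alpha^k(z_i)$ alone visits a fixed arc $J$. Since $(R_\alpha^k(z_i),R_\alpha^k(z_j))$ is just a rigidly rotated copy of $(z_i,z_j)$, the frequency of $c\in(R_\alpha^k(z_i),R_\alpha^k(z_j))$ is exactly $d(z_i,z_j)$, by equidistribution. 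So the pairwise separation frequency is governed by the pairwise distance $d(z_i,z_j)$, \emph{not} by $\Leb(J)$. Stacking $N=\lfloor 1/\nu\rfloor$ points $z_i$ into an arc of length $2\nu$, as you propose, makes the adjacent distances of order $\nu^2$, and those pairs are then separated only with frequency of order $\nu^2$, far below the required $\nu$. You cannot trade this off against making $J$ longer: trying to enforce ``both orbits in $J$ and on opposite sides of $c$'' yields a frequency $\min(t,\Leb(J)-t)$ with $t=d(z_i,z_j)$, which is still at most $t$ and forces $t\ge\nu$.

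Once you repair this by spacing the $p(x_i)$ at least $\nu$ apart (an equidistributed lattice, as in the paper), a second issue appears which your single-gap scheme does not address: when $d(z_i,z_j)$ is not small, the condition that the gap $(a_1,b_1)$ lie in the arc between $f^k(x_i)$ and $f^k(x_j)$ only forces \emph{that} arc to have length $\geq L$; the complementary arc can be arbitrarily short, so the circle distance $d(f^k(x_i),f^k(x_j))$ need not be $\geq\delta$. This is precisely why the paper uses three collapsed intervals whose $p$-images $\zeta_1,\zeta_2,\zeta_3$ are pairwise more than $1/4$ apart: since $\Leb(P)\le 1/2$, at least one of $\zeta_2,\zeta_3$ lies outside $R_\alpha^k(P)$ whenever $\zeta_1$ lies inside it, so a collapsed interval lies in \emph{each} of the two complementary arcs, and both arcs have length $\geq\delta$. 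Your ``side-assignment bookkeeping'' worry, by contrast, is not actually an obstacle — the side on which $f^k(x_i)$ falls is determined purely by the sign of $R_\alpha^k(z_i)-c$ — so the place you flag as the main difficulty is the one step that is automatic, while the two steps you treat as routine (frequency count, two-sided distance bound) are where the real work is.
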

Note that by definition this implies that $\uac(f)\geq 1$.
\begin{proof}
	Suppose $\nu\in(0,1/2]$.
	Since $p(A)$ is dense in $\T^1$, we can choose for each $m\in\{1,2,3\}$ a
	point $\zeta_m\in p(A)$ such that
	\begin{equation}\label{property_zeta_m_s} 
		d(\zeta_m,\zeta_n) \ > \ 1/4 \quad \textrm{for } m\neq n \ .
	\end{equation}
	Note that to each $\zeta_m$ we can associate an interval
	$[a_{\ell_m},b_{\ell_m}]$ with $p\big([a_{\ell_m},b_{\ell_m}]\big)=\{\zeta_m\}$.
	Now, choose $\delta>0$ such that
	\[
		\delta\ \leq \ \min\limits_{m=1}^3d\big(a_{\ell_m},b_{\ell_m}\big) \ .
	\]
        Since $p(I)$ has full Lebesgue measure in $\T^1$, we can choose a set of
        $\lfloor 1/\nu\rfloor$ points
	\[
		M\ =\ \big\{x_1,\dots,x_{\lfloor 1/\nu\rfloor}\big\}\ \subset\  I \ ,
	\]
	such that $p(M)$ is an equidistributed lattice in $\T^1$ with distance
	$1/\lfloor 1/\nu\rfloor \geq \nu$ between adjacent vertices.
	Consider distinct points $x_i, x_j\in M$ and assume without loss of
	generality that $\Leb([p(x_i),p(x_j)])\leq 1/2$.
	Set $P:=[p(x_i),p(x_j)]$.
	If $\zeta_1\in R_{\alpha}^k(P)$ for some $k\geq 0$, then due to
	\eqref{property_zeta_m_s} we have that $\zeta_2\in\T^1\backslash
	R_{\alpha}^k(P)$ or $\zeta_3\in\T^1\backslash R_{\alpha}^k(P)$, such
	that both $[f^k(x_i),f^k(x_j)]$ and $[f^k(x_j),f^k(x_i)]$ contain some
	interval $[a_{\ell_m},b_{\ell_m}]$ with $m\in\{1,2,3\}$.
	Hence, we have
	\[
		d(f^k(x_i),f^k(x_j))\ \geq\ \delta \ .
	\]
	Consequently, we obtain
	\[
		\frac{\countsep{n}(f,\delta,x_i,x_j)}{n}\ \geq \ 
		\frac{\#\left\{0\leq k<n\;|\;\zeta_1\in R_{\alpha}^k(P)\right\}}{n}  \ .
	\]
	By Weyl’s Equidistribution Theorem \cite[Example 4.18]{EinsiedlerWard2011},
	the right-hand side converges to $p(x_j)-p(x_i)\geq\nu$ as $n\to\infty$.
	This means that $x_i$ and $x_j$ are $(f,\delta,\nu)$-separated, so that $M$
	is an $(f,\delta,\nu)$-separated set.
\end{proof}

\begin{lemma}
	Let $f:\T^1\to\T^1$ be a Denjoy homeomorphism.
	Then for any $\delta>0$ there exists a constant $\kappa=\kappa(\delta)$ such
	that 
	\[
        \Span(f,\delta,\nu) \ \leq \ \kappa/\nu \quad \textrm{for all }
        \nu\in(0,1] \ .
	\]
\end{lemma}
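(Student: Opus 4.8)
The plan is to exploit the semiconjugacy $p\circ f=R_\alpha\circ p$ to a rotation and to push the problem forward onto the circle, where the dynamics is an isometry and hence has bounded separation numbers, paying only a bounded price in the $\Span$ estimate. Concretely, fix $\delta>0$. The key geometric observation I would establish first is a uniform-continuity-type statement for the Cantor function $p$: there exists $\varepsilon=\varepsilon(\delta)>0$ such that whenever $x,y\in\T^1$ satisfy $d(x,y)\geq\delta$, then $d(p(x),p(y))\geq\varepsilon$. This follows because $p$ collapses only the closed gap intervals $[a_\ell,b_\ell]$ to points; since the lengths $|b_\ell-a_\ell|$ tend to zero, only finitely many gaps have length $\geq\delta/2$, and on the complement of small neighbourhoods of those finitely many long gaps the map $p$ is injective and continuous on a compact set, hence ``uniformly injective'' in the required sense. (One has to be slightly careful: two far-apart points could both lie in the same long gap. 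But a long gap is collapsed to a single point, so if $d(x,y)\geq\delta$ and $\Leb$ of the gap containing both is $\geq\delta$, they have the same $p$-image — this case must be excluded by choosing $\varepsilon$ to also control the endpoints, or simply by noting it only costs us finitely many exceptional points which can be absorbed into the constant $\kappa$.)

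Granting such an $\varepsilon=\varepsilon(\delta)$, the argument runs as follows. By uniform continuity of $p$ in the other direction, and since $R_\alpha$ is an isometry of $\T^1$, I would next observe: if $x,y\in\T^1$ have $d(p(x),p(y))$ small, then $d(f^k(x),f^k(y))$ can only exceed $\delta$ when $d(R_\alpha^k p(x),R_\alpha^k p(y))=d(p(x),p(y))\geq\varepsilon$ fails — wait, that is automatic and uninformative since the distance is constant. The correct mechanism is the contrapositive of the first observation applied along orbits: $d(f^k(x),f^k(y))\geq\delta$ forces $d(R_\alpha^k(p(x)),R_\alpha^k(p(y)))=d(p(x),p(y))\geq\varepsilon(\delta)$, which is an event that either always holds (if $p(x),p(y)$ are $\varepsilon$-far) or never holds. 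Hence if I choose on $\T^1$ a finite $\varepsilon(\delta)$-spanning set $Z=\{z_1,\dots,z_N\}$ for the circle (possible by compactness, with $N=N(\delta)$ independent of $\nu$), and lift each $z_i$ to some $\tilde z_i\in\T^1$ with $p(\tilde z_i)$ within $\varepsilon(\delta)/3$ of... actually I should take the spanning set directly downstairs and pull back. Let me restate: for any $x\in\T^1$ there is $z_i\in Z$ with $d(p(x),z_i)<\varepsilon(\delta)/2$; pick $\tilde z_i\in p^{-1}(z_i)$. Then $d(p(x),p(\tilde z_i))<\varepsilon(\delta)/2$, so also $d(R_\alpha^k p(x),R_\alpha^k p(\tilde z_i))<\varepsilon(\delta)/2$ for all $k$, hence by (the contrapositive of) the first observation $d(f^k(x),f^k(\tilde z_i))<\delta$ for all $k$ — wait, the first observation was $d(x,y)\geq\delta\Rightarrow d(p(x),p(y))\geq\varepsilon$; I need the converse implication. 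The converse, $d(p(x),p(y))<\varepsilon\Rightarrow d(x,y)<\delta$, is exactly the contrapositive and is the statement I proved. So $\{\tilde z_1,\dots,\tilde z_N\}$ is actually $(f,\delta,0)$-spanning — even better than needed — giving $\Span(f,\delta,\nu)\leq N(\delta)$ for all $\nu$, which trivially implies $\Span(f,\delta,\nu)\leq N(\delta)\leq N(\delta)/\nu$ for $\nu\in(0,1]$, so $\kappa(\delta):=N(\delta)$ works.

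I expect the main obstacle to be precisely the geometric lemma about $p$, namely handling the points inside long gaps cleanly, so that the implication ``$d(p(x),p(y))<\varepsilon\Rightarrow d(x,y)<\delta$'' holds literally rather than up to finitely many exceptions. The resolution is to build $Z$ slightly more carefully: cover $\T^1$ by the finitely many images $p([a_\ell,b_\ell])$ of long gaps together with an $\varepsilon(\delta)$-net of the rest; for each long gap choose as lift the midpoint of the gap, which is within $\delta/2$ of every point of that gap, so the conclusion $d(f^k(x),f^k(\tilde z_i))<\delta$ still follows along the whole orbit since $f$ maps gaps to gaps and $p\circ f=R_\alpha\circ p$. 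This keeps $N(\delta)$ finite and yields the bound. A secondary, purely bookkeeping point is to make sure $\varepsilon(\delta)$ depends only on $\delta$ (and on $f$, which is fixed) and not on the pair of points — this is immediate from the compactness argument. Finally one notes that the weaker form $\Span(f,\delta,\nu)\le\kappa/\nu$ stated in the lemma is all that is needed downstream, so proving the stronger uniform bound is harmless.
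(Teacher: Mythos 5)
Your proposal has a fatal gap: the central geometric claim ``$d(x,y)\geq\delta\ \Rightarrow\ d(p(x),p(y))\geq\varepsilon(\delta)$'' (equivalently, the contrapositive you use, ``$d(p(x),p(y))<\varepsilon\Rightarrow d(x,y)<\delta$'') is simply false for the Cantor function $p$ of a Denjoy example, and the conclusion you draw from it --- that $\Span(f,\delta,\nu)$ is bounded uniformly in $\nu$ --- is contradicted by the paper's own companion lower bound. Here is a concrete counterexample to the geometric claim: let $[a_\ell,b_\ell]$ be a gap of length at least $\delta$, take $x\in C$ just to the left of $a_\ell$ and $y\in C$ just to the right of $b_\ell$. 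Then $d(x,y)\geq\delta$, but $p(x)$ and $p(y)$ are both as close as one wishes to the single point $p(a_\ell)=p(b_\ell)$, so $d(p(x),p(y))$ is arbitrarily small. Your proposed repair (represent each long gap by its midpoint) does not touch this case: these $x,y$ lie \emph{outside} the gap in the Cantor set, with \emph{distinct} $p$-images, and no finite spanning set downstairs can resolve them since gap endpoints are dense in $C$ and $p(A)$ is dense in $\T^1$. The map $p^{-1}$ has a jump discontinuity across every gap image, and your ``uniformly injective on compacta away from long gaps'' step silently assumes the opposite; $p$ collapses \emph{every} gap, not just the long ones, so $p$ is never locally injective on $\T^1$.

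The consequence you obtain, $\Span(f,\delta,\nu)\leq N(\delta)$ for all $\nu$, cannot be right: the preceding lemma in the paper shows $\Sep(f,\delta_0,\nu)\geq\lfloor 1/\nu\rfloor$ for some fixed $\delta_0>0$, and combined with $\Span(f,\delta_0/2,\nu/2)\geq\Sep(f,\delta_0,\nu)$ from Lemma~3.2, spanning numbers are unbounded as $\nu\to 0$; indeed, Theorem~1.2(a) asserts unbounded separation numbers for any minimal almost automorphic non-equicontinuous system, and the restriction of $f$ to its Cantor minimal set is one such. In other words, a uniform bound would force $\fsc(f)=0$, contradicting $\fsc(f)=1$. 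The genuine estimate is weaker and requires quantitative control: for $x,y$ with $d(p(x),p(y))=\tilde\nu$ small, the times $k$ at which $d(f^k(x),f^k(y))\geq\delta$ are precisely those where the arc $R_\alpha^k[p(x),p(y)]$ covers at least $\delta$ of gap material, and the Birkhoff frequency of such times is the Lebesgue measure of the set $\{\varphi_{\tilde\nu}\geq\delta\}$, which the paper bounds by $O(\tilde\nu)$, not by $0$. This is where the $1/\nu$ growth enters, and it is precisely what your argument loses by pretending $p^{-1}$ is uniformly continuous.
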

Together with Corollary \ref{c.span_sep}, this implies that $\oac(f)\leq 1$,
thus completing the proof of Theorem~\ref{t.denjoy}.
\begin{proof}
	We show that if $0<\tilde\nu\leq 1/(2(\lceil 1/\delta\rceil+1))$, then
	\[
		\Span(f,\delta,2\tilde\nu(\lceil 1/\delta\rceil+1))
		\ \leq \ \lceil 1/\tilde\nu\rceil \ .
	\]
    Since $\lceil 1/\tilde \nu\rceil \leq 2/\tilde \nu$, this implies the
	statement with $\kappa(\delta):=4(\lceil 1/\delta\rceil+1)$. 

	Let $\mu:=\Leb\circ p^{-1}$ and define the function
	$\varphi_{\tilde\nu}:\T^1\to[0,\infty)$ by
	\[
		\varphi_{\tilde\nu}(x):=\mu([x,x+\tilde\nu]) \ .
	\]
	Note that $d(x,y)\leq\mu([p(x),p(y)])$ and that
	$\varphi_{\tilde\nu}(x)=d(p^{-1}(x),p^{-1}(x+\tilde\nu))$ almost everywhere.
	In particular, $\varphi_{\tilde\nu}$ is measurable.
	Now, consider a subset $\tilde I\subseteq I$ such that
	\begin{align}\label{property_ba_varphi_alpha}
		\frac{\#\left\{0\leq k<n\;|\;
                    \varphi_{\tilde\nu}(R_{\alpha}^k(x))\geq\delta\right\}}{n} \
                \longrightarrow\ \Leb(\{x\in\kreis\mid
                \varphi_{\tilde\nu}(x)\geq\delta\})
        \quad\textrm{as}\quad n\to\infty 
	\end{align}
	for all $x\in p(\tilde I)$.
	Let $\{\varphi_{\tilde\nu}\geq\delta\}:=\{x\in\kreis\mid
		\varphi_{\tilde\nu}(x)\geq\delta\}$.
	Using Birkhoff's Ergodic Theorem, we know that $\tilde I$ can be chosen such
	that $p(\tilde I)$ has full Lebesgue measure.
	Hence, we can choose a set of $\lceil 1/\tilde\nu\rceil$ points
	\[
		M:=\big\{x_1,\dots,x_{\lceil 1/\tilde\nu\rceil}\big\}\subset\tilde I,
	\]
	such that $p(M)$ is an equidistributed lattice in $\T^1$ with distance
	$1/\lceil 1/\tilde\nu\rceil \leq \tilde\nu$ between adjacent
	vertices.
	Our aim is to show that $M$ is an
	$(f,\delta,2\tilde\nu(\lceil 1/\delta\rceil+1))$-spanning set. 

	For arbitrary $y\in\T^1$, let $x_i,x_j\in M$ be the two adjacent lattice
	points with $p(y)\in [p(x_i),p(x_j)]$ (that is, $j=i+1$ or $i=\lceil
	1/\tilde\nu\rceil$ and $j=1$).
	Then
	\[
		R_{\alpha}^k[p(x_i),p(y)]\ 
		\subseteq\ [R_{\alpha}^k(p(x_i)),R_{\alpha}^k(p(x_i))+\tilde\nu]
	\]
	for $k\geq 0$, and this implies
	\begin{eqnarray*}
		\lefteqn{	d\left(f^k(x_i),f^k(y)\right)
		 \ \leq \ \mu\left([p(f^k(x_i)),p(f^k(y))]\right)} \\
		& = & \mu\left(R_{\alpha}^k[p(x_i),p(y)]\right)
		\ \leq \ \varphi_{\tilde\nu}(R_{\alpha}^k(p(x_i))) \ .
	\end{eqnarray*}  
	We get that
	\[
		\frac{\countsep{n}(f,\delta,x_i,y)}{n}\ \leq \ 
		\frac{\#\left\{0\leq k<n\;|\;
			\varphi_{\tilde\nu}(R_{\alpha}^k(p(x_i)))\geq\delta\right\}}{n}
	\]
	and using \eqref{property_ba_varphi_alpha} we know that the right-hand side
	convergences to $\Leb(\{\varphi_{\tilde\nu}\geq\delta\})$ as $n\to\infty$.
	
	It remains to show that $\Leb(\{\varphi_{\tilde\nu}\geq\delta\})<
		2\tilde\nu(\lceil 1/\delta\rceil+1)$.
	Suppose for a contradiction that this inequality does not hold.
	Then $\{\varphi_{\tilde\nu}\geq \delta\}$ is not contained in a union of
	less than $\lceil 1/\delta\rceil+1$ intervals of length $2\tilde\nu$.
	Consequently, there exist at least $\lceil 1/\delta\rceil+1$ points
	$\zeta_i\in\T^1$ with $\varphi_{\tilde\nu}(\zeta_i)\geq\delta$ and
	$d(\zeta_i,\zeta_j)\geq\tilde\nu$ for $i\neq j$. We thus obtain
	\[
		\mu(\T^1)\ \geq \ \sum\limits_{i=1}^{\lceil 1/\delta\rceil+1}
		\mu([\zeta_i,\zeta_i+\tilde\nu])
		\ = \ \sum\limits_{i=1}^{\lceil 1/\delta\rceil+1}
		\varphi_{\tilde\nu}(\zeta_i)\ \geq \  1+\delta\ > \ 1 \ ,
	\]
	which is a contradiction.
	
	This means $\varlimsup_{n\to\infty}\countsep{n}(f,\delta,x_i,y)/n
		\leq\Leb(\{\varphi_{\tilde\nu}\geq\delta\})
		<2\tilde\nu(\lceil1/\delta\rceil+1)$,
	and since $y$ was arbitrary this shows that $M$ is
	an $(f,\delta,2\tilde\nu(\lceil 1/\delta\rceil+1))$-spanning set.
	This completes the proof.
\end{proof}

\subsection{Relations to power entropy}\label{PowerEntropy}

Given a compact metric space $(X,d)$ and a continuous map $f:X\to X$, the {\em
Bowen-Dinaburg metrics} are given by $d_n(x,y):=\max_{i=0}^{n-1}d(f^i(x),f^i(y))$.
A set $S\ssq X$ is called {\em $(f,\delta,n)$-separated}, for $\delta>0$ and
$n\in\N$, if $d_n(x,y)\geq \delta$ for all $x\neq y\in S$.
Let $\wh S(f,\delta,n)$ denote the maximal cardinality of an
$(f,\delta,n)$-separated set.
Then topological entropy, defined as 
\[
	\htop(f) \ := \ \sup_{\delta>0} \nLim \frac{\log\wh S(f,\delta,n)}{n} \ ,
\]
measures the exponential growth of these numbers, see for example \cite{Walters1982}
for more information.
If topological entropy is zero, then {\em power entropy} instead simply measures
the polynomial growth rate, given by 
\[
	h_{\textrm{pow}}(f) \ := \ \sup_{\delta>0} \varlimsup_{n\to\infty}
		\frac{\log \wh S(f,\delta,n)}{\log n} \ .
\]
We refer to \cite{HasselblattKatok2002HandbookPrincipalStructures} and
\cite{Marco2013} for a more detailed discussion. 

Now, note that already one wandering point is enough to ensure that power
entropy is at least bigger than one \cite{Labrousse2013}.
Given a Morse-Smale homeomorphism on a compact metric space, we hence conclude
that the corresponding power entropy is positive, as claimed above.

This shows that we may have $h_{\textrm{pow}}(f)>\fsc(f)$.
Conversely, consider the map $f:\torus\to\torus,\ (x,y)\mapsto (x,x+y)$ where 
$\T^2:=\R^2/\Z^2$.
Then given $z=(x,y)$ and $z'=(x',y')$, we have that 
\[
d_n(z,z') \ \leq \ n|x-x'|+|y-y'| \ , 
\]
which implies that $\wh S(f,\delta,n) \ \leq \ \frac{C\cdot n}{\delta^2}$ for
some constant $C>0$.
Hence, $h_{\textrm{pow}}(f)\leq 1$.
However, at the same time we have that if $x\neq x'$, then $z$ and $z'$ rotate
in the vertical direction with different speeds, and this makes it easy
to show that $\kreis\times\{0\}$ is an $(f,\delta,\nu)$-separated set for
suitable $\delta,\nu>0$, so that $\Sep(f,\delta,\nu)=\infty$.
Hence, we may also have $\fsc(f)>h_{\textrm{pow}}(f)$, showing that no inequality
holds between the two quantities.\smallskip

{\em Modified power entropy} $h_{\textrm{pow}}^*$ is defined in the same way as
power entropy, with the only difference that the metrics $d_n$ in the definition
are replaced by the {\em Hamming metrics}
\[
	d_n^*(x,y) \ := \ \frac{1}{n}\inergsum d(f^i(x),f^i(y)) \ . 
\]
Since $d_n^*\leq d_n$, modified power entropy is always smaller than power
entropy, and it can be shown that for Morse-Smale systems it is always zero.
The same is true, however, for Denjoy examples and Sturmian subshifts, so that
modified power entropy does not seem suitable to detect topological complexity
on the very fine level we are interested in here.
The same example $f(x,y)=(x,x+y)$ as above shows that we may have
$\fsc(f)>h_{\textrm{pow}}^*(f)$.
An example for the opposite inequality is more subtle, but can be made such that
it demonstrates at the same time the non-existence of a variational principle
for the modified power entropy (a question that was left open in
\cite{HasselblattKatok2002HandbookPrincipalStructures}).
It will be contained in the forthcoming note \cite{GroegerJaeger2015ModifiedPowerEntropy}.

\subsection{Besicovitch space}\label{BesicovitchSpace}

In this section, we want to state some basic results concerning amorphic
complexity in the context of symbolic systems.
The corresponding proofs will be included in the forthcoming paper \cite{FG2014},
where amorphic complexity of symbolic systems is studied more systematically. 

Let $A$ be a finite set, $\Sigma_A:=A^{\N_0}$ and $\rho$ the Cantor metric
on $\Sigma_A$ (see Section \ref{BasicExamples}).
For a general continuous map $f:X\to X$ on a compact metric space $X$ and some
$\delta>0$ we can not expect that
$\varlimsup_{n\to\infty}\countsep{n}(f,\delta,\,\cdot\,,\,\cdot)/n$
is a metric (even not a pseudo-metric since the triangle inequality will usually
fail).
However, this changes in the setting of symbolic dynamics.

\begin{proposition}
	We have that $\big(\tilde d_\delta\big)_{\delta \in (0,1]}$, defined as
	\begin{align*}
		\tilde d_{\delta}(x,y)\ :=\ \varlimsup_{n\to\infty}
		\frac{\countsep{n}(\sigma,\delta,x,y)}{n}\quad\textnormal{for}
		\quad x,y\in\Sigma_A \ ,
	\end{align*}
	is a family of bi-Lipschitz equivalent pseudo-metrics.
\end{proposition}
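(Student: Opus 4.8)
The plan is to rewrite each $\tilde d_\delta$ entirely in terms of the difference set of the two sequences and the dyadic scale attached to $\delta$; once this is done, the statement reduces to elementary facts about upper density on $\N_0$.

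Concretely, fix $x=(x_k)_{k\ge0},\,y=(y_k)_{k\ge0}\in\Sigma_A$ and put $D=D(x,y):=\{k\in\N_0\mid x_k\neq y_k\}$. For $\delta\in(0,1]$ let $m=m(\delta):=\lfloor\log_2(1/\delta)\rfloor\in\N_0$. Since the Cantor distance between two sequences equals $2^{-j}$ with $j$ the first coordinate at which they differ, we have $\rho(\sigma^k x,\sigma^k y)\ge\delta$ exactly when that first index is $\le m$, i.e.\ exactly when $D\cap\{k,k+1,\dots,k+m\}\neq\emptyset$. Writing $N_m(E):=\{k\in\N_0\mid E\cap\{k,\dots,k+m\}\neq\emptyset\}$ for $E\subseteq\N_0$ and $\overline{\mathrm{dens}}(A):=\varlimsup_{n\to\infty}\tfrac1n\#(A\cap\{0,\dots,n-1\})$ for the upper density, this gives $\countsep{n}(\sigma,\delta,x,y)=\#(N_m(D)\cap\{0,\dots,n-1\})$ and hence
\[
  \tilde d_\delta(x,y)\ =\ \overline{\mathrm{dens}}(N_m(D(x,y))).
\]
In particular $\tilde d_\delta$ depends on $\delta$ only through $m(\delta)$, and for $m=0$ (e.g.\ $\delta=1$) it coincides with the Besicovitch pseudo-metric $\overline{\mathrm{dens}}(D(x,y))$.

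With this reduction the verification is routine. Each $\tilde d_\delta$ is non-negative, symmetric, and vanishes on the diagonal because $D(x,x)=\emptyset$; the triangle inequality follows from $D(x,z)\subseteq D(x,y)\cup D(y,z)$, which yields $N_m(D(x,z))\subseteq N_m(D(x,y))\cup N_m(D(y,z))$, together with the monotonicity and subadditivity of $\overline{\mathrm{dens}}$. For the bi-Lipschitz equivalence the key point is the two-sided estimate
\[
  \overline{\mathrm{dens}}(D)\ \le\ \overline{\mathrm{dens}}(N_m(D))\ \le\ (m+1)\,\overline{\mathrm{dens}}(D)\qquad\text{for all }D\subseteq\N_0,\ m\in\N_0,
\]
whose left inequality is just $D\subseteq N_m(D)$ and whose right inequality uses the covering identity $N_m(D)=\bigcup_{d\in D}(\{d-m,\dots,d\}\cap\N_0)$ to bound $\#(N_m(D)\cap\{0,\dots,n-1\})\le(m+1)\,\#(D\cap\{0,\dots,n+m-1\})$ before dividing by $n$ and letting $n\to\infty$ (the shift of the range does not affect the $\limsup$). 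Applying this with $m=m(\delta)$ and $m'=m(\delta')$ and chaining through $\overline{\mathrm{dens}}(D(x,y))$ gives
\[
  \tfrac{1}{m'+1}\,\tilde d_{\delta'}(x,y)\ \le\ \tilde d_\delta(x,y)\ \le\ (m+1)\,\tilde d_{\delta'}(x,y),
\]
so the family is pairwise bi-Lipschitz equivalent with constants $\max\{m(\delta),m(\delta')\}+1$.

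There is no genuine obstacle here; the only points needing a little care are the passage from the threshold $\delta$ to the integer window length $m(\delta)+1=\lfloor\log_2(1/\delta)\rfloor+1$, and the (harmless) boundary effects when passing to the density limits. The one actually useful observation is the covering identity $N_m(D)=\bigcup_{d\in D}\{d-m,\dots,d\}$, which is exactly what bounds the density distortion by the multiplicative factor $m+1$ and thus simultaneously delivers the triangle inequality (via subadditivity) and the bi-Lipschitz constants.
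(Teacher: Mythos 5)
The paper does not include a proof of this proposition; Section~3.6 explicitly defers the proofs of the Besicovitch-space statements to the forthcoming paper \cite{FG2014}, so there is no in-paper argument to compare against. Judged on its own merits, your argument is correct and complete. The reduction of $\tilde d_\delta(x,y)$ to the upper density of the window-dilation $N_{m(\delta)}(D(x,y))$ of the mismatch set $D(x,y)=\{k\mid x_k\neq y_k\}$, with $m(\delta)=\lfloor\log_2(1/\delta)\rfloor$, is the natural route, and your two-sided estimate
\[
\overline{\mathrm{dens}}(D)\ \le\ \overline{\mathrm{dens}}\bigl(N_m(D)\bigr)\ \le\ (m+1)\,\overline{\mathrm{dens}}(D)
\]
simultaneously delivers the pseudo-metric axioms (via $D(x,z)\subseteq D(x,y)\cup D(y,z)$, the distributivity $N_m(A\cup B)=N_m(A)\cup N_m(B)$, and subadditivity of upper density) and the pairwise bi-Lipschitz constant $\max\{m(\delta),m(\delta')\}+1$. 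You correctly observe that shifting the counting window from $\{0,\dots,n-1\}$ to $\{0,\dots,n+m-1\}$ leaves the $\limsup$ unchanged, which is the only point requiring care in the right-hand inequality, and that $\tilde d_\delta$ depends on $\delta$ only through $m(\delta)$, with $m=0$ recovering the usual Besicovitch pseudo-metric.
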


Note that $\tilde d_1$ is usually called the \emph{Besicovitch pseudo-metric}
and it turns out to be especially useful for understanding certain dynamical
behaviour of cellular automata (see, for example, \cite{BlanchardFormentiKurka1997}
and \cite{CattaneoFormentiMargaraMazoyer1997}).

Now, following a standard procedure, we introduce the equivalence relation
\begin{align*}
	x \  \sim y \ : \ \Leftrightarrow \ \tilde d_{\delta}(x,y) \ = \ 0
	\quad\textnormal{for}\quad x,y\in\Sigma_A \ .
\end{align*}
Due to the previous proposition, this relation is well-defined and independent
of the chosen $\delta$.
Denote the corresponding projection mapping by $[\,\cdot\,]$.
We equip $\big[\Sigma_A\big]$ with the metric $d_\delta\left([x],[y]\right):=
	\tilde d_\delta\left(x,y\right)$, $[x]$, $[y]\in\big[\Sigma_A\big]$ for some
$\delta \in (0,1]$ and call $\big(\big[\Sigma_A\big], d_\delta\big)$ the
\emph{Besicovitch space}.
Given a subshift $\Sigma\ssq\Sigma_A$, we also call $[\Sigma]$ the
{\em Besicovitch space associated to $\Sigma$}.
We have the following properties. 

\begin{theorem}[\cite{BlanchardFormentiKurka1997,
    CattaneoFormentiMargaraMazoyer1997}]
	The Besicovitch space $[\Sigma_A]$ is perfect, complete, pathwise connected
	and (topologically) infinite dimensional.
	However, it is neither locally compact nor separable.
\end{theorem}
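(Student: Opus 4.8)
The plan is to translate every assertion into a statement about \emph{index sets}. By the preceding proposition the metrics $d_\delta$, $\delta\in(0,1]$, are bi-Lipschitz equivalent, so they induce the same topology and the same family of Cauchy sequences; hence I may work throughout with $d:=d_1$, for which $d([x],[y])$ is simply the upper asymptotic density of the disagreement set $\{k\in\N_0\mid x_k\neq y_k\}$. Since $\#A\geq 2$ (for $\#A=1$ the space is a single point, which is not perfect, so this is implicit), I fix once and for all $x,y\in\Sigma_A$ with $x_k\neq y_k$ for all $k$, and for $M\ssq\N_0$ I write $z^M$ for the sequence equal to $y$ on $M$ and to $x$ off $M$, so that $d([z^M],[z^{M'}])$ equals the upper density of $M\triangle M'$. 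With this dictionary, five of the six assertions reduce to exhibiting good families of index sets. For \emph{perfectness}, perturbing any point on a density-$1/n$ arithmetic progression gives points distinct from it but converging to it, so there are no isolated points. For \emph{path-connectedness}, I take an equidistributed increasing family $B_t=\{k\in\N_0\mid\{k\theta\}<t\}$ ($\theta$ irrational), for which $B_t\smin B_s$ has density $t-s$ when $s\le t$; given any $u,v\in\Sigma_A$, the map $t\mapsto[w_t]$, where $w_t$ switches from $u$ to $v$ exactly on $B_t$, is a $1$-Lipschitz path from $[u]$ to $[v]$. For \emph{infinite-dimensionality}, the same idea run on $n$ disjoint arithmetic progressions $Q_1,\dots,Q_n$ (the residue classes mod $n$), each carrying its own parameter $t_i$ through $\{j\theta\}$-sets inside $Q_i$, yields a map $[0,1]^n\to[\Sigma_A]$ which, because the $Q_i$ are disjoint and all the sets involved have genuine densities, is an isometric (up to the rescaled $\ell^1$-metric), hence topological, embedding of the $n$-cube; as $n$ is arbitrary and covering dimension is monotone on subspaces of metric spaces, $[\Sigma_A]$ is infinite-dimensional.

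For the remaining two "soft" properties I would produce large separated sets. For \emph{non-local-compactness}, I would exhibit, for every $\eps>0$, an infinite family of index sets of density at most $\eps$ whose pairwise symmetric differences have density bounded below by a fixed multiple of $\eps$ — for instance an $\eps$-scaled version of the block construction below, or the sets ``$i$-th binary digit $=1$'' intersected with a small independent density-$\eps$ set. The corresponding points $z^M$ then form an infinite, uniformly separated subset of the ball $B([x],\eps)$, and since a compact subset of a metric space is totally bounded, no neighbourhood of $[x]$ can be compact. For \emph{non-separability}, I would split $\N_0$ into consecutive blocks $I_n$ with $|I_n|=2^n$ and, for $r\in\{0,1\}^{\N}$, let $M_r$ consist of the first half of $I_n$ if $r_n=0$ and the second half if $r_n=1$, so that $M_r\triangle M_{r'}=\bigcup\{I_n\mid r_n\neq r'_n\}$. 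Picking one $r$ from each class of the relation ``agree outside a finite set'' gives an uncountable set $R$ for which $r\neq r'$ forces $M_r\triangle M_{r'}$ to meet infinitely many blocks, hence to have upper density at least $\tfrac12$; thus $\{[z^{M_r}]\mid r\in R\}$ is an uncountable $\tfrac12$-separated set.

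The genuinely delicate point, and the one I expect to be the main obstacle, is \emph{completeness}, because $d$ is defined through a $\limsup$ and so interacts badly with naive block-by-block estimates. My plan is: given a Cauchy sequence, pass to a subsequence with $d([x^{(n)}],[x^{(n+1)}])<4^{-n}$; set $D_n=\{k\mid x^{(n)}_k\neq x^{(n+1)}_k\}$ and choose an increasing sequence $N_1<N_2<\cdots$ so that $|D_n\cap[0,N)|<4^{-n}N$ for \emph{all} $N\ge N_n$ (this scheduling is the crux); then define the diagonal point $y$ by $y_k:=x^{(n)}_k$ for $k\in[N_n,N_{n+1})$. For fixed $m$ and any $N\ge N_m$, a disagreement between $y$ and $x^{(m)}$ at $k\in[N_l,N_{l+1})$ with $l\ge m$ forces $k\in D_m\cup\dots\cup D_{l-1}$; combining this with $N_l\ge N_i$ for $l>i$ gives, after a short count,
\[
	\#\{0\le k<N\mid y_k\neq x^{(m)}_k\}\ \le\ N_m+2N\sum_{i\ge m}4^{-i}\,,
\]
whence $d([y],[x^{(m)}])\le 2\sum_{i\ge m}4^{-i}\to 0$ as $m\to\infty$, so $[y]$ is the limit of the original Cauchy sequence. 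The real work is arranging the scales $N_n$ so that this single counting bound holds for all large $N$ rather than only along a subsequence; once the combinatorial bookkeeping is set up, convergence is immediate.
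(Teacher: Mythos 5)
The paper does not supply a proof of this theorem: it is stated as a citation to Blanchard--Formenti--K\r{u}rka and Cattaneo--Formenti--Margara--Mazoyer, so there is nothing internal to compare your argument against. Evaluated on its own, your proof is correct and essentially reproduces the standard arguments one finds in or around those references: reduce to $d_1$ (upper density of the disagreement set) via the bi-Lipschitz equivalence, then translate everything into statements about subsets of $\N_0$ and their symmetric differences. Perfectness (flip a density-$1/n$ arithmetic progression, being careful to always change the symbol so the disagreement set genuinely is the progression, which requires $\#A\ge 2$), path-connectedness via equidistributed level sets $B_t=\{k\mid\{k\theta\}<t\}$, infinite topological dimension by running $n$ independent copies of that construction on disjoint residue classes to embed $[0,1]^n$ isometrically up to constants, non-separability via the exponential-block construction giving an uncountable $\tfrac12$-separated set, and non-local-compactness by miniaturising that block construction inside a thin progression to put an infinite uniformly separated family inside every ball — all of these are sound, and your remark that covering dimension is monotone on subspaces of metric spaces correctly closes the dimension argument.

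The completeness argument is the one place that needed real care, and it is also correct. The key observation you rely on is that if the upper density of $D_n$ is strictly less than $4^{-n}$, then $|D_n\cap[0,N)|<4^{-n}N$ holds for \emph{all} sufficiently large $N$, not merely along a subsequence; this is exactly the ``$\limsup a_N<c\Rightarrow a_N<c$ eventually'' fact that makes your scheduling of the scales $N_1<N_2<\cdots$ possible. With that in place, the disagreement of the diagonal point $y$ against $x^{(m)}$ on $[N_m,N)$ is contained in $\bigcup_{i=m}^{L-1}D_i$ where $N\in[N_L,N_{L+1})$, and since $N\ge N_L>N_i$ for each such $i$ you may use the eventual bound on every term, yielding
$\#\{k<N\mid y_k\neq x^{(m)}_k\}\le N_m+N\sum_{i\ge m}4^{-i}$,
whence $d([y],[x^{(m)}])\le\sum_{i\ge m}4^{-i}\to 0$. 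This is the standard diagonalisation proof of completeness of the Besicovitch space and it is carried out correctly; the factor $2$ in your displayed bound is harmless slack. Two small points to tighten in a write-up: (i) in the perfectness step, specify that on the arithmetic progression you always switch $z_k$ to a \emph{different} symbol so that the disagreement density is exactly $1/n$ rather than possibly $0$; (ii) in the non-local-compactness step, make the ``independence'' explicit — the cleanest device is to rescale the block construction by a factor $p=\lceil 1/\eps\rceil$, i.e.\ run it on the multiples of $p$, which produces an infinite family of density-$\le\eps$ sets whose pairwise symmetric differences have upper density $\ge\eps/4$.
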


Note that we can define the shift map on the Besicovitch space as well and that it
becomes an isometry.
Before we proceed, we need to give the definition of box dimension in
general metric spaces $(X,d)$.
The \emph{lower} and \emph{upper box dimension} of a totally bounded subset
$E\subseteq X$ are defined as
\begin{align*}
	\underline\Dim_B(E)\ := \ \varliminf\limits_{\eps\to 0}
		\frac{\log N_\eps(E)}{-\log\eps}
	\quad\textnormal{and}\quad
	\overline\Dim_B(E)\ := \ \varlimsup\limits_{\eps\to 0}
		\frac{\log N_\eps(E)}{-\log\eps} \ ,
\end{align*}
where $N_\eps(E)$ is the smallest number of sets of diameter strictly smaller
than $\eps$ needed to cover $E$. 
If $\underline\Dim_B(E)=\overline\Dim_B(E)$, then we call their common value 
$\Dim_B(E)$ the \emph{box dimension of $E$}.
Further, let $M_\eps(E)$ be the maximal cardinality of an $\eps$-separated subset
of $E$, that is, a set $S\ssq E$ with $d(x,y)\geq\eps$ for all $x\neq y\in S$.
Then one can replace $N_\eps(E)$ by $M_\eps(E)$ in the definition of box
dimension \cite[Proposition 1.4.6]{Edgar1998}.

Now, suppose $(\Sigma,\sigma)$ is a subshift of $(\Sigma_A,\sigma)$.
If $\left.\sigma\right|_{\Sigma}$ has finite separation numbers, we observe for
each $\delta\in (0,1]$ that
\[
	\Sep(\left.\sigma\right|_{\Sigma},\delta,\nu)=M_\nu([\Sigma])
	\quad\textnormal{and}\quad
	\Span(\left.\sigma\right|_{\Sigma},\delta,\nu)=N_\nu([\Sigma])
	\quad\textnormal{in}\quad
	\big(\big[\Sigma_A\big], d_\delta\big)
\]
for all $\nu\in (0,1]$.
This immediately implies 
\begin{proposition}
	Let $\Sigma$ be a subshift of $\Sigma_A$.
	Then
	\begin{itemize}
		\item[(a)] $\left.\sigma\right|_{\Sigma}$ has finite separation numbers if
			and only if $[\Sigma]$ is totally bounded in $\big[\Sigma_A\big]$, and
		\item [(b)] in this setting, 
			$\uac(\left.\sigma\right|_{\Sigma})=\underline\Dim_B([\Sigma])$ and
			$\oac(\left.\sigma\right|_{\Sigma})=\overline\Dim_B([\Sigma])$.
	\end{itemize}
\end{proposition}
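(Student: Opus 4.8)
The plan is to transport the whole statement to the Besicovitch space $([\Sigma_A],d_\delta)$ via the projection $[\,\cdot\,]$ and then read it off from the elementary properties of box dimension recalled above. The only nontrivial ingredient is the bi-Lipschitz equivalence of the pseudo-metrics $\tilde d_\delta$ established in the preceding proposition: it guarantees that the relation $\sim$ and the quotient metrics $d_\delta$ are well defined, and it is what will make the a priori $\delta$-dependence in $\Sep(\sigma|_\Sigma,\delta,\nu)$ harmless.

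First I would establish the identity $\Sep(\sigma|_\Sigma,\delta,\nu)=M_\nu([\Sigma])$, with $M_\nu$ computed in $([\Sigma_A],d_\delta)$. By definition a set $S\subseteq\Sigma$ is $(\sigma,\delta,\nu)$-separated exactly when $\tilde d_\delta(x,y)=\varlimsup_{n\to\infty}\countsep{n}(\sigma,\delta,x,y)/n\geq\nu$ for all distinct $x,y\in S$; in particular $\tilde d_\delta(x,y)>0$, so $[x]\neq[y]$ and $[\,\cdot\,]$ is injective on $S$. Since $d_\delta([x],[y])=\tilde d_\delta(x,y)$, the image $[S]$ is a $\nu$-separated subset of $[\Sigma]$ with $\#[S]=\#S$, and conversely every $\nu$-separated subset of $[\Sigma]$ lifts, by choosing representatives in $\Sigma$, to a $(\sigma,\delta,\nu)$-separated set of the same cardinality. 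Taking the supremum of cardinalities on both sides gives the identity. The companion identity $\Span(\sigma|_\Sigma,\delta,\nu)=N_\nu([\Sigma])$ is proved in the same fashion, matching $(\sigma,\delta,\nu)$-spanning sets with covers of $[\Sigma]$ by $\nu$-balls and invoking the standard comparison between covering numbers by balls and by sets of small diameter; only the $\Sep$ identity will actually be used below.

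For part (a): by the definition of finite separation numbers and the identity just obtained, $\sigma|_\Sigma$ has finite separation numbers if and only if $M_\nu([\Sigma])<\infty$ for all $\nu\in(0,1]$, for some (equivalently, by bi-Lipschitz equivalence of the $d_\delta$, for every) $\delta$. A metric space is totally bounded precisely when $M_\varepsilon<\infty$ for all $\varepsilon>0$: a maximal $\varepsilon$-separated set is automatically an $\varepsilon$-net, while a finite $\varepsilon$-net forces $M_{2\varepsilon}$ to be finite. This yields (a).

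For part (b), assume finite separation numbers and substitute $\Sep(\sigma|_\Sigma,\delta,\nu)=M_\nu([\Sigma])$ into \eqref{d.delta_amorphic_complexity}. For each fixed $\delta\in(0,1]$ this gives
\[
\uac(\sigma|_\Sigma,\delta)=\varliminf_{\nu\to 0}\frac{\log M_\nu([\Sigma])}{-\log\nu}=\underline\Dim_B([\Sigma]),
\]
where the second equality is the fact (cited from \cite{Edgar1998}) that $M_\varepsilon$ may replace $N_\varepsilon$ in the definition of box dimension, and where the value is the same for every $d_\delta$ because box dimension is a bi-Lipschitz invariant. Since $\uac(\sigma|_\Sigma,\delta)$ thus does not depend on $\delta$, the supremum over $\delta$ in \eqref{e.amorphic_complexity} is trivial and $\uac(\sigma|_\Sigma)=\underline\Dim_B([\Sigma])$; the argument for $\oac$ and $\overline\Dim_B([\Sigma])$ is verbatim the same with $\varliminf$ replaced by $\varlimsup$. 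The argument is almost entirely bookkeeping, and the only place needing genuine care is precisely this interplay with the parameter $\delta$: the separation numbers, the quotient metrics and the resulting box dimensions all depend on $\delta$ a priori, and what makes the statement clean is that the preceding proposition collapses this dependence once one passes to box dimension.
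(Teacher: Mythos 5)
Your argument is correct and is essentially the same as the paper's: the paper states the identities $\Sep(\sigma|_\Sigma,\delta,\nu)=M_\nu([\Sigma])$ and $\Span(\sigma|_\Sigma,\delta,\nu)=N_\nu([\Sigma])$ in $([\Sigma_A],d_\delta)$ and then says the Proposition follows immediately (deferring details to the companion paper). You have merely filled in the bookkeeping the paper leaves implicit --- the injectivity of $[\,\cdot\,]$ on separated sets, the characterization of total boundedness via packing numbers, the replacement of $N_\eps$ by $M_\eps$ in box dimension, and the bi-Lipschitz-invariance argument that collapses the $\sup_\delta$ --- all of which are correct.
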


This means for example that all regular Toeplitz subshifts $\Sigma$ (see Section
\ref{RegularToeplitzFlows}) have a totally bounded associated Besicovitch space,
using Theorem \ref{t.finite_separation_numbers} (in fact one can show by a more
direct argument that $[\Sigma]$ is even compact), and that we can find regular
Toeplitz subshifts with associated Besicovitch spaces of arbitrarily high box
dimension, see Theorem \ref{t.toeplitz_sharpbound_examples}.

An example of a minimal and uniquely ergodic subshift with zero topological
entropy such that its projection is not totally bounded is the subshift generated
by the shift orbit closure of the well-known Prouhet-Thue-Morse sequence. (See,
for example, \cite{AlloucheShallit1992} for the definition of this sequence and
further information.)
The fact that the projection is not totally bounded follows directly from the
strict positivity of the \emph{aperiodicity measure} of the Prouhet-Thue-Morse
sequence $x$, defined as 
$\inf_{m\in\N} \varliminf_{n\to\infty} S_n(\sigma,1,x,\sigma^m(x))/n$,
see \cite{PritykinUlyashkina2009, MorseHedlund1938}.

\section{Quantitative analysis of almost sure 1-1 extensions of isometries}
\label{QuantitativeAutomorphic}

The aim of this section is to give a quantitative version of the argument in the
proof of Theorem~\ref{t.finite_separation_numbers} in order to obtain an upper
bound for amorphic complexity in this situation.
For the whole section let $X$ and $\Xi$ be compact metric spaces and
$f:X\to X$ an almost sure 1-1 extension of $g:\Xi\to\Xi$, with factor map $h$.
Further, assume that $g$ is a minimal isometry, with unique invariant probability
measure $\mu$.\foot{Note that a minimal isometry is necessarily uniquely ergodic.}
In this case, it is easy to check that the measure of an $\eps$-ball
$B_\eps(\xi)$ does not depend on $\xi\in\Xi$.
For the scaling of this measure as $\eps\to 0$, we have
\begin{lemma}
	In the above situation, we get
	\begin{equation*}
		\varlimsup_{\eps\to 0} \frac{\log\mu(B_\eps(\xi))}{\log \eps}
		\ = \ \overline{\Dim}_B(\Xi)
	\end{equation*}
	for all $\xi\in\Xi$ and the analogous equality holds for the limit inferior.
\end{lemma}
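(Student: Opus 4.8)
The plan is to relate the measure $\mu(B_\eps(\xi))$ to the covering and packing numbers of $\Xi$ via the homogeneity of $\mu$ under the minimal isometry $g$. Since $\mu$ is the unique $g$-invariant measure and $g$ acts transitively up to closure, $\mu(B_\eps(\xi))$ is independent of $\xi$; denote this common value by $m(\eps)$. The key geometric fact is a standard volume-counting argument: if $\{\xi_1,\dots,\xi_{M}\}$ is a maximal $\eps$-separated set in $\Xi$ (so $M=M_\eps(\Xi)$), then the balls $B_{\eps/2}(\xi_i)$ are pairwise disjoint, giving $M\cdot m(\eps/2)\le 1$; on the other hand, maximality means the balls $B_\eps(\xi_i)$ cover $\Xi$, so $M\cdot m(\eps)\ge 1$. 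Hence
\begin{equation*}
	\frac{1}{m(\eps)} \ \leq \ M_\eps(\Xi) \ \leq \ \frac{1}{m(\eps/2)} \ .
\end{equation*}

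From this sandwich one extracts the dimension identity. Taking $\log$ and dividing by $-\log\eps$ yields
\begin{equation*}
	\frac{\log m(\eps)}{\log\eps} \ \leq \ \frac{\log M_\eps(\Xi)}{-\log\eps}
	\ \leq \ \frac{\log m(\eps/2)}{\log\eps} \ .
\end{equation*}
Because $\Xi$ is compact (hence totally bounded), $\overline{\Dim}_B(\Xi)=\varlimsup_{\eps\to 0}\log M_\eps(\Xi)/(-\log\eps)$ by the remark after the definition of box dimension (one may use $M_\eps$ in place of $N_\eps$, citing \cite[Proposition 1.4.6]{Edgar1998}). Taking $\varlimsup_{\eps\to 0}$ in the left inequality gives $\varlimsup_{\eps\to 0}\log m(\eps)/\log\eps\le \overline{\Dim}_B(\Xi)$. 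For the reverse inequality, take $\varlimsup$ in the right inequality; here one needs that $\varlimsup_{\eps\to 0}\log m(\eps/2)/\log\eps = \varlimsup_{\eps\to 0}\log m(\eps)/\log\eps$, which follows since replacing $\eps$ by $2\eps$ changes $\log\eps$ only by the additive constant $\log 2$, negligible against $-\log\eps\to\infty$. This yields $\overline{\Dim}_B(\Xi)\le \varlimsup_{\eps\to 0}\log m(\eps)/\log\eps$, completing the $\varlimsup$ case. The $\varliminf$ case is identical, reading $\underline{\Dim}_B(\Xi)$ and $\varliminf$ throughout.

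The main (and essentially only) subtlety is the $\xi$-independence of $\mu(B_\eps(\xi))$ together with the fact that this value is strictly positive for every $\eps>0$ (so that the logarithms are well-defined and the reciprocals finite): positivity holds because $g$ is minimal, so $\supp\mu=\Xi$ and every open ball has positive measure. The independence of $\xi$ is the one point requiring the isometry hypothesis — given $\xi,\xi'\in\Xi$, by minimality there is a sequence $g^{n_k}(\xi)\to\xi'$, and since $g^{n_k}$ is a measure-preserving isometry, $\mu(B_\eps(\xi))=\mu(g^{n_k}(B_\eps(\xi)))=\mu(B_\eps(g^{n_k}(\xi)))$; a short argument using that $\mu(\partial B_\eps(\xi'))$ can be assumed zero for a.e.\ $\eps$ (or passing to closed and open balls of slightly perturbed radii and using monotonicity) then transfers this to $\xi'$ in the limit. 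Everything else is the routine logarithmic bookkeeping sketched above.
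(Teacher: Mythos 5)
Your proposal is correct and follows essentially the same route as the paper: use the $\xi$-independence of $\mu(B_\eps(\xi))$, then the standard packing/covering sandwich with a maximal $\eps$-separated set (disjoint $\eps/2$-balls give $M_\eps(\Xi)\le 1/\hat\mu(\eps/2)$, covering by $\eps$-balls gives $M_\eps(\Xi)\ge 1/\hat\mu(\eps)$), and finally the observation that replacing $\eps$ by $\eps/2$ does not change the limit. The only difference is presentational: the paper asserts the $\xi$-independence of $\mu(B_\eps(\xi))$ as an easy preliminary fact just before the lemma, while you spell out the minimality-plus-isometry argument for it; both amount to the same thing.
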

\begin{proof}
	Recall that we can also use $M_\eps(\Xi)$ in the definition of the box
	dimension of $\Xi$ (see Section \ref{BesicovitchSpace}). 
	Let $\hat\mu(\eps):=\mu(B_\eps(\xi))$, where $\xi\in\Xi$ is arbitrary, and
	suppose $S\ssq \Xi$ is an $\eps$-separated subset with cardinality
	$M_\eps(\Xi)$.
	Observe that the $\eps/2$-balls $B_{\eps/2}(\xi)$ with $\xi\in S$ are pairwise
	disjoint. We obtain $1=\mu(\Xi) \geq \sum_{\xi\in S}\hat\mu(\eps/2)$ and
	thus $M_{\eps}(\Xi)\leq 1/\hat\mu(\eps/2)$.
	Hence,
	\begin{eqnarray*}
		\overline{\Dim}_B(\Xi) & = & \varlimsup_{\eps\to 0}
			\frac{\log M_{\eps}(\Xi)}{-\log \eps}
		\ \leq \ \varlimsup_{\eps\to 0} \frac{\log \hat\mu(\eps/2)}{\log \eps}
		\ = \ \varlimsup_{\eps\to 0} \frac{\log\hat\mu(\eps)}{\log \eps} \ .
	\end{eqnarray*} 
	Conversely, the $\eps$-balls $B_{\eps}(\xi)$ with centres $\xi$ in $S$ cover
	$\Xi$, and this easily leads to the reverse inequality.
\end{proof}

By the Minkowski characterisation of box dimension, we have for $E\ssq \Xi$
\begin{equation}
	\label{e.Minkowski}
	\overline{\Dim}_B(E) \ = \ \overline{\Dim}_B(\Xi)
	- \varliminf_{\eps\to 0} \frac{\log\mu(B_\eps(E))}{\log \eps} \ .
\end{equation}
The proof of this fact in the setting above is the same as in Euclidean space,
see, for example, \cite{Falconer2007FractalGeometry}.
We denote by $\eta_{\delta}(\eps)$ the constant given by
Lemma~\ref{l.eta_function} and let
\begin{equation}\label{e.gamma_def}
	\gamma(h) \ := \ \varlimsup_{\delta\to 0}\varlimsup_{\eps\to 0} \ 
	\frac{\log\eta_\delta(\eps)}{\log\eps} \ .
\end{equation}
This is the scaling factor from
Theorem \ref{t.automorphic_quantitative_intro}, which we restate here as 
\begin{theorem}\label{t.automorphic_quantitative}
	Suppose that the upper box dimension of $\Xi$ is finite and strictly
	positive and $\gamma(h)>0$.
	Then under the above assumptions, we have
	\begin{equation}\label{e.automorphic_upper_bound}
		\oac(f) \ \leq \ \frac{\overline\Dim_B(\Xi)\cdot \gamma(h)}
			{\overline\Dim_B(\Xi)-\sup_{\delta>0}\overline{\Dim_B}(E_\delta)}  \ ,
	\end{equation}
	where $E_\delta=\{\xi\in \Xi\mid \diam(h^{-1}(\xi))\geq \delta\}$. 
\end{theorem}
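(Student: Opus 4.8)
The plan is to quantify the argument in the proof of Theorem~\ref{t.finite_separation_numbers}, tracking the dependence of the various constants on $\delta$ and $\nu$ as $\nu\to 0$. Fix $\delta>0$. In the qualitative proof, the key output was the bound $\Sep(f,\delta,\nu)\leq N$, where $N$ is the maximal cardinality of an $\eta_\delta(\eps)$-separated set in $\Xi$, and $\eps$ was chosen so that $\mu(\overline{B_\eps(E_\delta)})<\nu$ for the (unique, here) invariant measure $\mu$. So the first step is to make this choice of $\eps$ quantitative: using \eqref{e.Minkowski}, which gives $\mu(B_\eps(E_\delta))\approx \eps^{\,\overline\Dim_B(\Xi)-\overline\Dim_B(E_\delta)}$ up to subexponential factors, we may take $\eps=\eps(\nu)$ with $\log\eps \sim \frac{\log\nu}{\overline\Dim_B(\Xi)-\overline\Dim_B(E_\delta)}$; more precisely, for any $\tau>0$ and all small $\nu$ we can choose $\eps(\nu)$ with $\log\eps(\nu)\geq \frac{(1-\tau)\log\nu}{\overline\Dim_B(\Xi)-\overline\Dim_B(E_\delta)}$ while still guaranteeing $\mu(\overline{B_{\eps(\nu)}(E_\delta)})<\nu$. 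One must be slightly careful to use $\sup_{\delta'>0}\overline\Dim_B(E_{\delta'})$ in the denominator rather than $\overline\Dim_B(E_\delta)$, since the theorem's right-hand side involves the supremum; this only makes $\eps(\nu)$ smaller, which is harmless.

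Next I would feed this into the counting of $\eta_\delta(\eps)$-separated sets in $\Xi$. By the lemma relating $\mu(B_\eps(\xi))$ to $\overline\Dim_B(\Xi)$, the maximal cardinality $N(\eta)$ of an $\eta$-separated set in $\Xi$ satisfies $\log N(\eta)\leq (\overline\Dim_B(\Xi)+\tau)\cdot(-\log\eta)$ for all small $\eta$. Setting $\eta=\eta_\delta(\eps(\nu))$ and using the definition \eqref{e.gamma_def} of $\gamma(h)$, we have $-\log\eta_\delta(\eps(\nu))\leq (\gamma(h)+\tau)\cdot(-\log\eps(\nu))$ for $\delta$ small and $\nu$ small, and hence
\[
	\log\Sep(f,\delta,\nu)\ \leq\ \log N\big(\eta_\delta(\eps(\nu))\big)
	\ \leq\ (\overline\Dim_B(\Xi)+\tau)(\gamma(h)+\tau)\cdot(-\log\eps(\nu))\ .
\]
Combining with $-\log\eps(\nu)\leq \frac{(1-\tau)^{-1}(-\log\nu)}{\overline\Dim_B(\Xi)-\sup_{\delta'>0}\overline\Dim_B(E_{\delta'})}$ and dividing by $-\log\nu$, then letting $\nu\to 0$, $\delta\to 0$, and finally $\tau\to 0$, gives $\oac(f,\delta)$ and therefore $\sup_\delta\oac(f,\delta)=\oac(f)$ bounded by the right-hand side of \eqref{e.automorphic_upper_bound}.

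One subtlety to be handled carefully: the passage $\delta\to 0$ interacts with both $\gamma(h)$ (which is defined with an outer $\varlimsup_{\delta\to 0}$) and with $\overline\Dim_B(E_\delta)$ (which appears inside a supremum, and $E_\delta$ is increasing as $\delta\to 0$). For fixed $\delta$ one only gets the estimate with $\gamma(h)$ replaced by $\varlimsup_{\eps\to 0}\log\eta_\delta(\eps)/\log\eps$, so one should first prove the bound $\oac(f,\delta)\leq \frac{\overline\Dim_B(\Xi)\cdot\gamma_\delta(h)}{\overline\Dim_B(\Xi)-\sup_{\delta'>0}\overline\Dim_B(E_{\delta'})}$ with $\gamma_\delta(h):=\varlimsup_{\eps\to 0}\log\eta_\delta(\eps)/\log\eps$, and then take $\sup_{\delta>0}\oac(f,\delta)$, noting $\varlimsup_{\delta\to 0}\gamma_\delta(h)=\gamma(h)$ and that the bound for each $\delta$ is what matters since we take a supremum. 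I expect the main obstacle to be precisely this bookkeeping of the order of limits together with the use of the Minkowski formula \eqref{e.Minkowski}: one must ensure the subexponential error terms hidden in ``$\approx$'' are absorbed by the arbitrary $\tau>0$ uniformly enough to survive all the limiting operations, and that the denominator stays positive (which is where the hypothesis $\overline\Dim_B(\Xi)>\sup_{\delta>0}\overline\Dim_B(E_\delta)$ — implicit in the estimate being meaningful — is used; if it fails the bound is vacuous). The remaining ingredients (the measure-to-box-dimension lemma, \eqref{e.Minkowski}, Lemma~\ref{l.eta_function}, and the separated-set bound $\Sep(f,\delta,\nu)\leq N$ from Theorem~\ref{t.finite_separation_numbers}) are already available and plug in directly.
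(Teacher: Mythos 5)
Your approach is essentially the same as the paper's: you quantify the bound $\Sep(f,\delta,\nu)\leq M_{\eta_\delta(\eps)}(\Xi)$ from the proof of Theorem~\ref{t.finite_separation_numbers}, feed in the Minkowski characterisation \eqref{e.Minkowski} and the measure-to-box-dimension lemma for $\Xi$, and invoke the scaling factor \eqref{e.gamma_def}. Where the paper uses the dyadic sandwich $\mu(B_{2^{-k-1}}(E_\delta))<\nu\leq\mu(B_{2^{-k}}(E_\delta))$ and a three-factor telescoping of the quotient, you pick $\eps(\nu)$ explicitly with a $\tau$-slack; this is only a cosmetic difference. (A small bookkeeping slip: your first choice ``$\log\eps(\nu)\geq(1-\tau)\log\nu/(\overline\Dim_B(\Xi)-\overline\Dim_B(E_\delta))$'' has the wrong sense — you can only guarantee $\eps(\nu)\geq\nu^{(1+\tau)/c}$, not $\eps(\nu)\geq\nu^{(1-\tau)/c}$, since the Minkowski estimate gives $\mu(B_\eps(E_\delta))\leq\eps^{c-\tau'}$ and the exponent must exceed $1$ after substituting. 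The inequality you actually use three lines later, with $(1-\tau)^{-1}$, is correct and follows from the corrected version, so this is harmless.)

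There is, however, one genuine gap, and it is precisely at the step you flag as the subtlety. You correctly observe that for fixed $\delta$ the argument only gives
\[
	\oac(f,\delta)\ \leq\ \frac{\overline\Dim_B(\Xi)\cdot\gamma_\delta(h)}{\overline\Dim_B(\Xi)-\sup_{\delta'>0}\overline\Dim_B(E_{\delta'})}\,,\qquad
	\gamma_\delta(h):=\varlimsup_{\eps\to 0}\frac{\log\eta_\delta(\eps)}{\log\eps}\,,
\]
and you then propose to ``take $\sup_{\delta>0}$'' on both sides. But that yields only the bound with $\sup_{\delta>0}\gamma_\delta(h)$, which may strictly exceed $\gamma(h)=\varlimsup_{\delta\to 0}\gamma_\delta(h)$; the asserted equality between ``the supremum'' and ``the limsup as $\delta\to 0$'' is exactly what is not automatic, since there is no apparent monotonicity of $\eta_\delta(\eps)$ (hence of $\gamma_\delta(h)$) in $\delta$. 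The missing observation is on the left-hand side, not the right: $\Sep(f,\delta,\nu)$ is non-increasing in $\delta$, so $\oac(f,\delta)$ is non-increasing in $\delta$, hence
\[
	\oac(f)\ =\ \sup_{\delta>0}\oac(f,\delta)\ =\ \lim_{\delta\to 0^+}\oac(f,\delta)\ \leq\ \varlimsup_{\delta\to 0}\,\frac{\overline\Dim_B(\Xi)\cdot\gamma_\delta(h)}{\overline\Dim_B(\Xi)-\sup_{\delta'>0}\overline\Dim_B(E_{\delta'})}\ =\ \frac{\overline\Dim_B(\Xi)\cdot\gamma(h)}{\overline\Dim_B(\Xi)-\sup_{\delta'>0}\overline\Dim_B(E_{\delta'})}\,,
\]
where the last equality holds because the denominator no longer depends on $\delta$. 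With this replacement of supremum by limit, your argument closes correctly and matches the paper's.
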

\begin{proof}
	Without loss of generality, we assume that $\gamma(h)$ is finite and fix $\delta>0$.
	Going back to the end of the proof of Theorem~\ref{t.finite_separation_numbers},
	we find that according to its definition the number $N$ in
	\eqref{e.automorphic_Sep_bound} is equal to $M_{\eta_\delta(\eps)}(\Xi)$.
	Thus, we have already shown that if $\nu>\mu(B_\eps(E_\delta))$ for some
	$\eps>0$, then $\Sep(f,\delta,\nu) \leq M_{\eta_\delta(\eps)}(\Xi)$.

	Now, note that $\mu(B_\eps(E_\delta))$ is monotonously decreasing to $0$ as
	$\eps\to 0$.
	For $\nu$ small enough choose $k\in\N$ such that
	$\mu(B_{2^{-k-1}}(E_\delta))<\nu\leq \mu(B_{2^{-k}}(E_\delta))$.
	We obtain
	\begin{eqnarray*}
		\oac(f,\delta) & \leq &	\varlimsup_{k\to\infty}
			\frac{\log M_{\eta_\delta(2^{-k-1})}(\Xi)}{-\log\mu(B_{2^{-k}}(E_\delta))} \\
		& = & \varlimsup_{k\to\infty}
			\frac{M_{\eta_\delta(2^{-k-1})}(\Xi)}{-\log\eta_\delta(2^{-k-1})} 
			\cdot \frac{\log\eta_\delta(2^{-k-1})}{\log 2^{-k-1}} 
			\cdot \frac{\log 2^{-k-1}}{\log\mu(B_{2^{-k}}(E_\delta))}
		\\ & \leq & \overline\Dim_B(\Xi)\cdot \gamma(h)
			\cdot\left(\varliminf_{k\to\infty}
			\frac{\log\mu(B_{2^{-k}}(E_\delta))}{\log 2^{-k}}\right)^{-1} \\ 
		& = & \frac{\overline\Dim_B(\Xi)\cdot \gamma(h)}
			{\overline\Dim_B(\Xi)-\overline{\Dim}_B(E_\delta)} \ ,
	\end{eqnarray*}
	where we use \eqref{e.Minkowski} for the last equality.
	Taking the supremum over all $\delta>0$ yields
	\eqref{e.automorphic_upper_bound}. 
\end{proof}

\section{Regular Toeplitz flows}\label{RegularToeplitzFlows}

Inspired by earlier constructions of almost periodic functions by Toeplitz, the
notions of Toeplitz sequences and Toeplitz subshifts or flows were introduced by
Jacobs and Keane in 1969 \cite{JacobsKeane1969}.
In the sequel, these systems have been used by various authors to provide a
series of interesting examples of symbolic dynamics with intriguing dynamical
properties, see for example \cite{MarkleyPaul1979PositiveEntropyToeplitzFlows,
Williams1984ToeplitzFlows} or \cite{Downarowicz2005} and references therein.
In what follows, we will study the amorphic complexity for so-called regular
Toeplitz subshifts.

Let $A$ be a finite alphabet, $\Sigma_A=A^\I$ with $\I=\N_0$ or $\Z$ and $\rho$ the 
Cantor metric on $\Sigma_A$ (see Section \ref{BasicExamples}).
Assume that $\omega \in \Sigma_A$ is a non-periodic Toeplitz sequence with
associated Toeplitz subshift $(\Sigma_\omega,\sigma)$, as defined in
Section~\ref{Intro}.
Given $p\in\N$ and $x=(x_k)_{k\in\I}\in\Sigma_A$, let
\[
	\Per(p,x):=\{k\in\I\;|\;x_{k}=x_{k+p\ell}
		\textnormal{ for all }\ell\in\N\} \ .
\]
We call the $p$-periodic part of $\omega$ the \emph{$p$-skeleton of $\omega$}.
To be more precise, define the $p$-skeleton of $\omega$, denoted by $S(p,\omega)$,
as the sequence obtained by replacing $\omega_k$ with the new symbol `$\ast$'
for all $k\notin\Per(p,\omega)$.
Note that the $p$-skeletons of two arbitrary points in $\Sigma_{\omega}$ coincide
after shifting one of them by at most $p-1$ positions.
We say that $p$ is an \emph{essential period of $\omega$} if $\Per(p,\omega)$ is
non-empty and does not coincide with $\Per(\tilde p,\omega)$ for any $\tilde p<p$.
A \emph{weak periodic structure of $\omega$} is a sequence
$(p_{\ell})_{\ell\in\N}$ such that each $p_{\ell}$ divides $p_{\ell+1}$ and
\begin{align}\label{Toeplitz_periods}
  \bigcup\limits_{\ell\in\N}\Per(p_{\ell},\omega)=\I \ .
\end{align}
If, additionally, all the $p_l$'s are essential, we call $(p_{\ell})_{\ell\in\N}$
a \emph{periodic structure of $\omega$}.
For every (non-periodic) Toeplitz sequence we can find at least one periodic
structure \cite{Williams1984ToeplitzFlows}.

\begin{remark}\label{remark_non_essential_ps}
	Note that from each weak periodic structure we can obtain a periodic
	structure in the following way.
	Suppose $(p_{\ell})_{\ell\in\N}$ is a weak periodic structure of $\omega$.
	Without loss of generality, we can assume that 	
	$\Per(p_{\ell},\omega)\neq\emptyset$ and
	$\Per(p_{\ell},\omega)\subsetneq\Per(p_{\ell+1},\omega)$ for all $\ell\in\N$
	(recall that $\omega$ is non-periodic).
	For each $p_{\ell}$ choose the smallest $\tilde p_{\ell}\in\N$ such that
	$\Per(\tilde p_{\ell},\omega)$ coincides with $\Per(p_{\ell},\omega)$.
	Then by definition $\tilde p_{\ell}$ is an essential period.
	Since $p_{\ell}$ divides $p_{\ell+1}$ we have $\Per(\tilde p_{\ell},\omega)
		\subset\Per(\tilde p_{\ell+1},\omega)$.
	The next lemma and the minimality of the $\tilde p_{\ell}$'s imply that
	$\tilde p_{\ell}$ divides $\tilde p_{\ell+1}$ for each $\ell\in\N$, so that
	$(\tilde p_{\ell})_{\ell\in\N}$ is a periodic structure.
\end{remark}

The next lemma is probably well-known to experts and we omit the proof here.
\begin{lemma}\label{Toeplitz_gcd}
	If $\Per(p,x)\subseteq\Per(q,x)$, then $\Per(\gcd(p,q),x)=\Per(p,x)$ where 
	$x\in\Sigma_A$ and $p,q\in\N$.
\end{lemma}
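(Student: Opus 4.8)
The plan is to exploit the additive structure of the periodicity sets together with the inclusion $\Per(p,x)\subseteq\Per(q,x)$. First I would record the basic closure property: if $k\in\Per(r,x)$, then $k+r\ell\in\Per(r,x)$ for all $\ell\in\N$ (and, as far as it makes sense within $\I$, for all $\ell\in\Z$), and moreover $x_k=x_{k+r\ell}$. This is immediate from the definition, since $x_{k+r\ell+rm}=x_{k+r(\ell+m)}=x_k$ for all $m$. So each $\Per(r,x)$ is, in a suitable sense, a union of arithmetic progressions with common difference $r$ on which $x$ is constant.

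The inclusion $\Per(\gcd(p,q),x)\supseteq\Per(p,x)$ is trivial, since $\gcd(p,q)$ divides $p$ and hence $\Per(p,x)\subseteq\Per(\gcd(p,q),x)$ always (a position periodic with period $p$ is periodic with period any multiple of a divisor\dots\ more precisely: if $x_k=x_{k+p\ell}$ for all $\ell$ and $d\mid p$, then writing $p=dm$ gives $x_k=x_{k+dm\ell}$, so $k\in\Per(d,x)$). The substance is the reverse inclusion $\Per(\gcd(p,q),x)\subseteq\Per(p,x)$. Write $d=\gcd(p,q)$ and fix $k\in\Per(d,x)$; I must show $x_k=x_{k+p\ell}$ for every $\ell\in\N$. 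The idea is to walk from $k$ to $k+p\ell$ in steps of size $q$ (using $\Per(q,x)$) and correction steps of size $d$ (using $\Per(d,x)$), keeping track that at each intermediate position we are still in the appropriate periodicity set. Concretely, by B\'ezout there are integers $a,b$ with $ap+bq=d$; iterating, for any target displacement $p\ell$ one can realize $p\ell$ as a combination of $q$'s and $d$'s. The cleanest route: show first that $k\in\Per(d,x)$ implies $k\in\Per(q,x)$ — indeed $d\mid q$, so $\Per(d,x)\subseteq\Per(q,x)$ as above — hence $x_k=x_{k+q\ell}$ for all $\ell$. Now I claim $x_k=x_{k+d}$ as well, but to connect $d$-shifts with $p$-shifts I use the hypothesis $\Per(p,x)\subseteq\Per(q,x)$ in the form: every $p$-periodic position is $q$-periodic. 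The key point is that one can reach $k+p$ from $k$ through positions all lying in $\Per(q,x)$, on which the relevant values agree.

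Let me make the last paragraph precise, since that is where the main obstacle lies. Pick $m\in\Per(p,x)$ (nonempty by hypothesis that $\Per(p,x)$ is, in the Toeplitz setting, cofinal — or just take the hypothesis $\Per(p,x)\subseteq\Per(q,x)$ at face value with $\Per(p,x)\neq\emptyset$, which we may assume since otherwise there is nothing to prove). Then $m\in\Per(q,x)$ too. For $k\in\Per(d,x)$, I want $x_k=x_{k+p}$. Since $d=\gcd(p,q)$, there exist $\alpha,\beta\in\N_0$ with $\alpha q-\beta p=d$ or $\beta p-\alpha q = d$ (choosing signs so both coefficients are nonnegative, which is possible after adding a suitable multiple of $pq/d$); WLOG $\alpha q=\beta p+d$. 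Now iterate the $q$-periodicity of $k$: $x_k=x_{k+\alpha q}=x_{k+\beta p+d}$. On the other hand, $k+\beta p\in\Per(d,x)$ because $d\mid p$ and $k\in\Per(d,x)$ (shifting a $d$-periodic position by a multiple of $d$ stays in $\Per(d,x)$, and $\beta p$ is a multiple of $d$), so $x_{k+\beta p}=x_{k+\beta p+d}$. Combining, $x_k=x_{k+\beta p}$. One then has to bootstrap from "$x_k=x_{k+\beta p}$ for the specific $\beta$" to "$x_k=x_{k+p\ell}$ for all $\ell$"; here one argues that $k$ (being $q$-periodic) already satisfies $x_k=x_{k+p\ell}$ for all $\ell$ such that $p\ell$ is a multiple of $q$, i.e.\ for $\ell\in q'\N$ with $q'=q/d$, while the relation $x_k=x_{k+\beta p}$ for the B\'ezout exponent $\beta$ (which is coprime to $q'$, or can be chosen so) fills in the remaining residues mod $q'$ via the same walking argument applied to $k+jp$ in place of $k$.

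The main obstacle I anticipate is the bookkeeping in this last bootstrap: one must ensure that every intermediate base point to which the $q$-periodicity argument is applied genuinely lies in $\Per(q,x)$, and this ultimately rests on a clean formulation of the closure of $\Per(q,x)$ under shifts by multiples of $q$ combined with the hypothesis $\Per(p,x)\subseteq\Per(q,x)$. It may be cleaner to phrase the whole argument group-theoretically: view $\{\,r\in\N : x_k = x_{k+r}\,\}\cup\{0\}$, for fixed $k$, as closed under addition, and note $\Per(r,x)$ records membership of $k$ in a set whose "period group" contains $r\Z$; the inclusion of periodicity sets then transfers to an inclusion of these subgroups, whose gcd is realized by $\gcd(p,q)$, giving the claim with no index-chasing. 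I would write it this way if the direct approach gets unwieldy. Either way the lemma is elementary, and since the paper explicitly says "we omit the proof here", a short argument along these lines suffices.
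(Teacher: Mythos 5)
Your proof reverses the two inclusions, and the error runs through the entire argument. If $d\mid p$, say $p=dm$, and $k\in\Per(d,x)$, then $x_k=x_{k+d\ell}$ for \emph{all} $\ell\in\N$; specialising to $\ell=m\ell'$ gives $x_k=x_{k+p\ell'}$ for all $\ell'$, so $k\in\Per(p,x)$. That is, $d\mid p$ implies $\Per(d,x)\subseteq\Per(p,x)$, not the reverse. Your justification ``if $x_k=x_{k+p\ell}$ for all $\ell$ and $d\mid p$, then writing $p=dm$ gives $x_k=x_{k+dm\ell}$, so $k\in\Per(d,x)$'' merely re-derives membership in $\Per(dm,x)=\Per(p,x)$: to conclude $k\in\Per(d,x)$ you would need $x_k=x_{k+d\ell'}$ for \emph{every} $\ell'\in\N$, not only for $\ell'$ a multiple of $m$. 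In fact a few lines later you invoke the correct monotonicity yourself (``$d\mid q$, so $\Per(d,x)\subseteq\Per(q,x)$''), so the write-up is internally inconsistent. The upshot is that the inclusion you single out as ``the substance'', namely $\Per(\gcd(p,q),x)\subseteq\Per(p,x)$, is the genuinely trivial one (it holds simply because $\gcd(p,q)\mid p$, with no use of $\Per(p,x)\subseteq\Per(q,x)$ and no need of any B\'ezout walk), while the inclusion you dismiss in one line, $\Per(p,x)\subseteq\Per(\gcd(p,q),x)$, is where all the content lies, and your outline never engages with it.

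That nontrivial inclusion cannot be had from divisibility alone; for arbitrary $x\in\Sigma_A$ it can even fail. Take $A=\{a,b\}$, $\I=\N_0$, $x_2=b$ and $x_k=a$ for all $k\neq 2$: then $\Per(6,x)=\Per(4,x)=\N_0\setminus\{2\}$, so the hypothesis $\Per(6,x)\subseteq\Per(4,x)$ holds, yet $0\in\Per(6,x)$ while $0\notin\Per(2,x)$ because $x_0=a\neq b=x_2$. Any correct argument therefore has to make real use of the hypothesis $\Per(p,x)\subseteq\Per(q,x)$ together with additional structure (in the paper's application $x=\omega$ is Toeplitz, so $\bigcup_r\Per(r,x)=\I$). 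A workable route is: from $k\in\Per(p,x)$ and the hypothesis one gets $k+ap\in\Per(p,x)\subseteq\Per(q,x)$ for all $a\geq 0$, hence $x$ equals $x_k$ on the whole numerical monoid $\{k+ap+bq\mid a,b\geq 0\}$; then, for each $\ell$, one uses some period $r$ with $k+d\ell\in\Per(r,x)$ to push $k+d\ell$ along an arithmetic progression until it lands in that monoid. None of this appears in your plan because you aimed your effort at the wrong inclusion.
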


Given $p\in\N$, we define the relative densitiy of the $p$-skeleton of $\omega$ by
\[
	D(p)\  := \ \frac{\#(\Per(p,\omega)\cap [0,p-1])}{p} \ .
\]
Since $\omega$ is non-periodic, we have $D(p)\leq 1-1/p$.
For a (weak) periodic structure $(p_{\ell})_{\ell\in\N}$, the densities
$D(p_{\ell})$ are non-decreasing in $\ell$ and we say that
$(\Sigma_{\omega},\sigma)$ is a \emph{regular Toeplitz subshift} if
$\lim_{\ell\to\infty}D(p_{\ell})=1$.
Note that regularity of a Toeplitz subshift does not depend on the chosen (weak)
periodic structure (use \eqref{Toeplitz_periods} and Lemma \ref{Toeplitz_gcd}).

It is well-known that a regular Toeplitz subshift is an almost sure 1-1
extension of a minimal isometry (an odometer) \cite{Downarowicz2005}.
Thus, we obtain from Theorem~\ref{t.finite_separation_numbers} that its asymptotic
separation numbers are finite.
However, as mentioned in the introduction, a quantitative analysis is possible
and yields the following.
\begin{theorem} \label{t.toeplitz_estimate}
	Suppose $(\Sigma_{\omega},\sigma)$ is a regular Toeplitz subshift and let
	$(p_{\ell})_{\ell\in\N}$ be a (weak) periodic structure of $\omega$.
	For $\delta,s>0$ we have
	\[
		\varlimsup\limits_{\nu\to 0}
			\frac{\Sep(\sigma,\delta,\nu)}{\nu^{-s}}
		\ \leq \  C\cdot\varlimsup\limits_{\ell\to\infty}
			\frac{p_{\ell+1}}{(1-D(p_{\ell}))^{-s}} \ ,
	\]
	with $C=C(\delta,s)>0$.
\end{theorem}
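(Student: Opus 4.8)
The plan is to bound $\Sep(\sigma,\delta,\nu)$ directly in terms of the density of the skeleton by controlling the Besicovitch-type pseudo-metric on $\Sigma_\omega$. Fix $\delta\in(0,1]$ and choose $r\in\N$ with $2^{-r}<\delta$; then two points $x,y\in\Sigma_\omega$ satisfy $\rho(\sigma^k x,\sigma^k y)\geq\delta$ exactly when $x$ and $y$ disagree in one of the coordinates $k-r,\dots,k+r$ (up to the usual boundary adjustments for one-sided shifts), so that up to a multiplicative constant depending only on $\delta$ the quantity $\varlimsup_n S_n(\sigma,\delta,x,y)/n$ is comparable to the upper density of the set of coordinates where $x$ and $y$ disagree. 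The key point is that for a regular Toeplitz sequence this density is governed by the skeletons: if $x,y$ agree on all $p_\ell$-periodic positions (which, after aligning their $p_\ell$-skeletons by a shift of at most $p_\ell-1$, is the generic situation), they can only disagree on the set $\I\setminus\Per(p_\ell,\omega)$, which has density $1-D(p_\ell)$.

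The main step is then a counting argument at each scale $\ell$. The first thing I would do is fix $\nu>0$ and pick the level $\ell=\ell(\nu)$ by the relation $1-D(p_{\ell-1})\geq c\nu > 1-D(p_\ell)$ for a suitable constant $c=c(\delta)$ (this is possible once $\nu$ is small since $D(p_\ell)\to1$); the precise choice is dictated by the need to make the two sides of the claimed inequality match up. Given an $(\sigma,\delta,\nu)$-separated set $S\subseteq\Sigma_\omega$, I claim that any two distinct points of $S$ must have \emph{different} $p_\ell$-skeletons — meaning their skeletons disagree even after the optimal shift by at most $p_\ell-1$ positions. Indeed, if $x,y\in S$ had (shift-)equal $p_\ell$-skeletons, then after applying an appropriate power of $\sigma$ to one of them their disagreement set would be contained in $\I\setminus\Per(p_\ell,\omega)$, which has upper density $1-D(p_\ell)<c\nu$, and for $c$ small enough (absorbing the constant from the $\delta$-versus-coordinate comparison) this would contradict $(\sigma,\delta,\nu)$-separation, using the commutation/shift-invariance of the separation frequency. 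Consequently $\#S$ is at most the number of distinct $p_\ell$-skeleton patterns that can occur in $\Sigma_\omega$, up to shifts; and since a $p_\ell$-skeleton is periodic with period $p_\ell$, there are at most $p_\ell\cdot(\text{number of skeleton patterns})\leq p_\ell^2\cdot|A|^{p_\ell D(p_\ell)}$ of them — but in fact the number of distinct skeleton \emph{values} occurring in a fixed Toeplitz sequence $\omega$ is exactly $p_{\ell+1}/p_\ell$ when $(p_\ell)$ is a periodic structure (this is the standard description of the odometer factor: the $p_\ell$-skeleton of $\sigma^k\omega$ depends only on $k\bmod p_{\ell+1}$), so that $\#S\leq p_\ell\cdot(p_{\ell+1}/p_\ell)=p_{\ell+1}$. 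For a merely weak periodic structure one passes to the associated essential periodic structure via Remark \ref{remark_non_essential_ps} and notes this only decreases the $p_\ell$ and hence improves the bound.

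Putting the pieces together, $\Sep(\sigma,\delta,\nu)\leq p_{\ell(\nu)+1}$ while $\nu^{-s}\geq (c^{-1}(1-D(p_{\ell(\nu)-1})))^{-s}$ up to constants; feeding this into $\varlimsup_{\nu\to 0}\Sep(\sigma,\delta,\nu)/\nu^{-s}$ and re-indexing $\ell\mapsto\ell+1$ in the lim sup yields the asserted inequality with $C=C(\delta,s)$ absorbing the powers of $c$ and the shift-counting factors. The step I expect to be the real obstacle is making the comparison between the $\rho$-separation at distance $\delta$ and the skeleton disagreement completely rigorous on the \emph{orbit closure} $\Sigma_\omega$ rather than just on the orbit of $\omega$: one must check that every $x\in\Sigma_\omega$ genuinely has well-defined $p_\ell$-skeletons taking only the $p_{\ell+1}/p_\ell$ values attained by $\omega$, and that the boundary effects from using a window of radius $r$ (as opposed to a single coordinate) and from the one-sided case $\I=\N_0$ only cost a bounded factor, independent of $\ell$ and $\nu$. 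Once that bookkeeping is in place, the remaining computation is routine.
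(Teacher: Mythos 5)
Your overall strategy is the same as the paper's: compare $(\sigma,\delta,\nu)$-separation with the upper density of coordinate disagreements (losing a window factor $2m+1$, with $2^{-m}<\delta\leq 2^{-m+1}$), observe that two points with the same $p$-skeleton disagree only off $\Per(p,\omega)$ and hence have separation frequency at most $(2m+1)(1-D(p))$, choose the level $\ell$ according to $\nu$, and count skeletons. But the execution has two genuine errors.

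First, the "shift-invariance of the separation frequency" you invoke does not exist in the form you need. The quantity $\varlimsup_{n\to\infty}\countsep{n}(\sigma,\delta,x,y)/n$ is invariant under replacing $(x,y)$ by $(\sigma x,\sigma y)$, but comparing $x$ with $\sigma^j y$ pairs up different coordinates and produces a genuinely different pseudo-distance; there is no relation between $\nu(\sigma,\delta,x,y)$ and $\nu(\sigma,\delta,x,\sigma^j y)$ in general. Consequently, your claim that points of an $(\sigma,\delta,\nu)$-separated set must have \emph{shift-inequivalent} $p_\ell$-skeletons is unjustified (and, if true, would give the absurd bound $\#S\leq 1$, since all $p_\ell$-skeletons in $\Sigma_\omega$ are cyclic shifts of $S(p_\ell,\omega)$). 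The correct statement is that they must have \emph{literally distinct} $p_\ell$-skeletons: if $S(p_\ell,x)=S(p_\ell,y)$ then $x,y$ disagree only outside $\Per(p_\ell,x)$, so no shift needs to be applied and no invariance is needed.

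Second, the skeleton count is wrong. Since $S(p_\ell,\sigma^k\omega)=\sigma^k S(p_\ell,\omega)$ is $p_\ell$-periodic, the $p_\ell$-skeleton of $\sigma^k\omega$ depends on $k\bmod p_\ell$, not on $k\bmod p_{\ell+1}$ as you assert from "the standard description of the odometer factor"; there are at most $p_\ell$ distinct $p_\ell$-skeletons in $\Sigma_\omega$, and your $p_\ell\cdot(p_{\ell+1}/p_\ell)=p_{\ell+1}$ is a coincidence of symbols rather than a bound. This propagates into an index mismatch at the end: combining $\Sep\leq p_{\ell+1}$ with $\nu\lesssim 1-D(p_{\ell-1})$ gives $p_{\ell+1}/(1-D(p_{\ell-1}))^{-s}$, whose levels are two apart, and "re-indexing the lim sup" does not repair that. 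With the correct count $\Sep(\sigma,\delta,\nu)\leq p_\ell$ and your choice $1-D(p_{\ell-1})\geq c\nu>1-D(p_\ell)$ (with $c\asymp 1/(2m+1)$), you obtain $\Sep/\nu^{-s}\leq c^{-s}p_\ell/(1-D(p_{\ell-1}))^{-s}$, and re-indexing now does produce the stated inequality. This is exactly the paper's argument, written there with the level shifted by one: $\ell$ is chosen so that $(2m+1)2(1-D(p_{\ell+1}))<\nu\leq(2m+1)2(1-D(p_\ell))$, and $\Sep\leq p_{\ell+1}$ via the $p_{\ell+1}$-skeletons.
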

Note that this directly implies Theorem~\ref{t.toeplitz}.
\begin{proof}
	Recall that since $\omega$ is a regular Toeplitz sequence, the densities 
	$D(p_{\ell})$ are non-decreasing and converge to $1$.
	Choose $m\in\N$ with $2^{-m}<\delta\leq 2^{-m+1}$ and $\ell\in\N$ such that
	\begin{align}
		\label{toeplitz_upper_bound_cond_nu}
		(2m+1)2(1-D(p_{\ell+1}))\ < \ \nu\ \leq\ (2m+1)2(1-D(p_{\ell})) \ .
	\end{align}
	Then we have
	\[
		\Sep(\sigma,\delta,\nu)\ \leq \ \Sep(\sigma,2^{-m},(2m+1)2(1-D(p_{\ell+1})))
	\]
	and claim that the second term is bounded from above by $p_{l+1}$.
	
	Assume for a contradiction that there exists a 
	$(\sigma, 2^{-m},(2m+1)2(1-D(p_{\ell+1})))$-separated set
	$S\subseteq\Sigma_{\omega}$	with more than $p_{\ell+1}$ elements.
	Then, there are at least two points $x=(x_k)_{k\in\I}$,
	$y=(y_k)_{k\in\I}\in S$	with the same $p_{\ell+1}$-skeleton.
	This means $x$ and $y$ can differ at most at the remaining positions
	$k\notin\Per(p_{\ell+1},x)=\Per(p_{\ell+1},y)$.
	Using the fact that $\rho(x,y)\geq 2^{-m}$ if and only if $x_k\neq y_k$ for
	some $k\in\I$ with $|k|\leq m$, we obtain
	\begin{eqnarray*}
		\lefteqn{ \varlimsup\limits_{n\to\infty}\frac{\countsep{n}(\sigma,2^{-m},x,y)}{n}  
		\ \leq \  (2m+1)\varlimsup\limits_{n\to\infty}
			\frac{\#\left\{0\leq k<n\;|\;x_k\neq y_k\right\}}{n} } \\
		& \leq & (2m+1)\varlimsup\limits_{n\to\infty}
			\frac{\#([0,n-1]\smin \Per(p_{\ell+1},\omega))}{n} 
		\ = \ (2m+1)(1-D(p_{\ell+1})) \ . 
	\end{eqnarray*}
	However, this contradicts \eqref{toeplitz_upper_bound_cond_nu}.
	Hence, we obtain
	\[
		\frac{\Sep(\sigma,\delta,\nu)}{\nu^{-s}}
		\ \leq \ C(\delta,s)\cdot\frac{p_{\ell+1}}{(1-D(p_{\ell}))^{-s}} \ ,
	\]
	where $C(\delta,s):=(2m+1)^s$. Note that $m$ only depends on $\delta$.
	Taking the limit superior yields the desired result.
\end{proof}

For the remainder of this section, our aim is to provide a class of examples
demonstrating that the above estimate is sharp and that the amorphic complexity
of regular Toeplitz flows takes at least a dense subset of values in
$[1,\infty)$.
To that end, we first recall an alternative definition of Toeplitz sequences \ 
(cf.\ \cite{JacobsKeane1969}).
Consider the extended alphabet $\mathcal A:=A\cup\{\ast\}$ where we can think of $\ast$
as a hole or placeholder like in the definition of the $p$-skeleton. 
Then, $\omega\in\Sigma_A$ is a Toeplitz sequence if and only if there exists an
\emph{approximating sequence} $(\omega^{\ell})_{\ell\in\N}$ of periodic points
in $(\Sigma_{\mathcal A},\sigma)$ such that (i) for all $k\in\I$ we have
$\omega_k^{\ell+1}=\omega_k^{\ell}$ as soon as $\omega_k^{\ell}\in A$ for some
$\ell\in\N$ and (ii) $\omega_k=\lim_{\ell\to\infty}\omega_k^{\ell}$, see
\cite{Eberlein1971}.
Such an approximating sequence of a Toeplitz sequence is not unique.
For example, every sequence of $p_{\ell}$-skeletons $(S(p_{\ell},\omega))_{\ell\in\N}$
with $(p_{\ell})_{\ell\in\N}$ a (weak) periodic structure satisfies these properties.

Let us interpret Theorem~\ref{t.toeplitz_estimate} in this context. For a
$p$-periodic point $x\in\Sigma_{\mathcal A}$, we can define the relative density
of the holes in $x$ by
\[
	r(x) \ := \ \frac{\#\{0\leq k<p\;|\; x_k=\ast\}}{p} \ .
\]
Note that $D(p)=1-r\left(S(p,\omega)\right)$ for every $p\in\N$.
Suppose $(\omega^{\ell})_{\ell\in\N}$ is an approximating sequence of $\omega$.
We say $(p_{\ell})_{\ell\in\N}$ is a \emph{sequence of corresponding periods of
$(\omega^{\ell})_{\ell\in\N}$} if $p_{\ell}$ divides $p_{\ell+1}$ and
$\sigma^{p_{\ell}}(\omega^{\ell})=\omega^{\ell}$ for each $\ell\in\N$.
We have that $r(\omega^{\ell})\geq 1/p_{\ell}$.  Moreover, $r(\omega^{\ell}) \geq
1-D(p_\ell)$, so that Theorem~\ref{t.toeplitz_estimate} implies
\begin{corollary}
	\label{cor_upper_bound_ham}
	Assume $(\Sigma_{\omega},\sigma)$ is a regular Toeplitz subshift.
	Let $(\omega^{\ell})_{\ell\in\N}$ be an approximating sequence of $\omega$
	and let $(p_{\ell})_{\ell\in\N}$ be a sequence of corresponding periods
	of $(\omega^{\ell})_{\ell\in\N}$.
	Furthermore, assume $p_{\ell+1}\leq C p_{\ell}^t$ and 
	$r(\omega^{\ell})\leq K/p_{\ell}^u$ for $\ell$ large enough, where 
	$C,t\geq 1$, $u\in (0,1]$ and $K>0$.
	Then
	\[
		\oac(\sigma)\leq\frac{t}{u} \ .
	\]
\end{corollary}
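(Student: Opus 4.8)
The plan is to deduce Corollary~\ref{cor_upper_bound_ham} from Theorem~\ref{t.toeplitz_estimate}; once that estimate is available, what remains is only to insert the two growth hypotheses and keep track of the exponents. First I would record that a sequence $(p_{\ell})_{\ell\in\N}$ of corresponding periods of an approximating sequence $(\omega^{\ell})_{\ell\in\N}$ is in particular a weak periodic structure of $\omega$: each $p_{\ell}$ divides $p_{\ell+1}$ by definition, and for every $k\in\I$ one has $\omega^{\ell}_k=\omega_k\in A$ for all sufficiently large $\ell$ (since $\omega^{\ell}_k\to\omega_k$ in the discrete alphabet $\mathcal A$, and coordinates of $\omega^{\ell}$ lying in $A$ are frozen in all later approximants and coincide with the corresponding coordinate of $\omega$); combined with the $p_{\ell}$-periodicity of $\omega^{\ell}$ this forces $k\in\Per(p_{\ell},\omega)$ for such $\ell$, so that $\bigcup_{\ell}\Per(p_{\ell},\omega)=\I$. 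Hence Theorem~\ref{t.toeplitz_estimate} applies to $(p_{\ell})_{\ell\in\N}$, and for every $\delta>0$ and every $s>0$ we obtain
\[
	\varlimsup_{\nu\to 0}\frac{\Sep(\sigma,\delta,\nu)}{\nu^{-s}}
	\ \leq \ C(\delta,s)\cdot\varlimsup_{\ell\to\infty}
		\frac{p_{\ell+1}}{(1-D(p_{\ell}))^{-s}} \ .
\]

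Next I would estimate the right-hand $\varlimsup$. Using $1-D(p_{\ell})\leq r(\omega^{\ell})\leq K/p_{\ell}^{u}$ for $\ell$ large (the first inequality being the one recorded just before the corollary), we get $(1-D(p_{\ell}))^{-s}\geq K^{-s}p_{\ell}^{\,us}$, and combining this with $p_{\ell+1}\leq C p_{\ell}^{\,t}$ yields, for large $\ell$,
\[
	\frac{p_{\ell+1}}{(1-D(p_{\ell}))^{-s}} \ \leq \ C K^{s}\, p_{\ell}^{\,t-us} \ .
\]
Now fix any $s\geq t/u$. Then $t-us\leq 0$, so $p_{\ell}^{\,t-us}\leq 1$ (using $p_{\ell}\geq 1$), and the displayed quantity is bounded by the constant $CK^{s}$. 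Consequently $\varlimsup_{\ell\to\infty}p_{\ell+1}/(1-D(p_{\ell}))^{-s}<\infty$, and hence $M:=\varlimsup_{\nu\to 0}\Sep(\sigma,\delta,\nu)/\nu^{-s}<\infty$.

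Finally I would convert this into a bound on amorphic complexity. From $M<\infty$ we get $\Sep(\sigma,\delta,\nu)\leq(M+1)\nu^{-s}$ for all sufficiently small $\nu$, so
\[
	\frac{\log\Sep(\sigma,\delta,\nu)}{-\log\nu}
	\ \leq \ \frac{\log(M+1)}{-\log\nu}+s \ ,
\]
and the right-hand side tends to $s$ as $\nu\to 0$; hence $\oac(\sigma,\delta)\leq s$. Taking the supremum over $\delta>0$ gives $\oac(\sigma)\leq s$, and since $s\geq t/u$ was arbitrary we conclude $\oac(\sigma)\leq t/u$. I do not expect any genuine obstacle here: the argument is essentially a substitution into Theorem~\ref{t.toeplitz_estimate}, and the only points requiring a little care are the elementary inequality $1-D(p_{\ell})\leq r(\omega^{\ell})$ and the observation that a sequence of corresponding periods qualifies as a weak periodic structure — both immediate from the definitions.
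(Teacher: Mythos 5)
Your proof is correct and takes essentially the same approach as the paper, which deduces the corollary from Theorem~\ref{t.toeplitz_estimate} via the inequality $r(\omega^{\ell}) \geq 1 - D(p_{\ell})$ (stated just before the corollary). You merely spell out the details the paper leaves implicit — notably the observation that a sequence of corresponding periods forms a weak periodic structure, and the elementary exponent bookkeeping.
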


For the construction of examples, it will be convenient to use so-called
$(p,q)$-Toeplitz (infinite) words, as introduced in \cite{CassaigneKarhumaeki1997}. 
Let $\I=\N_0$.
Suppose $v$ is a finite and non-empty word with letters in $\mathcal A$ and at least
one entry distinct from $\ast$. 
Let $\abs{v}$ be its length and $\abs{v}_{\ast}$ be the number of holes in $v$.
We use the notation $\overline v\in\Sigma_{\mathcal A}$ for the one-sided periodic
sequence that is created by repeating $v$ infinitely often.
Define the sequence $(T_{\ell}(v))_{\ell\in\N}$ recursively by
\[
	T_{\ell}(v) \ := \ F_{v}(T_{\ell-1}(v)) \ ,
\]
where $T_0(v):=\overline\ast$ and $F_{v}:\Sigma_{\mathcal A}\to\Sigma_{\mathcal A}$
assigns to each $x\in\Sigma_{\mathcal A}$ the sequence that is obtained from
$\overline v$ by replacing the subsequence of all occurrences of $\ast$ in
$\overline v$ by $x$.
We get that $(T_{\ell}(v))_{\ell\in\N}$ is an approximating sequence and denote the
corresponding Toeplitz sequence by $T(v)$ \cite{CassaigneKarhumaeki1997}.
Setting $p:=\abs{v}$, $q:=\abs{v}_{\ast}$ and $d:=\gcd(p,q)$, we say $T(v)$ is a
\emph{$(p,q)$-Toeplitz word}.
One particular nice feature of $(p,q)$-Toeplitz words is that in order to exclude
periodicity one only has to check a short prefix of the sequence.

\begin{theorem}[{\cite[Theorem 4]{CassaigneKarhumaeki1997}}]
	\label{thm_periodicity_Toeplitz_words}
	Let $T(v)$ be a $(p,q)$-Toeplitz word.
	Then $T(v)$ is periodic if and only if its prefix of length $p$ is $d$-periodic.
\end{theorem}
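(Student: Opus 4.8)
The plan is to prove that $T(v)$ is periodic if and only if $T(v)$ is $d$-periodic if and only if the length-$p$ prefix of $T(v)$ is $d$-periodic; here the implications ($d$-periodic)$\Rightarrow$(length-$p$ prefix $d$-periodic) and ($d$-periodic)$\Rightarrow$(periodic) are trivial (for the first one using $d\mid p$), so only two implications carry content. Write $\omega:=T(v)$ and let $H=\{h_0<\dots<h_{q-1}\}\subseteq\{0,\dots,p-1\}$ be the set of positions of $\ast$ in $v$. Since each output coordinate of $F_v$ depends on at most one input coordinate, $F_v$ is continuous, so from $T_\ell(v)\to\omega$ we obtain $\omega=\lim_\ell F_v(T_{\ell-1}(v))=F_v(\omega)$; unwinding this fixed-point equation yields
\begin{equation}\label{e.ckrel}
	\omega_n=v_{n\bmod p}\ \ (n\bmod p\notin H),\qquad
	\omega_{h_j+mp}=\omega_{mq+j}\ \ (0\le j<q,\ m\ge 0).
\end{equation}
Moreover $\omega$ is the \emph{unique} fixed point of $F_v$: iterating the second family in \eqref{e.ckrel}, every coordinate of any fixed point is determined after finitely many steps by an entry of $v$, because the sets of positions still undetermined after $m$ steps are nested with empty intersection. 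Finally, since $v$ has a non-$\ast$ entry we have $q\le p-1$; if $q=0$ then $d=p$ and the statement is trivial, so assume $q\ge 1$, and recall $d\mid p$ and $d\mid q$.

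For (length-$p$ prefix $d$-periodic)$\Rightarrow$($T(v)$ $d$-periodic), put $u:=\omega_0\omega_1\cdots\omega_{d-1}$; by the hypothesis and $d\mid p$, $\omega_i=u_{i\bmod d}$ for all $0\le i<p$. I would verify $F_v(\overline u)=\overline u$ and then invoke uniqueness of the fixed point to conclude $\omega=\overline u$, which is $d$-periodic. At a non-hole position $n$, relation \eqref{e.ckrel} together with $d\mid p$ gives $F_v(\overline u)_n=v_{n\bmod p}=\omega_{n\bmod p}=u_{n\bmod d}=(\overline u)_n$. At a hole position $n=h_j+mp$, using $d\mid q$ and $d\mid p$ one gets $F_v(\overline u)_n=(\overline u)_{mq+j}=u_{j\bmod d}$ and $(\overline u)_n=u_{h_j\bmod d}$, and these coincide because the case $m=0$ of \eqref{e.ckrel} gives $\omega_{h_j}=\omega_j$, whence $u_{h_j\bmod d}=\omega_{h_j}=\omega_j=u_{j\bmod d}$ (both $h_j$ and $j$ lie in $[0,p-1]$, so the prefix hypothesis applies). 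Hence $F_v(\overline u)=\overline u$.

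For the converse, ($T(v)$ periodic)$\Rightarrow$($T(v)$ $d$-periodic), let $P$ be the minimal period of $\omega$; the goal is $P\mid d$. Granting $P\mid p$ this is quick: then $\omega_{h_j+mp}=\omega_{h_j}$ for all $m$, so \eqref{e.ckrel} gives $\omega_{mq+j}=\omega_{h_j}=\omega_j$ for all $m,j$; writing every index in the form $mq+j$ this reads $\omega_{k+q}=\omega_k$ for all $k$, so $\omega$ is $q$-periodic, and together with $P$-periodicity and minimality of $P$ this forces $P\mid\gcd(p,q)=d$. So the main obstacle is to prove $P\mid p$. Here I would exploit the self-similar structure of $T(v)$: a routine induction shows that the set $N_m$ of coordinates already determined by $v$ after $m$ applications of $F_v$ is periodic with period $p(p/d)^{m-1}$, that on $N_m$ the word $\omega$ agrees with an explicit word of that period built from $v$ via \eqref{e.ckrel}, and that $\bigcup_m N_m=\N_0$ (the complementary sets are nested and have densities $(q/p)^m\to 0$). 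Choosing $m$ so large that $[0,P-1]\subseteq N_m$, the $P$-periodic word $\omega$ must agree on a full fundamental domain with this $p(p/d)^{m-1}$-periodic word, and following where the level-$m$ holes land modulo $P$ forces $P\mid p$. Making this final step precise --- showing that every residue class modulo $P$ is eventually occupied by a non-hole, so that a periodic $T(v)$ is already $p$-periodic --- is the technical heart of the matter, and is the content of the combinatorial analysis in \cite{CassaigneKarhumaeki1997}.
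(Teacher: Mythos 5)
The paper does not prove this statement; it is quoted verbatim from Cassaigne--Karhum\"aki \cite[Theorem 4]{CassaigneKarhumaeki1997}, so there is no in-paper proof to compare against. Evaluating your attempt on its own merits: the fixed-point relation $\omega = F_v(\omega)$, the uniqueness of the fixed point, and the chain of equivalences you set up are all sound. In particular, the implication (length-$p$ prefix $d$-periodic) $\Rightarrow$ ($T(v)$ is $d$-periodic) is correctly proved: your verification that $F_v(\overline u)=\overline u$ at both hole and non-hole positions, using $\omega_{h_j}=\omega_j$ from the $m=0$ case of the fixed-point relation together with $d\mid p$ and $d\mid q$, is complete, and uniqueness then finishes that direction. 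Likewise, your reduction of the converse to the single claim ``the minimal period $P$ divides $p$'' is correct: granting $P\mid p$, the relation $\omega_{mq+j}=\omega_{h_j+mp}=\omega_{h_j}=\omega_j$ does yield $q$-periodicity and hence $P\mid\gcd(p,q)=d$.

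The gap is the claim $P\mid p$ itself, which you explicitly flag as ``the technical heart of the matter'' and defer to the reference. The sketch you give --- that the determined sets $N_m$ have period $p(p/d)^{m-1}$, that their union is all of $\N_0$, and that ``following where the level-$m$ holes land modulo $P$ forces $P\mid p$'' --- is not an argument but a description of one, and the crucial final step is left entirely unjustified. Since every other part of your proposal is either trivial or routine, the statement $P\mid p$ carries essentially all of the nontrivial content of the theorem; leaving it to the reference means the proposal does not actually constitute a proof, only a reduction. To close the gap you would need a concrete argument, for instance showing that if $P\nmid p$ then the $p$-periodicity of $\omega$ on the non-hole positions (a set of upper density $1-q/p\geq 1/p>0$) is incompatible with global $P$-periodicity by locating a pair of positions where the two constraints conflict; as it stands this is precisely what is missing.
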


\begin{theorem}\label{t.toeplitz_sharpbound_examples}
	Suppose $m\in\N$ and let $0{\,^m}1$ be the word starting with $m$ zeros and
	ending with a single one.
	Furthermore, let $v$ be a word with letters in $\mathcal A=\{0,1,\ast\}$ such
	that $1\leq\abs{v}_{\ast}\leq\abs{v}\leq m$.
	Then $\omega:=T(0{\,^m}1v)$ is a $(p,q)$-Toeplitz word and the corresponding
	regular	Toeplitz subshift $(\Sigma_{\omega},\sigma)$ has amorphic complexity
	\[
		\fsc(\sigma)=\frac{\log p/d}{\log p/q} \ .
	\]
\end{theorem}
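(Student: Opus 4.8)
We prove separately that $\oac(\sigma)\le\frac{\log(p/d)}{\log(p/q)}$ and $\uac(\sigma)\ge\frac{\log(p/d)}{\log(p/q)}$, where we write $p=|0{\,^m}1v|=m+1+|v|$, $q=|0{\,^m}1v|_\ast=|v|_\ast$, $d=\gcd(p,q)$, $p'=p/d$ and $q'=q/d$; note that $1\le q\le|v|\le m$, hence $1\le d\le q<p$ and $\gcd(p',q')=1$. The first step is to record the combinatorial data of $\omega$. Non-periodicity follows from Theorem~\ref{thm_periodicity_Toeplitz_words}: since $d\le m$, the length-$p$ prefix of $\omega$ carries the symbol $1$ at position $m$ but the symbol $0$ at position $m\bmod d\in\{0,\dots,m-1\}$, so it is not $d$-periodic. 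For the periodic structure one works with the approximating sequence $\omega^\ell:=T_\ell(0{\,^m}1v)=F_{0{\,^m}1v}(\omega^{\ell-1})$, so that $\omega=F_{0{\,^m}1v}(\omega)$. An easy induction via the map $F_{0{\,^m}1v}$ shows that $p_\ell:=d(p/d)^\ell$ is a period of $\omega^\ell$ with $p_\ell\mid p_{\ell+1}$, and that $\omega^\ell$ has exactly $d(q/d)^\ell$ holes per $p_\ell$-block, so that $r(\omega^\ell)=(q/p)^\ell$; consequently $D(p_\ell)\ge 1-(q/p)^\ell\to 1$ and $(\Sigma_\omega,\sigma)$ is a regular Toeplitz subshift with $(p_\ell)_{\ell\in\N}$ a sequence of corresponding periods of $(\omega^\ell)_{\ell\in\N}$.

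The upper bound is then immediate from Corollary~\ref{cor_upper_bound_ham}: we have $p_{\ell+1}=(p/d)\,p_\ell$, i.e.\ $t=1$ and $C=p/d$, and $r(\omega^\ell)=(q/p)^\ell=d^{u}/p_\ell^{u}$ with $u=\tfrac{\log(p/q)}{\log(p/d)}\in(0,1]$ (here $(p/d)^u=p/q$ and $d\le q<p$). Hence $\oac(\sigma)\le t/u=\tfrac{\log(p/d)}{\log(p/q)}$.

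For the matching lower bound I would exhibit large asymptotically separated subsets of $\Sigma_\omega$ consisting of shifts of $\omega$. Working with $\delta=1$ and using $\rho(x,y)\ge 1\Leftrightarrow x_0\ne y_0$, one has $\varlimsup_{n\to\infty}\countsep{n}(\sigma,1,\sigma^i\omega,\sigma^j\omega)/n\ge f(|i-j|)$, where $f(s):=\varliminf_{n\to\infty}\frac1n\#\{0\le k<n\mid \omega_k\ne\omega_{k+s}\}$. The crucial estimate is
\[
	f(s)\ \ge\ \tfrac1p\,(q/p)^{j(s)}\qquad\text{for every }s\ge 1,\ \text{where }j(s):=\max\{j\ge 0\mid p_j\mid s\}\,,
\]
which I would derive from two observations. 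First, if $p\nmid s$ then $f(s)\ge 1/p$: for the residue $s_0:=s\bmod p\in\{1,\dots,p-1\}$ the marker prefix $0{\,^m}1$ forces — and this is exactly where $|v|\le m$ enters — a residue $r\in\{0,\dots,m\}$ with $(r+s_0)\bmod p\in\{0,\dots,m\}$ such that one of the two entries of $0{\,^m}1v$ at positions $r$ and $(r+s_0)\bmod p$ is the central $1$ and the other a $0$; then $\omega_k\ne\omega_{k+s}$ for every $k\equiv r\pmod{p}$, a set of density $1/p$. Second, if $p\mid s$ then $f(s)\ge\frac qp\,f(sq/p)$: since $\omega=F_{0{\,^m}1v}(\omega)$, the restriction of $\omega$ to the set $H_1$ of positions lying above a $\ast$ of $0{\,^m}1v$, read off in increasing order, is again $\omega$, and under this reindexing a shift of $H_1$ by $s$ (with $p\mid s$) corresponds to a shift by $sq/p$; restricting the disagreement count to $H_1$, which has density $q/p$, gives the inequality. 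Iterating the second observation exactly $j(s)$ times — checking that the intermediate lags $s(q/p)^k$ remain divisible by $p$ for $k<j(s)$ while $s(q/p)^{j(s)}$ does not — and then applying the first observation yields the displayed bound.

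Granting this, for $0<s<p_\ell$ we have $p_\ell\nmid s$, hence $j(s)\le\ell-1$ and $f(s)\ge\nu_\ell:=(q/p)^{\ell-1}/p$; therefore $\{\sigma^i\omega\mid 0\le i<p_\ell\}\subseteq\Sigma_\omega$ is an $(\sigma,1,\nu_\ell)$-separated set of cardinality $p_\ell$, so $\Sep(\sigma,1,\nu_\ell)\ge p_\ell$. Since $\Sep(\sigma,1,\,\cdot\,)$ is non-increasing and $\nu_\ell\to 0$, for $\nu\in(\nu_{\ell+1},\nu_\ell]$ one gets $\frac{\log\Sep(\sigma,1,\nu)}{-\log\nu}\ge\frac{\log p_\ell}{-\log\nu_{\ell+1}}\to\frac{\log(p/d)}{\log(p/q)}$, whence $\uac(\sigma)\ge\frac{\log(p/d)}{\log(p/q)}$, and combining with the upper bound finishes the proof. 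The main obstacle is precisely this uniform decorrelation estimate for $f$: controlling how slowly $\omega$ separates from each of its shifts, via the self-similar reduction step and the verification that the block $0{\,^m}1$ genuinely forces a positive density of disagreements at \emph{every} lag not divisible by $p$.
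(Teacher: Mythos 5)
Your proof is correct and follows essentially the same route as the paper: non-periodicity via the Cassaigne--Karhum\"aki criterion, the upper bound from the induction $p_\ell=p^\ell/d^{\ell-1}$, $r(T_\ell)=(q/p)^\ell$ fed into Corollary~\ref{cor_upper_bound_ham}, and the lower bound from the two facts that the marker $0{\,^m}1$ forces a density-$1/p$ set of disagreements at every lag $s$ with $p\nmid s$ (where $|v|\le m$ is used) and that self-similarity $\omega=F_{0^m1v}(\omega)$ reduces a lag with $p\mid s$ to the lag $sq/p$. The only cosmetic difference is that you package the lower bound as a recursion on the autocorrelation frequency $f(s)\ge\tfrac{q}{p}f(sq/p)$ indexed by $j(s)$, whereas the paper runs the equivalent induction on the per-block mismatch counts $S_{p_\ell}(T_\ell,\sigma^j T_\ell)\ge q^{\ell-1}/d^{\ell-1}$; the resulting estimate for $0<s<p_\ell$ is the same, $q^{\ell-1}/p^{\ell}$.
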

\begin{proof}
	Define for each $n\in\N$ and $x=(x_k)_{k\in\N_0}$, 
	$y=(y_k)_{k\in\N_0}\in\Sigma_{\mathcal A}$
	\[
		S_{n}(x,y) \ := \ \#\left\{0\leq k<n\;|\;x_k, y_k\neq\ast\textnormal{ and }
			x_k\neq y_k\right\} \ .
	\]
	Observe that
	\begin{align*}
		\begin{split}		
			\lefteqn{S_{p}\big(T(0{\,^m}1v),\sigma^{j}(T(0{\,^m}1v))\big)}\\  
			&\quad\geq \ S_{p}\big(T_{1}(0{\,^m}1v),	\sigma^{j}(T_{1}(0{\,^m}1v))\big) \ 
			= \ S_{p}\big(\overline{0{\,^m}1v},\sigma^{j}(\overline{0{\,^m}1v})\big)
			\ \geq \ 1
		\end{split}
	\end{align*}	
	for every $0<j<p$ due to the special form of the prefix $0{\,^m}1$ and the
	assumption $\abs{v}\leq m$.
	This directly implies that $\omega$ is non-periodic, using Theorem 
	\ref{thm_periodicity_Toeplitz_words}.
	
	To get an upper bound for $\oac(\sigma)$, note that 
	$(p^{\ell}/d^{\ell-1})_{\ell\in\N}$ is a sequence of corresponding periods of
	$(T_{\ell}(0{\,^m}1v))_{\ell\in\N}$ and
	$r(T_{\ell}(0{\,^m}1v))=q^{\ell}/p^{\ell}$ for each $\ell\in\N$.
	This is proved easily by induction: The statement is true for
	$T_1(0{\,^m}1v) = \overline{0{\,^m}1v}$.
	When going from $\ell$ to $\ell+1$, by the induction hypothesis each of the
	$p^\ell/d^{\ell-1}$-periodic blocks of $T_\ell(0{\,^m}1v)$ has 
	$q^{\ell}/d^{\ell-1}$ free positions.
	In order to accommodate $q/d$ such periodic blocks of $T_\ell(0{\,^m}1v)$ it
	needs $p^{\ell}/d^\ell$ of the $p$-periodic blocks of $\overline{0{\,^m}1v}$
	with $q$ free positions each.
	Thus, the resulting periodic block of $T_{\ell+1}(0{\,^m}1v)$ has length
	$p^{\ell+1}/d^\ell$ and $q^{\ell+1}/d^{\ell}$ free positions.
	Now, Corollary \ref{cor_upper_bound_ham} gives the desired upper bound.

	In order to prove the lower bound, we show by a similar induction that
	\begin{align}\label{0m1_lb}
		S_{p^{\ell}/d^{\ell-1}}\big(T_{\ell}(0{\,^m}1v),
			\sigma^{j}(T_{\ell}(0{\,^m}1v))\big) \ \geq \  q^{\ell-1}/d^{\ell-1}
	\end{align}
	for every $0<j<p^{\ell}/d^{\ell-1}$ and $\ell\in\N$.
	If $j$ is not a multiple of $p$, then by induction assumption each
	$p^\ell/d^{\ell-1}$-periodic block of $T_\ell(0{\,^m}1v)$ has
	$p/d\cdot q^{\ell-2}/d^{\ell-2}$ mismatches with $\sigma^j(T_\ell(0{\,^m}1v))$
	coming from the mismatches of the $p/d$ contained $p^{\ell-1}/d^{\ell-2}$-periodic
	blocks of $T_{\ell-1}(0{\,^m}1v)$ with $\sigma^j(T_{\ell-1}(0{\,^m}1v))$.
	If $j$ is a multiple of $p$, then the mismatches result in a similar way
	from the shift in the sequences that are inserted into $\overline{0{\,^m}1v}$,
	since $\sigma^{ip}(T_\ell(0{\,^m}1v))=F_v(\sigma^{iq}(T_{\ell-1}(0{\,^m}1v)))$.
	Note that the fact that $p^{\ell}/d^{\ell-1}$ is a minimal period comes from
	the	assumption that $d=\gcd(p,q)$.

	As a direct consequence from \eqref{0m1_lb}, we obtain that for all $\ell\in\N$
	and  $0\leq i<j<p^{\ell}/d^{\ell-1}$
	\[
		S_{p^{\ell}/d^{\ell-1}}\big(\sigma^{i}(T_{\ell}(0{\,^m}1v)),
			\sigma^{j}(T_{\ell}(0{\,^m}1v))\big)\geq q^{\ell-1}/d^{\ell-1} \ .
	\]
	Hence,
	\[
		\{\omega,\sigma(\omega),\dots,\sigma^{p^{\ell}/d^{\ell-1}-1}(\omega)\}
	\]
	is a $(\sigma,1,q^{\ell-1}/p^{\ell})$-separated set.
	For $\nu$ small enough choose $\ell\in\N$ such that $q^{\ell}/p^{\ell+1}<\nu\leq
	q^{\ell-1}/p^{\ell}$ and observe
	\[
		\frac{\Sep(\sigma,\delta,\nu)}{\nu^{-s}}
		\ \geq \ \frac{\Sep(\sigma,1,q^{\ell-1}/p^{\ell})}{\nu^{-s}}
		>\frac{p^{\ell}}{d^{\ell-1}}\cdot\frac{q^{ls}}{p^{(l+1)s}}
	\]
	for $\delta, s>0$.  This yields $\uac(\sigma)\geq(\log p/d)/(\log p/q)$.
\end{proof}

As the set $\{\log p/\log (p/q)\mid p,q\in\N,\, \gcd(p,q)=1\}$ is dense in
$[1,\infty)$, we obtain
\begin{corollary}\label{c.toeplitz_densevalues}
	In the class of $(p,q)$-Toeplitz words, amorphic complexity takes (at least)
	a dense set of values in $[1,\infty)$.
\end{corollary}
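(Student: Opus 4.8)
The plan is to derive the corollary from Theorem~\ref{t.toeplitz_sharpbound_examples}, treating that theorem as a black box, and reduce everything to an elementary density statement about real numbers. The first step is to read off exactly which amorphic-complexity values the construction in that theorem produces. Given a prefix $0^{\,m}1$ and a word $v$ with $1\le|v|_\ast\le|v|\le m$, one obtains a regular Toeplitz subshift of amorphic complexity $\frac{\log(p/d)}{\log(p/q)}$, where $p=m+1+|v|$, $q=|v|_\ast$ and $d=\gcd(p,q)$. The constraint $q\le|v|\le m$ forces $p=m+1+|v|\ge 2q+1$; conversely, \emph{every} coprime pair $(p,q)$ with $p\ge 2q+1$ is realised by taking $v=\ast^{\,q}$ and $m=p-1-q$ (so that $d=1$). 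Hence the set of realised complexities contains
\[
	V \ := \ \left\{\left.\tfrac{\log p}{\log(p/q)}\ \right|\ p,q\in\N,\ \gcd(p,q)=1,\ p\ge 2q+1\right\}
\]
(a short check shows pairs with $d>1$ contribute nothing outside $V$, but this is not needed). So it remains to prove that $V$ is dense in $[1,\infty)$.

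For the density, I would fix a target $c\ge1$ together with $\varepsilon>0$ and work with a prime $p$. For every integer $1\le q\le (p-1)/2$ the pair $(p,q)$ is automatically coprime and satisfies $p\ge 2q+1$, so $f_p(q):=\tfrac{\log p}{\log(p/q)}$ lies in $V$. The map $q\mapsto f_p(q)$ is strictly increasing, with $f_p(1)=1$ and $f_p(\lfloor(p-1)/2\rfloor)=\tfrac{\log p}{\log 2+o(1)}\to\infty$ as $p\to\infty$; thus for $p$ large the values $f_p(q)$ sweep across $[1,c]$. If in addition the consecutive increments $f_p(q+1)-f_p(q)$ can be made smaller than $\varepsilon$ uniformly in $q$ by choosing $p$ large, then monotonicity forces some $q$ in the window with $|f_p(q)-c|<\varepsilon$, and density follows. (For $c=1$ one simply notes $f_p(1)=1\in V$.)

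The only genuine computation, which I expect to be the main obstacle — albeit a mild one — is this uniform bound on the increments. Writing $f_p(q)=\bigl(1-\tfrac{\log q}{\log p}\bigr)^{-1}$ and integrating the derivative of $x\mapsto\tfrac{\log p}{\log(p/x)}$ over $[q,q+1]$ gives
\[
	0 \ < \ f_p(q+1)-f_p(q) \ \le \ \frac{\log p}{q\,\bigl(\log(p/(q+1))\bigr)^{2}} \ .
\]
To see the right-hand side is $o(1)$ uniformly for $1\le q\le (p-1)/2$, I would split into two ranges: for $q\le\sqrt p$ one has $\log(p/(q+1))\ge\tfrac14\log p$ once $p$ is large, so the bound is $\le\tfrac{16}{q\log p}\le\tfrac{16}{\log p}$; for $\sqrt p<q\le (p-1)/2$ one has $\log(p/(q+1))\ge\log 2$, so the bound is $\le\tfrac{\log p}{\sqrt p\,(\log 2)^2}$. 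Both tend to $0$ as $p\to\infty$. Once this is in place, choosing a prime $p$ large enough that $f_p(\lfloor(p-1)/2\rfloor)>c$ and every increment is below $\varepsilon$ completes the argument. One may additionally observe that the exhibited subfamily of $(p,q)$-Toeplitz words realises precisely the set $V$, so that amorphic complexity already takes a dense, non-closed set of values within this single family.
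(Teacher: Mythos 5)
Your proposal follows the same route as the paper: read off the achievable values $\tfrac{\log(p/d)}{\log(p/q)}$ from Theorem~\ref{t.toeplitz_sharpbound_examples} and then argue density. The paper simply asserts the density of $\{\log p/\log(p/q)\mid \gcd(p,q)=1\}$ in $[1,\infty)$ without proof; what you add is a careful, self-contained argument for that density, together with a correct accounting of exactly which coprime pairs $(p,q)$ the construction $T(0^{\,m}1v)$ can actually realise (namely those with $p\geq 2q+1$, which you reach via $v=\ast^{\,q}$, $m=p-1-q$). This restriction is implicit but glossed over in the paper, so your version is, if anything, more precise. The one small slip: for $\sqrt p<q\leq(p-1)/2$ you claim $\log(p/(q+1))\geq\log 2$, but when $q=(p-1)/2$ one has $p/(q+1)=2p/(p+1)<2$, so the bound should be something like $\log(3/2)$ (valid for $p\geq 3$), or one should restrict the upper range of $q$ slightly; the constant is irrelevant and the conclusion that the increments tend uniformly to zero is unaffected. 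With that cosmetic fix, the argument is complete and correct.
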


\begin{remark}
	From the results in \cite[Theorem 5]{CassaigneKarhumaeki1997}, one can
	directly conclude that for all (non-periodic) $(p,q)$-Toeplitz words the
	power entropy equals $(\log p/d)/(\log p/q)$.
	Thus, for our examples provided by the last theorem power entropy and
	amorphic complexity coincide.
	It would be interesting to know if this is true for all $(p,q)$-Toeplitz
	words, or if not, in which cases this equality holds.
\end{remark}

\section{Strange non-chaotic attractors in pinched skew products}
\label{PinchedSystems}

As we have mentioned in previous sections, one of the main reasons for
considering amorphic complexity is the fact that it gives value zero to
Morse-Smale systems, while power entropy assigns a positive value to these.
The latter is unsatisfactory from an abstract viewpoint, since such dynamics
should certainly be considered entirely trivial.
At the same time, however, this issue may also raise practical problems.
In more complicated systems, attractor-repeller dynamics may coexist with other
more subtle dynamical mechanisms.
In this case, the contribution of the Morse-Smale component to power entropy may
overlay other effects, and two systems may not be distinguishable despite a
clearly different degree of dynamical complexity.

Of course, the computation of topological complexity invariants in more complex
non-linear dynamical systems will generally be difficult and technically involved.
Nevertheless, we want to include one classical example in this section which fits
the situation described above.
In order to keep the exposition brief, we concentrate on a positive qualitative
result for amorphic complexity and refrain from going into detail concerning
(modified) power entropy.\smallskip

Recall that $\T^1=\R/\Z$, $d$ is the usual metric on $\T^1$ and $\Leb$ denotes
the Lebesgue measure on $\T^1$ (cf.\ Section \ref{BasicExamples}).
Suppose $f:\kreis\times[0,1]\to\kreis\times[0,1]$ is a continuous map of the form
\begin{equation}
  \label{eq:1}
  f(\theta,x) \ = \ (\theta+\omega\mod 1,f_\theta(x)) \ ,
\end{equation}
where $\omega\in\kreis$ is irrational.
For the sake of simplicity we will suppress `$\textrm{mod } 1$' in the following.
The maps $f_\theta:[0,1]\to[0,1]$ are called {\em fibre maps}, $f$ itself is
often called a {\em quasiperiodically forced (qpf) 1D map}.
If all the fibre maps in (\ref{eq:1}) are monotonically increasing, then the
topological entropy of $f$ is zero.\footnote{This is a direct consequence of
\cite[Theorem 17]{bowen:1971}.}
Notwithstanding, systems of this type may exhibit considerable dynamical
complexity.
A paradigm example in this context are so-called {\em pinched skew products},
introduced by Grebogi et al in \cite{grebogi/ott/pelikan/yorke:1984} and later
treated rigorously by Keller \cite{keller:1996}.
In order to fix ideas, we concentrate on the specific parameter family
\begin{equation}
  \label{eq:2}
  f(\theta,x) \ = \ (\theta+\omega,\tanh(\alpha x)\cdot \sin(\pi \theta)) \ ,
\end{equation}
which is close to the original example introduced by Grebogi and his coworkers.
The crucial features of this system are that 
\romanlist
	\item the zero line $\kreis\times\{0\}$ is $f$-invariant;
	\item the fibre maps $f_\theta:x\mapsto \tanh(\alpha x)\cdot
		\sin(\pi\theta)$ are all concave;
	\item the fibre map $f_{0}$ sends the whole interval $[0,1]$ to $0$.
\listend
Item (iii) is often refered to as {\em pinching}.
It is the defining property of the general class of pinched skew products, as
introduced in \cite{glendinning:2002}.
Note that all of the arguments and statements in this section immediately carry
over to a whole class of fibre maps and higher-dimensional rotations in the base
(see the set $\mathcal T^\ast$ and Example 4.1 in \cite{GroegerJaeger2013}).

A function $\varphi:\kreis\to[0,1]$ is called an {\em invariant graph} of
(\ref{eq:1}) if $f_\theta(\varphi(\theta))=\varphi(\theta+\omega)$ for all
$\theta\in\kreis$.
In this case, the associated point set $\Phi:=\{(\theta,\varphi(\theta))\mid
	\theta\in\kreis\}$ is $f$-invariant.\footnote{Slightly abusing notation,
	the term {\em invariant graph} is used both for the function and its graph.}
If the fibre maps are all differentiable, the {\em (vertical) Lyapunov exponent}
of $\varphi$ is defined as 
\begin{equation}\label{eq:3}
	\lambda(\varphi) \ := \ 
	\int_{\kreis} \log|f'_\theta(\varphi(\theta))| \ d\theta \ .
\end{equation}
If $\lambda(\varphi)\leq 0$, then $\Phi$ is an attractor in the sense of Milnor
\cite{milnor:1985} (see, for example, \cite[Proposition 3.3]{jaeger:2003}).
If $\varphi$ is continuous, then it is even a topological attractor and contains
an open annular neighbourhood in its basin of attraction.
In case $\varphi$ is not continuous, the attractor $\Phi$ combines non-chaotic
dynamics (zero entropy, absence of positive Lyapunov exponents) with a
complicated topological structure (related to the absence of continuity, see
\cite{stark:2003,jaeger:2007} for more information).
Due to this combination of properties, it is called a {\em strange non-chaotic
attractor (SNA)}.

As mentioned above, the zero line $\Phi_0:=\kreis\times\{0\}$ is an invariant
graph of (\ref{eq:2}).
An elementary computation yields $\lambda(\varphi_0) = \log \alpha - \log 2$.
If $\alpha\leq 2$, so that $\lambda(\varphi_0)\leq 0$, then $\Phi_0$ is the global
attractor of the system, meaning that
$\Phi_0=\bigcap_{n\in\N} f^n(\kreis\times[0,1])$.
Accordingly, all orbits converge to the zero line, that is, $\nLim f^n_\theta(x)=0$,
where $f^n_\theta=f_{\theta+(n-1)\omega}\circ\ldots\circ f_\theta$.
If $\alpha>2$, then this picture changes drastically.
Now, a second invariant graph $\varphi^+$ with negative Lyapunov exponent appears,
which satisfies $\varphi^+(\theta)>0$ for $\Leb$-a.e.\ $\theta\in\kreis$
\cite{keller:1996}.
However, at the same time there exists a dense set of $\theta$'s with
$\varphi^+(\theta)=0$.
This latter fact is easy to see, since $\varphi^+$ is invariant and 
$f_{0}(\varphi^+(0))=0$ by property (iii) above.
Thus, $\varphi^+$ is an SNA (see Figure~\ref{fig: upper bounding graphs}(a)).
\begin{figure}
	\centering 
	\subfloat[]{\includegraphics[width=2.6in, height=2in]{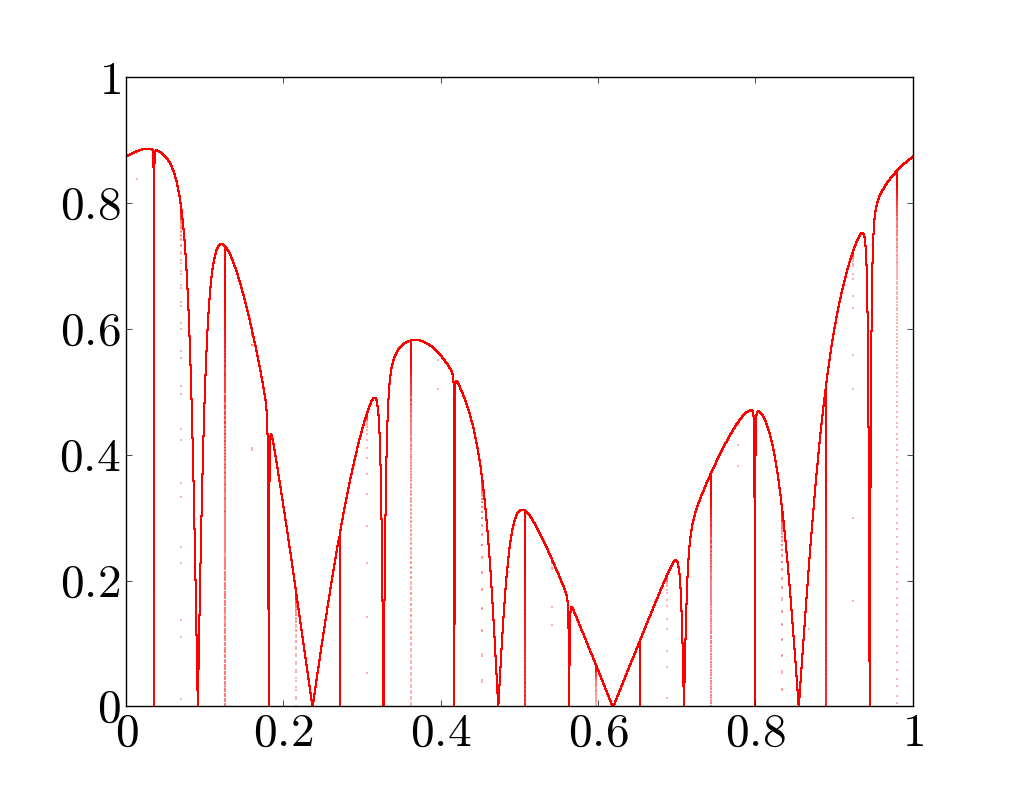}}
	\subfloat[]{\includegraphics[width=2.6in, height=2in]{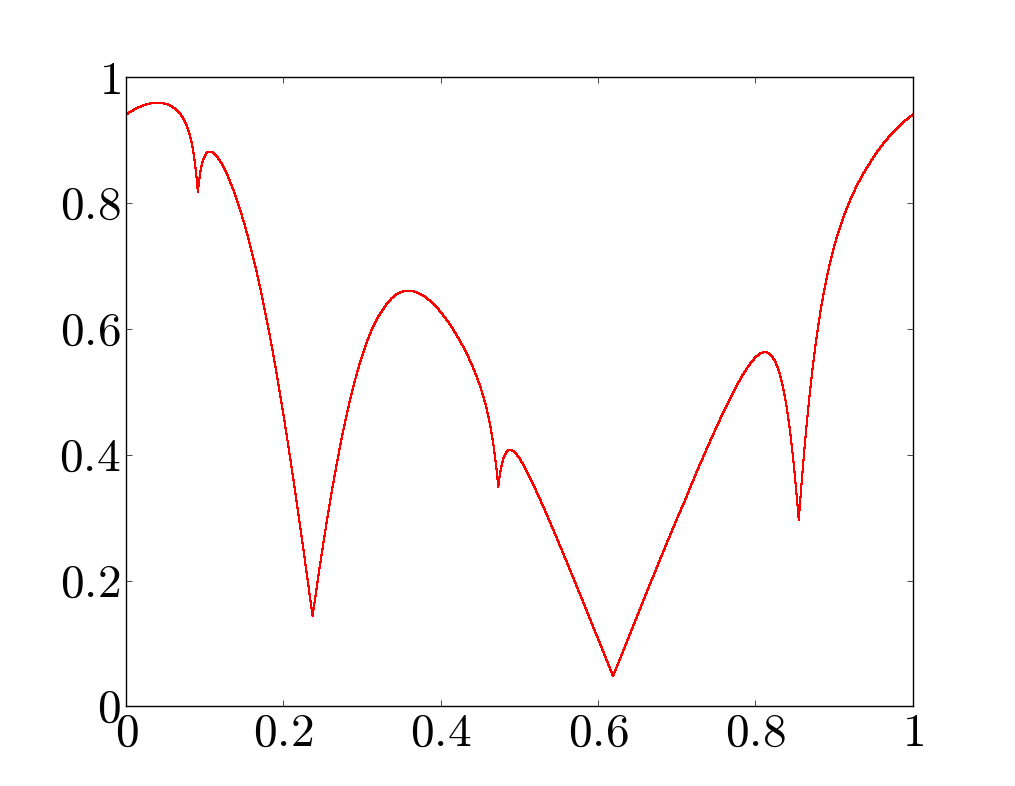}}
	\caption{The upper bounding graphs of $f$ (a) and $f^\eps$ (b) with $\eps=0.05$.
		In both cases, $\omega$ is the golden mean and $\alpha=3$.
		The horizontal axis is $\T^1$, the vertical axis $[0,1]$.} 
	\label{fig: upper bounding graphs}
\end{figure}
We note that $\varphi^+$ can be defined as the upper bounding graph of the global
attractor
$\mathcal{A}:=\bigcap_{n\in\N} f^n(\kreis\times[0,1])$, that is,
\begin{equation}\label{e.upper_bounding_graph}
	\varphi^+(\theta) \ := \ \sup\{x\in[0,1]\mid (\theta,x)\in\mathcal{A}\} \ .
\end{equation}

The rigorous proof of these facts in \cite{keller:1996} is greatly simplified by
the particular structure of pinched skew products.
More natural systems, though, often occur as the time-one-maps of flows generated
by scalar differential equations with quasiperiodic right-hand side.
In particular, this means that such systems are invertible and the non-invertible
pinched skew products have a certain toy-model character.
Nowadays, however, established methods of multiscale analysis yield a wealth of
results about the existence and structure of SNA in broad classes of invertible
systems as well \cite{young:1997,bjerkloev:2005,bjerkloev:2007,jaeger:2006,fuhrmann:2014}.
In many cases, it turned out that this machinery allows to transfer results
and insights first obtained for pinched systems to a more general setting.
One example is the computation of the Hausdorff dimension of SNA 
\cite{GroegerJaeger2013,FGJ2014DimensionsSNA}, another is a question about the
structure of their topological closure ({\em filled-in property})
\cite{jaeger:2007,bjerkloev:2007,FGJ2014DimensionsSNA}, going back to Herman
\cite{herman:1983}.
In this sense, pinched systems have proven to be very adequate models for more
general qpf systems.\medskip

If (\ref{eq:2}) is slightly modified by adding a small positive constant $\eps>0$
to the multiplicative forcing term $\sin(\pi\theta)$, we obtain a new system
\begin{equation}\label{eq:4}
	f^\eps(\theta,x) \ = \ (\theta+\omega,\tanh(\alpha x)
		\cdot(\sin(\pi\theta) + \eps)) \ .
\end{equation}
In this case, the Lyapunov exponent $\lambda(\varphi_0)$ still increases strictly
with $\alpha$ and there exists a critical value 
$\alpha_c=\int_{\kreis}\log|\sin(\pi\theta)+\eps|\ d\theta$ at which
$\lambda(\varphi_0)=0$.
If $\alpha\leq \alpha_c$, the graph $\Phi_0$ is the global attractor as before,
and if $\alpha>\alpha_c$ a second invariant graph $\varphi^+$ with negative
Lyapunov exponent appears above $\Phi_0$.
However, there is one important qualitative difference to the previous situation.
Due to the invertibility of the system, it is easy to show that the graph
$\Phi^+$ is a continuous curve (see Figure~\ref{fig: upper bounding graphs}(b)).

The resulting dynamics are much simpler than in the case of an SNA.
In particular, the system is conjugate to the direct product of the underlying
irrational rotation with a Morse-Smale map $g$ on $[0,1]$, with unique repelling
fixed point $0$ and unique attracting fixed point $x\in(0,1)$, and all points
outside $\Phi_0$ are Lyapunov stable.
In contrast to this, the system (\ref{eq:2}) with $\alpha>2$ has sensitive
dependence on initial conditions \cite{glendinning/jaeger/keller:2006}, and thus
has no Lyapunov stable points at all.

It is thus reasonable to expect that both cases can be distinguished by means of
a suitable topological complexity invariant.
However, in this case the rigorous analysis is more difficult than in the
previous chapters.
The reason is that while the qualitative analysis of pinched skew products is
comparatively easy due to their particular structure, a more detailed
quantitative study is still rather involved on a technical level.
In particular, it typically requires to exclude an exceptional set of measure
zero from the considerations, on which the dynamics are hard to control.
Since topological complexity invariants in the zero entropy regime typically do
not satisfy a variational principle (see Introduction), the lack of control even
on a set of measure zero impedes their computation.
For this reason, we do not attempt to determine the power entropy or modified
power entropy of (\ref{eq:2}) in a rigorous way.
However, based on the intuition gained from previous work on pinched systems in
\cite{jaeger:2007,GroegerJaeger2013} and heuristic arguments, we expect that the
power entropy of (\ref{eq:2}) with $\alpha>2$ equals 1, whereas the modified
power entropy is zero.
It is easy to show that the same values are attained by (\ref{eq:4}) with
$\alpha>\alpha_c$, and hence both quantities should not be suitable to
distinguish between the two substantially different types of behaviour.
As we have mentioned before, this was one of the original motivations for the
introduction of amorphic complexity.

In principle, though, the same restrictions as for the computation of (modified)
power entropy hold for amorphic complexity, and the existence of the exceptional
uncontrolled set does not allow a straightforward application of the concept.
What we concentrate on here is to show that
amorphic complexity distinguishes between SNAs and continuous attractors.
This is the main result of this section.
Recall that $\omega\in\kreis$ is called {\em Diophantine} if there exist
constants $c,d>0$ such that
\begin{equation}\label{e.Diophantine}
	d(n\omega,0) \ \geq \ cn^{-d}
\end{equation}
for all $n\in\N$.
\begin{theorem}\label{t.pinched_systems}
	Suppose $\omega$ is Diophantine and $\alpha$ in (\ref{eq:2}) is sufficiently
	large.
	Then there exists an invariant (under the rotation by angle $\omega$) set
	$\Omega\ssq\kreis$ of full Lebesgue measure such that
	\[
		0 \ < \ \uac\big(\left.f\right|_{\Omega\times[0,1]}\big)\ \leq \
		\oac\big(\left.f\right|_{\Omega\times[0,1]}\big)\ < \ \infty \ .
	\]
	In contrast to this, we have $\textrm{ac}(f^\eps)=0$ if $f^\eps$ is given by
	(\ref{eq:4}) with $\eps>0$ and any $\alpha\geq 0$.
\end{theorem}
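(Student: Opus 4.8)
The plan is to treat the two assertions in turn, the second being a short consequence of the results of Section~\ref{sec_definitions} and the first requiring the structure theory of pinched skew products.

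\emph{The non-strange case.} Let $f^\eps$ be given by \eqref{eq:4} with $\eps>0$ and $\alpha\geq 0$. If $\alpha$ is small enough that $\Phi_0=\T^1\times\{0\}$ is the global attractor, then $\nLim f^n_\theta(x)=0$ for every $(\theta,x)$, and $f^\eps$ acts on the invariant set $\Phi_0$ as the rotation $R_\omega\colon\theta\mapsto\theta+\omega$; hence $f^\eps$ has the unique target property with respect to $\Phi_0$, and Lemma~\ref{lem: unique target general} together with Proposition~\ref{p.ac_morse_iso_Sturmian_Denjoy} give $\fsc(f^\eps)=\fsc(R_\omega)=0$. If $\alpha$ is larger, then $f^\eps$ is topologically conjugate to the direct product $R_\omega\times g$ of the irrational rotation with a Morse--Smale interval map $g$, so the product formula, topological invariance (Corollary~\ref{fsc_top_inv}) and Proposition~\ref{p.ac_morse_iso_Sturmian_Denjoy} give $\fsc(f^\eps)=\fsc(R_\omega)+\fsc(g)=0$.

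\emph{Reduction in the SNA case.} For $\alpha$ sufficiently large the upper bounding graph $\varphi^+$ from \eqref{e.upper_bounding_graph} is an SNA with $\lambda(\varphi^+)<0$, with $\varphi^+>0$ Lebesgue-almost everywhere, and with the property that $|f^n_\theta(x)-\varphi^+(\theta+n\omega)|\to 0$ as $n\to\infty$ for every $x>0$ and every $\theta$ outside the (countable) $R_\omega$-orbit of $0$; these facts are contained in the analysis of \cite{keller:1996} (see also \cite{jaeger:2007,GroegerJaeger2013}). I would let $\Omega\ssq\T^1$ be the $R_\omega$-invariant set of full Lebesgue measure obtained by deleting the orbit of $0$, intersecting with $\{\varphi^+>0\}$, and imposing the quantitative conditions mentioned below. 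Then $\Omega\times[0,1]$ is $f$-invariant, and $f|_{\Omega\times[0,1]}$ has the unique target property with respect to $E:=(\Omega\times\{0\})\cup E_+$, where $E_+:=\{(\theta,\varphi^+(\theta))\mid\theta\in\Omega\}$. By Lemma~\ref{lem: unique target general} it thus suffices to estimate $\Sep_E$. Since $f$ acts on $\Omega\times\{0\}$ as an isometry, every $(f,\delta,\nu)$-separated subset of $\Omega\times\{0\}$ is $\delta$-separated and has at most $\lceil 1/\delta\rceil$ elements, and decomposing a separated subset of $E$ accordingly gives $\Sep_{E_+}(f,\delta,\nu)\leq\Sep_E(f,\delta,\nu)\leq\lceil 1/\delta\rceil+\Sep_{E_+}(f,\delta,\nu)$; hence everything reduces to showing $0<\uac(f|_{E_+})\leq\oac(f|_{E_+})<\infty$.

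\emph{Complexity on the attractor graph.} Via the continuous injection $\theta\mapsto(\theta,\varphi^+(\theta))$, the system $f|_{E_+}$ is conjugate to $R_\omega|_\Omega$ carrying the pulled-back pseudo-metric $d_+(\theta,\theta')=\max\{d(\theta,\theta'),|\varphi^+(\theta)-\varphi^+(\theta')|\}$, and for $\theta,\theta'\in\Omega$ with $d(\theta,\theta')<\delta$ one has $\countsep{n}(f,\delta,(\theta,\varphi^+(\theta)),(\theta',\varphi^+(\theta')))=\#\{0\leq k<n\mid |\varphi^+(\theta+k\omega)-\varphi^+(\theta'+k\omega)|\geq\delta\}$. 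Thus the asymptotic separation of two points of $E_+$ at scale $\delta$ is the asymptotic frequency with which the base orbit enters the region where $\varphi^+$ oscillates by at least $\delta$ over a gap of width $|\theta-\theta'|$, and the problem becomes a box-counting estimate for $(\Omega,d_+)$ under the action of $R_\omega$, in the spirit of Theorems~\ref{t.finite_separation_numbers} and \ref{t.automorphic_quantitative} but with the injective yet discontinuous factor map $(\theta,y)\mapsto\theta$. Concretely, packing the base points of a separated subset of $E_+$ into $O(1/\delta)$ short arcs yields $\Sep(f|_{E_+},\delta,\nu)=O\!\left(1/r_\delta(\nu)\right)$, where $r_\delta(\nu)$ is the largest $r$ for which the above frequency stays below $\nu$ whenever the gap is smaller than $r$; and a lower bound follows by choosing, for a fixed $\delta_0$ and along a sequence $\nu_k\to 0$, families $\theta_1,\dots,\theta_{N_k}\in\Omega$ whose pairwise differences avoid $\Z\omega+\Z$ and whose associated separation frequencies (evaluated by equidistribution on the pertinent one-dimensional subtorus) are at least $\nu_k$, so that $\{(\theta_i,\varphi^+(\theta_i))\}_{i}$ is an $(f,\delta_0,\nu_k)$-separated set.

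\emph{The main obstacle.} The heart of the matter is a two-sided power-law control of these oscillation frequencies: an upper bound of the form $C_\delta\,|s|^{\beta}$ for small gaps $|s|$, with $\beta>0$ bounded below in $\delta$, which forces $r_\delta(\nu)\gtrsim\nu^{1/\beta}$ and hence $\oac(f|_{E_+})<\infty$; and a matching lower bound along a geometric sequence of scales, which lets the families in the previous step be chosen with $N_k\gtrsim\nu_k^{-c}$ for some $c>0$, giving $\uac(f|_{E_+})>0$. Both quantify how rapidly $\varphi^+$ is compressed towards $0$ near the pinch $\theta=0$ and how it re-expands afterwards, and both are obtained from the multiscale analysis of pinched skew products along the lines of \cite{keller:1996,jaeger:2007,GroegerJaeger2013}, using the Diophantine property of $\omega$ to bound the frequency and depth of the close returns of base orbits to $0$ and the largeness of $\alpha$ to bound the fibrewise expansion and contraction rates. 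The set $\Omega$ is specified to consist of the $\theta$ meeting this Diophantine-type return condition together with the equidistribution statements used above; it is of full Lebesgue measure and $R_\omega$-invariant.
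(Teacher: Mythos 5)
Your high-level skeleton matches the paper's: dispose of $f^\eps$ by conjugation to a product with a Morse--Smale map, reduce the SNA case via the unique target property to the restriction on $\Phi^+$, and then bound amorphic complexity on the graph from above and below using the multiscale structure of $\varphi^+$. However, there are two genuine gaps in the SNA part. First, the convergence claim ``$|f^n_\theta(x)-\varphi^+(\theta+n\omega)|\to 0$ for every $x>0$ and every $\theta$ outside the orbit of $0$'' is stated too strongly and in any case is not something you can simply cite: it holds only for Lebesgue-a.e.~$\theta$, and establishing even that requires an argument. The paper defines $\psi(\theta):=\sup\{x\in[0,\varphi^+(\theta)]\mid\text{attraction fails}\}$, observes it is an invariant graph, appeals to Proposition~3.3 of \cite{jaeger:2003} to show $\psi<\varphi^+$ on a full measure set, and then uses concavity of the fibre maps (which forces at most two invariant graphs up to measure zero, \cite[Theorem~2.1]{AnagnostopoulouJaeger2012}) to conclude $\psi=0$ a.e.\ --- none of which is in your sketch.

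Second, the entire content of both estimates on $\Sep_{E_+}$ is deferred to ``the main obstacle'' and to the references, but those references do not contain the needed statements; one must build them. For the upper bound, the paper's mechanism is Lemma~\ref{lem: generic points close enough->no separation}: one approximates $\varphi^+$ by the iterated boundary line $\varphi_{n(\nu)}$ with $n(\nu)\asymp -\log\nu$, uses Proposition~\ref{properties_approx_graphs}(ii) together with a Birkhoff-frequency condition on $\Omega$ to control the bad set of times, and then exploits the Lipschitz bound $\pi\alpha^{n(\nu)}$ of Proposition~\ref{properties_approx_graphs}(i) to see that base points within $\eps\asymp\delta\nu^{\kappa m}$ cannot be $(f,\delta,\nu)$-separated. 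Your ``power-law control of oscillation frequencies'' is the right \emph{conclusion} but your proposal does not indicate how to achieve it. For the lower bound your construction (``families $\theta_1,\dots,\theta_{N_k}$ whose pairwise differences avoid $\Z\omega+\Z$ and whose separation frequencies evaluated by equidistribution are at least $\nu_k$'') is essentially circular --- it presupposes the existence of the separated families rather than producing them. The paper instead isolates \emph{fresh peaks} $\tau_{n_j}$ (Lemmas~\ref{lem: fresh peaks density} and \ref{lem: fresh peaks height}), shows that near each fresh peak $\varphi^+$ simultaneously drops below $\delta$ on a small arc and stays above $2\delta$ on a measurable share of a slightly larger one, and then constructs $2^j$ mutually $(f,\delta,\eta_j)$-separated points by translating a single reference angle by binary combinations $\sum_k x_k\Delta_k$ of displacement vectors attached to the first $j$ fresh peaks; the positive density of fresh peaks (which itself uses the Diophantine condition) is what makes the exponent positive. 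These are the central ideas of the proof and your proposal does not supply substitutes for them.
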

Note that $\textrm{ac}(f^\eps)=0$ follows immediatly from the conjugacy between
$f^\eps$ and the product of the underlying rotation with the Morse-Smale map
$g$ from above, using Corollary~\ref{fsc_top_inv}. 
\begin{remark}\label{r.ac_measure}
	The approach taken by restricting to a subset of full measure in the above
	statement can be formalized in a more systematic way.
	Although we do not pursue this issue much further here, we believe that this
	may make the concept of amorphic complexity applicable to an even broader
	range of systems.
	Let $(X,d)$ be a metric space and consider a map $f:X\to X$ and let $E\ssq X$.
	For the definition of $\Sep_{E}(f,\delta,\nu)$ see 
	\eqref{e.subset_separation_numbers}.
 	Further, we say $A\ssq E$ is \emph{$(f,\delta,\nu)$-spanning in $E$} if
	for each $x \in E$ there is $y\in A$ such that
	$\varlimsup_{n\to \infty}\frac{1}{n}{S_n(f,\delta,x,y)}<\nu$.
	Let $\Span_{E}(f,\delta,\nu)$ be the smallest cardinality of any
	$(f,\delta,\nu)$-spanning set in $E$.
	For the definition of $\uac_{E}(f)$ and $\oac_{E}(f)$ see
	\eqref{eq: defn h am for subsets} and note that similarly as before
	we can use $\Span_{E}(f,\delta,\nu)$ instead of $\Sep_{E}(f,\delta,\nu)$
	there (see Section~\ref{SpanningSets}).
	
	Now, suppose we are given a Borel probability measure $\mu$ on $X$.
	Then we define
	\begin{align*}
		\uac_{\mu}(f)&\ := \ \inf\left\{\uac_{E}(f) \mid E\ssq X
			\textrm{ and }\mu(E)=1\right\} \ ,\\
		\oac_\mu(f)&\ := \ \inf\left\{\oac_E(f) \mid  E\ssq X
			\textrm{ and }\mu(E)=1\right\} \ .
	\end{align*}

	With these notions, what we actually show is that under the assumptions of
	Theorem~\ref{t.pinched_systems} we have
	\begin{align*}
		0<\uac_{\mu}(f) \leq \oac_{\mu}(f) < \infty,	 
	\end{align*}
	where $\mu$ can be either the
	Lebesgue measure on $\kreis\times[0,1]$, or the measure $\mu_{\phi^+}$,
	which is the Lebesgue measure on $\kreis$ lifted to the graph $\Phi^+$, 
	i.e.\ $\mu_{\phi^+}(A):=\Leb(\pi_{\theta}(A\cap\Phi^+))$ where
	$A\subseteq\kreis\times[0,1]$ is Borel measurable and $\pi_\theta$ is the
	projection onto the first coordinate.
	Note that these statements are slightly stronger than the ones given in
	Theorem~\ref{t.pinched_systems}.
\end{remark}

We first consider the lower bound.
Thereby, we will focus on the SNA $\Phi^+$ and show that the restriction of $f$
to this set already has positive lower amorphic complexity.
\begin{proposition}\label{prop: ac > 0}
	Suppose $\omega$ is Diophantine and $\alpha$ in (\ref{eq:2}) is
    sufficiently large.
    Then there is a positive uniform lower bound for
    $\uac_{\Phi^+\cap(\Omega\times[0,1])}(f)$ for all $\Omega\ssq\kreis$
	with $\Leb(\Omega)=1$.
\end{proposition}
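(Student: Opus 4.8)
The plan is to parametrise the SNA $\Phi^+$ by its base coordinate and to quantify how wildly $\varphi^+$ oscillates near the pinching point $\theta=0$. Writing points of $\Phi^+$ as $(\theta,\varphi^+(\theta))$, one has $f^k(\theta,\varphi^+(\theta))=(\theta+k\omega,\varphi^+(\theta+k\omega))$, so for parameters $\theta\neq\theta'$ the first coordinates stay at the constant distance $d(\theta,\theta')$; hence for a \emph{fixed} $\delta>0$ the first coordinate produces only boundedly many $(f,\delta,\nu)$-separated points and contributes nothing to amorphic complexity. The whole matter thus reduces to a lower bound, for fixed small $\delta$, on the separation frequency $\nu(f,\delta,\theta,\theta')$ of \eqref{e.separation_frequency} through the fibre coordinate, i.e.\ on the upper density of $\{k\mid|\varphi^+(\theta+k\omega)-\varphi^+(\theta'+k\omega)|\geq\delta\}$. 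The estimate I would establish is that there are a finite exponent $\kappa=\kappa(\alpha)$, a threshold $\sigma_0>0$, and for each small $\delta>0$ a full-Lebesgue-measure set $G_\delta\ssq\kreis$ and a constant $c(\delta)>0$ with $\nu(f,\delta,\theta,\theta')\geq c(\delta)\,|\theta-\theta'|^{\kappa}$ whenever $\theta,\theta'\in G_\delta$ and $|\theta-\theta'|<\sigma_0$. Granting this, fix a small $\delta\in(0,\sigma_0)$; given $\sigma\in(0,\sigma_0)$, place $\lfloor 1/(2\sigma)\rfloor$ pairwise disjoint closed arcs of length $\sigma$ around $\kreis$ with consecutive gaps $\geq\sigma$, and — using $\Leb(\Omega)=1$, hence $\Leb(\Omega\cap G_\delta)=1$, so each arc still meets $\Omega\cap G_\delta$ — pick one $\theta_i\in\Omega\cap G_\delta$ in each arc. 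The points $(\theta_i,\varphi^+(\theta_i))$ lie in $\Phi^+\cap(\Omega\times[0,1])$ and have pairwise base distance $\geq\sigma$; pairs at base distance $\geq\delta$ are $(f,\delta,1)$-separated because the first coordinate stays that far apart, and pairs at base distance in $[\sigma,\delta)$ are $(f,\delta,c(\delta)\sigma^{\kappa})$-separated by the estimate. With $\nu:=c(\delta)\sigma^{\kappa}$ this gives $\Sep_{\Phi^+\cap(\Omega\times[0,1])}(f,\delta,\nu)\geq\lfloor 1/(2\sigma)\rfloor\gtrsim\nu^{-1/\kappa}$ for all small $\nu$, hence $\uac_{\Phi^+\cap(\Omega\times[0,1])}(f)\geq 1/\kappa>0$, with a bound independent of $\Omega$.

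For the separation estimate I would exploit the pinching mechanism. Recall that $\Phi^+$ is the graph of the upper bounding function $\varphi^+$ of \eqref{e.upper_bounding_graph}, which satisfies the invariance relation $\varphi^+(\theta+\omega)=f_\theta(\varphi^+(\theta))$, that $f_\theta(0)=0$ with $f_\theta'(0)=\alpha\sin(\pi\theta)$, that the fibre maps are concave, and that for $\alpha$ large the fibre dynamics at $0$ has positive Lyapunov exponent $\lambda=\log\alpha-\log2=\int_{\kreis}\log|f_\theta'(0)|\,d\theta$ (and $\lambda>\log 2$ once $\alpha>4$). Three elementary facts drive the argument. First, if $\theta+k\omega$ lies within $a$ of $0$ then $\varphi^+(\theta+(k+1)\omega)=f_{\theta+k\omega}(\varphi^+(\theta+k\omega))\leq|\sin(\pi(\theta+k\omega))|\leq\pi a$, so the value is forced down to size $\lesssim a$. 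Second, by concavity $f_\psi(x)\leq\alpha\sin(\pi\psi)\,x$, so $\varphi^+$ grows by at most the factor $\alpha=e^{\lambda+\log2}$ per step; thus once it is at level $\lesssim a$ it stays $<\delta$ for at least $\asymp\log(\delta/a)/\log\alpha$ steps. Third, from the second-order behaviour of the fibre maps near $0$ one has $f_\psi(x)\geq c\,\alpha\sin(\pi\psi)\,x$ for $x$ small, so a value at level $\asymp\sigma$ recovers to size $\geq2\delta$ within $\asymp\log(\delta/\sigma)/(\lambda-\log2)$ steps, \emph{provided the orbit avoids a deep neighbourhood of $0$ meanwhile} (this uses equidistribution to evaluate $\sum_{i<j}\log(\alpha|\sin(\pi(\psi+i\omega))|)\approx j\lambda$). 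Now take $\theta,\theta'$ with $\sigma:=|\theta-\theta'|$ small and consider the times $k$ at which $\theta+k\omega$ lies within $a$ of $0$ for a scale $a\ll\sigma$: automatically $\theta'+k\omega$ sits at distance $\asymp\sigma$ from $0$, so after time $k$ the value along $\theta$ stays $<\delta$ for $\gtrsim\log(\delta/a)/\log\alpha$ steps while the value along $\theta'$ has recovered past $2\delta$ after $\lesssim\log(\delta/\sigma)/(\lambda-\log2)$ steps; since $\log\alpha=\lambda+\log2>\lambda-\log2$, choosing $a\asymp_\delta\sigma^{\kappa}$ with $\kappa:=1+\tfrac{2\log2}{\lambda-\log2}$ forces the resulting ``separation window'' to have length bounded below by a constant. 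By unique ergodicity of the rotation such times $k$ occur with density $\asymp a\asymp_\delta\sigma^{\kappa}$, and each contributes a constant-length window to $\{k\mid|\varphi^+(\theta+k\omega)-\varphi^+(\theta'+k\omega)|\geq\delta\}$, yielding $\nu(f,\delta,\theta,\theta')\gtrsim_\delta\sigma^{\kappa}$.

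The main obstacle is turning the three ``facts'' above into rigorous two-sided control with constants uniform over the scales $a\asymp_\delta\sigma^{\kappa}$, $\sigma\to0$. The upper bound on the recovery rate is immediate from concavity, but the matching lower bound — that $\varphi^+$ along $\theta'$ really recovers, at a rate not much below $e^{\lambda}$ — requires quantitative equidistribution of the orbit $(\psi+i\omega)_i$ near the pinch, which is exactly what the Diophantine hypothesis delivers: it controls the partial sums $\sum_{i<j}\log\bigl(\alpha|\sin(\pi(\psi+i\omega))|\bigr)=j\lambda+o(j)$ despite the logarithmic singularity of $\log|\sin|$ at $0$. A second, bookkeeping-type difficulty is the exceptional event that the orbit of $\theta'$ does re-enter a deep neighbourhood of $0$ during its recovery window; since the set of times at which $\varphi^+(\theta'+\,\cdot\,)<2\delta$ has density of order only $p(\delta)$ with $p(\delta)\to0$ as $\delta\to0$, a Markov/counting argument shows that, for $\delta$ small enough, at least half of the passages above still carry a good window, which is all that is needed. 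Finally, these estimates hold only for $\theta,\theta'$ in a set $G_\delta$ of full Lebesgue measure (the points for which the relevant ergodic averages converge), but since $\Leb(\Omega\cap G_\delta)=1$ this is harmless for the statement about arbitrary full-measure $\Omega$, as already used above. All of this is technically involved but essentially standard for pinched skew products; cf.\ \cite{keller:1996,glendinning/jaeger/keller:2006,GroegerJaeger2013}.
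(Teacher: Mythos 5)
Your strategy is genuinely different from the paper's. The paper does not attempt a uniform two-point bound; it works directly with the iterated boundary lines $\varphi_n$, isolates the so-called \emph{fresh peaks} $\tau_{n_j}$ (Lemmas~\ref{lem: fresh peaks density}--\ref{lem: fresh peaks height}), and then builds, level by level, a Cantor-type family $\theta_x=\theta+\sum_{k\le j}x_k\Delta_k$, $x\in\{0,1\}^j$, whose pairwise \emph{offsets} are deliberately chosen to land in the good annulus of some fresh peak. Your route instead claims a quantitative Hölder-type lower bound $\nu(f,\delta,\theta,\theta')\gtrsim_\delta|\theta-\theta'|^{\kappa}$ for all pairs in a full-measure set $G_\delta$, and then wins by a straightforward $1/\sigma$-packing of the base circle. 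If that two-point estimate were proved, your packing step would be clean and would even give a more explicit exponent than the paper's $\frac{\log 2}{\log\alpha}\varliminf_j j/n_{j+1}$.

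The gap is precisely that two-point lemma, and the discussion in your third paragraph does not close it. Two specific issues:
\emph{(i) The recovery-rate lower bound.} You want that after a pinching event at scale $a$ along $\theta$ (so $\varphi^+(\theta'+(k+1)\omega)\lesssim\sigma$), the value along $\theta'$ climbs back past $2\delta$ within $\approx\log(\delta/\sigma)/(\lambda-\log 2)$ steps, and—crucially—\emph{stays} above $\delta$ for the remainder of the window of length $\approx\log(\delta/a)/\log\alpha$. This is not a consequence of the Lyapunov exponent alone; it is precisely the two-sided control that Proposition~\ref{properties_approx_graphs} (carried over from \cite{jaeger:2007,GroegerJaeger2013}) packages via the iterated boundary lines and the Diophantine-dependent widths $r_j$. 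Invoking ``quantitative equidistribution'' does not by itself produce a pointwise bound valid along every window.
\emph{(ii) The exceptional passages.} The times $k$ with $\theta+k\omega\in B_a(0)$ have density $\asymp a=\sigma^{\kappa}$, which for small $\sigma$ is \emph{much smaller} than the density $p(\delta)$ of times at which $\varphi^+(\theta'+\cdot)<2\delta$. So the union bound ``bad fraction $\le p(\delta)$'' gives nothing: you would need a genuine (quantitative) independence statement between the pinch times of $\theta$ and the low-visit times of $\theta'$. The paper's fresh-peak selection is designed exactly to sidestep this: a fresh peak at $\tau_{n_j}$ comes with a guarantee that the exceptional set in the annulus $B_{2r_{n_j}}(\tau_{n_j})\setminus B_{r_{n_j}}(\tau_{n_j})$ has super-exponentially small measure $\eps_{n_j}$, so the subsequent Cantor construction only ever uses offsets that have been \emph{verified} good. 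Your scheme uses arbitrary offsets (any $\theta_i-\theta_j\in[\sigma,\delta)$), and verifying all of them ``good'' is where the Markov/counting hand-wave currently stands in for a real argument.

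In short: same mechanism (pinching plus Diophantine control of the recovery), different organisation, and the paper's fresh-peak bookkeeping is exactly the device that replaces the uniform two-point estimate you are assuming. As written, the two-point estimate is a claim, not a lemma; the rest of your argument is conditional on it.
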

For the proof, we need a number of preliminary statements taken from previous
studies of pinched skew products in \cite{jaeger:2007,GroegerJaeger2013}.
First, \cite[Lemma 4.2]{GroegerJaeger2013} states that if $\alpha$ in (\ref{eq:2})
is sufficiently large, then there exist constants $\gamma,L_0,\beta,a,b>0$ and
$m\in\N$ such that the following conditions are satisfied.
\begin{eqnarray}\label{e.m}
	m & \geq & 22(1+1/\gamma) \\
    a & \geq & (m+1)^d\label{e.a}\\
    b & \leq & c\label{e.b1}\\
    b & < & d(n\omega,0) \ \textrm{for all } n\in\{1\ld m-1\}\label{e.b2}\\
    \abs{f_\theta(x)-f_\theta(y)} & \leq & \alpha^{-\gamma}\abs{x-y}
    \ \textrm{for all } \theta\in\kreis,\ x,y\in[L_0,1] \label{e.contraction}\\
    f_\theta(x) & \geq & \min\left\{L_0,ax\right\}\cdot \label{e.reference_system}
    \min\left\{1,2d(\theta,0)/b\right\} \ \textrm{for all }
    (\theta,x)\in\kreis\times[0,1]
\end{eqnarray}
It is worth mentioning that we can choose $a$ proportional to $\alpha$.
Moreover, we note that 
\begin{eqnarray}\label{e.maximal_expansion}
	\abs{f_\theta(x)-f_\theta(y)} & \leq & \alpha\abs{x-y} \ \textrm{for all } 
		\theta\in\kreis,\ x,y\in [0,1]\ ,\\
	\abs{f_\theta(x)-f_{\theta'}(x)} & \leq & \pi d(\theta,\theta') 
	    \ \textrm{for all } \theta,\theta'\in\kreis,\ x\in[0,1] \ .
\end{eqnarray}
Given any $n\in\N$, let $r_n := \frac{b}{2}a^{-\frac{n-1}{m}}$ and
$\tau_n:=n\omega$. We will need the following elementary estimate.
\begin{lemma}[\cite{jaeger:2007,GroegerJaeger2013}]
	\label{lem: consequence of diophantine w}
	Let $n\in \N$ and suppose $d(\tau_n,0)\leq \ell b\cdot a^{-i}$ for some $i>0$
	and $\ell>0$. 
	Then $n\geq\frac{a^{i/d}}{\ell^{1/d}}$.
\end{lemma}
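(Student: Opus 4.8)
The plan is to obtain the conclusion as an immediate consequence of the Diophantine property of $\omega$ recorded in \eqref{e.Diophantine}, combined with the inequality $b\le c$ from \eqref{e.b1}. Recall that $\tau_n=n\omega$, so the Diophantine assumption gives the lower bound $d(\tau_n,0)=d(n\omega,0)\ge c\,n^{-d}$ for every $n\in\N$.

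First I would chain this lower bound with the hypothesis $d(\tau_n,0)\le \ell b\,a^{-i}$, obtaining $c\,n^{-d}\le \ell b\,a^{-i}$. Since $b\le c$ by \eqref{e.b1}, the constant $c$ cancels and we are left with $n^{-d}\le \ell\,a^{-i}$, that is, $n^{d}\ge a^{i}/\ell$. Both sides are positive and $d>0$, so taking $d$-th roots yields $n\ge (a^{i}/\ell)^{1/d}=a^{i/d}/\ell^{1/d}$, which is exactly the claimed estimate.

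I do not expect any genuine obstacle here: the statement is a one-line rearrangement of the Diophantine inequality. The only point that requires a moment's attention is to invoke $b\le c$ at the right place, so that the implied constant from \eqref{e.Diophantine} does not survive in the final bound; without that step one would merely get $n\ge (c/(b\ell))^{1/d}\,a^{i/d}$, which is weaker than—though of the same shape as—the clean form stated in the lemma and used in the subsequent arguments.
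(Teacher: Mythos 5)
Your argument is correct and is precisely the paper's own proof: it invokes the Diophantine estimate \eqref{e.Diophantine} to get $c\,n^{-d}\le \ell b\,a^{-i}$, uses $b\le c$ from \eqref{e.b1} to cancel the constant, and takes $d$-th roots. No difference in approach to report.
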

\begin{proof}
	(\ref{e.Diophantine}) implies $c\cdot n^{-d}\leq \ell b\cdot a^{-i}$, and
	using (\ref{e.b1}) we get $n^{-d}\leq\ell a^{-i}$.
\end{proof}

In order to analyse the dynamics of $f$ on $\Phi^+$, it turns out to be crucial
that $\varphi^+$ is approximated by the so-called \emph{iterated boundary lines}
$\nfolge{\phi_n}$ of (\ref{eq:2}).
These are given by
\begin{align*}
	\varphi_n: \kreis \to [0,1];\quad\theta \mapsto f_{\theta-n\omega}^n(1) \ ,
\end{align*}
with $f^n_\theta(x):=\pi_x\circ f^n(\theta,x)=f_{\theta+(n-1)\omega}
	\circ\ldots\circ f_\theta(x)$ where $\pi_x$ is the projection onto the 
second coordinate.
Note that by the monotonicity of the maps $f_\theta$, the sequence 
$(\phi_n)_{n\in \N}$ is decreasing.
Further, as a consequence of the definition of $\phi^+$ in 
(\ref{e.upper_bounding_graph}), it can be shown easily that $\phi_n \to \phi^+$
pointwise as $n\to \infty$ \cite{GroegerJaeger2013}.
The following proposition tells us to which degree the $n$-th iterated boundary
line approximates the graph $\phi^+$.
\begin{proposition}[\cite{jaeger:2007},\cite{GroegerJaeger2013}]
	\label{properties_approx_graphs}
	Given $q\in\N$, the following holds.
	\begin{enumerate}
		 \item[(i)] $|\varphi_{n}(\theta)- \varphi_{n}(\theta')|\leq\pi\alpha^n
			d(\theta,\theta')$ for all $n\in\N$ and $\theta,\theta'\in\kreis$.
	     \item[(ii)] There exists $\lambda>0$ such that if $n\geq mq+1$ and
        	$\theta\notin\bigcup_{j=q}^n B_{r_{j}}(\tau_j)$, then
		    $|\varphi_{n}(\theta)-\varphi_{n-1}(\theta)|\leq
        		\alpha^{-\lambda(n-1)}$.
	\end{enumerate}
\end{proposition}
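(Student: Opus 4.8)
The plan is to establish the two estimates separately. Part~(i) is an elementary telescoping argument; part~(ii) is the technical heart and rests on the quantitative control of the pinching dynamics provided by \eqref{e.contraction}, \eqref{e.reference_system} and the Diophantine condition \eqref{e.Diophantine}.

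For part~(i), I would combine the two one-step bounds $|f_\theta(x)-f_\theta(y)|\le\alpha|x-y|$ and $|f_\theta(x)-f_{\theta'}(x)|\le\pi d(\theta,\theta')$ (see \eqref{e.maximal_expansion} and the line following it) into $|f_\theta(x)-f_{\theta'}(x')|\le\alpha|x-x'|+\pi d(\theta,\theta')$. Writing $\varphi_n(\theta)=f_{\theta-\omega}\circ\dots\circ f_{\theta-n\omega}(1)$ and using that the rotation by $-j\omega$ is an isometry of $\kreis$, one compares the two compositions defining $\varphi_n(\theta)$ and $\varphi_n(\theta')$ term by term: if $e_k$ denotes the distance between the two orbits after $k$ of the fibre maps have been applied, then $e_0=0$ and $e_{k+1}\le\alpha e_k+\pi d(\theta,\theta')$, hence $e_n\le\pi d(\theta,\theta')\sum_{k=0}^{n-1}\alpha^k\le\pi\alpha^n d(\theta,\theta')$ once $\alpha\ge 2$. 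This gives~(i).

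For part~(ii), the starting point is the identity $\varphi_n(\theta)-\varphi_{n-1}(\theta)=f^{n-1}_{\theta-(n-1)\omega}(\xi)-f^{n-1}_{\theta-(n-1)\omega}(1)$ with $\xi:=f_{\theta-n\omega}(1)\in[0,1]$, so the claim is that the $(n-1)$-fold fibre composition $f_{\theta-\omega}\circ\dots\circ f_{\theta-(n-1)\omega}$ shrinks the interval $[\xi,1]$, of length at most $1$, down to length at most $\alpha^{-\lambda(n-1)}$. I would bound this contraction by the product of the one-step Lipschitz constants of $f_{\theta-j\omega}$, $j=1,\dots,n-1$, taken along the relevant orbits, and split the indices into three groups. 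The at most $q-1$ most recent indices $j<q$ are handled with the crude bound $\alpha$ per step, costing a fixed factor $\alpha^{q-1}$. For an index $j\ge q$ at which the orbit value lies in $[L_0,1]$, estimate \eqref{e.contraction} gives a genuine contraction factor $\alpha^{-\gamma}$, and this is uniform in $\theta$. The remaining indices $j\ge q$ are those at which the orbit has been pushed below $L_0$ by a recent pinch; here the assumption $\theta\notin B_{r_j}(\tau_j)$ forces $d(\theta-j\omega,0)\ge r_j=\frac{b}{2}a^{-(j-1)/m}$, so \eqref{e.reference_system} bounds the depth of such a pinch from below and shows that the orbit climbs back above $L_0$ within at most $(j-1)/m+O(1)$ steps, during which the Lipschitz constants are at most $\alpha$. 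Lemma~\ref{lem: consequence of diophantine w} (the quantitative form of the Diophantine condition) then controls how often and how deeply the orbit $(j\omega)_j$ returns near $0$, so that the total number of expanding recovery steps is only a small multiple of $n$; together with the fact that $a$ can be chosen proportional to the large parameter $\alpha$ (so that $\log_a(\cdot)$ is small), the smallness of $1/m$ guaranteed by $m\ge 22(1+1/\gamma)$ in \eqref{e.m}, and the hypothesis $n\ge mq+1$ (which makes the fixed loss $\alpha^{q-1}$ negligible against the exponential gain), the product of all Lipschitz factors is bounded by $\alpha^{q-1}$ times an exponentially small factor $\alpha^{-\gamma'(n-1)}$, and hence by $\alpha^{-\lambda(n-1)}$ for a suitable $\lambda>0$.

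I expect the main obstacle to be precisely this final balancing and the associated bookkeeping: one must make rigorous the claim that the contracting steps dominate the recovery steps even when pinches occur on top of an unfinished recovery (so that the orbit value may be driven far below $L_0$ repeatedly), check that \eqref{e.reference_system} still yields geometric recovery in that situation, and verify that it is the \emph{spread} between the two orbits — not merely a single orbit value — that obeys the stated per-step contraction and expansion bounds, which is where the monotonicity and concavity of the fibre maps $f_\theta$ enter. Carrying this out carefully, while tracking how $\lambda$ depends on $\gamma$, $m$ and $a$, is exactly what is done in the Lemmas of \cite{jaeger:2007,GroegerJaeger2013} cited in the statement, and I would follow those arguments.
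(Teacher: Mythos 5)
This proposition is not proved in the paper itself; it is quoted from \cite{jaeger:2007} and \cite{GroegerJaeger2013}, so there is no in-paper argument to compare against. Part~(i) of your proposal is a correct and complete proof: from $|f_\theta(x)-f_{\theta'}(x')|\le\alpha|x-x'|+\pi d(\theta,\theta')$ and the isometry of the base rotation, the recursion $e_{k+1}\le\alpha e_k+\pi d(\theta,\theta')$ with $e_0=0$ gives $e_n\le\pi d(\theta,\theta')\,\frac{\alpha^n-1}{\alpha-1}\le\pi\alpha^n d(\theta,\theta')$ for $\alpha\ge 2$, as you say.

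For part~(ii), your telescoping identity $\varphi_n(\theta)-\varphi_{n-1}(\theta)=f^{n-1}_{\theta-(n-1)\omega}(\xi)-f^{n-1}_{\theta-(n-1)\omega}(1)$ with $\xi=f_{\theta-n\omega}(1)$ is exactly the right starting point, and your three-way split of the indices together with \eqref{e.contraction}, \eqref{e.reference_system}, Lemma~\ref{lem: consequence of diophantine w} and the parameter constraints \eqref{e.m}--\eqref{e.b2} reproduces the architecture of the argument in the cited sources. The step you have not carried out — and which is the content of the lemmas in \cite{jaeger:2007,GroegerJaeger2013} — is the actual bookkeeping: showing that the contraction \eqref{e.contraction} may be applied at an index only when the \emph{lower} of the two orbit values is above $L_0$, that the number of recovery steps for that lower orbit after each admissible near-encounter with the pinch (and after possibly nested, overlapping pinches) is controlled by the Diophantine estimate, and that the total expansion $\alpha^{(\text{recovery steps})+q-1}$ is strictly dominated by $\alpha^{-\gamma(\text{good steps})}$ once $m$ satisfies \eqref{e.m}, $a$ is taken proportional to $\alpha$, and $n\ge mq+1$. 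You have correctly named those as the missing steps and correctly deferred them; as a self-contained proof there is a gap, but it is precisely the gap covered by the references the proposition already cites, and your outline of how they close it is accurate.
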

\begin{figure}
	\centering 
	\subfloat[]{\includegraphics[width=1.8in]{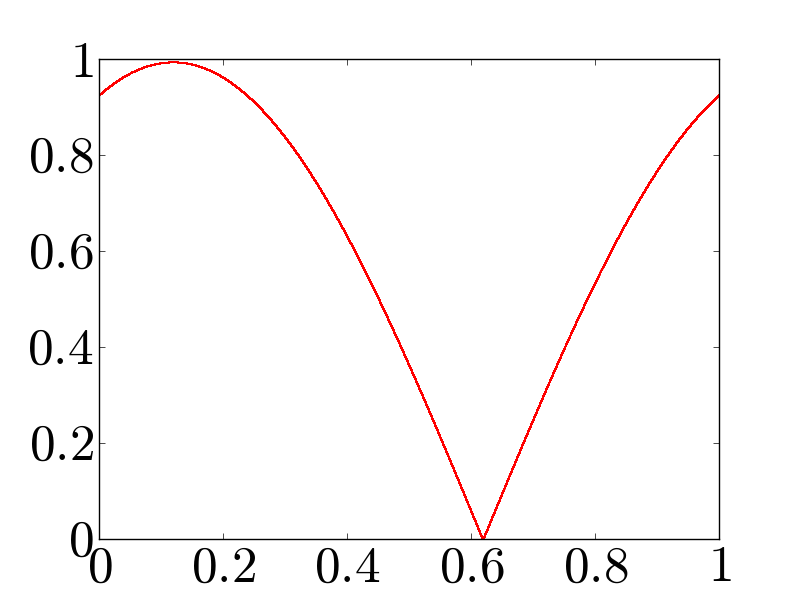}}
	\subfloat[]{\includegraphics[width=1.8in]{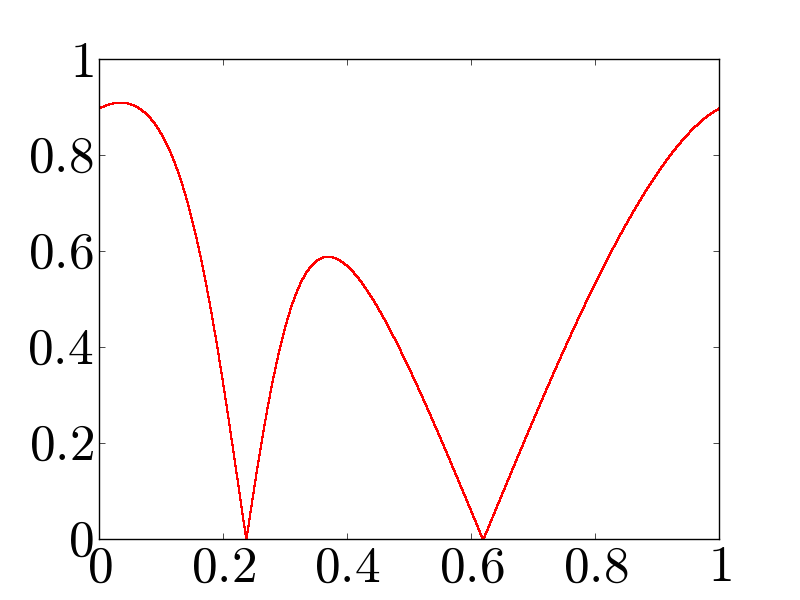}}
	\subfloat[]{\includegraphics[width=1.8in]{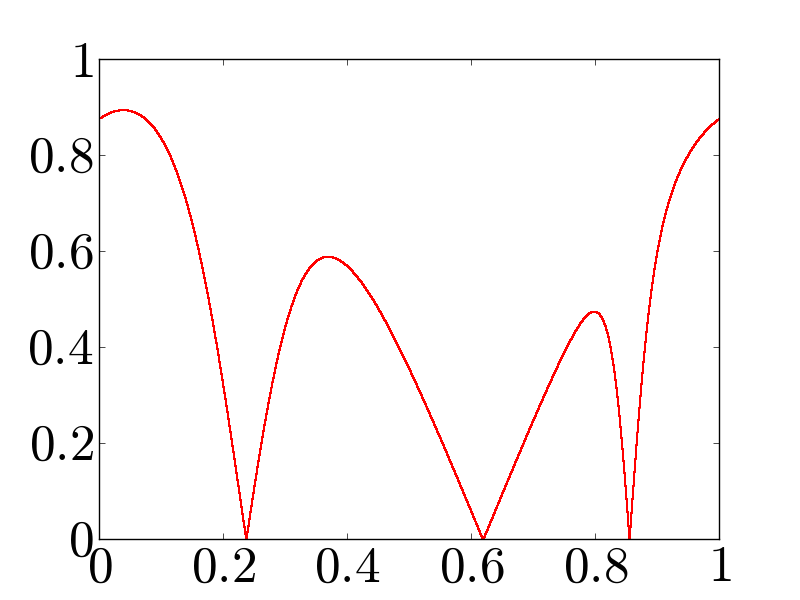}}\\
	\subfloat[]{\includegraphics[width=1.8in]{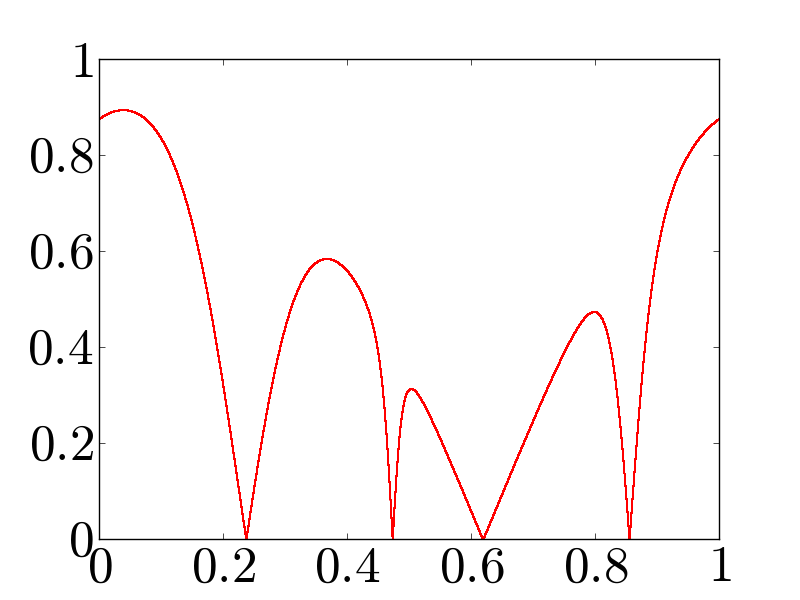}}
	\subfloat[]{\includegraphics[width=1.8in]{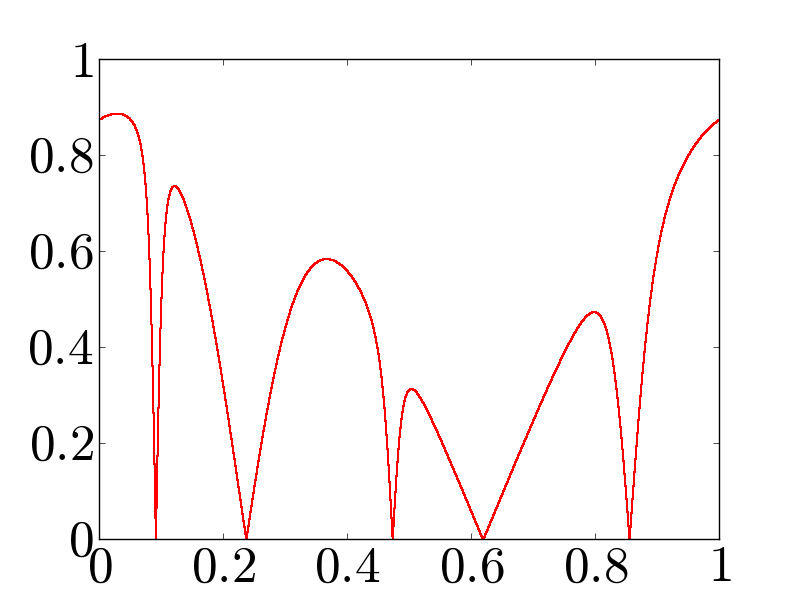}}
	\subfloat[]{\includegraphics[width=1.8in]{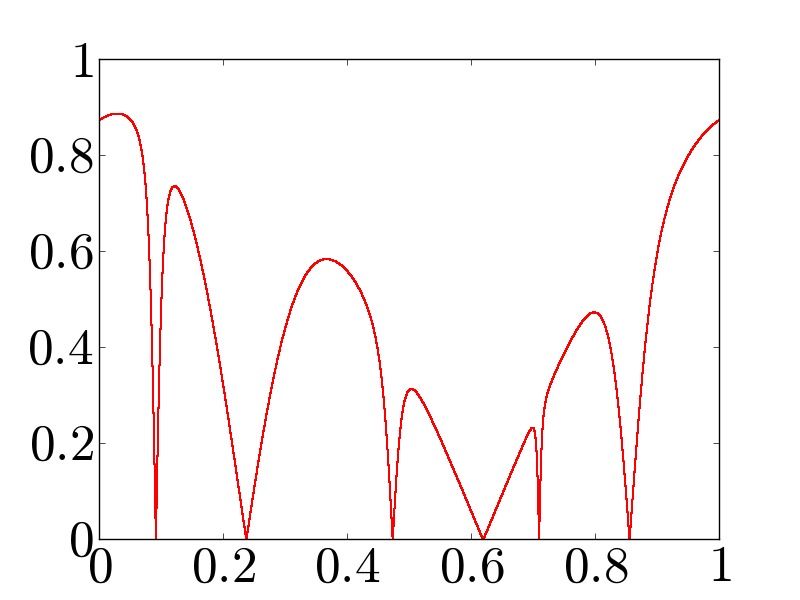}}
	\caption{The iterated boundary lines $\phi_n$ for $n=1,\ldots,6$.} 
	\label{fig: iterated boundary lines}
\end{figure}
Figure~\ref{fig: iterated boundary lines} shows the development of the iterated
upper boundary lines for $n=1\ld 6$.
As can be seen, $\phi_n$ has exactly $n$ zeros (at $\tau_1,\ldots,\tau_n$).
In order to describe the qualitative behaviour, we refer to 
$\left.\vphantom{T}\psi\right|_{B_{r_{j}}(\tau_j)}$ as the \emph{$j$-th peak} of
$\psi$ or the \emph{peak of $\psi$ around $\tau_j$}, where
$\psi\in\left\{\phi^+,\phi_j,\phi_{j+1},\ldots\right\}$.
We say the $j$-th peak is a \emph{fresh peak} if the $2r_j$-neighbourhood of
$\tau_j$ does not intersect any previous peak, that is,
$B_{2r_{j}}(\tau_j)\cap B_{r_{l}}(\tau_l)=\emptyset$ for each $1\leq
l<j$.
In the following, we label the fresh peaks by $n_1<n_2<\ldots$, that is, there
is $j\in \N$ with $n_j=l$ if and only if the $l$-th peak is fresh.
\begin{lemma}\label{lem: fresh peaks density}
	If $\alpha$ is large enough, there are infinitely many fresh peaks and they
	appear with positive density, that is,
	\begin{align*}
		\varliminf_{j\to \infty} j/n_j\ >\ 0\ .
	\end{align*}
\end{lemma}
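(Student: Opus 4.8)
The plan is to prove the quantitative statement that, for some constant $\theta\in(0,1)$ and all sufficiently large $N$, at most $\theta N$ of the first $N$ peaks fail to be fresh. Since the function $N\mapsto\#\{i\leq N\mid\text{the }i\text{-th peak is fresh}\}$ is nondecreasing, such a bound produces infinitely many fresh peaks, and evaluating it at $N=n_j$ gives $j\geq(1-\theta)n_j$ for all large $j$, hence $\varliminf_{j\to\infty}j/n_j\geq 1-\theta>0$, as claimed.

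Recall $r_l=\tfrac b2 a^{-(l-1)/m}$ and $\tau_l=l\omega$. Two elementary facts are used repeatedly: first, $b\leq c<\tfrac12$, where $b\leq c$ is \eqref{e.b1} and $c\leq d(\omega,0)<\tfrac12$ since $\omega$ is irrational; second, since $a$ is proportional to $\alpha$ while $m$ remains bounded, we may after enlarging $\alpha$ assume $a^{1/m}>4$, i.e.\ $a^{-1/m}<\tfrac14$. Now fix $j$ and suppose the $j$-th peak is not fresh, that is, $B_{2r_j}(\tau_j)\cap B_{r_l}(\tau_l)\neq\emptyset$ for some $l<j$, equivalently $d(\tau_j,\tau_l)<2r_j+r_l$. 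With $n:=j-l\geq1$ we have $r_j=r_l a^{-n/m}$, hence $2r_j+r_l=r_l(2a^{-n/m}+1)<\tfrac32 r_l$; thus $\tau_j\in B_{3r_l/2}(\tau_l)$ for some $l<j$.

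The quantitative heart of the argument is to bound such an index $l$ in terms of $j$. Since $1\leq j-l<j$, \eqref{e.Diophantine} gives $d(\tau_j,\tau_l)=d((j-l)\omega,0)\geq c(j-l)^{-d}\geq c\,j^{-d}$, and combining this with $d(\tau_j,\tau_l)<\tfrac32 r_l=\tfrac{3b}{4}a^{-(l-1)/m}$ and $b\leq c$ yields $a^{(l-1)/m}<\tfrac34 j^{d}$, so that $l\leq L(N):=\lceil 1+md\log N/\log a\rceil=O(\log N)$ whenever $j\leq N$ (compare Lemma~\ref{lem: consequence of diophantine w}). Consequently, for every $N$,
\[
	\#\{\,j\leq N\mid\text{the }j\text{-th peak is not fresh}\,\}\ \leq\ \#\{\,j\leq N\mid\tau_j\in B_N\,\},\qquad B_N:=\bigcup_{l=1}^{L(N)}B_{3r_l/2}(\tau_l) .
\]
The set $B_N$ is a union of at most $L(N)=O(\log N)$ arcs, and, using $a^{-1/m}<\tfrac14$,
\[
	\Leb(B_N)\ \leq\ \sum_{l\geq1}3r_l\ =\ \frac{3b}{2(1-a^{-1/m})}\ <\ 2b\ =:\ \beta_0\ <\ 1
\]
uniformly in $N$.

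It remains to compare $\#\{j\leq N\mid\tau_j\in B_N\}$ with $N\,\Leb(B_N)$. Here I would invoke the standard discrepancy bound for Diophantine rotations obtained from the Erd\H{o}s--Tur\'an inequality: the star discrepancy $D_N^*$ of $(j\omega)_{1\leq j\leq N}$ satisfies $D_N^*=O(N^{-\kappa})$ for some $\kappa=\kappa(d)>0$. Since $B_N$ consists of $O(\log N)$ arcs, this gives $\bigl|N^{-1}\#\{j\leq N\mid\tau_j\in B_N\}-\Leb(B_N)\bigr|\leq C\,L(N)D_N^*=O(N^{-\kappa}\log N)\to0$, whence $\#\{j\leq N\mid\text{the }j\text{-th peak is not fresh}\}\leq(\beta_0+o(1))N$; taking $\theta:=(1+\beta_0)/2<1$ then yields at least $\tfrac{1-\beta_0}{2}N$ fresh peaks among the first $N$ for $N$ large, which finishes the proof. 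The one genuine obstacle is exactly this comparison: one cannot apply Weyl's equidistribution theorem directly to the fixed open set $\bigcup_{l\geq1}B_{3r_l/2}(\tau_l)$, since the $\tau_l$ are dense so its closure is all of $\kreis$ and it is not a continuity set; truncating the union to the $O(\log N)$ relevant terms — legitimate by the index bound of the third paragraph — and absorbing the resulting error into the polynomially small discrepancy of a Diophantine rotation is what circumvents this.
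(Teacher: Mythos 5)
Your argument is correct, but it takes a noticeably different route from the paper's. Both proofs share the same first step: a non-fresh $j$-th peak is a visit $\tau_j\in B_{cr_l}(\tau_l)$ (for some constant $c$ and some $l<j$), and the Diophantine condition forces $l=O(\log j)$, so one must bound how often the orbit $(\tau_j)$ hits a finite union of shrinking arcs of total measure $\approx 2b<1$. From there the two approaches diverge. The paper handles each arc $B_{2r_l}(\tau_l)$ with $l\geq 2$ \emph{individually}: Lemma~\ref{lem: consequence of diophantine w} gives a lower bound $\geq a^{(l-1)/md}/2^{1/d}$ on the gap between consecutive returns to that arc, so the number of returns up to time $k$ is at most $(k-l)2^{1/d}a^{-(l-1)/md}$, and summing a geometric series in $l$ gives an explicit small fraction of $k$; only the fixed arc $l=1$ is treated by plain equidistribution, contributing $\Leb(B_{2r_1}(\tau_1))=2b$. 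You instead truncate the union to the $O(\log N)$ arcs that can matter for $j\leq N$ (which is legitimate by the same $l=O(\log j)$ bound) and control the count over the whole union at once via a quantitative discrepancy bound $D_N^*=O(N^{-\kappa})$ for Diophantine rotations (Erd\H{o}s--Tur\'an), absorbing the $O(\log N)$ arc count into the polynomially small discrepancy. Both give $\varliminf j/n_j\geq 1-2b-\eps>0$. What each buys: the paper's argument is more elementary and self-contained, staying entirely within estimates already set up for the pinched-system analysis (the return-time Lemma~\ref{lem: consequence of diophantine w}); yours is conceptually cleaner in reducing to a single standard analytic-number-theory input, at the cost of invoking discrepancy theory for Diophantine rotations, which the paper otherwise does not need. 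One small point worth being explicit about in your writeup is exactly which discrepancy estimate you use and that it applies to unions of $O(\log N)$ arcs (multiplying the single-arc discrepancy by the number of components); you note this, and it is the crux of why the naive application of Weyl equidistribution to the full infinite union fails.
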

\begin{proof}
	Let
	\begin{align*}
		N(k):= \left\{j \in \{2,\ldots,k\}\mid \textrm{there is } 1\leq l < j
	    	\text{ with } B_{2r_j}(\tau_j)\ssq B_{2 r_l}(\tau_l) \right\}.
	\end{align*}
	Thus, $N(n_j)$ contains the complement of $\left\{n_l\mid 1\leq l\leq j\right\}$.
	Further,
	\begin{align*}
		\#N(k)&\ \leq\ \sum_{l=1}^{k-1} \#\left\{j \in \{l+1,\ldots,k\}
			\mid B_{2 r_j}(\tau_j) \ssq B_{2 r_l}(\tau_l)\right\}\\
		&\stackrel{\mathclap{\textrm{Lemma~\ref{lem: consequence of diophantine w}}}}
			{\ \leq\ }\#\left\{j \in \{2,\ldots,k\}\mid B_{2 r_j}(\tau_j) \ssq B_{2r_1}
			(\tau_1)\right\}+\sum_{l=2}^{k-1} \frac{(k-l)2^{1/d}}{a^{(l-1)/md}}\\
		&\ \leq\ \#\left\{j \in \{2,\ldots,k\}\mid  B_{2 r_j}(\tau_j) \ssq B_{2 r_1}
		(\tau_1)\right\}+ 2^{1/d}k \sum_{l=2}^{k-1} a^{-(l-1)/md}\\
		&\ <\ \#\left\{j \in \{2,\ldots,k\}\mid B_{2 r_j}(\tau_j) \ssq B_{2 r_1}
			(\tau_1)\right\}+ \frac{2^{1/d}ka^{-1/md}}{1-a^{-1/md}}\ .
	\end{align*}
	Thus, for big enough $\alpha$ (and hence big enough $a$) and due to
	(\ref{e.b2}), there are infinitely many fresh peaks and
	\begin{align*}
		&\varliminf_{j\to \infty} j/n_j\ \geq\ \varliminf_{j\to \infty}
			\frac{n_j-\#N(n_j)}{n_j}\ \geq\  1-\Leb(B_{2r_1}(\tau_1))
			-\frac{2^{1/d}a^{-1/md}}{1-a^{-1/md}}\ >\ 0\ .\qedhere
	\end{align*}
\end{proof}
Since $\left(\phi_n\right)_{n\in\N}$ is monotonously decreasing and each
iterated boundary line is continuous, we know that $\phi^+$ is close to zero
in a neighbourhood of each $\tau_n$.
However, the next statement tells us that for most $\theta$ in a neighbourhood
of a fresh peak, $\phi^+$ is bigger than some threshold $\delta_0>0$.
This dichotomy is the basis for the mechanism by which we prove
Proposition~\ref{prop: ac > 0}.

\begin{lemma}\label{lem: fresh peaks height}
	Suppose $\alpha$ is large enough.
	There exist $\delta_0>0$ and a super-exponentially fast decaying sequence
	$\left(\eps_n\right)_{n\in\N}$ such that
	\begin{align*}
		\Leb\left(\left\{\theta \in B_{2r_{n_j}}(\tau_{n_j})\setminus
			B_{r_{n_j}}(\tau_{n_j})\mid \phi^+(\theta)< \delta_0\right\}\right)
		\ <\ \eps_{n_j}\ .
	\end{align*}
\end{lemma}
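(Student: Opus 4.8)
The plan is to bound the exceptional set inside the fresh annulus by a union of later peaks $B_{r_l}(\tau_l)$, each of which will turn out to be extraordinarily narrow. Three inputs are used: the monotone convergence $\phi_n\searrow\phi^+$ combined with Proposition~\ref{properties_approx_graphs}(ii) (which replaces $\phi^+$ by the iterated boundary line $\phi_m$ away from recent peaks), the reference estimate \eqref{e.reference_system} iterated along a backward orbit, and Lemma~\ref{lem: consequence of diophantine w}. I would first fix $\alpha$ (and with it the constants $\gamma,L_0,a,b,m$ of \eqref{e.m}--\eqref{e.reference_system} and the exponent $\lambda>0$ of Proposition~\ref{properties_approx_graphs}(ii)) so large that, in addition, $\frac{\alpha^{-\lambda m}}{1-\alpha^{-\lambda}}\le L_0/2$; then set $\delta_0:=L_0/2$ and $n_0:=m+1$. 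The heart of the argument is the claim that for every fresh peak of index $n_j\ge n_0$ and every
\[
\theta\in\big(B_{2r_{n_j}}(\tau_{n_j})\setminus B_{r_{n_j}}(\tau_{n_j})\big)\setminus\bigcup_{l>n_j}B_{r_l}(\tau_l)
\qquad\text{one has}\qquad \phi^+(\theta)\ \ge\ \delta_0 .
\]

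To prove this, note that the balls $B_{2r_{n_j}}(\tau_{n_j})$ and $B_{r_l}(\tau_l)$ with $l<n_j$ are disjoint by definition of a fresh peak, so $d(\theta,\tau_l)\ge r_l$ for all $l<n_j$, in particular for $l=1,\ld,m$. Writing $\phi_m(\theta)=f^m_{\theta-m\omega}(1)=f_{\theta-\omega}\circ\cdots\circ f_{\theta-m\omega}(1)$ and replacing each $f_\psi$ by the smaller, monotone map $g_\psi(x):=\min\{L_0,ax\}\cdot\min\{1,2d(\psi,0)/b\}$ of \eqref{e.reference_system}, a short induction gives $\phi_m(\theta)\ge L_0$: feeding in $\min\{1,2d(\theta,\tau_l)/b\}\ge a^{-(l-1)/m}$ one finds that after $k$ steps of the $g$-iteration the value is $\ge L_0\,a^{-(m-k)/m}$, so at the next step $a$ times it is $\ge L_0$ and the inner minimum in $g_\psi$ equals $L_0$; after $m$ steps one is left with $\ge L_0$. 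On the other hand, the same disjointness, together with $\theta\notin B_{r_{n_j}}(\tau_{n_j})$ and the removal of the peaks of index $>n_j$, gives $\theta\notin\bigcup_{l\ge1}B_{r_l}(\tau_l)$, so Proposition~\ref{properties_approx_graphs}(ii) with $q=1$ applies and yields $|\phi_n(\theta)-\phi_{n-1}(\theta)|\le\alpha^{-\lambda(n-1)}$ for all $n\ge m+1$; summing over $n>m$ and using $\phi_n(\theta)\to\phi^+(\theta)$, we obtain $|\phi^+(\theta)-\phi_m(\theta)|\le\frac{\alpha^{-\lambda m}}{1-\alpha^{-\lambda}}\le L_0/2$, whence $\phi^+(\theta)\ge L_0-L_0/2=\delta_0$.

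Consequently the exceptional set $\{\theta\in B_{2r_{n_j}}(\tau_{n_j})\setminus B_{r_{n_j}}(\tau_{n_j})\mid\phi^+(\theta)<\delta_0\}$ is contained in $\bigcup_{l>n_j}B_{r_l}(\tau_l)\cap B_{2r_{n_j}}(\tau_{n_j})$, and it remains to estimate its measure. If $B_{r_l}(\tau_l)$ with $l>n_j$ meets $B_{2r_{n_j}}(\tau_{n_j})$, then (since $r_l<r_{n_j}$) $d(\tau_{l-n_j},0)=d(\tau_l,\tau_{n_j})<3r_{n_j}\le 2b\,a^{-(n_j-1)/m}$, so Lemma~\ref{lem: consequence of diophantine w} with $\ell=2$ and $i=(n_j-1)/m$ forces $l-n_j\ge 2^{-1/d}a^{(n_j-1)/(md)}$. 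Therefore
\[
\Leb\Big(\bigcup_{l>n_j}B_{r_l}(\tau_l)\cap B_{2r_{n_j}}(\tau_{n_j})\Big)\ \le\ \sum_{l\,\ge\,n_j+2^{-1/d}a^{(n_j-1)/(md)}}2r_l\ \le\ \frac{b}{1-a^{-1/m}}\,a^{-(n_j-1)/m}\,a^{-a^{(n_j-1)/(md)}/(2^{1/d}m)}\ =:\ \eps_{n_j} .
\]
Extending this formula to all $n>n_0$ and putting $\eps_n:=b\,a^{-(n-1)/m}$ (the length of the annulus) for $n\le n_0$, one has $\log\eps_n\sim -C\,a^{n/(md)}$, so $(\eps_n)_{n\in\N}$ decays super-exponentially, as asserted; the finitely many fresh peaks with $n_j\le n_0$ are covered by the trivial inclusion of the exceptional set in the annulus.

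The step I expect to be the main obstacle is the calibration of constants underpinning the core claim. The reference-system induction closes cleanly only when the number of iterates equals $m$, so it cannot be pushed past $\phi_m(\theta)\ge L_0$; hence one really needs Proposition~\ref{properties_approx_graphs}(ii) with $q=1$ to absorb the error $\frac{\alpha^{-\lambda m}}{1-\alpha^{-\lambda}}$ below $L_0/2$, which amounts to checking that $\lambda m>1$ for the admissible $\alpha$ (so that $\alpha^{-\lambda m}$ beats $L_0\asymp\alpha^{-1}$). With $m\ge 22(1+1/\gamma)$ and $\lambda$ comparable to $\gamma$ this should be comfortably true, but verifying it — and, more generally, keeping the whole web of $\alpha$-dependent constants mutually consistent (including compatibility with the hypotheses of Lemmas~\ref{lem: fresh peaks density} and~\ref{lem: consequence of diophantine w}) — is the delicate part; the freshness bookkeeping and the Diophantine counting are then routine.
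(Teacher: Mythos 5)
Your argument is essentially correct and it tracks the paper's proof quite closely on the global level: the bad set inside the fresh annulus is squeezed into $\bigcup_{l>n_j}B_{r_l}(\tau_l)$, the freshness property kills the earlier peaks, Lemma~\ref{lem: consequence of diophantine w} forces the first intersecting later peak to have index $\geq n_j + \text{(const)}\cdot a^{(n_j-1)/(md)}$, and summing the lengths $2r_l$ gives the double-exponential tail $\eps_{n_j}$. The one place where you genuinely diverge is in the construction of $\delta_0$. The paper sets $\ell:=m+1$ and simply invokes the continuity of $\phi_\ell$ together with the fact that $\phi_\ell$ vanishes only at $\tau_1,\ldots,\tau_\ell$, so a positive minimum $2\delta_0$ of $\phi_\ell$ on the compact complement of $\bigcup_{j\le\ell}B_{r_j}(\tau_j)$ exists nonconstructively; you instead reconstruct an explicit lower bound $\phi_m(\theta)\geq L_0$ by iterating the reference estimate \eqref{e.reference_system} $m$ times (your telescoping $y_k\geq L_0\,a^{-(m-k)/m}$ argument is fine) and set $\delta_0:=L_0/2$. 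Both then use Proposition~\ref{properties_approx_graphs}(ii) with $q=1$ to transfer the bound from $\phi_m$ (resp.\ $\phi_{m+1}$) to $\phi^+$. Your explicit version buys a concrete value of $\delta_0$, at the price of having to match $\alpha^{-\lambda m}/(1-\alpha^{-\lambda})$ against $L_0/2$ by hand — but note that the paper carries exactly the same burden: its inequality $2\delta_0-\alpha^{-\lambda\ell}/(1-\alpha^{-\lambda})>\delta_0$ involves a $\delta_0$ that also depends on $\alpha$, and the claim ``for sufficiently large $\alpha$'' is asserted without a detailed audit of the $\alpha$-dependence. So the calibration worry you flag is real but is inherited from (and not worse than) the source; it is not a defect peculiar to your route. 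Modulo that shared imprecision, your proof is sound.
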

\begin{proof}
	Let $\ell:=m+1$.
	Since $\phi_{\ell}$ is continuous and $\phi_{\ell}(\theta)\neq 0$ for
	$\theta\notin\{\tau_1,\ldots,\tau_{\ell}\}$, there exists $\delta_0>0$ such
	that
	\begin{align}\label{eq: phi1 > L0}
		\phi_{\ell}(\theta)\ \geq\ 2\delta_0
	\end{align}
	for $\theta\notin \bigcup_{j=1}^{\ell} B_{r_j}(\tau_j)$.
	Due to (\ref{e.b2}), we have that if $\alpha$ (and hence $a$) is large enough,
	then $\kreis\setminus\bigcup_{j=1}^{\infty}B_{r_j}(\tau_j)$ is non-empty.
	Let $\theta\notin\bigcup_{j=1}^{\infty} B_{r_j}(\tau_j)$.
	For $k \in \N$ with $k>\ell$, Proposition~\ref{properties_approx_graphs} (ii)
	yields
	\begin{align*}
		\left|\phi_k(\theta)-\phi_{\ell}(\theta)\right|\ \leq\ 
		\sum_{j=\ell+1}^k\left|\phi_{j}(\theta)-\phi_{j-1}(\theta)\right|\ \leq\ 
		\sum_{j=\ell}^{k-1} \alpha^{-\lam j}\ \leq\ 
		\frac{\alpha^{-\lam \ell}}{1-\alpha^{-\lam}}\ .
	\end{align*}
	Together with equation (\ref{eq: phi1 > L0}), this gives
	\begin{align}\label{eq: phil > L0/2}
		\phi_k(\theta)\ \geq\ 
		2\delta_0-\frac{\alpha^{-\lam \ell}}{1-\alpha^{-\lam}}\ >\ \delta_0
	\end{align}
	for sufficiently large $\alpha$.

	Let $j\geq2$. Since the $n_j$-th peak is fresh, we have 
	$B_{2r_{n_j}}(\tau_{n_j})\cap \bigcup_{l=1}^{n_j-1}
	B_{r_l}(\tau_l)=\emptyset$.
	Further, Lemma~\ref{lem: consequence of diophantine w} yields that the first
	time $l>n_j$ a peak intersects $B_{2r_{n_j}}(\tau_{n_j})$ is bounded from
	below by $n_j+a^{(n_j-1)/md}$.
	Hence,
	\begin{align*}
		B_{2r_{n_j}}(\tau_{n_j})\setminus \bigcup_{l\geq 1} B_{r_l}(\tau_l)&\ =\ 
		B_{2r_{n_j}}(\tau_{n_j})\setminus\bigcup_{l\geq {n_j}} B_{r_l}(\tau_l)\\
		&\ =\ \left (B_{2r_{n_j}}(\tau_{n_j})\setminus 	
			B_{r_{n_j}}(\tau_{n_j})\right)\setminus
			\bigcup_{l\geq a^{({n_j}-1)/md}+{n_j}} B_{r_l}(\tau_l)\ .
	\end{align*}
	Note that for $n\in\N$
	\begin{align*}
		\Leb\left(\bigcup_{l\geq a^{(n-1)/md}+n} B_{r_l}(\tau_l)\right)
		&\ \leq\ b \sum_{l\geq  a^{(n-1)/md}+n}a^{-(l-1)/m}\\
		&\ =\ \frac b{1-a^{-1/m}}a^{-\left(a^{(n-1)/m}+n-1\right)/m}\ =:\ \eps_n\ .
	\end{align*}
	By means of equation \eqref{eq: phil > L0/2}, this proves the statement
	since $\phi_k\to \phi^+$ as $k\to\infty$.
\end{proof}

\begin{proof}[Proof of Proposition~\ref{prop: ac > 0}]
	Suppose $\delta<\frac{\delta_0}2$, where $\delta_0$ is chosen as in Lemma
	\ref{lem: fresh peaks height} and set $\eta_{j}:=\delta/(2\pi\alpha^{n_j})$.
	Further, let $\Omega$ be a given set of full measure.
	As $\left(\phi_n\right)_{n\in\N}$ is monotonously decreasing, Proposition 
	\ref{properties_approx_graphs} (i) shows that
	\begin{align}\label{eq: phi small close to peaks}
		\phi^+(\theta)<\delta\quad\textrm{ for all }\quad
			\theta\in B_{2\eta_j}(\tau_{n_j})\ .
	\end{align}
	By possibly going over to a subsequence (such that $n_1$ is big enough),
	Lemma~\ref{lem: fresh peaks height} yields that for each $j\in \N$ there is
	$\theta^{n_j}\in B_{2r_{n_j}}(\tau_{n_j})$ with
	\begin{align}\label{eq: phi big outside of fresh peaks}
		\Leb\left(\left\{\theta\in B_{\eta_j}
			\left(\theta^{n_j}\right)\mid\phi^+(\theta)>2\delta\right\}\right)
		\ >\ \eta_j\ .
	\end{align}
	Set $\Delta_j:=\tau_{n_j}-\theta^{n_j}$.
	By \eqref{eq: phi small close to peaks} and \eqref{eq: phi big outside of
	fresh peaks}, we have that
	\begin{align}\label{eq: measure of seperating region}
		\Leb\left(\left\{\theta\in \kreis:
			\abs{\phi^+(\theta)-\phi^+(\theta+\Delta)}>\delta \text{ for all }
			\Delta \in B_{\eta_j}(\Delta_j)\right\}\right)\ \geq\ \eta_j\ .
	\end{align}
	We denote  by $\Omega_j$ the set of such $\theta$ which visit the set
	$\{\theta\in \kreis : \abs{\phi^+(\theta)-\phi^+(\theta+\Delta)}>\delta
		\text{ for all }\Delta \in B_{\eta_j}(\Delta_j)\}$ with a frequency $\eta_j$.
	Note that by \eqref{eq: measure of seperating region} and Birkhoff's Ergodic
	Theorem, $\Leb(\Omega_j)=1$ such that
	$\tilde\Omega:=\Omega\cap\bigcap_{j\in\N}\Omega_j$ has full measure.

	Next, we choose $2^j$ points in $\Phi^+\cap \tilde \Omega\times[0,1]$ which
	are mutually $(f,\delta,\eta_j)$-separated from each other.
	Let $\theta\in\bigcap_{x\in\{0,1\}^j}\tilde\Omega-\sum_{k=1}^jx_k\Delta_k$
	and define $\theta_x=\theta+\sum_{k=1}^jx_k\Delta_k$ where
	$x=(x_1,\ldots,x_j)\in\{0,1\}^j$.
	By possibly going over to a subsequence of $\left(n_j\right)_{j\in\N}$
	(still of positive density), we may assume without loss of generality that 
	$\sum_{k=i+1}^\infty (\eta_k+|\Delta_k|)<\eta_i$ for all $i\in\N$ such that
	$d(\theta_x,\theta_y)\in B_{\eta_\ell}(\Delta_\ell)$ for distinct
	$x,y\in \{0,1\}^j$ and $\ell:=\min\{k\mid x_k\neq y_k\}\leq j$.
	By definition, we have for all $x\in\{0,1\}^j$ that $\theta_x\in\tilde\Omega$
	and hence, the set $\{(\theta_x,\phi^+(\theta_x))\mid x\in \{0,1\}^j\}$ is
	$(f,\delta,\eta_j)$-separated.
	We have thus shown
	\begin{align*}
		\varliminf_{\nu \to 0}
			\frac{\log\Sep_{\Phi^+\cap(\Omega\times[0,1])}
				(f,\delta,\nu)}{\log \nu^{-1}}
		&\ \geq\ \varliminf_{j\to\infty}
			\frac{\log\Sep_{\Phi^+\cap(\tilde\Omega\times[0,1])}
				(f,\delta,\eta_{j})}{\log \eta_{j+1}^{-1}}\\
		&\ \geq\ \varliminf_{j\to\infty}
			\frac{\log 2^j}{n_{j+1}\log\alpha-\log\delta/2\pi}
		\ =\ \frac{\log 2}{\log \alpha }\varliminf_{j\to\infty} j/n_{j+1}
		\ >\ 0
	\end{align*}
	by Lemma~\ref{lem: fresh peaks density}.  As $\varliminf_{j\to\infty} j/n_{j+1}$
	is independent of the set $\Omega$, this proves the statement.
\end{proof}

We now turn to the upper bound of the amorphic complexity.
Let $\Omega\ssq\kreis$ be the set of all $\theta\in\kreis$ such that for all
$q\in \N$
\begin{align*}
	\lim\limits_{n\to\infty}\frac{\#\left\{0\leq i\leq n-1\mid\theta+i\omega
		\in\bigcup_{j=q}^\infty B_{r_{j}}(\tau_j)\right\}}{n}\ =\ 
	\Leb\left(\bigcup_{j=q}^\infty B_{r_{j}}(\tau_j)\right).
\end{align*}
Note that Birkhoff's Ergodic Theorem yields that
$\mu_{\phi^+}(\Phi^+\cap(\Omega\times[0,1]))=\Leb(\Omega)=1$ (see Remark 
\ref{r.ac_measure}).
The upper bound on $\oac_{\Phi^+\cap(\Omega\times[0,1])}(f)$ will follow easily
from the following assertion.
\begin{lemma}\label{lem: generic points close enough->no separation}
	There exist $\kappa,c_0>0$ such that for all positive $\delta$ and small
	enough $\nu$, we have that for each $\theta,\theta'\in\Omega$ with
	$d(\theta,\theta')<\eps=\eps(\delta,\nu)=c_0\delta\nu^{\kappa m}$
	the points $(\theta,\phi^+(\theta))$ and $(\theta',\phi^+(\theta'))$ are not
	$(f,\delta,\nu)$-separated.
\end{lemma}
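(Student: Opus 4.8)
The plan is to exploit that $\phi^+$ is an invariant graph: the orbit of $(\theta,\phi^+(\theta))$ under $f$ is $\big(\theta+k\omega,\phi^+(\theta+k\omega)\big)_{k\geq 0}$, so the two orbits in question stay at $\kreis$-distance $d(\theta,\theta')<\eps$ in the base coordinate for all times, and once we arrange $\eps\leq\delta$ this base distance never triggers $\delta$-separation. Working with the maximum metric on $\kreis\times[0,1]$ (the sum metric only changes constants), it thus suffices to choose $\kappa,c_0>0$ so that with $\eps=c_0\delta\nu^{\kappa m}$ and all sufficiently small $\nu$ one has
\begin{equation*}
	\varlimsup_{n\to\infty}\frac1n\#\Big\{0\leq k<n\ \Big|\
	\abs{\phi^+(\theta+k\omega)-\phi^+(\theta'+k\omega)}\geq\delta\Big\}\ <\ \nu
\end{equation*}
whenever $\theta,\theta'\in\Omega$ with $d(\theta,\theta')<\eps$. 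So the only mechanism that could cause separation is a difference of the fibre values along the orbit.

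The key point is a dichotomy: $\phi^+$ can vary by more than $\delta$ on scale $\eps$ only near the peaks, while away from all but finitely many peaks it is essentially Lipschitz. For $q\in\N$ set $U_q:=\bigcup_{j\geq q}B_{r_j}(\tau_j)$ and $C_q:=\kreis\smin U_q$. By Proposition~\ref{properties_approx_graphs}(ii), for every $\vartheta\in C_q$ and every $n\geq mq+1$ we have $\abs{\phi_n(\vartheta)-\phi_{n-1}(\vartheta)}\leq\alpha^{-\lam(n-1)}$; summing this telescoping series and using that $\phi_n\to\phi^+$ pointwise gives $\abs{\phi^+(\vartheta)-\phi_{mq}(\vartheta)}\leq e_q:=\alpha^{-\lam mq}/(1-\alpha^{-\lam})$ for all $\vartheta\in C_q$. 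Since $\phi_{mq}$ is $\pi\alpha^{mq}$-Lipschitz by Proposition~\ref{properties_approx_graphs}(i) and the rotation $\vartheta\mapsto\vartheta+\omega$ is an isometry, the triangle inequality yields, for every $k$ with $\theta+k\omega\in C_q$ and $\theta'+k\omega\in C_q$,
\begin{equation*}
	\abs{\phi^+(\theta+k\omega)-\phi^+(\theta'+k\omega)}\ \leq\
	2e_q+\pi\alpha^{mq}d(\theta,\theta')\ \leq\ 2e_q+\pi\alpha^{mq}\eps\ .
\end{equation*}

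Next I would fix the cutoff $q=q(\nu)$ as the least integer with $\Leb(U_q)<\nu/2$. From $\Leb(U_q)\leq\sum_{j\geq q}2r_j=\frac{b}{1-a^{-1/m}}\,a^{-(q-1)/m}$ together with $\Leb(U_{q-1})\geq\nu/2$ (which holds by minimality) a short computation gives $q(\nu)\leq C_3\, m\log\nu^{-1}$ for a constant $C_3$ depending only on the data $a,b,m$ fixed above and all small $\nu$; hence $\alpha^{mq(\nu)}\leq\nu^{-\kappa m}$ with $\kappa:=C_3\, m\log\alpha$. Choosing $c_0:=1/(2\pi)$ and $\eps:=c_0\delta\nu^{\kappa m}$ then gives $\pi\alpha^{mq(\nu)}\eps\leq\delta/2$ and $\eps\leq\delta$; moreover $q(\nu)\to\infty$ as $\nu\to0$, so $e_{q(\nu)}<\delta/4$ for all $\nu$ below a threshold depending on $\delta$ (this is the only place the admissible range of $\nu$ depends on $\delta$). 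Inserting these choices into the second display shows $\abs{\phi^+(\theta+k\omega)-\phi^+(\theta'+k\omega)}<\delta$ for every $k$ with $\theta+k\omega,\theta'+k\omega\in C_q$, so every $k$ contributing to the first display must satisfy $\theta+k\omega\in U_q$ or $\theta'+k\omega\in U_q$.

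Finally I would invoke the defining property of $\Omega$: since $\theta,\theta'\in\Omega$ and $U_q=\bigcup_{j\geq q}B_{r_j}(\tau_j)$, both $\lim_n\frac1n\#\{0\leq k<n\mid\theta+k\omega\in U_q\}$ and the analogous limit for $\theta'$ equal $\Leb(U_q)<\nu/2$. Hence the $\varlimsup$ in the first display is at most $2\Leb(U_q)<\nu$, which is precisely the claim. The main obstacle is the bookkeeping in the third step: one must track how the cutoff $q(\nu)$, forced on us by $\Leb(U_q)<\nu/2$, propagates through the Lipschitz constant $\pi\alpha^{mq}$ of $\phi_{mq}$ to yield the stated polynomial scaling $\eps\asymp\delta\nu^{\kappa m}$, and one must ensure that Proposition~\ref{properties_approx_graphs}(ii) applies uniformly to \emph{all} of $C_q$ at once — which is exactly why $C_q$ (rather than a finite intersection of peak complements) is the correct set on which to compare $\phi^+$ with $\phi_{mq}$.
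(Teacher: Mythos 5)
Your proof is correct and follows essentially the same approach as the paper's: you fix a cutoff $q(\nu)$ determined by $\Leb\bigl(\bigcup_{j\geq q}B_{r_j}(\tau_j)\bigr)<\nu/2$, compare $\phi^+$ to the Lipschitz reference graph $\phi_{mq}$ on the complement of this union via Proposition~\ref{properties_approx_graphs}, and then use the Birkhoff-frequency defining property of $\Omega$ to bound the separation frequency by $2\Leb(U_{q(\nu)})<\nu$. The paper packages the same argument with the reference graph $\phi_{n(\nu)}$, $n(\nu)=mq(\nu)+1$, and writes the resulting value of $\eps$ directly rather than deriving the exponent $\kappa m$ from the constraints as you do, but the mechanism and the constants are the same.
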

\begin{proof}
	Observe that there is a constant $C>0$, independent of both $\delta$ and
	$\nu$, such that for $q(\nu):=\left\lceil-C\log\nu\right\rceil$ we have
	\begin{align}\label{eq: defn of q(nu)}
		\Leb\left(\bigcup_{j=q(\nu)}^\infty B_{r_j}(\tau_j)\right)
		\ \leq\ \sum_{j=q(\nu)}^\infty a^{-(j-1)/m}\ <\ \nu/2\ .
	\end{align}
	Set $n(\nu):=mq(\nu)+1$ and assume that $n(\nu)$ is large enough (i.e.\ 
	$\nu$ is small) to guarantee
	\begin{align}\label{eq: quality of approximation after n(nu) iterations}
		\abs{\varphi^+(\theta)-\varphi_{n(\nu)}(\theta)}\ <\ \delta/4
	\end{align}
	for all $\theta\notin\bigcup_{j=q(\nu)}^\infty B_{r_{j}}(\tau_j)$
	(cf. Proposition~\ref{properties_approx_graphs}(ii)).
	Note that if $d(\theta,\theta')<\eps(\delta,\nu):=
		\delta/(4\pi\alpha^{m(-C\log \nu+1)+1})$, then
	\begin{align}\label{eq: lipschitz condition phi n(nu)}
		\abs{\varphi_{n(\nu)}(\theta)- \varphi_{n(\nu)}(\theta')}\ \leq\
		\pi\alpha^{n(\nu)}d(\theta,\theta')\ <\ \delta/4
	\end{align}
	for all $\theta,\theta'$ (cf. Proposition~\ref{properties_approx_graphs}(i)).
	Now, assume $\theta,\theta'\in\Omega$ verify
	$d(\theta,\theta')<\eps(\delta,\nu)<\delta/4$.
	By \eqref{eq: defn of q(nu)} and definition of $\Omega$, we know that
	\begin{align*}
		\lim\limits_{n\to\infty}\frac{\#\left\{0\leq i\leq n-1\mid\theta+i\omega,
			\theta'+i\omega\notin\bigcup_{j=q(\nu)}^\infty 
			B_{r_{j}}(\tau_j)\right\}}{n}
		\ >\ 1-\nu.
	\end{align*}
	Further, \eqref{eq: quality of approximation after n(nu) iterations} and
	\eqref{eq: lipschitz condition phi n(nu)} yield that 
	$\abs{\phi^+(\theta+i\omega)-\phi^+(\theta'+i\omega)}<\frac34\delta$
	whenever both $\theta+i\omega$ and $\theta'+i\omega$ are not in 
	$\bigcup_{j=q(\nu)}^\infty B_{r_{j}}(\tau_j)$.
	Hence,
	\begin{align*}
		 \varlimsup_{n\to \infty}\frac{S_n(f,\delta, (\theta,\phi(\theta)),
			(\theta',\phi(\theta')))}{n}\ <\ \nu,
	\end{align*}
	so that $(\theta,\phi(\theta))$ and $(\theta',\phi(\theta'))$ are not
	$(f,\delta,\nu)$-separated.
\end{proof}

We thus have
\begin{proposition}\label{prop: upper ac on the graph}
	Suppose $\alpha$ in (\ref{eq:2}) is sufficiently large and $\Omega$ is as in
	the previous lemma.
	Then
	\begin{align*}
		 \oac_{\Phi^+\cap(\Omega\times[0,1])}(f)\ \leq\ \kappa m,
	\end{align*}
	with $\kappa$ as in Lemma~\ref{lem: generic points close enough->no separation}.
\end{proposition}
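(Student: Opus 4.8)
The plan is to read off the bound directly from Lemma~\ref{lem: generic points close enough->no separation}, which already carries out all of the substantive analysis (the construction of $\Omega$, the Diophantine estimates, and the control of the fresh peaks). The structural fact I would exploit is that $\varphi^+$ is a genuine function on $\kreis$: thus $\theta\mapsto(\theta,\varphi^+(\theta))$ parametrises $\Phi^+$ injectively, and a point of $\Phi^+$ is uniquely determined by its base coordinate. This reduces the estimation of separation numbers on the two-dimensional set $\Phi^+\cap(\Omega\times[0,1])$ to a counting argument on the circle.

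First I would fix $\delta>0$ and restrict to $\nu$ small enough for Lemma~\ref{lem: generic points close enough->no separation} to apply, with $\kappa,c_0>0$ and $\eps(\delta,\nu)=c_0\delta\nu^{\kappa m}$ as provided there. Let $S\ssq\Phi^+\cap(\Omega\times[0,1])$ be $(f,\delta,\nu)$-separated; by the remark above, $S=\{(\theta_i,\varphi^+(\theta_i))\mid i\in I\}$ with pairwise distinct $\theta_i\in\Omega$. For $i\neq j$ the points $(\theta_i,\varphi^+(\theta_i))$ and $(\theta_j,\varphi^+(\theta_j))$ are $(f,\delta,\nu)$-separated, so the contrapositive of Lemma~\ref{lem: generic points close enough->no separation} forces $d(\theta_i,\theta_j)\geq\eps(\delta,\nu)$. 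Hence $\{\theta_i\mid i\in I\}$ is an $\eps(\delta,\nu)$-separated subset of $\kreis$; since $\kreis$ has length $1$, this gives $\#S=\#I\leq 1/\eps(\delta,\nu)$ (in particular $S$ is finite), and therefore, for all sufficiently small $\nu$,
\[
	\Sep_{\Phi^+\cap(\Omega\times[0,1])}(f,\delta,\nu)\ \leq\ \frac{1}{\eps(\delta,\nu)}\ =\ \frac{1}{c_0\delta}\,\nu^{-\kappa m}\ .
\]

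Next I would feed this into the definition \eqref{eq: defn h am for subsets} of $\oac_{E}(f,\delta)$: the factor $1/(c_0\delta)$ does not depend on $\nu$ and so drops out in the limit, leaving
\[
	\oac_{\Phi^+\cap(\Omega\times[0,1])}(f,\delta)\ =\ \varlimsup_{\nu\to 0}\frac{\log\Sep_{\Phi^+\cap(\Omega\times[0,1])}(f,\delta,\nu)}{-\log\nu}\ \leq\ \varlimsup_{\nu\to 0}\frac{-\log(c_0\delta)+\kappa m(-\log\nu)}{-\log\nu}\ =\ \kappa m\ .
\]
Taking the supremum over $\delta>0$ then yields $\oac_{\Phi^+\cap(\Omega\times[0,1])}(f)\leq\kappa m$.

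I do not expect a real obstacle in this final step, since the work has been localised in Lemma~\ref{lem: generic points close enough->no separation} and the geometric lemmas preceding it. The only two points that need a little care are: that the reduction to a counting problem on $\kreis$ is legitimate, which relies precisely on $\varphi^+$ being single-valued so that distinct points of $S$ have distinct base coordinates; and that the threshold ``$\nu$ small enough'' in Lemma~\ref{lem: generic points close enough->no separation} may a priori depend on $\delta$ — but this is harmless, because for each fixed $\delta$ the estimate is only needed as $\nu\to 0$, and the supremum over $\delta$ is taken afterwards.
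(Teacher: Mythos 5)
Your argument is correct and takes essentially the same route as the paper: in both cases the entire content is extracted from Lemma~\ref{lem: generic points close enough->no separation}, which forces base coordinates of separated/spanned points to be at scale $\eps(\delta,\nu)=c_0\delta\nu^{\kappa m}$ on $\kreis$, giving a bound of order $\nu^{-\kappa m}/\delta$ and hence exponent $\kappa m$. The only cosmetic difference is that you bound $\Sep_{\Phi^+\cap(\Omega\times[0,1])}(f,\delta,\nu)$ directly via the contrapositive of the lemma, while the paper bounds $\Span_{\Phi^+\cap(\Omega\times[0,1])}(f,\delta,\nu)$ by placing an $\eps(\delta,\nu)$-net of base points inside the dense set $\Omega$ (using Remark~\ref{r.ac_measure} to pass between the two formulations); both yield $\oac_{\Phi^+\cap(\Omega\times[0,1])}(f)\leq\kappa m$.
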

\begin{proof}
	By the previous lemma, we know that for small enough $\nu$
	\begin{align*}
	 	&\Span_{\Phi^+\cap(\Omega\times[0,1])}(f,\delta,\nu)\ \leq\ 
		\left\lceil\frac1{\eps(\delta,\nu)}\right\rceil+1\ =\ 
		\left\lceil\frac{\nu^{-\kappa m}}{c_0\delta}\right\rceil+1\ <\ 
		2\frac{\nu^{-\kappa m}}{c_0\delta}\ .\qedhere
	\end{align*}
\end{proof}
\begin{proof}[Proof of Theorem~\ref{t.pinched_systems}]
	It is left to show the upper bound. 
	To that end, we show that there is an invariant set of full measure 
	$\tilde\Omega\ssq\Omega$ ($\Omega$ as above) such that
	\begin{align}\label{eq: fibrewise attraction}
		\lim_{n\to \infty}  \phi^+(\theta+n\omega)-f_\theta^n(x)\ = \ 0
	\end{align}
	for all $\theta\in\tilde \Omega$ and $x\in (0,1]$.
	In other words, we show that $\left.f\right|_{\tilde\Omega\times(0,1]}$ has
	the unique target property with respect to $\Phi^+\cap(\tilde\Omega\times(0,1])$.
	By means of Lemma~\ref{lem: unique target general} and
	Proposition~\ref{prop: upper ac on the graph} this yields that
	$\oac(\left.f\right|_{\tilde\Omega\times(0,1]})\leq\kappa m$ and it is easy
	to see that $\kappa m$ is in fact an upper bound for
	$\oac(\left.f\right|_{\tilde\Omega\times[0,1]})$. 

	Note that since $\phi^+$ is the upper bounding graph of the global attractor,
	we have \eqref{eq: fibrewise attraction} for all $\theta$ and
	$x\in [\phi^+(\theta),1]$.
	Now, define $\psi(\theta):=\sup\{x\in [0,\phi^+(\theta)]\mid
		\eqref{eq: fibrewise attraction}\text{ does not hold}\}$.
	Due to monotonicity, $\psi$ is an invariant graph. 
	By \cite[Proposition~3.3]{jaeger:2003}, we have that for $\Leb$-a.e.\ $\theta$
	there is $\delta(\theta)>0$ such that \eqref{eq: fibrewise attraction} holds
	for $x\in (\phi^+(\theta)-\delta(\theta),\phi^+(\theta)]$.
	Hence, $\psi$ is distinct from $\phi^+$.
	Since concavity of the fibre maps $f_\theta$ only allows for two invariant
	graphs (equivalence up to sets of measure zero, cf.\ 
	\cite[Theorem~2.1]{AnagnostopoulouJaeger2012}), this shows that $\psi=0$
	Lebesgue almost surely.
	Set $\tilde \Omega := \bigcap_{n\in\Z} n \omega +
		\{\theta \in \Omega \mid \psi(\theta)=0\}$.
	This ends the proof.
\end{proof}

\footnotesize

\end{document}